\numberwithin{equation}{section}
\newtheorem{theorem}{Theorem}
\newtheorem{lemma}{Lemma}[section]
\newtheorem{proposition}{Proposition}[section]
\newtheorem{definition}{Definition}
\newtheorem{remark}{Remark}
\newtheorem{Exa}{Example}
\newtheoremstyle{case}{}{}{}{}{\bf}{:}{ }{}
\theoremstyle{case}
\newtheorem{case}{Case}
\newtheoremstyle{case1}{}{}{}{}{\bf}{:}{ }{}
\theoremstyle{case1}
\newtheorem{case1}{Case}
\newcommand{\be}{\begin{equation}}
\newcommand{\ee}{\end{equation}}
\newcommand{\bea}{\begin{eqnarray*}}
\newcommand{\eea}{\end{eqnarray*}}
\newcommand{\mE}{\mathbb{E}}
\newcommand{\la}{\langle}
\newcommand{\ra}{\rangle}
\newcommand{\sumlimits}[2]{\sum\limits_{#1}^{#2}}
\newcommand{\lsmooth}[1]{\left(L_0+L_1\|\nabla f(x_{#1})\|\right)}
\newcommand{\nablaf}[1]{\|\nabla f(#1)\|}
\newcommand{\keywords}[1]{\textit{\textbf{Keywords}: #1}}
\newcommand{\tiletalow}{\sqrt{G_{t-1}^2+A\overline{\Delta}_t+\left(B+1\right)\|\overline{g}_t\|^2+C}}
\newcommand{\etalow}{G_t}
\newcommand{\Gconstant}{8\overline{\Delta}_1+8+32\log^2\frac{T}{\delta}+8\frac{\sqrt{2C}}{\sqrt{L_0T}}\log\frac{T}{\delta}+\frac{303L_0}{224L_1^2}}
\newcommand{\Gadaptive}{\frac{8}{5}\overline{\Delta}_1+\frac{8}{5}\log\frac{T}{\delta}+\frac{8}{5}+\frac{303L_0}{1120L_1^2}}
\newcommand{\functiongapboundconstant}{\overline{\Delta}_c}
\newcommand{\functiongapboundadaptive}{\overline{\Delta}_a}
\newcommand{\functiongapboundsmooth}{\overline{\Delta}_L}
\newcommand{\functiongapexpectation}{\overline{\Delta}_e}
\begin{document}

  \title{Convergence Analysis of 
  Stochastic Accelerated Gradient  Methods
  	 for Generalized Smooth  Optimizations}

  \author{Chenhao Yu\footnotemark[1] \qquad
          Yusu Hong\footnotemark[1]\qquad 
          Junhong Lin\footnotemark[2]}
  \renewcommand{\thefootnote}{\fnsymbol{footnote}}
  \footnotetext[1]{Center for Data Science and School of Mathematical Sciences, Zhejiang University.}
  \footnotetext[2]{Center for Data Science, Zhejiang University.}

\maketitle \baselineskip 16pt
\begin{abstract}
    We investigate the Randomized Stochastic Accelerated Gradient (RSAG) method, utilizing either constant or adaptive step sizes, for stochastic optimization problems with generalized smooth objective functions. Under relaxed affine variance assumptions for the stochastic gradient noise, we establish high-probability convergence rates of order $\tilde{O}\left(\sqrt{\log(1/\delta)/T}\right)$ for function value gaps in the convex setting, and for the squared gradient norms in the non-convex setting. Furthermore, when the noise parameters are sufficiently small, the convergence rate improves to $\tilde{O}\left(\log(1/\delta)/T\right)$, where $T$ denotes the total number of iterations and $\delta$ is the probability margin.
    Our analysis is also applicable to SGD with both constant and adaptive step sizes.
  \end{abstract}
\keywords{stochastic gradient descent; AdaGrad-Norm; accelerated gradient descent; generalized smoothness; relaxed affine variance noise.}

\section{Introduction} 
\renewcommand{\thefootnote}{\arabic{footnote}}
We consider the stochastic optimization problem that 
\begin{align}
  \label{problem}
  \min\limits_{x\in\mathbb{R}^d} f(x)=\mathbb{E}_{z \sim \mathcal{D}}[f_z(x;z)],
\end{align}
where the objective function $f(x)$ is possibly non-convex and $\mathcal{D}$ is a probability distribution from which the random vector $z$ is drawn.

One of the popular algorithms for \eqref{problem} is Stochastic Gradient Descent (SGD) \cite{robbins1951stochastic}.
The vanilla SGD performs the iterations of the form  
$$x_{t+1}=x_t-\eta_t g_t,$$
where $g_t$ is the stochastic gradient estimate of $f$ at $x_t$ and $\eta_t>0$ is the step size.
Extensive research has been conducted on the convergence of SGD for smooth objective functions.
\cite{nemirovski2009robust} analyzed strongly convex functions and the expected error in terms of the function value gap is of order $\mathcal{O}\left(1/T\right)$, where $T$ is the total number of iterations. 
In the convex setting, \cite{lan2012optimal} provided an expected upper bound of order $\mathcal{O}(1/T + \sigma/\sqrt{T})$ in terms of the function value gap,
while for the non-convex case, \cite{ghadimi2013stochastic} obtained the same-order expected upper bound  for the average of squared gradient norms, both considering non-adaptive and well-tuned step sizes, under the bounded variance assumption.
 Here,
 $\sigma$ is the noise variance parameter.
While the upper bound for the non-convex case is in general unimprovable as it matches the lower bound in \cite{arjevani2023lower},
 classic convergence results for SGD with non-adaptive step sizes often require the information on problem parameters to tune the step sizes, e.g. $\eta_t=\eta\leq 1/L$
 with $L$ denoting the smoothness parameter of the objective function  in \cite{ghadimi2013stochastic}, which may be somewhat limited since the smoothness constant could  be  hard to know.
 Alternatively, one could consider SGD with adaptive step sizes, namely, 
 AdaGrad-Norm \cite{streeter2010less,duchi2011adaptive,ward2020adagrad,faw2022power}, which could thus potentially avoid this drawback.
 AdaGrad-Norm can been seen as a scalar version of AdaGrad \cite{streeter2010less,duchi2011adaptive}.
 Recall that the step sizes of AdaGrad-Norm are
\begin{align}
  \label{adaptive step size}
  \eta_t=\eta/\left(G_0^2+\sum\limits_{k=1}^t\|g_k\|^2\right)^{\frac{1}{2}},
\end{align}
where $\eta, G_0$ are some positive constants.
\cite{streeter2010less} and \cite{duchi2011adaptive} independently
provided the regret bound of order $\mathcal{O}\left(\sqrt{T}\right)$
 for AdaGrad-Norm in online convex optimization, assuming only bounded stochastic gradients.
 For the non-convex smooth stochastic optimization settings, \cite{ward2020adagrad} proved that AdaGrad-Norm has the expected convergence rate of $\mathcal{O}\left(1/\sqrt{T}\right)$ to find a stationary point
 without pre-tuning step sizes, assuming that both full gradients and noise variance are bounded.

Recently, increasing attention has been directed towards the so-called affine variance noise, in contrast to the previously mentioned bounded variance noise. In this case, for some constants \(B, C > 0\), the stochastic gradients \(g(x), \forall x \in \mathbb{R}^d\), satisfy
\begin{align}
\label{affine}
\mathbb{E}\left[\|g(x)-\nabla f(x)\|^2\right]\leq B \nablaf{x}^2+C.
\end{align} 
As noted in \cite{faw2022power}, this noise model has many practical applications, such as machine learning with feature noise \cite{khani2020feature} and robust linear regression \cite{xu2008robust}. 
While the analysis for non-adaptive SGD \cite{bottou2018optimization} is similar for both bounded variance noise and affine variance noise, the analysis for AdaGrad-Norm is more challenging in the latter case \cite{faw2022power, wang2023convergence, attia2023sgd}. \cite{faw2022power, wang2023convergence} provided the expected convergence rates of AdaGrad-Norm in the non-convex setting under \eqref{affine}, while \cite{attia2023sgd} established high-probability convergence rates in both convex and non-convex settings, assuming the almost sure version of \eqref{affine},
\begin{align}
\label{asaffine}
\|g(x)-\nabla f(x)\|^2\leq B \nablaf{x}^2+C,
\end{align} 
or its corresponding sub-Gaussian version.

 In this paper, we will focus on a more general noise model, the so-called relaxed affine variance noise model\footnote{Our high-probability results also hold for its corresponding sub-Gaussian version.}, which satisfies that for some constants $A,B,C\geq 0$,
\begin{align}
  \label{affine variance}
  \|g(x)-\nabla f(x)\|^2\leq A\left(f(x)-f^*\right)+B\nablaf{x}^2+C,
\end{align}
where $f(x)\geq f^*$ for all $x\in\mathbb{R}^d$.
This relaxed affine variance noise assumption was proposed by \cite{khaledbetter} in expectation form where they observed that numerous practical stochastic gradient
settings, including
commonly used perturbation, sub-sampling and compression, satisfy this relaxed affine variance noise model but not \eqref{affine}. 
This expected noise assumption can be also seen as a combination of \eqref{affine} and the following expected smoothness condition \cite{gower2019sgd,grimmer2019convergence,wang2023convergence2},
\begin{align}
  \label{expected smooth}
\mathbb{E}\left[ \|g(x)\|^2\right]\leq A\left(f(x)-f^*\right)+C.
\end{align}
Moreover, \eqref{affine variance} is the general noise model which covers the corresponding almost-sure versions of the aforementioned
 noise assumptions, such as
  bounded variance noise, affine variance noise,
  expected smoothness condition, and also maximal strong growth condition \cite{schmidt2013fast}. 
Under the expected version of \eqref{affine variance}, \cite{khaledbetter}  obtained the expected convergence rate of SGD with non-adaptive step sizes. 
Recently, \cite{hongrevisiting} obtained high-probability convergence results for AdaGrad, assuming \eqref{affine variance} or its corresponding sub-Gaussian version.
Both \cite{khaledbetter} and \cite{hongrevisiting} focused on non-convex optimization.

Although the analysis in previous research \cite{ghadimi2013stochastic,ghadimi2016accelerated,levy2018online,ward2020adagrad,attia2023sgd} often relies on the commonly used \(L\)-smoothness condition,  
\begin{align}
  \label{definition L smoothness}
  \|\nabla f(x)-\nabla f(y)\| \leq L\|x-y\|, \quad \forall x,y \in \mathbb{R}^d, 
\end{align}
this assumption can be somewhat restrictive in modern machine learning, particularly for neural network models \cite{zhang2019gradient}.
Indeed, \cite{zhang2019gradient}
empirically observed that in large language models, the following  generalized smoothness condition,
\begin{align*}
	\|\nabla^2 f(x)\|\leq L_0+L_1\|\nabla f(x)\|,
\end{align*}
could be more realistic to characterize the behavior of the objective function.
This assumption is further relaxed in \cite{zhang2020improved} with the following first-order formulation, not requiring the
second differentiability of the objective function.
\begin{definition}[$\left(L_0,L_1\right)$-smoothness]
  \label{definition 1}
  Let $f(x)$ be a differentiable function. For some $L_0,L_1>0$,
   $f(x)$ is $(L_0,L_1)$-smooth iff for any $x,y\in\mathbb{R}^d$ with $\|x-y\|\leq 1/L_1$,
  \begin{align}
    \label{def generalized smooth}
  \|\nabla f(x)-\nabla f(y)\|\leq \left(L_0+L_1\nablaf{x}\right)\|x-y\|.
  \end{align}
\end{definition}
There are a substantial amount of research on the convergence analysis of gradient based algorithms for generalized smooth objective functions.  
Indeed, \cite{zhang2019gradient} obtained the convergence rates of order $\mathcal{O}\left(1/T\right)$ for Gradient Descent (GD) and $\mathcal{O}\left(\sigma/\sqrt{T}\right)$ for SGD, where $\sigma$ is the bounded noise parameter, both involving extra gradient clipping steps, under the setting of non-convex objective functions.
\cite{zhang2020improved} improved the convergence rates for clipped SGD on the dependencies for all problem-dependent parameters, especially $L_1$ by order, under the same assumptions as \cite{zhang2019gradient}.
In the convex case, \cite{gorbunov2024methods} obtained the convergence rate of $\mathcal{O}\left(1/T\right)$ for clipping GD, and they refer to this algorithm as $(L_0,L_1)$-GD.
\cite{faw2023beyond} derived the expected convergence bound of order $\tilde{\mathcal{O}}\left(1/\sqrt{T}\right)$ for AdaGrad-Norm in the non-convex case under \eqref{affine} and \eqref{def generalized smooth} while \cite{wang2023convergence} provided alternative simpler analysis under the same setup.
Furthermore, \cite{wang2023convergence} gave a counter-example showing that the necessity of prior knowledge on parameters for the step-size parameter in the generalized smoothness case.

In this paper, we study the convergence analysis on
RSAG \cite{ghadimi2016accelerated} with both smooth and generalized smooth objective functions, under the relaxed affine variance noise assumption, considering both the non-adaptive \cite{ghadimi2016accelerated} and adaptive step sizes \cite{levy2018online,kavis2022high}.
RSAG is  a specific instance of the accelerated gradient descent (AGD) template (see Algorithm \ref{algorithm1}).
This template could recover some well-known algorithms, such as Nesterov's AGD \cite{nesterov1983method,nesterov2013introductor},
 AdaGrad-Norm, and AcceleGrad \cite{levy2018online},
depending on parameter choices. 
While AGD \cite{nesterov1983method} was originally designed for solving optimization problems with convex smooth objective functions, \cite{ghadimi2016accelerated} 
showed that for non-convex smooth stochastic optimizations,
an expected convergence rate with respect to the stationary-point measure matching the lower bound in \cite{arjevani2023lower}, could be also derived for RSAG with non-adaptive step sizes, assuming bounded variance noise.
\cite{kavis2022high} recently studied adaptive RSAG and derived similar high probability convergence results with respect to the stationary-point measure for the algorithm, assuming that the objective function is smooth and the noise is almost surely bounded.

In this paper, we analyze RSAG, and we derive some  convergence results under the assumptions of
 (generalized) smooth objective functions and the relaxed affine variance noises.
 Also, all the convergence results presented in this paper hold true for vanilla SGD or AdaGrad-Norm.
We  summarize our main contributions as follows.
\begin{enumerate}[(a)]
  \item \label{contribution a} 
  We analyze SGD and RSAG with the constant step size under the assumptions of \eqref{affine variance} and \eqref{def generalized smooth}, and we obtain high-probability convergence rates. We show that
  after $T$ iterations,  with probability at least $1-\delta$,
  \begin{enumerate}[(i)]
    \item for non-convex objective functions, we have
    \begin{align}
      \label{contribution 1}
    \frac{1}{T}\sumlimits{t=1}{T}\|\nabla f(\overline{x}_t)\|^2\leq \mathcal{O}\left(\frac{\text{poly}\left(\log\frac{T}{\delta}\right)}{T}+\sqrt{\frac{\left(A+C\right)\text{poly}\left(\log\frac{T}{\delta}\right)}{T}}\right);
    \end{align}
    \item for convex objective functions, we have
    \begin{align}
      \label{contribution 2}
      f\left(\frac{1}{T}\sumlimits{t=1}{T}\overline{x}_t\right)-f^*\leq \mathcal{O}\left(\frac{\text{poly}\left(\log\frac{T}{\delta}\right)}{T}+\sqrt{\frac{\left(A+C\right)\text{poly}\left(\log\frac{T}{\delta}\right)}{T}}\right).
    \end{align}   
  \end{enumerate}
  Here, $\{\overline{x}_t\}_{t\in[T]}$ is the output sequence generated by SGD or RSAG with the constant step size.
  \item Under the assumptions of \eqref{affine variance} and \eqref{def generalized smooth}, we show that AdaGrad-Norm and RSAG
   with the adaptive step size \eqref{adaptive step size} can have the same high-probability convergence rate as \eqref{contribution 1} for non-convex objective functions and the following high-probability
   convergence rate for convex objective functions,
  \begin{align}
    \label{contribution 3}
    f\left(\frac{1}{T}\sumlimits{t=1}{T}\overline{x}_t\right)-f^*\leq \mathcal{O}\left(\frac{\text{poly}\left(\log\frac{T}{\delta}\right)}{T}+\sqrt{\frac{C\text{poly}\left(\log\frac{T}{\delta}\right)}{T}}\right).
  \end{align}
  We believe that the difference between \eqref{contribution 2} and \eqref{contribution 3} is due to the setting of step sizes.
  \item We also apply our analysis about adaptive algorithms to $L$-smooth objective functions and obtain the same order of convergence rate as \eqref{contribution 1} for non-convex functions and \eqref{contribution 3} for convex functions, not requiring the prior knowledge on problem parameters, such as $A$, $B$, $C$ and $L$.
  \item We obtain the following expected convergence rates of SGD and RSAG with the constant step size for $L$-smooth objective functions, 
  \begin{enumerate}[(i)]
    \item  for non-convex objective functions, we have
    \begin{align*}
      \mE\left[\frac{1}{T}\sumlimits{t=1}{T}\|\nabla f(\overline{x}_t)\|^2\right]\leq \mathcal{O}\left(\frac{1}{T}+\sqrt{\frac{A+C}{T}}\right);
    \end{align*} 
    \item for convex objective functions, we have
    \begin{align*}
      \mE\left[f\left(\frac{1}{T}\sumlimits{t=1}{T}\overline{x}_t\right)-f^*\right]\leq \mathcal{O}\left(\frac{1}{T}+\frac{C}{\sqrt{T}}\right).
    \end{align*}
  \end{enumerate}
\end{enumerate}

The rest of the paper are organized as follows. 
We first introduce some extra related works in the next section. 
Section \ref{preliminary} introduces the assumptions and the algorithms. 
In Section \ref{main result}, we summarize our main theorems.
Then, we give the proofs for generalized smooth objective functions with constant step sizes in Section \ref{section 4} and with adaptive step sizes in Section \ref{section 5}. 
In Section \ref{section 6}, we present our analysis for smooth objective functions with adaptive step sizes.
In the appendix, we provide the proofs that are omitted in the main text.

\paragraph{Notations}
In this paper, we denote the set $\{1,\cdots,T\}$ as $[T]$. 
We use $\mathbb{E}_t[\cdot]\triangleq\mathbb{E}[\cdot|z_1,\cdots,z_{t-1}]$ to represent the conditional expectation, where $z_i$ is the random sample in the $i$-th gradient oracle. 
The notation $a\sim \mathcal{O}(b)$ and $a\leq \mathcal{O}(b)$ refer to $c_1 b\leq a\leq c_2 b$ and $a\leq c_3 b$ with $c_1, c_2, c_3$ being positive constants, respectively. 
Also, we write $\tilde{\mathcal{O}}(b)$ as shorthand for $\mathcal{O}(b \cdot \text{poly}(\log b))$.
\section{Discussions}
\label{related work}
\paragraph{Accelerated Gradient Descent}
AGD, such as \cite{ghadimi2016accelerated}, is one of the variants of GD and was first proposed by \cite{nesterov1983method} for convex smooth optimization, achieving the accelerated convergence rate of $\mathcal{O}(1/T^2)$ compared to the rate of $\mathcal{O}\left(1/T\right)$ for (non-accelerated) GD method.
\cite{nemirovskij1983problem} proved that the bound $\mathcal{O}\left(1/T^2\right)$ is not improvable for smooth convex optimization.
\cite{ghadimi2016accelerated} generalized the classic AGD method to solve non-convex and possibly stochastic optimization problem. They obtained the convergence rate of $\mathcal{O}\left(1/T\right)$ for non-convex smooth objective functions.
Also for stochastic optimization, the expected convergence rate is up to $\mathcal{O} (1/T+C/\sqrt{T})$ in the non-convex situation and $\mathcal{O}(1/T^2+C/\sqrt{T})$ in the convex situation.
Under this generalized AGD formulation, \cite{kavis2022high} proved the convergence rate of $\tilde{\mathcal{O}}(1/\sqrt{T})$ with high probability for non-convex objective functions without the knowledge of the smoothness parameters and noise level.
They assumed that the full gradient is bounded and also the stochastic gradient is almost surely bounded.
\cite{arjevani2023lower} proved that $\mathcal{O}\left(1/\sqrt{T}\right)$ is not improvable for any algorithm with only first-order oracle access, assuming the function is non-convex, smooth and the noise variance is bounded. 
\paragraph{Generalized Smoothness}
As mentioned above, global smoothness is limited especially when processing language models. Therefore, \cite{zhang2019gradient} provided the definition of generalized smoothness and established the convergence rate of $\mathcal{O}\left(1/\sqrt{T}\right)$ for clipping SGD with bounded noise in the non-convex case.
\cite{zhang2020improved} improved the dependencies for all problem-dependent parameters, especially $L_1$ by order.
For vanilla SGD in the non-convex case, \cite{li2024convex} obtained the convergence rate of $\mathcal{O}\left(1/\sqrt{T}\right)$ with high probability, under the bounded variance noise assumption. 
However, the dependence of probability margin is $1/\delta$.
In the field of adaptive methods, \cite{faw2023beyond,wang2023convergence} established the convergence rate of  AdaGrad-Norm for non-convex objective functions, considering \eqref{affine}. 
Also, \cite{hongrevisiting} proved that, in the non-convex situation, AdaGrad converges at the rate of $\mathcal{O}\left(\frac{\text{poly}\left(\log \frac{T}{\delta}\right)}{T}+C\frac{\sqrt{\text{poly}\left(\log \frac{T}{\delta}\right)}}{\sqrt{T}}\right)$  in high probability with relaxed affine variance noise in \eqref{affine variance}.  
\paragraph{Relaxed Affine Variance Noise}
Relaxed affine variance noise model was proposed by \cite{khaledbetter} and they derived a non-asymptotic convergence rate of $\mathcal{O}\left(1/\sqrt{T}\right)$ in expectation for SGD with non-convex and smooth  objective functions.
Considering \eqref{affine}, \cite{faw2023beyond} established the expected convergence rate for AdaGrad-Norm when $B\leq 1$.
This limitation was eliminated in \cite{wang2023convergence} and a tighter bound was established.
For smooth objective functions, \cite{attia2023sgd} derived the optimal convergence rate of (non-accelerated) SGD under \eqref{asaffine} with high probability in both convex and non-convex case, but they emphasized that the result can be easily extended to the relaxed affine variance noise model. 
In the non-convex case, \cite{hongrevisiting} analyzed AdaGrad for both smooth and generalized smooth objective functions and got the optimal convergence rate in high probability.
\section{Preliminaries}
\label{preliminary}
We consider Problem \eqref{problem} over the Euclidean space $\mathbb{R}^d$ with the $l_2$ norm, denoted as $\|\cdot\|$.

\paragraph{Assumptions}
Below are the assumptions required in this paper.
\begin{enumerate}
  \item \label{assumption 1} \textbf{Below bounded:} The objective function is bounded from below, i.e.,  $f^*:=\inf_{x\in\mathbb{R}^d}f(x)>-\infty.$
  \item \label{assumption 2}\textbf{Unbiased estimator:} The gradient oracle returns an unbiased estimator of $\nabla f(x)$, i.e., for all $x\in\mathbb{R}^d$, 
  $$\mathbb{E}_z\left[\nabla f_z(x;z)\right]=\nabla f(x).$$
  \item \label{assumption 3}\textbf{Relaxed affine variance (almost-sure version):} The gradient oracle satisfies that for some constants $A,B,C>0$, $$\|\nabla f_z(x;z)-\nabla f(x)\|^2\leq A\left(f(x)-f^*\right)+B\nablaf{x}^2+C \quad a.s., \forall x\in\mathbb{R}^d.$$
  \item \label{assumption 4}\textbf{Relaxed affine variance (expected version):} The gradient oracle satisfies that for some constants $A,B,C>0$, $$\mE_z\left[\|\nabla f_z(x;z)-\nabla f(x)\|^2\right]\leq A\left(f(x)-f^*\right)+B\nablaf{x}^2+C, \quad \forall x\in\mathbb{R}^d.$$
\end{enumerate}

Assumptions \ref{assumption 1} and \ref{assumption 2} are standard in the field of stochastic optimization \cite{ghadimi2013stochastic,ghadimi2016accelerated,ward2020adagrad,attia2023sgd,li2024convex}.
We refer interested readers to see \cite{khaledbetter} for more examples satisfying the relaxed affine variance assumption.
\begin{remark}
  \label{remark 4.1}
  Assumption \ref{assumption 3} can be replaced by its sub-Gaussian form \cite{nemirovski2009robust,harvey2019tight} where 
  $$\mathbb{E}_z\left[\exp\left(\frac{\|\nabla f_z(x;z)-\nabla f(x)\|^2}{A\left(f(x)-f^*\right)+B\nablaf{x}^2+C}\right)\right]\leq \mathrm{e}.$$
  All our high-probability results still hold true under the noise assumption of sub-Gaussian form. See the appendix for the proofs.
\end{remark}
\begin{algorithm}[H]
  \caption{Generic AGD Template}
  \label{algorithm1}
  \begin{algorithmic}[1]
  \Require Horizon $T$, $\tilde{x}_1=x_1\in \mathbb{R}^d$, $\alpha_t \in \left(\left.0,1\right.\right]$, step sizes $\{\eta_t\}_{t\in [T]}$, $\{\gamma_t\}_{t\in [T]}$.
  \For{$t=1,\cdots, T$}
  \State $\overline{x}_t=\alpha_tx_t+\left(1-\alpha_t\right)\tilde{x}_t$;
  \State \textbf{Set} $g_t=\nabla f_z(\overline{x}_t;z_t)$;
  \State $x_{t+1}=x_t-\theta_t g_t$;
  \State $\tilde{x}_{t+1}=\overline{x}_t-\gamma_t g_t$.
  \EndFor
  \end{algorithmic}
\end{algorithm}

 The AGD template (Algorithm \ref{algorithm1}) can be specialized into several well-known algorithms if we choose different parameter settings.

\begin{Exa}
  \label{example 1}
  Let the step size $\left\{\theta_t\right\}_{t\in[T]}$ be a constant. 
  The AGD template becomes RSAG with the constant step size when  $\gamma_t=\left(1+\alpha_t\right)\theta_t, \alpha_t=\frac{2}{t+1}, \forall t\in[T]$ and classic SGD when $\gamma_t=\theta_t, \forall t\in[T]$.
  If $\gamma_t=\alpha_t\theta_t, \forall t\in[T]$, the AGD template becomes stochastic Nesterov's AGD with the step size $\gamma_t$ and the momentum parameter $\frac{\left(1-\alpha_t\right)\alpha_{t+1}}{\alpha_t}$. 
  We refer interested readers to see \cite{hong2025high} for details. 
\end{Exa}
\begin{Exa}
  \label{example 2}
  Let the step size $\left\{\theta_t\right\}_{t\in[T]}$ be of the adaptive form. 
  The AGD template becomes RSAG with the adaptive step size when $\gamma_t=\left(1+\alpha_t\right)\theta_t, \theta_t=\eta_t, \alpha_t=\frac{2}{t+1}$ and AdaGrad-Norm when $\gamma_t=\theta_t=\eta_t, \forall t\in[T]$, where $\eta_t$ is in \eqref{adaptive step size}. 
  Also, the AGD template could become AdaGrad-Norm with averaging when $\gamma_t=0, \theta_t=\alpha_t\eta_t$ with $ \alpha_t=\frac{2}{t+1}$ or $\alpha_t=\frac{1}{t}$ for all $t\in[T]$ \cite{kavis2022high}.
\end{Exa}

\section{Main Results}
\label{main result}
We state our main results for different parameter settings and assumptions. All the proof will be given in the following sections.
\subsection{Results of Non-Adaptive Algorithms}
First, we provide the convergence rates under the generalized smoothness condition with constant step sizes for both non-convex and convex cases. 
\begin{theorem}
  \label{theorem 1}
  Let  $T>0$ and $\delta\in(0,1)$. Suppose that $\{\overline{x}_t\}_{x\in[T]}$ is a sequence generated by SGD or RSAG with the constant step size, $f$ is an $\left(L_0,L_1\right)$-smooth function and the step size $\theta_t$ satisfies 
  \begin{align}
    \label{eta constant non-convex}
    \theta_t=\eta = \min\left\{\frac{1}{\mathcal{A}\sqrt{T}},\frac{1}{\mathcal{B}},\frac{1}{\mathcal{C}\sqrt{T}}, \frac{1}{\mathcal{G}},\frac{\functiongapboundconstant^{-1/2}}{3\left(A+4BL_0\right)}, \frac{\functiongapboundconstant^{-3/2}}{12BL_1^2}\right\},
  \end{align}
    where $\mathcal{A}, \mathcal{B}, \mathcal{C}, \mathcal{G}$ are defined as
  \begin{align}
    \label{theorem 1 eta}
    \mathcal{A}=4\sqrt{\left(L_0+L_1\mathcal{M}_c\right)A}, \qquad &\mathcal{B}=8\left(L_0+L_1\mathcal{M}_c\right)\left(B+1\right), \notag\\
    \mathcal{C}=2\sqrt{\left(L_0+L_1\mathcal{M}_c\right)C},\qquad &\mathcal{G}=8L_1 \left(\sqrt{A\functiongapboundconstant}+2\left(\sqrt{B}+1\right)\sqrt{L_0 \functiongapboundconstant+L_1^2 \functiongapboundconstant^2}+\sqrt{C}\right),  
  \end{align}
     and $\functiongapboundconstant$, $\mathcal{M}_c$ are given in the following order\footnote{The explicit expressions of $\functiongapboundconstant$ and $\mathcal{M}_c$ are in \eqref{Gconstant} and \eqref{nabla fx_t constant step size}, respectively.},
  \begin{align*} 
  \functiongapboundconstant \sim \mathcal{O}\left(\overline{\Delta}_1+\log^2\frac{T}{\delta}+\frac{\sqrt{C}}{\sqrt{L_0T}}\log\frac{T}{\delta}+\frac{L_0}{L_1^2 }\right), \qquad  \mathcal{M}_c \sim \mathcal{O}\left(L_1\functiongapboundconstant\right).
  \end{align*}
  Under Assumptions \ref{assumption 1}, \ref{assumption 2} and \ref{assumption 3}, with probability at least $1-\delta$, we have\footnote{The detail convergence rate is stated in \eqref{5.34}.}
  \begin{align}
    \label{4.1}
    \frac{1}{T}\sumlimits{t=1}{T}\|\nabla f(\overline{x}_t)\|^2\leq & \mathcal{O}\left(\frac{\functiongapboundconstant\left(\functiongapboundconstant B\log \frac{T}{\delta}+\sqrt{B+1}\right)}{T}+\frac{\sqrt{\functiongapboundconstant}\left(\functiongapboundconstant\sqrt{A}+\sqrt{C}\right)}{\sqrt{T}}\right).
  \end{align}
\end{theorem}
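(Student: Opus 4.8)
The plan is to follow the classical descent-lemma approach adapted to the $(L_0,L_1)$-smooth setting and to the relaxed affine variance noise, but with an a~priori bound on the function value gap $\overline{\Delta}_t := f(\overline{x}_t)-f^*$ that is established by a self-bounding (induction) argument — this is the only way to handle the $A(f(x)-f^*)$ term in Assumption~\ref{assumption 3}. First I would record the one-step inequality coming from $(L_0,L_1)$-smoothness: since consecutive iterates differ by $\eta\|g_t\|$ (or a comparable quantity for $\overline{x}_t$), as long as this is below $1/L_1$, a Taylor-type argument gives something of the form
\begin{align*}
f(\overline{x}_{t+1})\leq f(\overline{x}_t)-c\,\eta\|\nabla f(\overline{x}_t)\|^2+\eta\la \nabla f(\overline{x}_t),\nabla f(\overline{x}_t)-g_t\ra+\tfrac{1}{2}\bigl(L_0+L_1\|\nabla f(\overline{x}_t)\|\bigr)\eta^2\|g_t\|^2,
\end{align*}
where for RSAG one must also carry the auxiliary points $x_t,\tilde x_t$ through the estimate and use the averaging identity $\overline{x}_t=\alpha_t x_t+(1-\alpha_t)\tilde x_t$ together with the specific choices $\alpha_t=\tfrac{2}{t+1}$, $\gamma_t=(1+\alpha_t)\theta_t$ from Example~\ref{example 1} to telescope the momentum terms (this is the step that reduces RSAG's analysis, in the non-convex case, to essentially the SGD estimate on $\{\overline{x}_t\}$).

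Next I would split the stochastic gradient as $g_t=\nabla f(\overline{x}_t)+\xi_t$, bound $\|g_t\|^2\leq 2\|\nabla f(\overline{x}_t)\|^2+2\|\xi_t\|^2$, and invoke Assumption~\ref{assumption 3} to replace $\|\xi_t\|^2$ by $A\overline{\Delta}_t+B\|\nabla f(\overline{x}_t)\|^2+C$. Summing over $t\in[T]$ and telescoping the left side to $f(\overline{x}_1)-f^*$, the deterministic part of the bound reads, schematically,
\begin{align*}
\bigl(c\,\eta-O(\eta^2(L_0+L_1\mathcal M_c)(B+1))\bigr)\sumlimits{t=1}{T}\|\nabla f(\overline{x}_t)\|^2\leq \overline{\Delta}_1+O\!\left(\eta^2(L_0+L_1\mathcal M_c)\right)\!\left(A\!\sumlimits{t=1}{T}\overline{\Delta}_t+CT\right)+\text{(noise)}.
\end{align*}
The step-size conditions in \eqref{eta constant non-convex} are exactly what makes the coefficient of $\sum\|\nabla f(\overline{x}_t)\|^2$ on the left positive (the $1/\mathcal B$, $1/(\mathcal A\sqrt T)$, $1/(\mathcal C\sqrt T)$ pieces) and what tames the $A\sum\overline{\Delta}_t$ term once the a~priori bound $\overline{\Delta}_t\le \overline{\Delta}_c$ is in force (the last two entries $\overline{\Delta}_c^{-1/2}/(3(A+4BL_0))$ and $\overline{\Delta}_c^{-3/2}/(12BL_1^2)$), and the $1/\mathcal G$ piece guarantees the displacement stays below $1/L_1$ so that Definition~\ref{definition 1} applies throughout.

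For the stochastic terms I would introduce the martingale difference sequence $\{\eta\la\nabla f(\overline{x}_t),\xi_t\ra\}$ and control $\sum_t\eta\la\nabla f(\overline{x}_t),\xi_t\ra$ by a Freedman-type / sub-Gaussian concentration inequality (as in Remark~\ref{remark 4.1}), whose conditional variance is again $\le \eta^2\|\nabla f(\overline{x}_t)\|^2(A\overline{\Delta}_t+B\|\nabla f(\overline{x}_t)\|^2+C)$; a standard $ab\le \tfrac{\lambda}{2}a^2+\tfrac{1}{2\lambda}b^2$ splitting lets one absorb the $\|\nabla f(\overline{x}_t)\|^2$-proportional part back into the left-hand side and leaves a residue of order $\sqrt{\overline{\Delta}_c(\overline{\Delta}_c\sqrt A+\sqrt C)\,\mathrm{poly}(\log\tfrac{T}{\delta})/T}$, matching \eqref{4.1}. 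The main obstacle — and the part that must be done carefully rather than routinely — is the \textbf{a~priori boundedness of $\overline{\Delta}_t$}: one needs a separate induction showing $\overline{\Delta}_t$ (and hence $\|\nabla f(\overline{x}_t)\|\le \mathcal M_c\sim O(L_1\overline{\Delta}_c)$, via a generalized-smoothness gradient bound $\|\nabla f(x)\|^2\lesssim (L_0+L_1\|\nabla f(x)\|)(f(x)-f^*)$) stays below $\overline{\Delta}_c$ for all $t\le T$ with probability $\ge 1-\delta$, because $\overline{\Delta}_c$ itself is defined in terms of $\log\tfrac{T}{\delta}$ and the noise constants — this is a chicken-and-egg fixed-point argument, handled by a stopping-time construction (stop at the first $t$ where the bound is violated, show concentration up to the stopping time forces the bound, conclude the stopping time exceeds $T$). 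Once that uniform bound is in hand, $\mathcal M_c$ and $\overline{\Delta}_c$ become legitimate constants, the descent recursion closes, and \eqref{4.1} follows by collecting the $O(1/T)$ deterministic/variance-reduction terms and the $O(1/\sqrt T)$ noise floor.
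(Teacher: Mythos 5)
Your proposal follows essentially the same route as the paper: a martingale concentration bound holding uniformly over all prefixes $l\in[T]$, a self-bounding induction establishing $\overline{\Delta}_t\le\functiongapboundconstant$ (and hence $\|\nabla f(x_t)\|\le\mathcal{M}_c$ via the generalized-smoothness gradient bound of Lemma \ref{lemma 6.2}), and a summed descent recursion whose coefficients are controlled by exactly the step-size constraints you identify; your stopping-time phrasing is equivalent to the paper's device of conditioning on the event that the concentration inequality holds for every prefix and then running a deterministic induction.

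One caveat worth flagging: your one-step inequality is written for $f(\overline{x}_{t+1})-f(\overline{x}_t)$, but for RSAG the increment $\overline{x}_{t+1}-\overline{x}_t$ is not a single gradient step, so that recursion does not telescope cleanly as displayed. The paper instead applies the descent lemma along the unaveraged sequence $x_{t+1}=x_t-\theta_t g_t$ (which does telescope), controls the mismatch between $\nabla f(x_t)$ and $\overline{g}_t$ through the explicit bound $\|\overline{x}_t-x_t\|\le Y_c\le 1/(8L_1)$ from Proposition \ref{proposition overline x_t - x_t}, and only at the end converts $f(x_{l+1})$ to $f(\overline{x}_{l+1})$ with one more application of the descent lemma. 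You acknowledge the need to "carry the auxiliary points through," but resolving it the paper's way (descend on $x_t$, not $\overline{x}_t$) is the cleaner and, for RSAG, the necessary choice.
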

\begin{remark}
  1) The step size $\eta$ in \eqref{eta constant non-convex} is of order $\tilde{\mathcal{O}}\left(1/\sqrt{T}\right)$, matching the one in classic SGD \cite{ghadimi2013stochastic} and RSAG \cite{ghadimi2016accelerated} up to the $\log$ factor. Both \cite{ghadimi2013stochastic} and \cite{ghadimi2016accelerated} required the smoothness and bounded variance noise assumptions.\\
  2) The convergence rate in \eqref{4.1} is of order $\tilde{\mathcal{O}}\left(\frac{1}{T}+\sqrt{\frac{A+C}{T}}\right)$, and it adapts to the noise level since the rate could accelerate to $\tilde{\mathcal{O}}\left(1/T\right)$ when the noise level $A$ and $C$ are sufficiently low.\\
  3) \eqref{4.1} matches the expected rates of SGD with fixed step sizes in \cite{ghadimi2013stochastic} and \cite{bottou2018optimization} as well as the high-probability rate in \cite{liu2023high}.  
   Note that the latters are obtained under the smoothness condition.
\end{remark}
\begin{theorem}
  \label{theorem 2}
  Let $T>0$ and $\delta\in(0,1)$. Suppose that   $\{\overline{x}_t\}_{x\in[T]}$ is the sequence generated by SGD or RSAG with the constant step size, $f$ is $\left(L_0,L_1\right)$-smooth and convex, and the step size $\theta_t$ satisfies  
  \begin{align}
    \label{eta constant convex}
  \theta_t=\eta =  \min\left\{\frac{1}{\mathcal{A}\sqrt{T}},\frac{1}{\mathcal{B}},\frac{1}{\mathcal{C}\sqrt{T}}, \frac{1}{\mathcal{G}}, \frac{1}{\mathcal{L}_c},\frac{\functiongapboundconstant^{-1/2}}{3\left(A+4BL_0\right)}, \frac{\functiongapboundconstant^{-3/2}}{12BL_1^2},\frac{1}{2\sqrt{T}}\right\},
  \end{align}
  where $\mathcal{A}, \mathcal{B}, \mathcal{C}, \mathcal{G}$ are defined in \eqref{theorem 1 eta}, $\functiongapboundconstant$ is in \eqref{Gconstant} and
  \begin{align*}
    \mathcal{L}_c=16\left(A+4\left(B+1\right)\left(L_0+L_1^2\functiongapboundconstant\right)\right).
  \end{align*}
  Under Assumptions \ref{assumption 1}, \ref{assumption 2} and \ref{assumption 3}, with probability at least $1-3\delta$, we have\footnote{The explicit expressions of the convergence rate and $D_c^2$ are in \eqref{5.66} and \eqref{D_c}, respectively.}
  \begin{align}
    \label{4.7}
    f\left(\frac{1}{T}\sumlimits{t=1}{T}\overline{x}_t\right)-f^*\leq & \mathcal{O}\left[ \frac{\functiongapboundconstant \left(D_c^2 B\log\frac{1}{\delta}   + \sqrt{B+1}  \right)  }{T}       + \frac{\sqrt{\functiongapboundconstant A}+D_c\sqrt{C\log\frac{1}{\delta}}}{\sqrt{T}}   \right],
  \end{align}
  where 
  \begin{align*}
    D_c^2 \sim \mathcal{O}\left(\|x_1-x^*\|^2+ \left(L_1 \functiongapboundconstant\right)^4\right).
  \end{align*}
\end{theorem}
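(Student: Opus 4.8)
\textbf{Proof proposal for Theorem~\ref{theorem 2}.}
The plan is to run the descent/potential argument familiar from the non-convex analysis (Theorem~\ref{theorem 1}) but now exploit convexity to pass from a bound on $\sum_t \eta \la \nabla f(\overline{x}_t), \overline{x}_t - x^*\ra$ to a bound on the averaged function-value gap via Jensen's inequality. First I would write the one-step identity for $\|x_{t+1}-x^*\|^2$ coming from the update $x_{t+1}=x_t-\theta_t g_t$, obtaining
\begin{align*}
\|x_{t+1}-x^*\|^2 = \|x_t-x^*\|^2 - 2\eta\la g_t, x_t-x^*\ra + \eta^2\|g_t\|^2,
\end{align*}
and then relate $x_t-x^*$ to $\overline{x}_t-x^*$ using the convex-combination structure $\overline{x}_t=\alpha_t x_t+(1-\alpha_t)\tilde{x}_t$ together with the RSAG coupling $\gamma_t=(1+\alpha_t)\theta_t$ (for SGD the two iterates coincide and this step is trivial). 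Splitting $g_t=\nabla f(\overline{x}_t) + \xi_t$ with $\xi_t$ the noise, the inner-product term produces the ``true'' descent quantity $\la \nabla f(\overline{x}_t), \overline{x}_t-x^*\ra \ge f(\overline{x}_t)-f^*$ by convexity, plus a martingale-difference term $\la \xi_t, x_t-x^*\ra$ and the second-moment term $\eta^2\|g_t\|^2$ which, using Assumption~\ref{assumption 3}, is controlled by $\eta^2(A(f(\overline{x}_t)-f^*)+B\|\nabla f(\overline{x}_t)\|^2+C)$.

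The key difficulty, and the reason for the long list of constraints on $\eta$, is that the martingale term $\sum_t \eta\la \xi_t, x_t-x^*\ra$ and the $\eta^2\|g_t\|^2$ term both involve $\|x_t-x^*\|$ (equivalently a distance iterate that must be shown to stay bounded) and $\|\nabla f(\overline{x}_t)\|^2$, neither of which is a priori bounded under $(L_0,L_1)$-smoothness. I would therefore set up a bootstrap: assume inductively that $f(\overline{x}_t)-f^*\le \functiongapboundconstant$ for all $t$ (this is where $\functiongapboundconstant$ enters, carried over from Theorem~\ref{theorem 1}'s analysis, equation~\eqref{Gconstant}), which via generalized smoothness gives $\|\nabla f(\overline{x}_t)\|\lesssim L_1\functiongapboundconstant$ and hence $L_0+L_1\|\nabla f(\overline{x}_t)\|\lesssim L_0+L_1^2\functiongapboundconstant$; the constraints involving $\mathcal{A},\mathcal{B},\mathcal{C},\mathcal{G}$ and the $\functiongapboundconstant^{-1/2},\functiongapboundconstant^{-3/2}$ terms are exactly what make the $\eta^2$ contributions absorbable into the telescoping $\|x_t-x^*\|^2$ difference. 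The extra constraints $\eta\le 1/\mathcal{L}_c$ and $\eta\le 1/(2\sqrt{T})$ (absent in Theorem~\ref{theorem 1}) are what let me also close the bound on the distance iterate $D_c^2$ — I expect $D_c^2$ to satisfy a self-bounding inequality of the form $D_c^2 \lesssim \|x_1-x^*\|^2 + \eta\sqrt{T}\,D_c\sqrt{C\log(1/\delta)} + (\text{lower order})$, solved by the quadratic formula to give $D_c^2\sim \mathcal{O}(\|x_1-x^*\|^2+(L_1\functiongapboundconstant)^4)$.

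For the high-probability control of $\sum_t \eta\la \xi_t, x_t-x^*\ra$ I would invoke a Freedman-type / Azuma-type concentration inequality for martingale differences with bounded or sub-Gaussian increments (as in \cite{attia2023sgd,hongrevisiting}), whose conditional variance is bounded by $\eta^2\|x_t-x^*\|^2(A(f(\overline{x}_t)-f^*)+B\|\nabla f(\overline{x}_t)\|^2+C)\lesssim \eta^2 D_c^2(\functiongapboundconstant \cdot(\text{stuff})+C)$; this is the source of both the $D_c^2 B\log\frac{1}{\delta}/T$ term and the $D_c\sqrt{C\log\frac1\delta}/\sqrt{T}$ term in \eqref{4.7}. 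The ``$3\delta$'' in the statement reflects a union bound over the concentration event, the event that the bootstrap hypothesis $f(\overline{x}_t)-f^*\le\functiongapboundconstant$ holds (itself a high-probability statement from the machinery behind Theorem~\ref{theorem 1}), and possibly the event controlling the distance iterate. Putting the pieces together: telescoping $\|x_t-x^*\|^2$, dividing by $\eta T$, using $\eta\sim\tilde{\mathcal{O}}(1/\sqrt{T})$ (the binding constraint being $\min\{1/(\mathcal{A}\sqrt{T}),1/(\mathcal{C}\sqrt{T}),1/(2\sqrt{T})\}$ when $A,C$ are not negligible), and applying Jensen to move $f$ outside the average yields \eqref{4.7}. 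The main obstacle is the circularity between ``$\|\nabla f(\overline{x}_t)\|$ bounded'', ``$D_c$ bounded'', and ``the martingale sum is small'': all three must be closed simultaneously by a single induction on $t$, and getting the step-size constraints tight enough that each term is genuinely absorbed — rather than merely bounded — is the delicate part. I expect the convex case to be, if anything, slightly easier than the non-convex case here because $\la\nabla f(\overline{x}_t),\overline{x}_t-x^*\ra\ge f(\overline{x}_t)-f^*$ is a cleaner handle than the gradient-norm descent lemma, but the price is the additional distance-iterate bookkeeping encoded in $D_c^2$.
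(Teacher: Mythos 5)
Your proposal is correct and follows essentially the same route as the paper: the one-step distance recursion, the split $g_t=\overline{g}_t+\xi_t$ with convexity giving $\la\overline{g}_t,\overline{x}_t-x^*\ra\ge f(\overline{x}_t)-f^*$, the bootstrap on $\overline{\Delta}_t\le\functiongapboundconstant$ inherited from the non-convex analysis, a separate induction closing the distance bound $D_c^2$, sub-Gaussian martingale concentration for the two noise sums (hence the union bound giving $1-3\delta$), and Jensen at the end. The only cosmetic difference is that the paper absorbs the self-bounding terms for $D_c^2$ via Young's inequality into an explicit fixed-point definition rather than solving a quadratic, and absorbs the $\overline{\Delta}_t\|\overline{x}_t-x^*\|^2$ part of the martingale variance back into $\tfrac14\sum_t\la\overline{g}_t,\overline{x}_t-x^*\ra$ by a tuned choice of the concentration parameter.
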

\begin{remark}
  1) The step size $\eta$ in \eqref{eta constant convex} is of order $\tilde{\mathcal{O}}\left(1/\sqrt{T}\right)$, matching the one in the convex case in \cite{ghadimi2013stochastic} up to the log factor.\\
  2) Similar to \eqref{4.1}, the convergence rate in the convex case in \eqref{4.7} is of order $\tilde{\mathcal{O}}\left(\frac{1}{T}+\sqrt{\frac{A+C}{T}}\right)$, and it adapts to the noise level.\\
  3) \eqref{4.7} matches the rate in \cite{ghadimi2013stochastic}. 
     However, \eqref{4.7} is a bit worse comparing with the rate of $\mathcal{O}\left(1/T^2+\sigma/\sqrt{T}\right)$ for solving smooth convex stochastic optimization problems in \cite{ghadimi2016accelerated}, although requiring the smoothness condition and the bounded variance noise assumption.
    
\end{remark}
\subsection{Results of Adaptive Algorithms}
We then present the convergence rate for AdaGrad-Norm and RSAG with the adaptive step size under the generalized smoothness condition with adaptive step sizes.
\begin{theorem}
  \label{theorem 3}
  Let $T>0$ and $\delta\in(0,1)$. 
  Suppose that $\{\overline{x}_t\}_{x\in[T]}$ is the sequence generated by AdaGrad-Norm or RSAG with the adaptive step size defined in \eqref{adaptive step size}, $f$ is an $\left(L_0,L_1\right)$-smooth function and the constants $\eta, G_0$ satisfy 
  \begin{align}
    \label{theorem 3 eta}
    G_0>0, \qquad 0<\eta\leq \min\left\{\frac{1}{2\sqrt{\left(L_0+L_1\mathcal{M}_a\right)\mathcal{H}}}, \frac{1}{8P_a \mathcal{H}}, \frac{1}{3P_a},\frac{1}{8L_1} \right\},
  \end{align} 
  where
  \begin{align}
    \label{P_a}
    P_a =\sqrt{A\functiongapboundadaptive+4B\left(L_0\functiongapboundadaptive+L_1^2\functiongapboundadaptive^2\right)+C},
  \end{align}
  \begin{align}
    \label{mathcal H}
    \mathcal{H}=\log\left(1+\frac{2T\left(A\functiongapboundadaptive+4\left(B+1\right)\left(L_0\functiongapboundadaptive+L_1^2\functiongapboundadaptive^2\right)+C\right)}{G_0^2}\right),
  \end{align}
  and $\functiongapboundadaptive, \mathcal{M}_a$ are of order\footnote{The explicit expressions of $\functiongapboundadaptive, \mathcal{M}_a$ are in \eqref{functiongapboundadaptive}, \eqref{nabla fx_t adaptive step size}.}
  \begin{align*}
    \functiongapboundadaptive \sim \mathcal{O}\left(\overline{\Delta}_1+\log\frac{T}{\delta}+\frac{L_0}{L_1^2}\right), \quad  \mathcal{M}_a\sim \mathcal{O}\left(L_1\functiongapboundadaptive\right).
  \end{align*}
  Under Assumptions  \ref{assumption 1}, \ref{assumption 2} and \ref{assumption 3} , with probability at least $1-\delta$, we have\footnote{Refer \eqref{result of theorem 3} for the detail convergence rate.}
  \begin{align}
    \label{4.13}
    \frac{1}{T}\sumlimits{t=1}{T}\|\nabla f(\overline{x}_t)\|^2\leq \mathcal{O}\left[\functiongapboundadaptive^2\log T \left(\frac{\left(B+1\right)\functiongapboundadaptive^2\log T}{T}+\frac{\sqrt{A\functiongapboundadaptive+C}}{\sqrt{T}}\right)\right].
  \end{align}
  \end{theorem}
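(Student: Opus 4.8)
The plan is to follow the descent-lemma approach adapted to the generalized-smoothness setting, carrying out the argument in two stages: first establish a high-probability bound on the function-value gaps $\overline{\Delta}_t := f(\overline{x}_t) - f^*$ (this is what feeds into $\overline{\Delta}_a$ and $\mathcal{M}_a$), and then plug that bound back into a weighted telescoping of the descent inequality to extract the gradient-norm rate. The first stage is the delicate one because, unlike in the $L$-smooth case, the local smoothness constant $L_0 + L_1\|\nabla f(\overline{x}_t)\|$ is itself governed by the gradients we are trying to control, and the adaptive step size $\eta_t = \eta/(G_0^2 + \sum_{k\le t}\|g_k\|^2)^{1/2}$ couples all iterations together.

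First I would write the one-step inequality. Using the $(L_0,L_1)$-smoothness descent lemma (valid once one checks $\|\overline{x}_{t+1}-\overline{x}_t\|\le 1/L_1$, which the constraint $\eta\le 1/(8L_1)$ together with the step-size normalization is designed to guarantee), obtain something of the form
\begin{align*}
  f(\overline{x}_{t+1}) \le f(\overline{x}_t) - \tfrac{\eta_t}{2}\|\nabla f(\overline{x}_t)\|^2 + \tfrac{(L_0+L_1\|\nabla f(\overline{x}_t)\|)}{2}\eta_t^2\|g_t\|^2 + \eta_t\langle \nabla f(\overline{x}_t) - \text{(averaging correction)}, \cdot\rangle,
\end{align*}
where for RSAG one must also handle the interplay between the $\overline{x}_t$, $x_t$, $\tilde{x}_t$ sequences via the $\alpha_t = 2/(t+1)$ weighting (as in \cite{ghadimi2016accelerated,kavis2022high}). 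Then I would decompose $\|g_t\|^2 \le 2\|\nabla f(\overline{x}_t)\|^2 + 2\|g_t - \nabla f(\overline{x}_t)\|^2$ and invoke Assumption \ref{assumption 3} to replace the noise term by $A\overline{\Delta}_t + B\|\nabla f(\overline{x}_t)\|^2 + C$. Summing over $t$, the $\sum \eta_t^2\|g_t\|^2$ piece telescopes logarithmically — this is the standard AdaGrad trick $\sum_t \|g_k\|^2/(G_0^2+\sum_{k\le t}\|g_k\|^2) \le \log(1 + \sum\|g_k\|^2/G_0^2)$ — which is exactly where $\mathcal{H}$ comes from, provided one has an a priori bound on $\sum_k \|g_k\|^2$; that bound in turn comes from $\overline{\Delta}_a$ and the noise assumption.

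The martingale term $\sum_t \eta_t\langle \nabla f(\overline{x}_t), \nabla f(\overline{x}_t) - g_t\rangle$ requires a high-probability concentration argument. Since $\eta_t$ is not predictable (it depends on $g_t$), I would split $\eta_t = \eta_{t-1} + (\eta_t - \eta_{t-1})$, bound the predictable part by a Freedman-type / Bernstein-type inequality for martingale difference sequences with sub-Gaussian-like increments (as in \cite{attia2023sgd,hongrevisiting}), and control the correction term $(\eta_{t-1}-\eta_t)$ by a telescoping sum times $\sup_t\|\nabla f(\overline{x}_t)\|$. This is the step I expect to be the main obstacle: closing the loop requires a bootstrapping/induction argument — one posits that $\overline{\Delta}_t \le \overline{\Delta}_a$ and $\|\nabla f(\overline{x}_t)\| \le \mathcal{M}_a$ for all $t$ up to some stopping time, uses these to make all the concentration bounds and the $\mathcal{H}$-logarithm effective, and then shows the resulting inequality actually reproduces the bounds $\overline{\Delta}_a, \mathcal{M}_a$ with the stated orders $\overline{\Delta}_a \sim \mathcal{O}(\overline{\Delta}_1 + \log(T/\delta) + L_0/L_1^2)$, so the stopping time is $T$ with probability $\ge 1-\delta$. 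The term $303L_0/(1120 L_1^2)$ in $\overline{\Delta}_a$ signals that the $L_1\|\nabla f\|$ contribution to the local smoothness is absorbed by choosing $\eta$ small relative to $L_1$, which is why the constraints $\eta \le 1/(8L_1)$ and $\eta \le 1/(3P_a)$ appear.

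Finally, with $\overline{\Delta}_t \le \overline{\Delta}_a$ and $\|\nabla f(\overline{x}_t)\|\le \mathcal{M}_a$ in hand, I would return to the summed descent inequality, now reading off a lower bound $\eta_t \ge \eta/\sqrt{G_0^2 + T(A\overline{\Delta}_a + 4(B+1)(L_0\overline{\Delta}_a + L_1^2\overline{\Delta}_a^2) + C)}$ on the step sizes (using the a priori bound on $\sum\|g_k\|^2$), divide through, and balance the $1/T$ term (from $f(\overline{x}_1)-f^*$ and the logarithmic $\mathcal{H}$ term, scaled by $\overline{\Delta}_a^2$ factors coming from $\mathcal{M}_a\sim L_1\overline{\Delta}_a$) against the $1/\sqrt{T}$ term (from the noise constants $A\overline{\Delta}_a + C$). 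Collecting the polynomial-in-$\log$ factors that accumulate from $\overline{\Delta}_a^2\log T$ yields precisely \eqref{4.13}. The RSAG-specific bookkeeping (the $\alpha_t$-weighted sums) only affects constants, not the order, so the same bound covers AdaGrad-Norm as the special case $\gamma_t=\theta_t=\eta_t$.
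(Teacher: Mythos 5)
Your two-stage plan (an induction establishing $\overline{\Delta}_t\leq\functiongapboundadaptive$ with high probability, then a weighted telescoping of the descent inequality) is structurally the same as the paper's proof via Proposition \ref{5.2}. One genuine difference is how you decorrelate the step size from the noise in the martingale term: you propose the split $\eta_t=\eta_{t-1}+(\eta_t-\eta_{t-1})$ with a telescoping correction, whereas the paper replaces $\eta_t$ by the surrogate $\tilde{\eta}_t$ of \eqref{tilde eta_t}, whose denominator substitutes $A\overline{\Delta}_t+(B+1)\|\overline{g}_t\|^2+C$ for $\|g_t\|^2$, and controls $|\tilde{\eta}_t-\eta_t|$ via Lemma \ref{tilde eta}. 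Both devices are legitimate for the concentration step, but the paper's choice is not cosmetic: the $(B+1)\|\overline{g}_t\|^2$ sitting inside $\tilde{\eta}_t$ is exactly what powers the final self-bounding argument described below.

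The concrete gap is in your last step. You propose to extract the rate by reading off a uniform lower bound $\eta_t\geq \eta/\sqrt{G_0^2+T(A\functiongapboundadaptive+4(B+1)(L_0\functiongapboundadaptive+L_1^2\functiongapboundadaptive^2)+C)}$ from an a priori bound on $\sum_k\|g_k\|^2$. If you do that, the $(B+1)$-contribution to $\sum_k\|g_k\|^2$ is bounded by $T$ times a constant, and dividing $\sum_t\tilde{\eta}_t\|\overline{g}_t\|^2\leq \tfrac{5}{4}\functiongapboundadaptive$ by this worst-case step size yields $\tfrac{1}{T}\sum_t\|\overline{g}_t\|^2\lesssim \functiongapboundadaptive^2\sqrt{(B+1)}/\sqrt{T}$, i.e.\ the $(B+1)$-dependence lands in the $1/\sqrt{T}$ term rather than the $1/T$ term of \eqref{4.13}, and the rate no longer accelerates to $\tilde{\mathcal{O}}(1/T)$ when $A$ and $C$ are small. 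To recover \eqref{4.13} you must keep $\sum_s\|\overline{g}_s\|^2$ as an unknown inside the square root — as in \eqref{inequality 5.26}, $\eta/\tilde{\eta}_t\leq\sqrt{G_0^2+2(B+1)\sum_{s}\|\overline{g}_s\|^2+2T(A\functiongapboundadaptive+C)}$ — and then solve the resulting self-bounding inequality for $\sum_t\|\overline{g}_t\|^2$ via Lemma \ref{sqrt sum} and Young's inequality, which pushes the $(B+1)$ term into the $1/T$ part. Your sketch omits this step, and without it the stated bound does not follow.
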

\begin{remark}
  \label{remark 4}
  1) The order of the constant $\eta$ is lower than $\mathcal{O}\left(\frac{1}{\mathrm{poly}\left(\log T\right)}\right)$. \\
  2) The convergence rate in \eqref{4.13} is of order $\tilde{\mathcal{O}}\left(\frac{1}{T}+\sqrt{\frac{A+C}{T}}\right)$, and it adapts to the noise level.\\
  3) \eqref{4.13} matches the high-probability rates of AdaGrad with momentum in \cite{hongrevisiting} and RSAG with adaptive step sizes in \cite{kavis2022high}. 
  It is worth emphasizing that \cite{kavis2022high}  not only assumes the conditions of smoothness and bounded variance noise, but also requires the gradients and stochastic gradients to be bounded.
\end{remark}
  \begin{theorem}
    \label{Theorem 4}
    Let $T>0$ and $\delta\in(0,1)$. 
    Suppose that $\{\overline{x}_t\}_{x\in[T]}$ is the sequence generated by AdaGrad-Norm or RSAG with the adaptive step size defined in \eqref{adaptive step size}, $f$ is $\left(L_0,L_1\right)$-smooth and convex, and the parameters $\eta, G_0$ satisfy \eqref{theorem 3 eta}.
     Under Assumptions \ref{assumption 1}, \ref{assumption 2} and \ref{assumption 3}, with probability at least $1-4\delta$, we have
    \begin{align}
      \label{inequality 5.32}
      f\left(\frac{1}{T}\sumlimits{t=1}{T}\overline{x}_t\right)-f^*\leq & \mathcal{O}  \left[D_a^2\functiongapboundadaptive \log T\left(\frac{D_a^2\functiongapboundadaptive\log T\left(A+\functiongapboundadaptive\left(B+1\right)\right)}{T}       +    \frac{\sqrt{C}}{\sqrt{T}} \right)\right],
    \end{align} 
  where\footnote{The explicit expressions of the convergence rate and $R_a$, $D_a$ are stated in \eqref{6.77}, \eqref{R}, \eqref{D_a} respectively. }
    \begin{align*}
      R_a\sim \mathcal{O}\left( \log\left(\frac{T\functiongapboundadaptive^2}{G_0^2}\right)+\functiongapboundadaptive^2\log\frac{T}{\delta}  \right), \qquad D_a^2\sim \mathcal{O}\left(\|x_1-x^*\|^2+R_a\log\frac{T}{\delta}+R_a^2\right).
    \end{align*}
  \end{theorem}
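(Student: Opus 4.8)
The plan is to run the same master-inequality machinery developed for the adaptive non-convex case (Theorem \ref{theorem 3}) but now exploit convexity to bound the function value gap via a regret-type telescoping argument. First I would start from the AGD template with $\gamma_t=(1+\alpha_t)\theta_t$, $\theta_t=\eta_t$ given by \eqref{adaptive step size}, and write the one-step expansion of $\|x_{t+1}-x^*\|^2$. Using $x_{t+1}=x_t-\eta_t g_t$, this gives
\begin{align*}
\|x_{t+1}-x^*\|^2 = \|x_t-x^*\|^2 - 2\eta_t\langle g_t, x_t-x^*\rangle + \eta_t^2\|g_t\|^2.
\end{align*}
The coupling identity $\overline{x}_t=\alpha_t x_t+(1-\alpha_t)\tilde{x}_t$ lets me rewrite $\langle g_t, x_t-x^*\rangle$ in terms of $\langle g_t,\overline{x}_t-x^*\rangle$ plus a term involving $\langle g_t,\overline{x}_t-\tilde{x}_t\rangle$, the latter of which is controlled by the descent step $\tilde{x}_{t+1}=\overline{x}_t-\gamma_t g_t$ and convexity/generalized-smoothness (a descent-lemma estimate of $f(\tilde{x}_{t+1})-f(\overline{x}_t)$, exactly as in the non-convex analysis). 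Taking the inner product against the true gradient and using convexity, $\langle\nabla f(\overline{x}_t),\overline{x}_t-x^*\rangle\geq f(\overline{x}_t)-f^*$, produces a term $\sum_t \eta_t(f(\overline{x}_t)-f^*)$ on the left after telescoping, while the stochastic error $\langle g_t-\nabla f(\overline{x}_t),\overline{x}_t-x^*\rangle$ becomes a martingale difference to be handled by concentration.

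The core technical work is then three-fold. First, the adaptive step size $\eta_t$ correlates with $g_t$, so the standard trick is needed: split $\eta_t$ into a "predictable" surrogate (replacing $\|g_t\|^2$ in the denominator by its conditional proxy, or using $\eta_{t-1}$) plus an error term bounded by the classic AdaGrad telescoping inequality $\sum_t \eta_t\|g_t\|^2 \leq 2\eta(G_0^2+\sum_t\|g_t\|^2)^{1/2}=O(\eta\sqrt{\sum_t\|g_t\|^2})$, which is where the $\log T$ factor $\mathcal{H}$ in \eqref{mathcal H} enters after converting $\sum\|g_t\|^2$ back to a bound in $T$ and the noise constants. Second, I must control $\sum_t\|g_t\|^2$ and the individual iterate gaps: by Assumption \ref{assumption 3}, $\|g_t\|^2 \lesssim A(f(\overline{x}_t)-f^*)+B\|\nabla f(\overline{x}_t)\|^2+C+\|\nabla f(\overline{x}_t)\|^2$, and then $\|\nabla f(\overline{x}_t)\|^2 \lesssim (L_0+L_1^2(f(\overline{x}_t)-f^*))(f(\overline{x}_t)-f^*)$ via the generalized-smoothness consequence (self-bounding, using $f(\overline{x}_t)-f^*\le \functiongapboundadaptive$ which must already be established en route, as in the non-convex proof). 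This feeds the $A+\functiongapboundadaptive(B+1)$ and the $\functiongapboundadaptive^2$ factors in \eqref{inequality 5.32}. Third, the two concentration steps: bounding $\sum_t \eta_t\langle g_t-\nabla f(\overline{x}_t),\overline{x}_t-x^*\rangle$ and the noise contribution to $\sum_t\eta_t^2\|g_t\|^2$, each requiring a time-uniform Freedman/Bernstein-type inequality (or its sub-Gaussian variant per Remark \ref{remark 4.1}), which is what generates the $\log\frac{T}{\delta}$ dependence and forces the $3\delta\to 4\delta$ budget (one $\delta$ more than the non-convex case because of the extra $\langle g_t-\nabla f,\overline{x}_t-x^*\rangle$ term absent when one only tracks gradient norms). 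The quantity $D_a^2$ in the statement is precisely the a-priori bound one derives for $\sup_t\|x_t-x^*\|^2$ by a self-bounding/induction argument closing the loop between the iterate-distance recursion and the concentration bounds, hence its dependence on $\|x_1-x^*\|^2$ and on $R_a$ (itself the per-step fluctuation bound).

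After assembling, I divide the telescoped inequality by $\sum_{t=1}^T \eta_t$. Since $\eta_t = \eta/(G_0^2+\sum_{k\le t}\|g_k\|^2)^{1/2}$ and $\sum\|g_k\|^2 = O(T\cdot\text{noise})$, we get $\sum_t\eta_t = \Omega(\sqrt{T}/\sqrt{\text{noise}})$ up to $\log$ factors, which converts the $O(1)$ leading constants into the $1/T$ term and the $O(\sqrt{T})$-type martingale/telescoping residuals into the $\sqrt{C}/\sqrt{T}$ term, with all the $\functiongapboundadaptive$, $\log T$, $D_a^2$ prefactors carried along exactly as displayed; finally use convexity once more, $f(\frac1T\sum\overline{x}_t)-f^* \leq \frac1T\sum(f(\overline{x}_t)-f^*)$ — actually $\le (\sum\eta_t)^{-1}\sum\eta_t(f(\overline{x}_t)-f^*)$ with a reweighting that is absorbed into constants since $\eta_t$ is slowly varying. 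The main obstacle I anticipate is the circular dependence: the step-size lower bound, the bound $f(\overline{x}_t)-f^*\le\functiongapboundadaptive$, the bound $\sup_t\|x_t-x^*\|^2\le D_a^2$, and the concentration inequalities all reference one another, so these must be disentangled by a careful bootstrapping/induction on $t$ — establishing a "good event" on which all four hold simultaneously and showing it is self-consistent — which is exactly the delicate part already present (for a simpler quantity) in the proof of Theorem \ref{theorem 3} and which here is heavier because of the additional $\overline{x}_t-x^*$ terms.
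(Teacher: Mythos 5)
Your overall architecture matches the paper's: a bootstrapping/induction argument establishing a good event on which the function-gap bound $\overline{\Delta}_t\le\functiongapboundadaptive$, the iterate bound $\sup_t\|x_t-x^*\|^2\le D_a^2$, and several time-uniform concentration inequalities hold simultaneously; decorrelated surrogates for $\eta_t$ (the paper's $\tilde\eta_t$ and $\hat\eta_t$) to break the correlation between the step size and $g_t$; the AdaGrad telescoping lemmas to produce the logarithmic factor $\mathcal{H}$; and a final martingale bound on $\sum_t\langle \xi_t,\overline{x}_t-x^*\rangle$, which is indeed the source of the fourth $\delta$.

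However, there is a genuine gap in your final averaging step. You propose to telescope $\|x_{t+1}-x^*\|^2=\|x_t-x^*\|^2-2\eta_t\langle g_t,x_t-x^*\rangle+\eta_t^2\|g_t\|^2$ directly and then divide by $\sum_{t=1}^T\eta_t$, claiming the resulting $\eta_t$-weighted average of the gaps controls $f\bigl(\frac1T\sum_t\overline{x}_t\bigr)-f^*$ because "$\eta_t$ is slowly varying." This does not hold: with $\eta_t=\eta/G_t$ and $G_T/G_0$ possibly of order $\sqrt{T}$, the ratio $\eta_1/\eta_T$ can be $\Theta(\sqrt{T})$, so $\bigl(\sum_t\eta_t\bigr)^{-1}\sum_t\eta_t\bigl(f(\overline{x}_t)-f^*\bigr)$ only bounds $f$ at the $\eta_t$-weighted average point, which is a genuinely different point from the uniform average in the theorem, and the discrepancy is not a constant factor. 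The paper avoids this by dividing each step of the recursion by $2\eta_t$ \emph{before} summing, which yields the unweighted sum $\sum_t\langle g_t,x_t-x^*\rangle$ at the cost of extra terms $\frac12\bigl(\frac{1}{\eta_t}-\frac{1}{\eta_{t-1}}\bigr)\|x_t-x^*\|^2\le\frac{\eta_t}{\eta^2}\|g_t\|^2\|x_t-x^*\|^2$; these are exactly where the a-priori bound $D_a^2$ enters (multiplying $\sum_t\eta_t\|g_t\|^2\le 2\eta\sqrt{\sum_t\|g_t\|^2}$, hence the $D_a^2\sqrt{TC}$ contribution), after which unweighted convexity plus Jensen gives the stated bound at $\frac1T\sum_t\overline{x}_t$. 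Your plan needs this per-step normalization (or an equivalent device) to prove the theorem as stated; the rest of your outline is consistent with the paper's proof via Proposition \ref{proposition 5.2} and Lemmas \ref{Lemma 5.4}, \ref{Lemma 5.5}, \ref{high probability sum convex constant convergence}.
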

  \begin{remark}
    \label{remark 5}
    1) The convergence rate in \eqref{inequality 5.32} is of order $\tilde{\mathcal{O}}\left(\frac{A}{T}+\sqrt{\frac{C}{T}}\right)$, which differs from \eqref{4.13} slightly in the dependence of $A$ under the same constraints of $\eta$. 
    We believe that this difference is due to essential difference between non-convex and convex settings.\\
    2) \eqref{inequality 5.32} matches the high-probability rate for AdaGrad-Norm in \cite{attia2023sgd}, where smoothness and affine variance noise are assumed. 
  \end{remark}

  \subsection{Results of Adaptive Algorithms under the Smoothness Condition}
 Also we apply our analysis to $L$-smooth objective functions and we obtain the convergence rates, not requiring the prior knowledge of parameters, such as $A$, $B$, $C$ and $L$.
  \begin{theorem}
    \label{theorem 5}
    Let $\left\{\overline{x}_t\right\}_{t\in[T]}$ be the sequence generated by AdaGrad-Norm or RSAG with the adaptive step size defined in \eqref{adaptive step size} and $f(x)$ be an $L$-smooth function. Suppose that Assumptions \ref{assumption 1}, \ref{assumption 2} and \ref{assumption 3} hold.
    For any $\eta>0$, with probability at least $1-\delta$, we have\footnote{Refer \eqref{7.37} and \eqref{functiongapboundsmooth} for explicit expressions of the convergence rate and $\functiongapboundsmooth$.}
    \begin{align}
      \label{4.16}
      \frac{1}{T}\sumlimits{t=1}{T}\|\nabla f(\overline{x}_t)\|^2\leq & \mathcal{O}\left[\functiongapboundsmooth\left( \frac{\functiongapboundsmooth\left(B+1\right)}{T}      +\frac{\sqrt{A\functiongapboundsmooth+C}}{\sqrt{T}}        \right)\right],
    \end{align}
    where $$\functiongapboundsmooth \sim \mathcal{O}\left(\overline{\Delta}_1+\log^2\frac{T}{G_0}+ \log^2\frac{T}{\delta} \right).$$
  \end{theorem}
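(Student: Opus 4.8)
The plan is to treat the $L$-smooth case as the degenerate specialization $L_1 = 0$, $L_0 = L$ of the generalized-smooth analysis underlying Theorem \ref{theorem 3}, but to redo the bookkeeping carefully since now we want the result to hold for \emph{any} $\eta > 0$ with no prior knowledge of $A$, $B$, $C$, $L$. First I would invoke the standard descent-type inequality for $L$-smooth $f$ along the AGD template: using $\overline{x}_{t}$, $\tilde{x}_{t+1} = \overline{x}_t - \gamma_t g_t$ and the adaptive step $\eta_t = \eta/\sqrt{G_0^2 + \sum_{k\le t}\|g_k\|^2}$, expand $f(\tilde{x}_{t+1})$ (or the appropriate Lyapunov combination of $f(\tilde{x}_{t+1})$ and $f(\overline{x}_{t+1})$ used for RSAG) to get a bound of the form
\begin{align*}
\sum_{t=1}^T \eta_t \|\nabla f(\overline{x}_t)\|^2 \le \foverline{1} - f^* + \sum_{t=1}^T \eta_t \la \nabla f(\overline{x}_t), \nabla f(\overline{x}_t) - g_t\ra + \frac{L}{2}\sum_{t=1}^T \eta_t^2 \|g_t\|^2 + (\text{AGD cross terms}).
\end{align*}
The AGD-specific cross terms (coming from $\overline{x}_t$ being an $\alpha_t$-combination of $x_t$ and $\tilde{x}_t$, with $\alpha_t = 2/(t+1)$) are handled exactly as in the proof of Theorem \ref{theorem 3}; they contribute lower-order terms controlled by the same quantities.

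Second, I would handle the $\sum \eta_t^2 \|g_t\|^2$ term by the classical AdaGrad telescoping trick, $\sum_{t=1}^T \|g_t\|^2/(G_0^2 + \sum_{k\le t}\|g_k\|^2) \le \log\bigl(1 + \sum_{k\le T}\|g_k\|^2/G_0^2\bigr)$, so this is $O(\eta^2 L \log(\sum\|g_k\|^2/G_0^2))$. Third — and this is where the affine-variance assumption enters — I would bound $\sum_{k \le T}\|g_k\|^2$ in terms of $\sum \|\nabla f(\overline{x}_k)\|^2$, the function gaps $f(\overline{x}_k) - f^*$, the noise constant $C$, and a martingale term, using Assumption \ref{assumption 3}. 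This is coupled: the right-hand side we are trying to bound reappears inside the logarithm and inside the noise term $A(f(\overline{x}_k)-f^*)$. The standard resolution is to first establish a uniform high-probability bound $f(\overline{x}_t) - f^* \le \functiongapboundsmooth$ for all $t$ (this is the role of $\functiongapboundsmooth \sim O(\overline{\Delta}_1 + \log^2(T/G_0) + \log^2(T/\delta))$), proved by an induction/stopping-time argument on $t$ analogous to the one producing $\functiongapboundconstant$ and $\functiongapboundadaptive$ in the earlier theorems; then substitute this bound to decouple. The martingale difference sequence $\sum \eta_t \la \nabla f(\overline{x}_t), \nabla f(\overline{x}_t) - g_t\ra$ is controlled via a Freedman-type concentration inequality (or the sub-Gaussian variant of Remark \ref{remark 4.1}), with its predictable quadratic variation again bounded by Assumption \ref{assumption 3} in terms of the already-controlled gap $\functiongapboundsmooth$ and of $\sum \eta_t^2 \|\nabla f(\overline{x}_t)\|^2$, which can be absorbed into the left-hand side since $\eta_t \to 0$.

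Finally, once we have, with probability $\ge 1-\delta$,
\begin{align*}
\sum_{t=1}^T \eta_t \|\nabla f(\overline{x}_t)\|^2 \le O\bigl(\functiongapboundsmooth + \eta^2 L \functiongapboundsmooth \log T + \sqrt{T C}\,\eta\, \text{poly}(\log)\bigr),
\end{align*}
I would convert the weighted sum to the unweighted average by lower-bounding $\eta_t \ge \eta/\sqrt{G_0^2 + \sum_{k\le T}\|g_k\|^2}$ and using the (now established) bound $\sum_{k \le T}\|g_k\|^2 \le O\bigl(T(A\functiongapboundsmooth + C) + (B+1)\sum_k\|\nabla f(\overline{x}_k)\|^2\bigr)$; dividing through and solving the resulting quadratic inequality in $\frac1T\sum \|\nabla f(\overline{x}_t)\|^2$ yields the stated rate $O\bigl(\functiongapboundsmooth((B+1)\functiongapboundsmooth/T + \sqrt{(A\functiongapboundsmooth + C)/T})\bigr)$. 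The main obstacle is the circular dependence among three quantities — the uniform gap bound $\functiongapboundsmooth$, the cumulative stochastic-gradient-norm sum $\sum\|g_k\|^2$ (which sits inside the AdaGrad logarithm), and the target gradient-norm average — all linked through the affine-variance inequality; disentangling them cleanly, while keeping the argument valid for arbitrary $\eta > 0$ (so that no step-size tuning requires knowledge of $L$, $A$, $B$, $C$), is the delicate part, and it is exactly why $\functiongapboundsmooth$ must carry the $\log^2(T/G_0)$ term reflecting the worst-case growth of the adaptive denominator.
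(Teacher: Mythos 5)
Your proposal follows the same overall architecture as the paper (descent inequality along the AGD template, AdaGrad telescoping, induction to a uniform gap bound $\functiongapboundsmooth$, then a quadratic solve to pass from $\sum_t\eta_t\|\overline{g}_t\|^2$ to the unweighted average), but it has two genuine gaps. First, you treat $\sum_t \eta_t\la \overline{g}_t,\overline{g}_t-g_t\ra$ as a martingale difference sequence to which a Freedman-type inequality applies. It is not one: $\eta_t=\eta/\sqrt{G_0^2+\sum_{k\le t}\|g_k\|^2}$ depends on $g_t$ itself, so $\mathbb{E}_t[\eta_t\la\overline{g}_t,\xi_t\ra]\neq 0$. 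This is the central technical obstacle in AdaGrad-Norm analysis, and the paper resolves it by introducing the decorrelated, $\mathcal{F}_{t-1}$-measurable surrogate $\tilde{\eta}_t$ in \eqref{tilde eta_t}, applying concentration to $\sum_t\tilde{\eta}_t\la\overline{g}_t,\xi_t\ra$ (Lemma \ref{lemma 6.1}), and separately bounding the replacement error $\sum_t(\tilde{\eta}_t-\eta_t)\la\overline{g}_t,g_t\ra$ via Lemma \ref{tilde eta} and the AdaGrad log sum (term (2.3) in the paper). Your proof as written has no mechanism for this decorrelation.

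Second, your resolution of the circularity inside the AdaGrad logarithm is underspecified, and it is precisely there that the paper earns the claim ``for any $\eta>0$'' and the stated form $\functiongapboundsmooth\sim\mathcal{O}(\overline{\Delta}_1+\log^2\frac{T}{G_0}+\log^2\frac{T}{\delta})$. If the log sum is bounded only through the inductive hypothesis $\overline{\Delta}_t\le\functiongapboundsmooth$, then $\functiongapboundsmooth$ is defined implicitly through $\log\functiongapboundsmooth$ (this is exactly what happens in Theorem \ref{theorem 3}, where the step size must then be tuned against $\mathcal{H}$). The paper instead proves the purely deterministic Lemmas \ref{sum function value gap} and \ref{sum log smooth}: since the adaptive updates satisfy $\|x_{t+1}-x_t\|\le\eta$ and $\|\overline{x}_t-x_t\|\le\eta$ unconditionally, $L$-smoothness forces $\|\overline{g}_t\|\le\|\overline{g}_1\|+3L\eta(t-1)$, hence $\sum_{t\le T}\|g_t\|^2=\mathcal{O}(T^3)$ deterministically and the log factor $\mathcal{F}=\mathcal{O}(\log(T/G_0))$ is a fixed, explicit quantity independent of $\functiongapboundsmooth$ and of the target gradient sum. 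That is the missing idea your sketch gestures at (``worst-case growth of the adaptive denominator'') but does not supply; without it the induction does not close for arbitrary $\eta$ with an explicitly defined $\functiongapboundsmooth$.
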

\begin{remark}
1) The convergence rate in Theorem \ref{theorem 5} is similar to \eqref{4.13} but the prior knowledge about the parameters is not required to tune the step size.\\
2) \eqref{4.16} also aligns with the rates of AdaGrad with momentum in \cite{hongrevisiting} and RSAG with adaptive step sizes in \cite{kavis2022high}.
\end{remark}
  \begin{theorem}
    \label{theorem 6}
    Let  $\left\{\overline{x}_t\right\}_{t\in[T]}$ be the sequence generated by AdaGrad-Norm or RSAG with the adaptive step size defined in \eqref{adaptive step size} and $f(x)$ be $L$-smooth and convex. Suppose that Assumptions \ref{assumption 1}, \ref{assumption 2} and \ref{assumption 3} hold.
    For any $\eta>0$, with probability at least $1-4\delta$, we have
    \begin{align} 
      \label{inequality theorem 6}
      f\left(\frac{1}{T}\sumlimits{t=1}{T}\overline{x}_t\right)-f^*\leq & \mathcal{O} \left[ D_L^2\left(  \frac{ D_L^2\left(A+\left(B+1\right)\right)}{T}       +  \frac{\sqrt{C}}{\sqrt{T}} \right) \right], 
    \end{align}
    where\footnote{We state the explicit expressions of the convergence rate and $R_L$, $D_L$  in \eqref{7.62}, \eqref{6.42}, \eqref{D_L}.}
    \begin{align*} 
      R_L=\mathcal{O}\left(\log\left(\frac{T\functiongapboundsmooth}{G_0^2}\right)+\functiongapboundsmooth\log\frac{T}{\delta}\right)  ,\qquad D_L^2 \sim \mathcal{O}\left(\|x_1-x^*\|^2+R_L\log\frac{T}{\delta}+R_L^2   \right).
    \end{align*}
  \end{theorem}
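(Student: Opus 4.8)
The plan is to derive Theorem~\ref{theorem 6} as the $L$-smooth, convex specialization of the machinery already developed for Theorem~\ref{Theorem 4}, mirroring how Theorem~\ref{theorem 5} specializes Theorem~\ref{theorem 3}. First I would observe that an $L$-smooth function is trivially $(L_0,L_1)$-smooth with $L_0=L$ and any $L_1>0$ (or, more cleanly, redo the descent lemma directly with $L_1=0$, which removes the $\|x-y\|\le 1/L_1$ restriction and hence the need to control the iterate displacement that forces the awkward parameter-dependent step-size constraints in Theorem~\ref{Theorem 4}). With $L_1=0$, the quantities $P_a$, $\mathcal{H}$, and $\functiongapboundadaptive$ collapse: there is no $L_1^2\functiongapboundadaptive^2$ term, the function-gap bound simplifies to something of order $\functiongapboundsmooth\sim\mathcal{O}(\overline{\Delta}_1+\log^2(T/G_0)+\log^2(T/\delta))$, and crucially the step-size constraint \eqref{theorem 3 eta} can be dropped entirely — for \emph{any} $\eta>0$ — because the adaptive denominator $G_0^2+\sum\|g_k\|^2$ automatically suppresses the $L$-smoothness error term $\tfrac{L}{2}\eta_t^2\|g_t\|^2$ once the accumulated gradient mass is large, which is the standard AdaGrad-Norm self-taming argument.

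The key steps, in order, would be: (1) Start from the AGD descent inequality for $\overline{x}_t$ under $L$-smoothness (the convex analogue of the one-step estimate used in Section~\ref{section 6} for Theorem~\ref{Theorem 4}), bounding $f(\overline{x}_{t+1})$ or the telescoped distance $\|x_{t+1}-x^*\|^2$ in terms of $f(\overline{x}_t)-f^*$, the step size $\eta_t$, the noise, and a quadratic error $\mathcal{O}(L\eta_t^2\|g_t\|^2)$. (2) Use convexity plus the RSAG weighting $\alpha_t=2/(t+1)$ to convert the per-step gap into a bound on $f\bigl(\tfrac1T\sum\overline{x}_t\bigr)-f^*$ via Jensen, exactly as in Theorem~\ref{Theorem 4}'s proof. (3) Invoke the already-established high-probability function-value-gap bound (the $\functiongapboundsmooth$ estimate, which is the $L_1=0$ version of $\functiongapboundadaptive$) to certify that $f(\overline{x}_t)-f^*$ stays $\mathcal{O}(\functiongapboundsmooth)$ on the good event, hence the affine-variance noise term $A(f(\overline{x}_t)-f^*)+B\|\nabla f(\overline{x}_t)\|^2+C$ is controlled. (4) Handle the stochastic error terms with the same martingale/Freedman concentration bounds used throughout (yielding the $\log(1/\delta)$ factors and the $1-4\delta$ probability from the union over the four bad events: the function-gap event, the distance-bound event $D_L$, and two concentration events). (5) Control $\sum\eta_t\|g_t\|^2$ and $\sum\eta_t^2\|g_t\|^2$ using the standard AdaGrad-Norm sum lemma ($\sum_t \|g_t\|^2/(G_0^2+\sum_{k\le t}\|g_k\|^2)\le \log(1+\sum\|g_k\|^2/G_0^2)$) and then bound $\sum\|g_k\|^2$ itself by the affine-variance structure to close the loop, giving the $R_L$ and $D_L^2$ quantities as stated. (6) Assemble: divide through by the accumulated weight $\sim T$, collect the $1/T$ terms (the $D_L^2(A+B+1)/T$ piece) and the $1/\sqrt{T}$ term (the $D_L^2\sqrt{C}/\sqrt{T}$ piece), and simplify.

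I expect the main obstacle to be step (3)–(4) done \emph{simultaneously and without a step-size constraint}: unlike the generalized-smooth case where the step size is explicitly capped in terms of $\functiongapboundadaptive$, here the argument must be genuinely parameter-free, so the bound on $f(\overline{x}_t)-f^*$ and the bound on the iterate distance $\|x_t-x^*\|$ have to be bootstrapped together with the self-taming of the adaptive denominator — one needs an induction (or a "first exit time" stopping-time argument) showing that neither quantity escapes its claimed $\mathcal{O}$-bound, using the fact that any transient blow-up is immediately damped by the growth of $G_0^2+\sum\|g_k\|^2$ in the denominator. Getting the constants in $R_L$ and $D_L^2$ to come out with only $\log(T/G_0)$ and $\log(T/\delta)$ dependence (rather than polynomial-in-$T$) is the delicate part; the rest is bookkeeping parallel to Theorems~\ref{Theorem 4} and~\ref{theorem 5}. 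A secondary, purely technical nuisance is carrying the RSAG three-sequence bookkeeping ($x_t,\tilde x_t,\overline{x}_t$) through the convex telescoping cleanly, but since the same template was already handled in Section~\ref{section 6}, I would simply reuse that reduction and set $L_1=0$ throughout.
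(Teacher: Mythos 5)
Your proposal follows essentially the same route as the paper: it specializes the generalized-smooth adaptive analysis to $L_0=L$, $L_1=0$ (which is exactly how the paper removes the step-size constraint), bootstraps the function-value-gap bound $\functiongapboundsmooth$ and the iterate-distance bound $D_L^2$ together by induction on the good event, takes the union over the same four concentration events to get $1-4\delta$, controls $\sum_t\eta_t^2\|g_t\|^2$ via the AdaGrad-Norm logarithmic sum lemma combined with a deterministic polynomial-in-$T$ bound on $\sum_t\overline{\Delta}_t$ (the paper's Lemmas \ref{sum function value gap} and \ref{sum log smooth}, which is the "self-taming" you describe), and finishes with the convex telescoping of $\|x_t-x^*\|^2$ plus Jensen. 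The obstacle you flag — making the induction genuinely parameter-free — is precisely what the paper resolves with that deterministic gradient-sum bound, so the plan is sound and aligned with the paper's proof.
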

  \begin{remark}
    1) It is apparent that \eqref{inequality theorem 6} is similar to \eqref{inequality 5.32}.\\
    2) \eqref{4.16} and \eqref{inequality theorem 6} hold for any $\eta>0$, where $\eta$ is the parameter in \eqref{adaptive step size}.
  \end{remark}

\subsection{Expected Results of Non-Adaptive Algorithms under the Smoothness Condition}
As a supplement, we also provide the expected convergence results of SGD and RSAG with the constant step size under the $L$-smoothness condition.
\begin{theorem}
  \label{theorem 7}
  Let $T>0$ and $f(x)$ be an $L$-smooth function. Suppose that $\left\{\overline{x}_t\right\}_{t\in[T]}$ is a sequence generated by SGD or RSAG with the constant step size $\theta_t$, satisfying
  \begin{align}
    \label{expectation eta constraint}
    \theta_t=\eta=\min\left\{\frac{1}{\sqrt{3LAT}}, \sqrt{\frac{2}{3LCT}},\sqrt{\frac{2}{5L\left(A\functiongapexpectation+2L\left(B+1\right)\functiongapexpectation+C\right)}}, \frac{1}{4L\left(B+1\right)}\right\},
  \end{align}
where $\functiongapexpectation=4\overline{\Delta}_1+8$. Under Assumptions \ref{assumption 1}, \ref{assumption 2} and \ref{assumption 4}, we have that\footnote{The explicit convergence rate is stated in \eqref{expectation non convex}.}
\begin{align}
  \label{expected nonconvex}
  \mE\left[\frac{1}{T}\sumlimits{t=1}{T}\left\|\nabla f(\overline{x}_t)\right\|^2\right]\leq \mathcal{O}\left(\frac{B+1}{T}+\sqrt{\frac{A+C}{T}}\right).
\end{align}
\end{theorem}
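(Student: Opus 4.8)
The plan is to adapt the standard descent-lemma analysis for SGD/RSAG under $L$-smoothness, but carefully tracking the extra $A(f(\overline{x}_t)-f^*)$ term coming from Assumption \ref{assumption 4}, and taking full expectations throughout rather than building high-probability martingale bounds. First I would recall the AGD template identity: writing $\overline{x}_t=\alpha_tx_t+(1-\alpha_t)\tilde{x}_t$ with $\gamma_t=\theta_t$ (SGD) or $\gamma_t=(1+\alpha_t)\theta_t$ (RSAG), one derives a one-step inequality for $f(\tilde{x}_{t+1})$ (or directly for $f(\overline{x}_{t+1})$ in the SGD case) in terms of $f(\overline{x}_t)$, $\eta\|\nabla f(\overline{x}_t)\|^2$, and the noise term $\eta^2\|g_t\|^2$. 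Using $L$-smoothness in the form $f(y)\le f(x)+\langle\nabla f(x),y-x\rangle+\tfrac{L}{2}\|y-x\|^2$ and taking $\mE_t[\cdot]$, the cross term $\langle\nabla f(\overline{x}_t),g_t-\nabla f(\overline{x}_t)\rangle$ vanishes by unbiasedness, and $\mE_t\|g_t\|^2\le \|\nabla f(\overline{x}_t)\|^2+A(f(\overline{x}_t)-f^*)+B\|\nabla f(\overline{x}_t)\|^2+C$ by Assumption \ref{assumption 4}. This yields
\begin{align*}
\mE_t[f(\tilde{x}_{t+1})-f^*]\le (1+\tfrac{L\eta^2 A}{2}+\cdots)(f(\overline{x}_t)-f^*)-\left(\eta-\tfrac{L\eta^2(B+1)}{2}\cdot\text{const}\right)\|\nabla f(\overline{x}_t)\|^2+\tfrac{L\eta^2 C}{2}.
\end{align*}

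The second step is the telescoping. The constraint $\eta\le \tfrac{1}{4L(B+1)}$ guarantees the coefficient of $\|\nabla f(\overline{x}_t)\|^2$ stays bounded below by a positive multiple of $\eta$ (roughly $\eta/2$), so after summing $t=1,\dots,T$ and taking total expectations the gradient-norm sum is controlled by the telescoped function gap plus $TL\eta^2 C/2$ and the accumulated $A$-terms. To close the recursion with the pesky $(1+cL\eta^2A)$ factor, I would handle the $A$-contribution by absorbing it: the bound on $\eta$ involving $\sqrt{2/(3L(A\functiongapexpectation+2L(B+1)\functiongapexpectation+C))}$ together with $\functiongapexpectation=4\overline{\Delta}_1+8$ is precisely engineered so that one can prove by induction that $\mE[f(\overline{x}_t)-f^*]\le \functiongapexpectation/2$ (or a comparable constant bound) uniformly in $t$ — this is what makes the $\sum_t \eta^2 A\,\mE[f(\overline{x}_t)-f^*]$ term be of order $T\eta^2 A\functiongapexpectation$, hence absorbable into the $1/T$ and $\sqrt{(A+C)/T}$ rates once $\eta\sim\min\{1/\sqrt{3LAT},\sqrt{2/(3LCT)},\dots\}$ is plugged in. I would carry out this boundedness induction explicitly as a lemma-style sub-argument before the main telescoping, since the recursion is not unconditionally contractive.

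The third step is plugging in the step size. With $\eta=\min\{1/\sqrt{3LAT},\sqrt{2/(3LCT)},\eta_0,1/(4L(B+1))\}$ where $\eta_0$ is the $\functiongapexpectation$-dependent term, the bound $\tfrac{1}{\eta T}\mE[f(\overline{x}_1)-f^*]+\tfrac{L\eta C}{2}+L\eta A\functiongapexpectation$ becomes $\mathcal{O}(\tfrac{1}{\eta T}+\sqrt{A/T}+\sqrt{C/T})$; since $1/\eta\lesssim \sqrt{3LAT}+\sqrt{3LCT/2}+1/\eta_0+4L(B+1)$, the $1/(\eta T)$ term splits into an $\mathcal{O}(\sqrt{(A+C)/T})$ piece and an $\mathcal{O}((B+1)/T)$ piece (the last constraint contributes the $(B+1)/T$; the $\eta_0$ constraint contributes another $\mathcal{O}(1/T)$-type term absorbed into the constant), giving exactly \eqref{expected nonconvex}.

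The main obstacle I anticipate is the interaction between the $A$-term in the noise bound and the non-contractive recursion: unlike the pure affine-variance case, $A(f(\overline{x}_t)-f^*)$ couples the noise to the iterate's own suboptimality, so one cannot simply telescope — the uniform boundedness of $\mE[f(\overline{x}_t)-f^*]$ must be established first, and getting the constant in that bound small enough (so that it feeds back consistently into the step-size constraint $\eta\le\sqrt{2/(3L(A\functiongapexpectation+\cdots))}$ with $\functiongapexpectation=4\overline{\Delta}_1+8$) is the delicate circular-looking part. For RSAG specifically there is the additional bookkeeping of relating $f(\overline{x}_{t+1})$ to $f(\tilde{x}_{t+1})$ and $f(x_{t+1})$ through the convex-combination weights $\alpha_t=2/(t+1)$; I would either invoke the template manipulations already used for the high-probability theorems (Section \ref{section 4}) restricted to the $L$-smooth, expectation setting, or note that the $L$-smooth case is strictly simpler and follows by setting $L_1=0$ in those arguments and replacing high-probability concentration steps with $\mE_t[\cdot]$.
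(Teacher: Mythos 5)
Your proposal matches the paper's proof in all essentials: the paper first establishes the uniform bound $\mE[\overline{\Delta}_t]\le \functiongapexpectation$ with $\functiongapexpectation=4\overline{\Delta}_1+8$ by exactly the induction you describe (Proposition \ref{expectation}), then telescopes the $L$-smooth descent inequality (the $L_1=0$ specialization of the Section \ref{section 4} template manipulations) in conditional expectation so the martingale terms vanish, and finally splits $1/(\eta T)$ across the four step-size constraints to obtain \eqref{expected nonconvex}. The approach and the handling of the $A\overline{\Delta}_t$ coupling are the same as the paper's; no gaps.
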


\begin{theorem}
  \label{theorem 8}
  Let $T>0$,  $f(x)$ be $L$-smooth and convex. Suppose that $\left\{\overline{x}_t\right\}_{t\in[T]}$ is a sequence generated by SGD or RSAG with the constant step size $\theta_t$, satisfying 
  \begin{align}
    \label{expectation eta constraint convex}
    \theta_t=\eta=\min\left\{\frac{1}{4\left(A+2L\left(B+1\right)\right)}, \frac{1}{\sqrt{T}}\right\}.
  \end{align}
  Under Assumptions \ref{assumption 1}, \ref{assumption 2} and \ref{assumption 4}, we have that\footnote{The explicit expression is shown in \eqref{expectation convex}.}
  \begin{align}
    \label{expected convex}
    \mE\left[f\left(\frac{1}{T}\sumlimits{t=1}{T}\overline{x}_t\right)-f^*\right]\leq \mathcal{O}\left(\frac{A+B+1}{T}+\frac{C}{\sqrt{T}}\right).
  \end{align}
\end{theorem}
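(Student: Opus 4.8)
The plan is to follow the same template-based analysis used for Theorem \ref{theorem 7} but specialize it to the convex setting, where the natural Lyapunov quantity is the distance $\|x_t - x^*\|^2$ rather than the function gap $f(\tilde x_t) - f^*$. First I would write down the one-step descent/expansion identities for the AGD template (Algorithm \ref{algorithm1}): from $x_{t+1} = x_t - \theta_t g_t$ I get
\begin{align*}
\|x_{t+1}-x^*\|^2 = \|x_t-x^*\|^2 - 2\theta_t\langle g_t, x_t-x^*\rangle + \theta_t^2\|g_t\|^2,
\end{align*}
and I would combine this with the averaging step $\overline x_t = \alpha_t x_t + (1-\alpha_t)\tilde x_t$ and the update $\tilde x_{t+1} = \overline x_t - \gamma_t g_t$ to re-express $\langle g_t, x_t-x^*\rangle$ in terms of $\langle g_t, \overline x_t - x^*\rangle$ and $\langle g_t, \overline x_t - \tilde x_t\rangle$. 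Using convexity, $\langle \nabla f(\overline x_t), \overline x_t - x^*\rangle \ge f(\overline x_t) - f^*$, and the $L$-smoothness descent inequality on the $\tilde x$-sequence to control the $\langle g_t, \overline x_t - \tilde x_t\rangle$ term; these are exactly the steps that make RSAG and SGD fit the same skeleton (with $\gamma_t = \theta_t$ for SGD and $\gamma_t = (1+\alpha_t)\theta_t$ for RSAG, $\alpha_t = 2/(t+1)$).

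Next I would take conditional expectations. The cross term $\langle g_t - \nabla f(\overline x_t), \cdot\rangle$ vanishes by Assumption \ref{assumption 2} (unbiasedness), and the second-moment term is bounded via $\mE_t\|g_t\|^2 \le \|\nabla f(\overline x_t)\|^2 + A(f(\overline x_t)-f^*) + B\|\nabla f(\overline x_t)\|^2 + C$ using Assumption \ref{assumption 4}. The $\|\nabla f(\overline x_t)\|^2$ piece is absorbed by the $L$-smoothness descent term $f(\tilde x_{t+1}) - f(\overline x_t) \le -\gamma_t(1 - L\gamma_t(B+1)/2)\|\nabla f(\overline x_t)\|^2 + \ldots$, which is where the step-size cap $\eta \le \tfrac{1}{4(A+2L(B+1))}$ enters: it makes the coefficient of $\|\nabla f(\overline x_t)\|^2$ nonnegative and also lets the $A(f(\overline x_t)-f^*)$ term be dominated by a telescoping function-gap contribution. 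Then I would sum over $t \in [T]$, telescope $\|x_t - x^*\|^2$ and the auxiliary $f(\tilde x_t) - f^*$ terms, divide by $T$, and invoke Jensen's inequality on $f(\tfrac1T\sum_t \overline x_t)$. With $\eta = \min\{\tfrac{1}{4(A+2L(B+1))}, \tfrac{1}{\sqrt T}\}$, the leading terms are $\|x_1-x^*\|^2/(\eta T) \sim (A + 2L(B+1))\|x_1-x^*\|^2/T + \|x_1-x^*\|^2/\sqrt T$ from the telescoped distance, plus the residual noise term $\eta C \cdot T / T = \eta C \le C/\sqrt T$, giving the claimed $\mathcal{O}\big(\tfrac{A+B+1}{T} + \tfrac{C}{\sqrt T}\big)$.

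The main obstacle I anticipate is bookkeeping the coupling between the two telescoping quantities: unlike the non-convex proof (Theorem \ref{theorem 7}) where only $f(\tilde x_t) - f^*$ telescopes, here the distance term $\|x_t - x^*\|^2$ and the gap term $f(\tilde x_t) - f^*$ are intertwined through the choice $\gamma_t = (1+\alpha_t)\theta_t$ and the time-varying $\alpha_t = 2/(t+1)$, so I need the weights to line up so that $\sum_t (\text{gap at }\overline x_t)$ appears with a uniformly positive coefficient after summation. In the classical Nesterov analysis one uses weights $\propto t$; here, because $\theta_t = \eta$ is constant rather than increasing, I expect instead a \emph{uniformly weighted} average to work, and the delicate point is verifying that the $\alpha_t$-dependent error terms from the $\langle g_t, \overline x_t - \tilde x_t\rangle$ rearrangement sum to something $\mathcal{O}(1)$ (after multiplying by $\eta$) rather than growing with $T$. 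A secondary, routine check is confirming that no high-probability machinery is needed: since Theorem \ref{theorem 8} is an in-expectation statement under Assumption \ref{assumption 4}, all the martingale/concentration arguments used in Theorems \ref{theorem 1}--\ref{theorem 6} collapse to a single application of the tower rule, which should make this the shortest proof in the paper.
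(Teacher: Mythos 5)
Your proposal starts in the same place as the paper (the expansion of $\|x_{t+1}-x^*\|^2$, unbiasedness to kill the cross term, Assumption \ref{assumption 4} for the second moment, Jensen at the end, and no concentration machinery), and your final rate accounting is right. But in the middle you commit to a Ghadimi--Lan/Nesterov-style argument with \emph{two} coupled potentials — the distance $\|x_t-x^*\|^2$ and a function gap along the $\tilde{x}_t$-sequence controlled by the descent lemma — and the difficulty you flag as the "main obstacle" (getting the $\alpha_t$-dependent error terms from the $\langle g_t,\overline{x}_t-\tilde{x}_t\rangle$ rearrangement to sum to $\mathcal{O}(1)$ with constant $\theta_t$ and $\alpha_t=2/(t+1)$) is left unresolved. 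That is a genuine gap in the route you chose: to telescope $f(\tilde{x}_{t+1})-f(\overline{x}_t)$ you must also relate $f(\overline{x}_t)$ back to $\alpha_t f(x_t)+(1-\alpha_t)f(\tilde{x}_t)$, and checking that the resulting weights line up is the entire content of that style of proof; without it your argument does not close.

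The paper avoids this entirely. Its convex analysis never uses the descent lemma or the $\tilde{x}_t$-sequence. The decomposition is $\langle g_t,\overline{x}_t-x^*\rangle=\langle g_t,x_t-x^*\rangle+\langle g_t,\overline{x}_t-x_t\rangle$, and the second term is handled purely algebraically: Proposition \ref{proposition overline x_t - x_t} writes $\overline{x}_t-x_t$ as a $\varGamma$-weighted combination of past gradients, and the weight bound \eqref{proposition 1.2} collapses the resulting double sum to $2\eta\sum_t\|g_t\|^2$, giving the pathwise inequality \eqref{constant convex sum} with no smoothness or convexity needed. The absorption of $\|\nabla f(\overline{x}_t)\|^2$ is then done not by a negative descent term but by the chain $\|\overline{g}_t\|^2\le 2L\overline{\Delta}_t\le 2L\langle\overline{g}_t,\overline{x}_t-x^*\rangle$ (Lemma \ref{smooth function value gap} plus Lemma \ref{convexity}); the cap $\eta\le\frac{1}{4(A+2L(B+1))}$ makes the coefficient of the reabsorbed $\sum_t\langle\overline{g}_t,\overline{x}_t-x^*\rangle$ equal to $1/2$, so it moves to the left-hand side, and the remaining $2\eta TC$ term gives the $C/\sqrt{T}$ piece. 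If you replace your descent-lemma absorption and two-potential bookkeeping with this self-bounding rearrangement, your outline becomes the paper's proof.
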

\begin{remark}
  1) The step size $\eta$ in \eqref{expectation eta constraint} and \eqref{expectation eta constraint convex} is of order $\mathcal{O}\left(1/\sqrt{T}\right)$, matching the one in classic SGD \cite{ghadimi2013stochastic} and RSAG \cite{ghadimi2016accelerated}.\\
  2) Both \eqref{expected nonconvex} and \eqref{expected convex} adapt to the noise level.
\end{remark}

\section{Analysis of Non-Adaptive Algorithms}
\label{section 4}
In this section, we provide analysis of the convergence rate for SGD and RSAG with the constant step size under the generalized smoothness condition. 
Concretely, we offer the proof for non-convex optimization in Section \ref{constant non-convex} and then derive the result for convex optimization in Section \ref{constant convex}.
See Example \ref{example 1} for the step size setting of SGD and RSAG with the constant step size.
We borrow some of the proof idea from \cite{ghadimi2016accelerated,kavis2022high,junchitwo,hubler2024parameter,attia2023sgd,hongrevisiting,hong2023high}. Refer the appendix for the omitted lemmas and proofs.
\subsection{Non-convex Optimization}
\label{constant non-convex}
 Before starting the proof of the main result, we state some important propositions first. The following two propositions follow from the analysis in \cite{ghadimi2016accelerated,kavis2022high}.

 \begin{proposition}[Proposition 5.2 in \cite{kavis2022high}]
  \label{proposition 1}
  Denote $\alpha_t=\frac{2}{t+1}$ and $\varGamma_t=\left(1-\alpha_t\right)\varGamma_{t-1}$ with $\varGamma_1=1$. We have that for all $t\in[T],$
  \begin{align}
    \label{proposition 1.1}
   \varGamma_{t}\sumlimits{k=1}{t}\frac{\alpha_k}{\varGamma_k}=1,
  \end{align}
  and 
  \begin{align}
    \label{proposition 1.2}
    \left[\sumlimits{k=t}{T}\left(1-\alpha_k\right)\varGamma_k\right]\frac{\alpha_t}{\varGamma_t}\leq 2.
  \end{align}
\end{proposition}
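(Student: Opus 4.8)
\textbf{Proof proposal for Proposition \ref{proposition 1}.}

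The plan is to prove both identities by straightforward induction/telescoping arguments using the explicit form $\alpha_t = \frac{2}{t+1}$, which forces an explicit closed form for $\varGamma_t$. First I would compute $\varGamma_t$ explicitly: from $\varGamma_t = (1-\alpha_t)\varGamma_{t-1} = \frac{t-1}{t+1}\varGamma_{t-1}$ with $\varGamma_1 = 1$, a telescoping product gives $\varGamma_t = \prod_{k=2}^t \frac{k-1}{k+1} = \frac{2}{t(t+1)}$ for $t \geq 1$ (checking $\varGamma_1 = 1$ as the base case, and noting $1-\alpha_1 = 0$ so the recursion is only used for $t\geq 2$). Having this, $\frac{\alpha_k}{\varGamma_k} = \frac{2/(k+1)}{2/(k(k+1))} = k$, so $\sum_{k=1}^t \frac{\alpha_k}{\varGamma_k} = \frac{t(t+1)}{2}$, and multiplying by $\varGamma_t = \frac{2}{t(t+1)}$ yields exactly $1$, which is \eqref{proposition 1.1}.

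For \eqref{proposition 1.2}, I would again use the closed forms: $(1-\alpha_k)\varGamma_k = \frac{k-1}{k+1}\cdot\frac{2}{k(k+1)} = \frac{2(k-1)}{k(k+1)^2}$, and $\frac{\alpha_t}{\varGamma_t} = t$. Then $\left[\sum_{k=t}^T (1-\alpha_k)\varGamma_k\right]\frac{\alpha_t}{\varGamma_t} = t\sum_{k=t}^T \frac{2(k-1)}{k(k+1)^2}$. The plan is to bound the summand: since $\frac{2(k-1)}{k(k+1)^2} \leq \frac{2}{(k+1)^2} \leq \frac{2}{k(k+1)} = 2\left(\frac1k - \frac1{k+1}\right)$, the sum telescopes to at most $2\left(\frac1t - \frac1{T+1}\right) \leq \frac2t$, and multiplying by $t$ gives the bound $2$. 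Alternatively one can avoid explicit closed forms entirely: observe that $\varGamma_t \sum_{k=1}^t \frac{\alpha_k}{\varGamma_k} = 1$ follows by induction (the step uses $\varGamma_{t+1}\left(\sum_{k=1}^t \frac{\alpha_k}{\varGamma_k} + \frac{\alpha_{t+1}}{\varGamma_{t+1}}\right) = (1-\alpha_{t+1})\varGamma_t \cdot \frac{1}{\varGamma_t} + \alpha_{t+1} = 1$), and then \eqref{proposition 1.2} follows since $(1-\alpha_k)\varGamma_k = \varGamma_{k+1}$, so $\sum_{k=t}^T \varGamma_{k+1}\frac{\alpha_t}{\varGamma_t} \leq \frac{\alpha_t}{\varGamma_t}\sum_{k=t}^{T}\varGamma_{k+1}$, and one bounds $\sum_{k\geq t}\varGamma_{k+1}$ against $\varGamma_t/\alpha_t$ using monotonicity of $1/\alpha_k$.

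I expect essentially no serious obstacle here; the only mild care is keeping the index bookkeeping correct (the recursion for $\varGamma_t$ is vacuous/degenerate at $t=1$ because $\alpha_1 = 1$, so one must treat $\varGamma_1 = 1$ as a definition and start the product at $k=2$), and ensuring the telescoping bound in \eqref{proposition 1.2} is applied with the correct direction of inequality. Since the statement is quoted directly from \cite{kavis2022high}, I would keep the argument short, present the closed form $\varGamma_t = \frac{2}{t(t+1)}$, verify \eqref{proposition 1.1} by direct computation, and dispatch \eqref{proposition 1.2} by the telescoping estimate above.
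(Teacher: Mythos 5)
Your proof is correct. For \eqref{proposition 1.2} it is essentially identical to the paper's: both derive the closed form $\varGamma_t=\frac{2}{t(t+1)}$, bound the summand by $\frac{2}{k(k+1)}$, and telescope to $2t\left(\frac{1}{t}-\frac{1}{T+1}\right)\leq 2$. For \eqref{proposition 1.1} you differ slightly: you substitute the closed form and compute $\frac{\alpha_k}{\varGamma_k}=k$ directly, whereas the paper uses the general telescoping identity $\frac{\alpha_k}{\varGamma_k}=\frac{1}{\varGamma_k}-\frac{1}{\varGamma_{k-1}}$, which yields $\sum_{k=1}^{t}\frac{\alpha_k}{\varGamma_k}=\frac{1}{\varGamma_t}$ for \emph{any} choice of $\alpha_t\in(0,1]$ with $\alpha_1=1$. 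The paper's route is marginally more general; yours is more concrete. Both are valid. One small caution: in your "alternative" aside you claim $(1-\alpha_k)\varGamma_k=\varGamma_{k+1}$, but the recursion gives $\varGamma_{k+1}=(1-\alpha_{k+1})\varGamma_k$, so $(1-\alpha_k)\varGamma_k=\frac{2(k-1)}{k(k+1)^2}\neq\frac{2}{(k+1)(k+2)}=\varGamma_{k+1}$; that sketch would need repair, but it does not affect your main argument, which is complete as stated.
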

The following proposition bounds the gap of the two sequences specific to Algorithm \ref{algorithm1}.
\begin{proposition}
  \label{proposition overline x_t - x_t}
  Let $\{x_t\}_{t\in[T]}$ and $\{\overline{x}_t\}_{t\in[T]}$ be generated by Algorithm \ref{algorithm1}. We have
  \begin{align}
    \label{overline x_t-x_t}
    \overline{x}_t-x_t =\left(1-\alpha_t\right)\varGamma_{t-1}\sumlimits{k=1}{t-1}\frac{\alpha_k}{\varGamma_k}\frac{\left(\theta_k-\gamma_k\right)}{\alpha_k}g_k,
  \end{align}
  and
  \begin{align*}
  \|\overline{x}_t-x_t\|^2\leq \left(1-\alpha_t\right)\varGamma_t\sumlimits{k=1}{t-1}\frac{\alpha_k}{\varGamma_k}\frac{\left(\theta_k-\gamma_k\right)^2}{\alpha_k^2}\|g_k\|^2.
\end{align*}
\end{proposition}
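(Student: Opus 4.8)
The plan is to derive the closed form for $\overline{x}_t - x_t$ by tracking how the two sequences $\{x_t\}$ and $\{\tilde{x}_t\}$ (and hence $\{\overline{x}_t\}$) evolve under Algorithm~\ref{algorithm1}, and then to unroll the recursion using the averaging weights $\varGamma_t$. First I would record the one-step relations from the algorithm: $x_{t+1} = x_t - \theta_t g_t$, $\tilde{x}_{t+1} = \overline{x}_t - \gamma_t g_t$, and $\overline{x}_t = \alpha_t x_t + (1-\alpha_t)\tilde{x}_t$. Subtracting, $\overline{x}_t - x_t = (1-\alpha_t)(\tilde{x}_t - x_t)$, so it suffices to find a formula for the difference sequence $d_t := \tilde{x}_t - x_t$.

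Next I would set up a recursion for $d_t$. Using $\tilde{x}_{t+1} = \overline{x}_t - \gamma_t g_t = \alpha_t x_t + (1-\alpha_t)\tilde{x}_t - \gamma_t g_t$ and $x_{t+1} = x_t - \theta_t g_t$, we get
\[
d_{t+1} = \tilde{x}_{t+1} - x_{t+1} = (1-\alpha_t)(\tilde{x}_t - x_t) + (\theta_t - \gamma_t) g_t = (1-\alpha_t) d_t + (\theta_t-\gamma_t) g_t,
\]
with $d_1 = \tilde{x}_1 - x_1 = 0$. This is a linear recursion whose solution telescopes once we divide by $\varGamma_t$: since $\varGamma_{k} = (1-\alpha_k)\varGamma_{k-1}$, we have $d_{k+1}/\varGamma_k = d_k/\varGamma_{k-1} + (\theta_k-\gamma_k) g_k / \varGamma_k$, and summing from $k=1$ to $t-1$ (using $d_1=0$) yields $d_t = \varGamma_{t-1}\sum_{k=1}^{t-1} \frac{\theta_k-\gamma_k}{\varGamma_k} g_k$. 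The minor bookkeeping subtlety here is matching the index convention in the statement, which writes the coefficient as $\frac{\alpha_k}{\varGamma_k}\cdot\frac{\theta_k-\gamma_k}{\alpha_k}$; this is the same thing, just split to make the $\alpha_k$-weighted structure visible (and to harmonize with Proposition~\ref{proposition 1}). Multiplying by $(1-\alpha_t)$ gives the claimed identity \eqref{overline x_t-x_t}.

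For the second inequality, I would apply a weighted Jensen / Cauchy--Schwarz argument to the vector sum. Writing $\overline{x}_t - x_t = (1-\alpha_t)\varGamma_{t-1}\sum_{k=1}^{t-1} w_k v_k$ with $w_k = \alpha_k/\varGamma_k$ and $v_k = \frac{\theta_k-\gamma_k}{\alpha_k} g_k$, convexity of $\|\cdot\|^2$ gives $\big\|\sum_k w_k v_k\big\|^2 \le \big(\sum_k w_k\big)\sum_k w_k \|v_k\|^2$. By \eqref{proposition 1.1}, $\sum_{k=1}^{t-1} w_k = \sum_{k=1}^{t-1}\frac{\alpha_k}{\varGamma_k} \le \sum_{k=1}^{t}\frac{\alpha_k}{\varGamma_k} = 1/\varGamma_t$, so the prefactor $(1-\alpha_t)^2\varGamma_{t-1}^2$ times $\sum_k w_k$ becomes $(1-\alpha_t)^2\varGamma_{t-1}^2 \cdot \frac{1}{\varGamma_t} = (1-\alpha_t)\varGamma_{t-1}\cdot\frac{(1-\alpha_t)\varGamma_{t-1}}{\varGamma_t} = (1-\alpha_t)\varGamma_{t-1} = \varGamma_t \cdot \frac{(1-\alpha_t)\varGamma_{t-1}}{\varGamma_t}$; using $\varGamma_t = (1-\alpha_t)\varGamma_{t-1}$ this simplifies to exactly $(1-\alpha_t)\varGamma_t$, matching the stated bound $\|\overline{x}_t-x_t\|^2 \le (1-\alpha_t)\varGamma_t\sum_{k=1}^{t-1}\frac{\alpha_k}{\varGamma_k}\frac{(\theta_k-\gamma_k)^2}{\alpha_k^2}\|g_k\|^2$.

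The main obstacle is not conceptual but notational: getting all the $\alpha_t$, $\varGamma_t$, $\varGamma_{t-1}$ factors to line up precisely with the form in the statement, and correctly handling the base case $d_1 = 0$ and the off-by-one in the summation range. The only real ``trick'' is recognizing that the telescoping works after dividing by $\varGamma_k$, and that Proposition~\ref{proposition 1} is exactly the tool that converts the weight sum $\sum \alpha_k/\varGamma_k$ into the clean factor $1/\varGamma_t$ needed for the Cauchy--Schwarz step; both of these are already available in the excerpt.
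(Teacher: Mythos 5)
Your derivation of the identity \eqref{overline x_t-x_t} is essentially the paper's argument: the paper unrolls the recursion $\tilde{x}_{t+1}-x_{t+1}=(1-\alpha_t)(\tilde{x}_t-x_t)+(\theta_t-\gamma_t)g_t$ directly into the product $\prod_{j=k+1}^{t-1}(1-\alpha_j)=\varGamma_{t-1}/\varGamma_k$, whereas you telescope after dividing by $\varGamma_t$; these are the same computation. The weighted Jensen step for the squared norm is also the paper's.

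There is, however, an algebraic slip in your final simplification of the prefactor, and as written your argument only proves a slightly weaker bound than the one stated. You bound the weight sum by $\sum_{k=1}^{t-1}\alpha_k/\varGamma_k\le\sum_{k=1}^{t}\alpha_k/\varGamma_k=1/\varGamma_t$, which makes the prefactor $(1-\alpha_t)^2\varGamma_{t-1}^2/\varGamma_t=(1-\alpha_t)\varGamma_{t-1}=\varGamma_t$, not $(1-\alpha_t)\varGamma_t$ as you assert: your last chain of equalities conflates $(1-\alpha_t)\varGamma_{t-1}$ (which equals $\varGamma_t$) with $(1-\alpha_t)\varGamma_t$ (which equals $(1-\alpha_t)^2\varGamma_{t-1}$). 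Since $(1-\alpha_t)\varGamma_t\le\varGamma_t$, your route yields the claim only up to the extra factor $(1-\alpha_t)$. The fix is one line: apply \eqref{proposition 1.1} at index $t-1$, i.e.\ use the exact identity $\sum_{k=1}^{t-1}\alpha_k/\varGamma_k=1/\varGamma_{t-1}$ rather than the looser bound through index $t$. The prefactor then becomes $(1-\alpha_t)^2\varGamma_{t-1}^2\cdot\varGamma_{t-1}^{-1}=(1-\alpha_t)^2\varGamma_{t-1}=(1-\alpha_t)\varGamma_t$, exactly matching the statement (and the paper's computation in \eqref{overline x_t-x_t square norm}).
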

In the following sections, let $\overline{\Delta}_t=f(\overline{x}_t)-f^*$ and $\xi_t=g_t-\overline{g}_t$ where $\overline{g}_t=\nabla f(\overline{x}_t)$ for simplicity.
\begin{lemma}
  \label{high probability sum non-convex constant}
  Given $T\geq 1 $ and $\delta\in \left(0,1\right)$, if Assumptions \ref{assumption 2} and \ref{assumption 3} hold, then with probability at least $1-\delta$, $\forall l\in[T]$,
  \begin{align}
    \label{inequality high probability non-convex constant}
    \sumlimits{t=1}{l}-\la\overline{g}_t,\xi_t\ra\leq \frac{1}{4}\sumlimits{t=1}{l}\frac{P_t^2}{P_c^2}\|\overline{g}_t\|^2+3P_c^2\log\frac{T}{\delta},
  \end{align}
  where 
  \begin{align}
    \label{P_t}
    P_t=\sqrt{A\overline{\Delta}_t+4B\left(L_0\overline{\Delta}_t+L_1^2\overline{\Delta}_t^2\right)+C},
  \end{align}
and 
\begin{align}
  \label{P}
  P_c=\sqrt{A\functiongapboundconstant+4B\left(L_0\functiongapboundconstant+L_1^2\functiongapboundconstant^2\right)+C}.
\end{align}
\end{lemma}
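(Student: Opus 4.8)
The goal is to control the martingale sum $\sum_{t=1}^{l}-\langle\overline{g}_t,\xi_t\rangle$ uniformly over $l\in[T]$ by a fraction of $\sum_t (P_t^2/P_c^2)\|\overline{g}_t\|^2$ plus an additive logarithmic term. The plan is to apply a time-uniform concentration inequality for martingale difference sequences with sub-Gaussian-type increments, where the conditional variance proxy is governed by Assumption \ref{assumption 3}. First I would note that the increments $X_t := -\langle\overline{g}_t,\xi_t\rangle$ form a martingale difference sequence with respect to the filtration $\mathcal{F}_t = \sigma(z_1,\dots,z_{t-1})$, since $\overline{x}_t$ is $\mathcal{F}_t$-measurable and $\mE_t[\xi_t]=0$ by Assumption \ref{assumption 2}. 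Moreover, conditionally on $\mathcal{F}_t$, $X_t$ is a (scalar projection of a) bounded/sub-Gaussian random variable with parameter proportional to $\|\overline{g}_t\|\cdot\sqrt{A\overline{\Delta}_t+B\|\overline{g}_t\|^2+C}$; since $(L_0,L_1)$-smoothness gives $\|\overline{g}_t\|^2 \leq 2(L_0\overline{\Delta}_t + L_1^2\overline{\Delta}_t^2)$ (a standard consequence, used to reshape $B\|\overline{g}_t\|^2$ into $4B(L_0\overline{\Delta}_t+L_1^2\overline{\Delta}_t^2)$, matching the definition of $P_t$ in \eqref{P_t}), the conditional sub-Gaussian parameter is bounded by $\|\overline{g}_t\| P_t$ up to constants.

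The core step is then a standard Freedman-type / self-normalized inequality: for each $l$, with probability at least $1-\delta/T$,
\begin{align*}
  \sumlimits{t=1}{l} X_t \leq \frac{\lambda}{2}\sumlimits{t=1}{l}\|\overline{g}_t\|^2 P_t^2 + \frac{1}{\lambda}\log\frac{T}{\delta}
\end{align*}
for any fixed $\lambda>0$ (obtained from the exponential supermartingale $\exp(\lambda\sum_{t\le l}X_t - c\lambda^2\sum_{t\le l}\|\overline g_t\|^2 P_t^2)$ being a supermartingale, then Markov's inequality). I would choose $\lambda = 1/(2P_c^2)$ so that the first term becomes $\tfrac14\sum_{t\le l}(P_t^2/P_c^2)\|\overline{g}_t\|^2$ after absorbing constants, and the second term becomes $2P_c^2\log\frac{T}{\delta}$; the stated bound has $3P_c^2$, so there is slack to absorb the constant from the sub-Gaussian tail. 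Finally I would take a union bound over $l\in[T]$ (or, better, use a genuinely time-uniform version via Ville's inequality applied to the supermartingale, which avoids the union-bound loss and is cleaner) to get the claim simultaneously for all $l\in[T]$ with probability at least $1-\delta$.

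The main obstacle is handling the fact that $P_t$ depends on $\overline{\Delta}_t = f(\overline{x}_t)-f^*$, which is itself random and not bounded a priori — so the "variance proxy" is not predictable in a fully deterministic way, but it \emph{is} $\mathcal{F}_t$-measurable (since $\overline{x}_t$ is), which is exactly what is needed for the supermartingale argument; the dependence on the unknown-at-analysis-time bound $\functiongapboundconstant$ only enters through the normalization constant $P_c$ and does not obstruct measurability. A secondary technical point is justifying the conditional sub-Gaussianity of the scalar $\langle\overline{g}_t,\xi_t\rangle$ from the (vector) almost-sure bound in Assumption \ref{assumption 3} (or its sub-Gaussian variant per Remark \ref{remark 4.1}): one notes $|\langle\overline{g}_t,\xi_t\rangle|\le\|\overline{g}_t\|\|\xi_t\|$ and $\|\xi_t\|^2\le A\overline{\Delta}_t+B\|\overline g_t\|^2+C$ a.s., so the scalar increment is conditionally bounded (hence sub-Gaussian with the stated parameter) — and the same MGF computation goes through verbatim under the sub-Gaussian version of the assumption. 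I would relegate the precise constants in the exponential-moment estimate to the appendix and keep the main-text proof at the level of "apply Lemma [time-uniform concentration] with parameters $(\|\overline g_t\| P_t, \lambda=1/(2P_c^2))$."
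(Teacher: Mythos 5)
Your proposal is correct and follows essentially the same route as the paper: treat $-\la\overline{g}_t,\xi_t\ra$ as a martingale difference sequence whose conditional sub-Gaussian parameter is $\|\overline{g}_t\|\sqrt{A\overline{\Delta}_t+B\|\overline{g}_t\|^2+C}$ (via Cauchy--Schwarz and Assumption \ref{assumption 3}), apply a Freedman-type concentration bound (the paper uses Lemma \ref{lemma high probability}, which carries the constant $3\lambda/4$ and forces the choice $\lambda=1/(3P_c^2)$, hence the $3P_c^2$ term), convert $B\|\overline{g}_t\|^2$ into $4B(L_0\overline{\Delta}_t+L_1^2\overline{\Delta}_t^2)$ via Lemma \ref{lemma 6.2}, and union-bound over $l\in[T]$. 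The only nit is your claimed bound $\|\overline{g}_t\|^2\le 2(L_0\overline{\Delta}_t+L_1^2\overline{\Delta}_t^2)$ — the constant the paper actually establishes is $4$, which is exactly why $P_t$ carries the factor $4B$, so your reshaping still goes through.
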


 We apply an induction argument to obtain the upper bound of the function value gap in the following proposition.
\begin{proposition}
  \label{constant bound}
  Under the conditions of Theorem \ref{theorem 1}, $\overline{\Delta}_t\leq \functiongapboundconstant,P_t\leq P_c, \forall t\in[T]$, hold with probability at least $1-\delta$, where  $P_t$, $P_c$ are in \eqref{P_t}, \eqref{P} and $\functiongapboundconstant$ is defined as
  \begin{align}
    \label{Gconstant}
    \functiongapboundconstant=\Gconstant.
  \end{align} 
\end{proposition}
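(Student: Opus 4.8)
The plan is to establish $\overline{\Delta}_t \le \functiongapboundconstant$ for all $t\in[T]$ by strong induction on $t$, simultaneously carrying the bound $P_t \le P_c$ along for free since $P_t$ is monotonically increasing in $\overline{\Delta}_t$ (inspect \eqref{P_t} and \eqref{P}: every term under the root is an increasing function of $\overline{\Delta}_t\ge 0$, so $\overline{\Delta}_t\le \functiongapboundconstant$ immediately yields $P_t\le P_c$). The whole argument runs on the event of probability $1-\delta$ on which Lemma \ref{high probability sum non-convex constant} holds; no additional randomness is introduced, which is why the final probability stays $1-\delta$.

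First I would write down the one-step descent inequality coming from the $(L_0,L_1)$-smoothness of $f$ applied along the update $\overline{x}_{t+1}-\overline{x}_t$ (or, in the averaged formulation, combining the generalized descent lemma with Propositions \ref{proposition 1} and \ref{proposition overline x_t - x_t} to control $\|\overline{x}_t-x_t\|$ and hence the step lengths). This produces a recursion of the schematic form
\begin{align*}
  \overline{\Delta}_{l+1} \le \overline{\Delta}_1 - c_1\eta\sumlimits{t=1}{l}\|\overline{g}_t\|^2 + c_2\eta^2\sumlimits{t=1}{l}\bigl(A\overline{\Delta}_t + B\|\overline{g}_t\|^2 + (L_0+L_1^2\overline{\Delta}_t)\|\overline{g}_t\|^2 + C\bigr) - \eta\sumlimits{t=1}{l}\la\overline{g}_t,\xi_t\ra + (\text{clip terms}),
\end{align*}
where the $\|g_t\|^2 = \|\overline{g}_t+\xi_t\|^2$ terms have been expanded and the noise bounded via Assumption \ref{assumption 3} using the induction hypothesis $\overline{\Delta}_t\le\functiongapboundconstant$ for $t\le l$. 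Then I would insert Lemma \ref{high probability sum non-convex constant} to absorb $-\eta\sum\la\overline{g}_t,\xi_t\ra$ into $\tfrac14\eta\sum (P_t^2/P_c^2)\|\overline{g}_t\|^2 + 3\eta P_c^2\log\frac{T}{\delta}$; since $P_t\le P_c$ the coefficient $P_t^2/P_c^2\le 1$, so this term is at most $\tfrac14\eta\sum\|\overline{g}_t\|^2$ and can be merged with the $-c_1\eta\sum\|\overline{g}_t\|^2$ term. The constraints on $\eta$ in \eqref{eta constant non-convex} — in particular the pieces $1/\mathcal{A}\sqrt T$, $1/\mathcal{B}$, $1/\mathcal{C}\sqrt T$, $1/\mathcal{G}$ and the two $\functiongapboundconstant^{-1/2}, \functiongapboundconstant^{-3/2}$ terms involving $A+4BL_0$ and $BL_1^2$ — are precisely calibrated so that all the positive $\eta^2$-coefficient sums are dominated by the negative $\|\overline{g}_t\|^2$ term (using again $\overline{\Delta}_t\le\functiongapboundconstant$ to bound $L_0+L_1^2\overline{\Delta}_t\le L_0+L_1^2\functiongapboundconstant$, which enters $\mathcal{M}_c$ and the effective smoothness constant). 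What survives is $\overline{\Delta}_{l+1}\le \overline{\Delta}_1 + 3\eta P_c^2\log\frac T\delta + (\text{small constants from the }\|x-y\|\le 1/L_1\text{ regime})$, and one checks that the right-hand side is $\le\functiongapboundconstant$ by comparing against the explicit definition \eqref{Gconstant}: the term $8\overline{\Delta}_1$ covers $\overline{\Delta}_1$, the $\log^2$ and $\sqrt{C}\log$ terms absorb $3\eta P_c^2\log\frac T\delta$ after substituting the $1/(\mathcal{A}\sqrt T)$ and $1/(\mathcal{C}\sqrt T)$ bounds on $\eta$, and the $303L_0/(224L_1^2)$ term mops up the residue from the generalized-smoothness correction and the $B$-dependent $P_c^2$ piece.

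The main obstacle is the self-referential structure: $\functiongapboundconstant$ appears inside $\mathcal{A},\mathcal{B},\mathcal{C},\mathcal{G},\mathcal{M}_c$ and hence inside the constraint on $\eta$, which in turn is what makes the induction close — so one has to verify there is no circularity, i.e. that the fixed-point value $\functiongapboundconstant$ defined by \eqref{Gconstant} genuinely satisfies the inequality "$\overline{\Delta}_1 + (\text{noise/smoothness residue depending on }\eta(\functiongapboundconstant)) \le \functiongapboundconstant$" when $\eta$ is taken as large as \eqref{eta constant non-convex} permits. This amounts to a careful bookkeeping of constants: substitute the minimum defining $\eta$, bound $\eta P_c^2\log\frac T\delta$ term by term (the $A\functiongapboundconstant$ contribution against $1/(\mathcal{A}\sqrt T)\cdot(\cdot)$, the $C$ contribution against $1/(\mathcal{C}\sqrt T)$, the $BL_1^2\functiongapboundconstant^2$ contribution against $\functiongapboundconstant^{-3/2}/(12BL_1^2)$, etc.), and confirm each lands below the matching summand in \eqref{Gconstant} with room to spare for the numerical coefficients $8,\,32,\,303/224$. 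A secondary technical point is handling the clipping / the $\|x-y\|\le 1/L_1$ restriction in Definition \ref{definition 1} when invoking generalized smoothness along the step — this is where the $1/(8L_1)$-type and $\mathcal{G}$ constraints on $\eta$ and the extra small additive constants in \eqref{Gconstant} (the "$+8$") come from, and it should be dispatched by a short lemma (stated in the appendix) ensuring the step size is small enough that consecutive iterates stay within the $1/L_1$-ball given $\overline{\Delta}_t\le\functiongapboundconstant$.
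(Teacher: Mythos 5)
Your plan matches the paper's proof essentially step for step: induction on $t$ conditioned on the single event of Lemma \ref{high probability sum non-convex constant}, the generalized descent lemma along the $x_t$-updates combined with Propositions \ref{proposition 1} and \ref{proposition overline x_t - x_t} to keep all displacements below $1/(8L_1)$, absorption of the martingale sum into $\tfrac14\eta\sum_t\|\overline{g}_t\|^2+3\eta P_c^2\log\tfrac{T}{\delta}$, and $\eta$-constraints calibrated so the surviving residue fits under \eqref{Gconstant}. The only detail you leave implicit is that the recursion naturally bounds $f(x_{l+1})$ rather than $f(\overline{x}_{l+1})$, so one extra application of the descent lemma produces a $\tfrac{17}{128L_1}\|\overline{g}_{l+1}\|$ term that must itself be bounded via Lemma \ref{lemma 6.2} in terms of $\overline{\Delta}_{l+1}$ and moved to the left-hand side (the $\tfrac{5}{8}\overline{\Delta}_{l+1}$ rearrangement), which is precisely the residue your $303L_0/(224L_1^2)$ summand absorbs.
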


\begin{proof}
  We assume that \eqref{inequality high probability non-convex constant} always happens and then deduce $\overline{\Delta}_t\leq \functiongapboundconstant$, $P_t\leq P_c$ for all $t\in[T]$. 
  Since \eqref{inequality high probability non-convex constant} happens with probability at least $1-\delta$, $\overline{\Delta}_t\leq \functiongapboundconstant$ and $P_t\leq P_c$, $\forall t\in[T]$,  hold with probability at least $1-\delta$. 
  It is obvious that $f(\overline{x}_1)-f^*\leq \functiongapboundconstant$. Therefore, $P_1\leq P_c$.
  Suppose that for some $l\in[T]$, $$f(\overline{x}_t)-f^*\leq \functiongapboundconstant,\quad\forall t\in[l], \quad \text{thus}, \quad P_t\leq P_c, \quad \forall t\in[l].$$ 
  By the triangle inequality of the norm function,
  \begin{align*}
    \|g_t\|\leq  \|g_t-\overline{g}_t\|+\|\overline{g}_t\|
    \leq  \sqrt{A\overline{\Delta}_t+B\|\overline{g}_t\|^2+C}+\|\overline{g}_t\|
    \leq  \sqrt{A\overline{\Delta}_t}+\left(\sqrt{B}+1\right)\|\overline{g}_t\|+\sqrt{C},
  \end{align*}
  where the second inequality follows from Assumption \ref{assumption 3} and the last inequality follows from Lemma \ref{sqrt sum}.
  Combining with Lemma \ref{lemma 6.2} and the assumption that $\overline{\Delta}_t\leq \functiongapboundconstant, \forall t\in[l]$, we have
  \begin{align} 
    \label{g_t}
    \|g_t\| \leq  \sqrt{A\functiongapboundconstant}+2\left(\sqrt{B}+1\right)\sqrt{L_0 \functiongapboundconstant+L_1^2 \functiongapboundconstant^2}+\sqrt{C}, \quad \forall t\in[l].
  \end{align}
  For simplicity, let 
  \begin{align}
    \label{Y_c}
    Y_c=\eta \left(\sqrt{A\functiongapboundconstant}+2\left(\sqrt{B}+1\right)\sqrt{L_0 \functiongapboundconstant+L_1^2 \functiongapboundconstant^2}+\sqrt{C}\right).
  \end{align}
    By the iteration step of Algorithm \ref{algorithm1}, we have
  \begin{align*}
  \|x_{t+1}-x_t\|=\theta_t\|g_t\|=\eta \|g_t\|\leq Y_c\leq 1/8L_1,
  \end{align*}
  where the last inequality follows from the restriction of $\eta$.
 Applying Lemma \ref{descent lemma}, we have 
\begin{align*}
  & f(x_{t+1})-f(x_t) \notag\\
  \leq & \la \nabla f(x_t),x_{t+1}-x_t\ra+\frac{L_0+L_1\nablaf{x_t}}{2}\|x_{t+1}-x_t\|^2\notag\\
   = & -\theta_t\la \nabla f(x_t),g_t\ra+\frac{L_0+L_1\nablaf{x_t}}{2}\theta_t^2\|g_t\|^2\notag\\
   = & -\theta_t\la \overline{g}_t+\nabla f(x_t)-\overline{g}_t,\overline{g}_t+\xi_t\ra + \frac{L_0+L_1\nablaf{x_t}}{2}\theta_t^2\|g_t\|^2\notag\\
   = & -\theta_t \|\overline{g}_t\|^2-\theta_t\la \overline{g}_t,\xi_t\ra -\theta_t \la \nabla f(x_t)-\overline{g}_t,g_t\ra+\frac{L_0+L_1\nablaf{x_t}}{2}\theta_t^2\|g_t\|^2\notag\\
  \leq & -\theta_t \|\overline{g}_t\|^2-\theta_t\la \overline{g}_t,\xi_t\ra +\theta_t \| \nabla f(x_t)-\overline{g}_t\|\|g_t\|+\frac{L_0+L_1\nablaf{x_t}}{2}\theta_t^2\|g_t\|^2,
\end{align*} 
where the first equation follows from the update rule in Algorithm \ref{algorithm1} and the last line follows from Cauchy-Schwarz inequality. 
Recall the constraints of the step size for RSAG with the constant step size and SGD in Example \ref{example 1}, we have 
\begin{align}
  \label{constant step size constraint}
  \frac{\gamma_t-\theta_t}{\alpha_t}\leq \theta_t=\eta, \quad \forall t\in[T].
\end{align}
Combining \eqref{overline x_t-x_t} with \eqref{proposition 1.1}, \eqref{g_t}, \eqref{constant step size constraint}, we have
\begin{align}
  \label{bound overline x_t-x_t}
  \|\overline{x}_t-x_t\| \leq & \eta\varGamma_{t-1}\sumlimits{k=1}{t-1}\frac{\alpha_k}{\varGamma_k}\|g_k\|\notag\\
  \leq & \eta\left(\sqrt{A\functiongapboundconstant}+2\left(\sqrt{B}+1\right)\sqrt{L_0 \functiongapboundconstant+L_1^2 \functiongapboundconstant^2}+\sqrt{C}\right)\varGamma_{t-1}\sumlimits{k=1}{t-1}\frac{\alpha_k}{\varGamma_k}\notag\\
  = & Y_c \leq 1/8L_1.
\end{align}
Note that \eqref{bound overline x_t-x_t} holds for all $t\in[l+1]$. 
Applying Definition \ref{definition 1}, we have that for all $t\in[l]$,
\begin{align}  
  \label{inequality base}
  & f(x_{t+1})-f(x_t) \notag\\
  \leq & -\theta_t \|\overline{g}_t\|^2-\theta_t\la \overline{g}_t,\xi_t\ra +\theta_t\left(L_0+L_1\nablaf{x_t}\right)\|\overline{x}_t-x_t\|\|g_t\| +\frac{L_0+L_1\nablaf{x_t}}{2}\theta_t^2\|g_t\|^2\notag\\
  \leq & -\theta_t\|\overline{g}_t\|^2-\theta_t\la\overline{g}_t,\xi_t\ra+\frac{1}{2}\left(L_0+L_1\nablaf{x_t}\right)\|\overline{x}_t-x_t\|^2+\left(L_0+L_1\nablaf{x_t}\right)\theta_t^2\|g_t\|^2\notag\\
  \leq & -\theta_t\|\overline{g}_t\|^2-\theta_t\la\overline{g}_t,\xi_t\ra+\frac{1}{2}\left(L_0+L_1\nablaf{x_t}\right)\left(1-\alpha_t\right)\varGamma_t\sumlimits{k=1}{t}\frac{\alpha_k}{\varGamma_k}\theta_k^2\|g_k\|^2\notag\\
  & + \left(L_0+L_1\nablaf{x_t}\right)\theta_t^2\|g_t\|^2,
\end{align}
where we use Young's inequality in the second inequality and apply Proposition \ref{proposition overline x_t - x_t} with \eqref{constant step size constraint} in the last inequality. 
Summing over $t\in [l]$ with $\theta_t=\eta$, we have
\begin{align}
\label{telescope}
f(x_{l+1})-f(x_1)  \leq& \frac{1}{2}\sum\limits^{l}_{t=1}\left[\lsmooth{t}\left(1-\alpha_t\right)\varGamma_t\sum\limits_{k=1}^t\frac{\alpha_k}{\varGamma_k}\eta^2\|g_k\|^2\right]\notag\\
&+\eta^2\sumlimits{t=1}{l}\lsmooth{t}\|g_t\|^2-\eta\sumlimits{t=1}{l}\|\overline{g}_t\|^2-\eta\sumlimits{t=1}{l}\la \overline{g}_t,\xi_t\ra\notag\\
=& \frac{1}{2}\eta^2 \sumlimits{t=1}{l}\left[\sumlimits{k=t}{l}\lsmooth{k}\left(1-\alpha_k\right)\varGamma_k\right]\frac{\alpha_t}{\varGamma_t}\|g_t\|^2\notag\\
&+\eta^2\sumlimits{t=1}{l}\lsmooth{t}\|g_t\|^2-\eta\sumlimits{t=1}{l}\|\overline{g}_t\|^2-\eta\sumlimits{t=1}{l}\la \overline{g}_t,\xi_t\ra.
\end{align}
Applying the triangle inequality of $\|\cdot\|$, we have that for all $t\in[l]$,
\begin{align}
  \label{nabla fx_t}
  \nablaf{x_t} \leq  \|\overline{g}_t-\nabla f(x_t)\|+\|\overline{g}_t\| 
  \leq  \left(L_0+L_1\|\overline{g}_t\|\right)\|\overline{x}_t-x_t\|+\|\overline{g}_t\|,
\end{align}
where the second inequality holds since $\|\overline{x}_t-x_t\|\leq 1/L_1$. Combining \eqref{bound overline x_t-x_t} and \eqref{nabla fx_t}, for all $t\in[l]$,
\begin{align}
  \label{nabla fx_t constant step size}
   \nablaf{x_t}
  \leq & \frac{1}{8L_1}\left(L_0+2L_1\sqrt{L_0\functiongapboundconstant+L_1^2\functiongapboundconstant^2}\right)+2\sqrt{L_0 \functiongapboundconstant+L_1^2 \functiongapboundconstant^2}\notag\\
  = & \frac{L_0}{8L_1}+\frac{9}{4} \sqrt{L_0 \functiongapboundconstant+L_1^2 \functiongapboundconstant^2}=\mathcal{M}_c,
\end{align}
where we apply Lemma \ref{lemma 6.2} to the inequality. 
Applying the triangle inequality again, we have
\begin{align}
  \label{triangle square}
  \|g_t\|^2\leq 2\|g_t-\overline{g}_t\|^2+2\|\overline{g}_t\|^2\leq 2\left(A \overline{\Delta}_t+\left(B+1\right)\|\overline{g}_t\|^2+C\right).
\end{align}
Combining  \eqref{proposition 1.2}, \eqref{inequality high probability non-convex constant}, \eqref{telescope} and \eqref{triangle square}, together with the bound of $\nablaf{x_t}$ in \eqref{nabla fx_t constant step size}, we have 
\begin{align}
  \label{4.26}
f(x_{l+1})-f(x_1)  \leq & 2\left(L_0+L_1\mathcal{M}_c\right)\eta^2\sumlimits{t=1}{l}\|g_t\|^2-\eta\sumlimits{t=1}{l}\|\overline{g}_t\|^2-\eta\sumlimits{t=1}{l}\la\overline{g}_t,\xi_t\ra\notag\\
\leq & 4\left(L_0+L_1\mathcal{M}_c\right)\eta^2\sumlimits{t=1}{l}\left(A\overline{\Delta}_t+\left(B+1\right)\|\overline{g}_t\|^2+C\right)-\eta\sumlimits{t=1}{l}\|\overline{g}_t\|^2\notag\\
&+\frac{1}{4}\eta\sumlimits{t=1}{l}\frac{P_t^2}{P_c^2}\|\overline{g}_t\|^2+3\eta P_c^2\log\frac{T}{\delta}\notag\\
\leq & 4\left(L_0+L_1\mathcal{M}_c\right)\eta^2 lA\functiongapboundconstant+4\left(L_0+L_1\mathcal{M}_c\right)\eta^2lC \notag\\
&+\left(4\left(L_0+L_1\mathcal{M}_c\right)\eta\left(B+1\right)+\frac{1}{4}-1\right)\eta\sumlimits{t=1}{l}\|\overline{g}_t\|^2+3\eta P_c^2\log\frac{T}{\delta},
\end{align}
where the last inequality holds since the assumption that $P_t\leq P_c, \forall t\in[l]$. 
Applying the restrictions of $\eta$ in \eqref{eta constant non-convex}, it is apparent that 
\begin{align*}
  f(x_{l+1})\leq f(x_1)+\frac{1}{4}\functiongapboundconstant+1+3\eta P_c^2\log\frac{T}{\delta}.
\end{align*}
Since $P_c^2 = A\functiongapboundconstant+4B\left(L_0\functiongapboundconstant+L_1^2\functiongapboundconstant^2\right)+C= \left(A+4BL_0\right)\functiongapboundconstant+4BL_1^2\functiongapboundconstant^2+C$,
\begin{align}
  \label{f x+1-f x_1 constant step size}
  f(x_{l+1}) \leq & f(x_1)+\frac{1}{4}\functiongapboundconstant+1+3\eta \left(A+4BL_0\right)\functiongapboundconstant\log\frac{T}{\delta}+12\eta BL_1^2\functiongapboundconstant^2\log\frac{T}{\delta}+3\eta C \log\frac{T}{\delta}\notag\\
  \leq & f(x_1)+\frac{1}{4}\functiongapboundconstant+1+2\sqrt{\functiongapboundconstant}\log\frac{T}{\delta}+3\eta C\log\frac{T}{\delta}     \notag\\
  \leq & f(x_1)+\frac{1}{2}\functiongapboundconstant+1+ 4\log^2\frac{T}{\delta}+3\eta C \log\frac{T}{\delta},
\end{align}
where the second inequality is due to the restrictions of $\eta$ in \eqref{eta constant non-convex} and the last inequality follows from Young's inequality. Next we will bound the gap between $f(x_{l+1})$ and $f(\overline{x}_{l+1})$. By \eqref{bound overline x_t-x_t}, we have $\|\overline{x}_{l+1}-x_{l+1}\|<1/L_1$. Using Lemma \ref{descent lemma} again,
\begin{align}
  \label{gap between f overline x_t+1 and f x_t+1 constant step size}
  f(\overline{x}_{l+1}) \leq & f(x_{l+1})+\la \overline{g}_{l+1}, \overline{x}_{l+1}-x_{l+1}\ra+\frac{L_0+L_1\|\overline{g}_{l+1}\|}{2}\|\overline{x}_{l+1}-x_{l+1}\|^2\notag\\
  \leq & f(x_{l+1})+\|\overline{g}_{l+1}\|\|\overline{x}_{l+1}-x_{l+1}\|+\frac{L_0+L_1\|\overline{g}_{l+1}\|}{2}\|\overline{x}_{l+1}-x_{l+1}\|^2,
\end{align}
where the second inequality follows from Cauchy-Schwarz inequality. Substituting \eqref{bound overline x_t-x_t} into \eqref{gap between f overline x_t+1 and f x_t+1 constant step size}, we have
\begin{align}
  \label{descent lemma 2}
  f(\overline{x}_{l+1}) \leq & f(x_{l+1})+\frac{1}{8L_1}\|\overline{g}_{l+1}\|+\frac{1}{128L_1^2}\left(L_0+L_1\|\overline{g}_{l+1}\|\right)\notag\\
  = & f(x_{l+1})+\frac{17}{128L_1}\|\overline{g}_{l+1}\|+\frac{L_0}{128L_1^2}.
\end{align}
Combining \eqref{f x+1-f x_1 constant step size} and \eqref{descent lemma 2}, and subtracting $f^*$ from both sides, we have
\begin{align*}
  \overline{\Delta}_{l+1}\leq & \overline{\Delta}_1+\frac{1}{2}\functiongapboundconstant+1+4\log^2\frac{T}{\delta}+3\eta C \log\frac{T}{\delta} +\frac{17}{128L_1}\|\overline{g}_{l+1}\|+\frac{L_0}{128L_1^2}\notag\\
  \leq & \overline{\Delta}_1+\frac{1}{2}\functiongapboundconstant+1+4\log^2\frac{T}{\delta}+3\eta C \log\frac{T}{\delta} + \frac{17}{64L_1}\left(\sqrt{L_0\overline{\Delta}_{l+1}}+L_1\overline{\Delta}_{l+1}\right)+  \frac{L_0}{128L_1^2}\notag\\
  \leq &   \overline{\Delta}_1+\frac{1}{2}\functiongapboundconstant+1+4\log^2\frac{T}{\delta}+\frac{\sqrt{2C}}{\sqrt{L_0T}} + \frac{7}{64}\overline{\Delta}_{l+1}+\frac{289L_0}{1792L_1^2}+\frac{17}{64}\overline{\Delta}_{l+1}+\frac{L_0}{128L_1^2}.
\end{align*}
The second inequality follows from Lemma \ref{lemma 6.2} and Lemma \ref{sqrt sum}. The third inequality follows from Young's inequality and the constraint of $\eta$ that $\eta\leq \frac{1}{2\sqrt{\left(L_0+L_1\mathcal{M}_c\right)CT}}$, where $\mathcal{M}_c\geq \frac{L_0}{8L_1}$.
Re-arranging the above inequality, we have 
\begin{align*}
  \frac{5}{8}\overline{\Delta}_{l+1}\leq \overline{\Delta}_1+\frac{1}{2}\functiongapboundconstant+1+4\log^2\frac{T}{\delta}+\frac{\sqrt{2C}}{\sqrt{L_0T}}+\frac{303L_0}{1792L_1^2}\leq \frac{5}{8}\functiongapboundconstant.
\end{align*}
The second inequality holds since 
$\functiongapboundconstant=\Gconstant$.
Therefore, $P_{l+1}\leq P_c$ also holds. 
So the induction is complete and we obtain the desired result. 
\end{proof}
 Using the above results, we are able to prove the convergence rate of RSAG with the constant step size and SGD in the non-convex case. 
\begin{proof}[Proof of Theorem \ref{theorem 1}]
We assume \eqref{inequality high probability non-convex constant} always holds then we deduce \eqref{4.1}. Note that \eqref{inequality high probability non-convex constant} holds with probability at least $1-\delta$. Therefore, \eqref{4.1} holds with probability at least $1-\delta$.
Recalling \eqref{inequality base} and dividing $\theta_t$ on both sides, we have  
\begin{align*}
  \|\overline{g}_t\|^2\leq &\frac{f(x_t)-f(x_{t+1})}{\theta_t}+\frac{1}{2\theta_t}\left(L_0+L_1\nablaf{x_t}\right)\left(1-\alpha_t\right)\varGamma_t\sumlimits{k=1}{t}\frac{\alpha_k}{\varGamma_k}\theta_k^2\|g_k\|^2\notag\\
&+\left(L_0+L_1\nablaf{x_t}\right)\theta_t\|g_t\|^2-\la\overline{g}_t,\xi_t\ra.
\end{align*}
Summing over $t\in[T]$ with $\theta_t=\eta$,
\begin{align}
  \label{constant gt}
  \sumlimits{t=1}{T}\|\overline{g}_t\|^2\leq & \frac{1}{\eta}\sumlimits{t=1}{T}\left(f(x_t)-f(x_{t+1})\right)+\frac{\eta}{2}\sumlimits{t=1}{T}\left[\left(L_0+L_1\nablaf{x_t}\right)\left(1-\alpha_t\right)\varGamma_t\sumlimits{k=1}{t}\frac{\alpha_k}{\varGamma_k}\|g_k\|^2\right]\notag\\
  &+\eta\sumlimits{t=1}{T}\left(L_0+L_1\nablaf{x_t}\right)\|g_t\|^2-\sumlimits{t=1}{T}\la \overline{g}_t,\xi_t\ra\notag\\
  \leq &\frac{\overline{\Delta}_1}{\eta}+\frac{\eta}{2} \sumlimits{t=1}{T}\left[\sumlimits{k=t}{T}\lsmooth{k}\left(1-\alpha_k\right)\varGamma_k\right]\frac{\alpha_t}{\varGamma_t}\|g_t\|^2\notag\\
  & +\eta\sumlimits{t=1}{T}\left(L_0+L_1\nablaf{x_t}\right)\|g_t\|^2-\sumlimits{t=1}{T}\la \overline{g}_t,\xi_t\ra.
\end{align}
Applying Lemma \ref{high probability sum non-convex constant},
\begin{align*}
  \sumlimits{t=1}{T}\|\overline{g}_t\|^2\leq &\frac{\overline{\Delta}_1}{\eta}+\frac{\eta}{2} \sumlimits{t=1}{T}\left[\sumlimits{k=t}{T}\lsmooth{k}\left(1-\alpha_k\right)\varGamma_k\right]\frac{\alpha_t}{\varGamma_t}\|g_t\|^2\notag\\
  & +\eta\sumlimits{t=1}{T}\left(L_0+L_1\nablaf{x_t}\right)\|g_t\|^2+\frac{1}{4P_c^2}\sumlimits{t=1}{T}\|\overline{g}_t\|^2 P_t^2+3P_c^2\log\frac{T}{\delta}.
\end{align*}
Combining with \eqref{proposition 1.2}, \eqref{nabla fx_t constant step size}, \eqref{triangle square} and Proposition \ref{constant bound}, we have
\begin{align*}
  \sumlimits{t=1}{T}\|\overline{g}_t\|^2 \leq & \frac{\overline{\Delta}_1}{\eta}+2\eta\sumlimits{t=1}{T}\left(L_0+L_1\mathcal{M}_c\right)\|g_t\|^2+\frac{1}{4}\sumlimits{t=1}{T}\|\overline{g}_t\|^2+3P_c^2\log\frac{T}{\delta}\notag\\
  \leq & \frac{\overline{\Delta}_1}{\eta}+4\eta\left(L_0+L_1\mathcal{M}_c\right) T \left(A\functiongapboundconstant+C\right)+4\left(L_0+L_1\mathcal{M}_c\right)\left(B+1\right)\eta\sumlimits{t=1}{T}\|\overline{g}_t\|^2\notag\\
  & + \frac{1}{4}\sumlimits{t=1}{T}\|\overline{g}_t\|^2+3\left(A\functiongapboundconstant + 4B\left(L_0\functiongapboundconstant+L_1^2\functiongapboundconstant^2\right)+C\right)\log\frac{T}{\delta}.
\end{align*}
Re-arranging the above inequality with the constraint of $\eta$ in \eqref{eta constant non-convex}, we obtain 
\begin{align*}
  \frac{1}{T}\sumlimits{t=1}{T}\|\overline{g}_t\|^2\leq & \frac{4\overline{\Delta}_1}{\eta T}+ \frac{4}{\sqrt{T}}\sqrt{L_0+L_1\mathcal{M}_c}\left(\sqrt{A\functiongapboundconstant^2}+2\sqrt{C}\right)\notag\\
  &+\frac{12}{T}\left(A\functiongapboundconstant + 4B\left(L_0\functiongapboundconstant+L_1^2\functiongapboundconstant^2\right)+C\right)\log\frac{T}{\delta}.
\end{align*}
Combining with the restrictions of $\eta$ in \eqref{eta constant non-convex} again, we get the final result,
\begin{align}
  \label{5.34}
  \frac{1}{T}\sumlimits{t=1}{T}\|\overline{g}_t\|^2\leq & \frac{4}{\sqrt{T}}\sqrt{L_0+L_1\mathcal{M}_c}\left(\sqrt{A}\left(\functiongapboundconstant+4\overline{\Delta}_1\right)+2\sqrt{C}\left(1+\overline{\Delta}_1\right)\right)\notag\\
  &+ \frac{32\overline{\Delta}_1}{T}\left(L_0+L_1\mathcal{M}_c\right)\left(B+1\right) +  \frac{12\overline{\Delta}_1}{T}\left(\left(A+4BL_0\right)\functiongapboundconstant^{1/2}+4BL_1^2\functiongapboundconstant^{3/2}\right) \notag\\
  &+ \frac{32\overline{\Delta}_1}{T}L_1 \left(\sqrt{A\functiongapboundconstant}+2\left(\sqrt{B}+1\right)\sqrt{L_0 \functiongapboundconstant+L_1^2 \functiongapboundconstant^2}+\sqrt{C}\right) \notag\\
  &+\frac{12}{T}\left(A\functiongapboundconstant + 4B\left(L_0\functiongapboundconstant+L_1^2\functiongapboundconstant^2\right)+C\right)\log\frac{T}{\delta}.
\end{align}
\end{proof}

\subsection{Convex Optimization}
\label{constant convex}
In this section, we derive the convergence rate in the convex case under the generalized smoothness assumption with the constant step size, using the function value gap obtained in Proposition \ref{constant bound}. 
We begin by presenting some lemmas related to probability inequalities.
\begin{lemma}
  \label{high probability sum convex constant}
  Given $T\geq 1 $ and $\delta\in \left(0,1\right)$, if Assumptions \ref{assumption 2} and \ref{assumption 3} hold, then with probability at least $1-\delta$, $\forall l\in[T]$,
  \begin{align}
    \label{inequality high probability convex constant}
    \sumlimits{t=1}{l}-\eta \la\xi_t,x_t-x^*\ra\leq \frac{\eta^2}{2D_c}\sumlimits{t=1}{l}\left(A\overline{\Delta}_t+B\|\overline{g}_t\|^2+C\right)\|x_t-x^*\|^2+\frac{3D_c}{2}\log\frac{T}{\delta}.
  \end{align}
\end{lemma}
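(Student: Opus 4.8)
The plan is to observe that $\{-\eta\langle\xi_t,x_t-x^*\rangle\}_t$ is a martingale difference sequence and to control its partial sums by a Freedman-type bound with a \emph{predictable} (i.e.\ $\mathcal F_{t-1}$-measurable) variance proxy, exactly in the spirit of Lemma \ref{high probability sum non-convex constant}. Let $\mathcal F_{t-1}=\sigma(z_1,\dots,z_{t-1})$. Since $x_{t+1}=x_t-\theta_t g_t$ and $\tilde{x}_{t+1}=\overline{x}_t-\gamma_t g_t$, the iterates $x_t,\tilde{x}_t$, hence $\overline{x}_t=\alpha_t x_t+(1-\alpha_t)\tilde{x}_t$, and therefore $\overline{g}_t=\nabla f(\overline{x}_t)$, $\overline{\Delta}_t$ and $\|x_t-x^*\|$, are all $\mathcal F_{t-1}$-measurable. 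By Assumption \ref{assumption 2}, $\mathbb E_t[\xi_t]=0$, so $Y_t:=-\eta\langle\xi_t,x_t-x^*\rangle$ has $\mathbb E_t[Y_t]=0$; by Cauchy--Schwarz and the almost-sure bound in Assumption \ref{assumption 3},
\[
|Y_t|\le\eta\|x_t-x^*\|\sqrt{A\overline{\Delta}_t+B\|\overline{g}_t\|^2+C}=:R_t,\qquad \mathbb E_t[Y_t^2]\le R_t^2,
\]
with $R_t$ being $\mathcal F_{t-1}$-measurable.

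Next I would invoke the exponential-supermartingale argument underlying Lemma \ref{high probability sum non-convex constant} with the fixed choice $\lambda=1/D_c$: Hoeffding's lemma gives $\mathbb E_t[\exp(\lambda Y_t)]\le\exp(\tfrac{1}{2}\lambda^2 R_t^2)$, so $M_l:=\exp\!\big(\sum_{t=1}^l(\lambda Y_t-\tfrac{1}{2}\lambda^2 R_t^2)\big)$ is a nonnegative supermartingale with $\mathbb E[M_0]=1$. Ville's maximal inequality then yields, with probability at least $1-\delta$ and simultaneously for all $l\in[T]$,
\[
\sum_{t=1}^l Y_t\le\frac{\lambda}{2}\sum_{t=1}^l R_t^2+\frac{1}{\lambda}\log\frac{1}{\delta}=\frac{\eta^2}{2D_c}\sum_{t=1}^l\big(A\overline{\Delta}_t+B\|\overline{g}_t\|^2+C\big)\|x_t-x^*\|^2+D_c\log\frac{1}{\delta},
\]
which is \eqref{inequality high probability convex constant} once $D_c\log(1/\delta)$ is absorbed into $\tfrac{3D_c}{2}\log\frac{T}{\delta}$; the extra $\log T$ and the slightly larger constant merely reflect the precise statement of the appendix concentration lemma (a crude union bound over $l\in[T]$, or the Bernstein-type refinement giving the factor $\tfrac32$, already suffices). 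Under the sub-Gaussian variant of Assumption \ref{assumption 3} (Remark \ref{remark 4.1}) one only replaces Hoeffding's MGF estimate by the corresponding sub-Gaussian one.

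The step that forces the predictable-variance route, rather than a plain Azuma or Freedman inequality with deterministic bounds, is that the variance proxy $R_t$ involves $\overline{\Delta}_t$ and $\|\overline{g}_t\|^2$, which are not yet known to be uniformly bounded: the uniform estimate $\overline{\Delta}_t\le\functiongapboundconstant$ of Proposition \ref{constant bound} is established \emph{downstream} using precisely this lemma, so invoking it here would be circular. The supermartingale construction sidesteps this, since it requires only that $R_t$ be $\mathcal F_{t-1}$-measurable and that $Y_t$ be conditionally bounded by it, both of which hold unconditionally; the only genuine design choice is the calibration $\lambda=1/D_c$, made so that the variance term produced is exactly the one that will later be cancelled against the descent inequality in the convex analysis.
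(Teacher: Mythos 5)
Your proof is correct, and it reaches the stated bound by a genuinely different (and slightly sharper) concentration argument than the paper's. The paper sets $Z_t=-\eta\langle\xi_t,x_t-x^*\rangle$, verifies the conditional MGF condition $\mathbb{E}_t[\exp(Z_t^2/\sigma_t^2)]\leq \mathrm{e}$ with $\sigma_t^2=\eta^2(A\overline{\Delta}_t+B\|\overline{g}_t\|^2+C)\|x_t-x^*\|^2$, invokes the ready-made concentration result of Lemma \ref{lemma high probability} for each fixed $l$, and then union-bounds over $l\in[T]$ (the "re-scale over $\delta$" step), which is exactly where the $\log\frac{T}{\delta}$ comes from; the calibration is $\lambda=\frac{2}{3D_c}$, yielding the coefficients $\frac{\eta^2}{2D_c}$ and $\frac{3D_c}{2}$. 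You instead build the exponential supermartingale from Hoeffding's lemma and apply Ville's maximal inequality, which delivers the bound uniformly in $l$ without any union bound, so you obtain $D_c\log\frac{1}{\delta}$ in place of $\frac{3D_c}{2}\log\frac{T}{\delta}$ and correctly note that the former is dominated by the latter. What your route buys is the removal of the $\log T$ factor and a smaller constant in front of the variance term ($\frac{\lambda}{2}$ versus $\frac{3\lambda}{4}$); what the paper's route buys is that the MGF condition it verifies is weaker than almost-sure boundedness, so the identical proof covers the sub-Gaussian variant of Assumption \ref{assumption 3} with no modification, whereas you must swap Hoeffding's estimate for the sub-Gaussian one as you indicate. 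Your observation that the variance proxy must be $\mathcal{F}_{t-1}$-measurable precisely because the uniform bound $\overline{\Delta}_t\leq\functiongapboundconstant$ of Proposition \ref{constant bound} is only established downstream of this lemma is exactly the structural point the paper's argument also relies on.
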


\begin{lemma}
  \label{high probability sum convex constant convergence}
  Given $T\geq 1 $, $\lambda>0$ and $\delta\in \left(0,1\right)$, if Assumptions \ref{assumption 2} and \ref{assumption 3} hold, then with probability at least $1-\delta$,
  \begin{align}
    \label{inequality high probability convex constant convergence}
    \sumlimits{t=1}{T}-\la \xi_t,\overline{x}_t-x^*\ra\leq \frac{3\lambda}{4}\sumlimits{t=1}{T}\left(A\overline{\Delta}_t+B\|\overline{g}_t\|^2+C\right)\|\overline{x}_t-x^*\|^2+\frac{1}{\lambda}\log\frac{1}{\delta}.
  \end{align}
\end{lemma}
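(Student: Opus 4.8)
The plan is to view $X_t := -\la \xi_t, \overline{x}_t - x^*\ra$ as a martingale-difference sequence with respect to the filtration $\mathcal{F}_{t-1} := \sigma(z_1,\dots,z_{t-1})$ and to control $\sum_{t=1}^T X_t$ by the standard exponential-supermartingale (Ville/Markov) argument with a \emph{predictable but random} variance proxy. First I would record the two structural facts that make this work. Since $\overline{x}_t = \alpha_t x_t + (1-\alpha_t)\tilde{x}_t$ is assembled only from $g_1,\dots,g_{t-1}$, it is $\mathcal{F}_{t-1}$-measurable; hence $\overline{g}_t=\nabla f(\overline{x}_t)$, $\overline{\Delta}_t$, and the scalar $\sigma_t := \sqrt{A\overline{\Delta}_t + B\|\overline{g}_t\|^2 + C}$ are all $\mathcal{F}_{t-1}$-measurable. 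By Assumption~\ref{assumption 2}, $\mE_t[g_t]=\nabla f(\overline{x}_t)=\overline{g}_t$, so $\mE_t[\xi_t]=0$, and therefore $\mE_t[X_t]=0$.

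Next I would bound the conditional moment generating function of $X_t$. Assumption~\ref{assumption 3} gives $\|\xi_t\|\le \sigma_t$ almost surely, so by Cauchy--Schwarz $|X_t|\le \sigma_t\|\overline{x}_t-x^*\|$ almost surely, with the right-hand side $\mathcal{F}_{t-1}$-measurable. Applying the conditional form of Hoeffding's lemma to the bounded, conditionally centered variable $X_t$ yields, for every $\mu>0$,
\[
\mE_t\!\left[\exp(\mu X_t)\right]\le \exp\!\left(\tfrac{\mu^2}{2}\,\sigma_t^2\|\overline{x}_t-x^*\|^2\right)\le \exp\!\left(\tfrac{3\mu^2}{4}\,\sigma_t^2\|\overline{x}_t-x^*\|^2\right).
\]
Under the sub-Gaussian noise variant of Remark~\ref{remark 4.1} the same bound holds with an absolute constant that can be kept at or below $3/4$, by passing from the norm-sub-Gaussian control of $\xi_t$ to directional sub-Gaussianity of $\la\xi_t,\overline{x}_t-x^*\ra$; this is the only place the slack between $1/2$ and $3/4$ is needed.

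Then, setting $M_l := \exp\!\big(\mu\sum_{t=1}^l X_t - \tfrac{3\mu^2}{4}\sum_{t=1}^l \sigma_t^2\|\overline{x}_t-x^*\|^2\big)$ with $M_0=1$, the MGF estimate together with the predictability of $\sigma_t^2\|\overline{x}_t-x^*\|^2$ gives $\mE[M_l\mid\mathcal{F}_{l-1}]\le M_{l-1}$, so $\{M_l\}$ is a nonnegative supermartingale with $\mE[M_T]\le 1$. Markov's inequality gives $\mathbb{P}(M_T\ge 1/\delta)\le\delta$, hence with probability at least $1-\delta$,
\[
\mu\sum_{t=1}^T X_t\le \tfrac{3\mu^2}{4}\sum_{t=1}^T \sigma_t^2\|\overline{x}_t-x^*\|^2+\log\tfrac{1}{\delta};
\]
dividing by $\mu$ and choosing $\mu=\lambda$ (which simultaneously fixes the variance coefficient at $3\lambda/4$ and the deviation term at $\tfrac1\lambda\log\tfrac1\delta$, using $\sigma_t^2=A\overline{\Delta}_t+B\|\overline{g}_t\|^2+C$) is exactly \eqref{inequality high probability convex constant convergence}. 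I do not expect a deep obstacle: the care points are the measurability bookkeeping that makes $\sigma_t$ and $\overline{x}_t-x^*$ predictable — which is precisely why the supermartingale route is needed, since the ``variance'' is random and not bounded a priori — the matching of constants through $\mu=\lambda$, and, in the sub-Gaussian case, tracking the absolute constant to keep it at most $3/4$. A sharper variance factor could be obtained by replacing Hoeffding with a conditional Bernstein/Bennett bound using $\mE_t[X_t^2]\le\sigma_t^2\|\overline{x}_t-x^*\|^2$, but that refinement is unnecessary here.
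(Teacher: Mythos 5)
Your proof is correct and follows the same underlying route as the paper: identify $-\la\xi_t,\overline{x}_t-x^*\ra$ as a martingale difference sequence with the $\mathcal{F}_{t-1}$-measurable variance proxy $\left(A\overline{\Delta}_t+B\|\overline{g}_t\|^2+C\right)\|\overline{x}_t-x^*\|^2$, then apply an exponential-supermartingale concentration bound. The only difference is that the paper simply verifies the condition $\mathbb{E}_t[\exp(Z_t^2/\sigma_t^2)]\leq \mathrm{e}$ and cites Lemma \ref{lemma high probability}, whereas you re-derive that bound from scratch via conditional Hoeffding plus Ville/Markov; under the almost-sure Assumption \ref{assumption 3} this is fully rigorous and in fact yields the sharper constant $1/2$ in place of $3/4$, so the stated inequality follows a fortiori.
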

Next we will bound the term $\|x_t-x^*\|^2$, $\forall t\in[T]$, by induction.
\begin{proposition}
  \label{4.3}
 Under the conditions of Theorem \ref{theorem 2}, with probability at least $1-2\delta$, we have
  \begin{align}
   \label{theorem 2 (1)}
   \|x_t-x^*\|^2\leq D_c^2, \quad \forall t\in[T],
  \end{align}
  where 
  \begin{align}
  \label{D_c}
    D_c^2=&2\|x_1-x^*\|^2+3\left(A\functiongapboundconstant+ 4\left(B+\frac{7}{6}\right)\left(L_0\functiongapboundconstant+L_1^2\functiongapboundconstant^2\right)+C\right)\notag\\
  & +\frac{1}{8}\left(A\functiongapboundconstant+4B\left(L_0\functiongapboundconstant+L_1^2\functiongapboundconstant^2\right)+C\right)^2  +18\log^2\frac{T}{\delta}.
  \end{align}
\end{proposition}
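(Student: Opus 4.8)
The statement is a uniform-in-$t$ bound on the iterates $\{x_t\}$, so the plan is an induction on $l$, carried out on the intersection of two high-probability events: the event of Proposition~\ref{constant bound} (on which $\overline{\Delta}_t\leq\functiongapboundconstant$, $P_t\leq P_c$, $\|\overline{x}_t-x_t\|\leq Y_c\leq 1/(8L_1)$, and hence, via Lemma~\ref{lemma 6.2}, $\|\overline{g}_t\|^2\leq 4\left(L_0\functiongapboundconstant+L_1^2\functiongapboundconstant^2\right)$ for every $t\in[T]$), and the event of Lemma~\ref{high probability sum convex constant}. A union bound over these two events is exactly what produces the probability $1-2\delta$; everything afterwards is deterministic. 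The base case $l=1$ is immediate, since $D_c^2\geq 2\|x_1-x^*\|^2$.

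For the inductive step, assume $\|x_t-x^*\|^2\leq D_c^2$ for all $t\in[l]$. Because $\theta_t=\eta$ is constant, expanding $x_{t+1}=x_t-\eta g_t$ and telescoping gives
\[
\|x_{l+1}-x^*\|^2=\|x_1-x^*\|^2-2\eta\sumlimits{t=1}{l}\la g_t,x_t-x^*\ra+\eta^2\sumlimits{t=1}{l}\|g_t\|^2 .
\]
I would then split $\la g_t,x_t-x^*\ra=\la\overline{g}_t,\overline{x}_t-x^*\ra+\la\overline{g}_t,x_t-\overline{x}_t\ra+\la\xi_t,x_t-x^*\ra$ and handle the three groups separately. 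Convexity of $f$ gives $\la\overline{g}_t,\overline{x}_t-x^*\ra\geq\overline{\Delta}_t\geq 0$, so $-2\eta\sum_t\la\overline{g}_t,\overline{x}_t-x^*\ra\leq 0$. The cross term is bounded by Cauchy--Schwarz together with $\|x_t-\overline{x}_t\|\leq Y_c$ and the uniform gradient bound; since $Y_c$ is itself proportional to $\eta$, its sum over $t$ is a quantity of the form $\eta^2T\times(\text{problem constants})$. The curvature term $\eta^2\sum_t\|g_t\|^2$ is controlled by \eqref{triangle square}, $\overline{\Delta}_t\leq\functiongapboundconstant$ and the uniform bound on $\|\overline{g}_t\|$, again of the form $\eta^2T\times(\text{problem constants})$. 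Each such $\eta^2T$ factor is then collapsed to an explicit constant using \eqref{eta constant convex}: $\eta\leq 1/(\mathcal{A}\sqrt T)$ absorbs the $A\functiongapboundconstant$ part, $\eta\leq 1/(\mathcal{C}\sqrt T)$ the $C$ part, and the constraints $\eta\leq\functiongapboundconstant^{-1/2}/(3(A+4BL_0))$, $\eta\leq\functiongapboundconstant^{-3/2}/(12BL_1^2)$ together with $\eta\le 1/\mathcal{L}_c$ and $\eta\leq 1/(2\sqrt T)$ the $B$-dependent pieces; here one repeatedly uses that $L_0+L_1\mathcal{M}_c$ is at least a fixed multiple of $L_1^2\functiongapboundconstant$ to tame the $\functiongapboundconstant$-dependence.

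The stochastic part is where Lemma~\ref{high probability sum convex constant} enters: up to the harmless factor $2$ it gives
\[
-2\eta\sumlimits{t=1}{l}\la\xi_t,x_t-x^*\ra\leq\frac{\eta^2}{D_c}\sumlimits{t=1}{l}\left(A\overline{\Delta}_t+B\|\overline{g}_t\|^2+C\right)\|x_t-x^*\|^2+3D_c\log\frac{T}{\delta}.
\]
Now I invoke the inductive hypothesis $\|x_t-x^*\|^2\leq D_c^2$ together with the pointwise bound $A\overline{\Delta}_t+B\|\overline{g}_t\|^2+C\leq P_c^2$ valid on the conditioning event; the $1/D_c$ normalization built into the lemma is precisely what keeps the resulting bound \emph{linear} in $D_c$, namely at most $\eta^2 T P_c^2 D_c+3D_c\log\frac{T}{\delta}\leq \tfrac14 P_c^2 D_c+3D_c\log\frac{T}{\delta}$, where $\eta^2 T\leq\tfrac14$ comes from $\eta\leq 1/(2\sqrt T)$. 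A Young's-inequality split then turns $\tfrac14 P_c^2 D_c$ into a multiple of $P_c^4$ — the $\tfrac18(\cdots)^2$ summand in \eqref{D_c} — plus a small multiple of $D_c^2$, and $3D_c\log\frac{T}{\delta}$ into a $\log^2\frac{T}{\delta}$ constant (the $18\log^2\frac{T}{\delta}$ summand) plus a small multiple of $D_c^2$. Collecting the three groups, $\|x_{l+1}-x^*\|^2$ is bounded by $\|x_1-x^*\|^2$, plus the explicit constant block $3\bigl(A\functiongapboundconstant+4(B+\tfrac76)(L_0\functiongapboundconstant+L_1^2\functiongapboundconstant^2)+C\bigr)$, plus $\tfrac18 P_c^4+18\log^2\frac{T}{\delta}$, plus at most $\tfrac12 D_c^2$; moving that $\tfrac12 D_c^2$ to the left-hand side is exactly what forces $D_c^2\geq 2\|x_1-x^*\|^2+\cdots$, i.e. the formula \eqref{D_c}, and closes the induction. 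Note that, unlike in Proposition~\ref{constant bound}, no re-derivation of $P_{l+1}\leq P_c$ is required, since $P_t\leq P_c$ for all $t\in[T]$ already holds on the conditioning event.

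The main obstacle is purely the bookkeeping in the last two paragraphs: one must verify that, under the precise step-size schedule of Theorem~\ref{theorem 2}, every $\eta^2T$ (and $\eta T$) product appearing in the cross and curvature terms collapses to a controllable constant, and that after the Young steps the total coefficient multiplying $D_c^2$ stays strictly below $1$, so that it can be absorbed. There is no conceptual difficulty beyond mirroring the induction in Proposition~\ref{constant bound}, but the constants must be tracked carefully so that the chosen $D_c^2$ in \eqref{D_c} is large enough for the recursion to close while still being of the advertised order $\mathcal{O}\!\left(\|x_1-x^*\|^2+(L_1\functiongapboundconstant)^4\right)$.
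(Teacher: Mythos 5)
Your proposal is correct and follows essentially the same route as the paper: an induction on $l$ conditioned on the intersection of the events of Lemma~\ref{high probability sum non-convex constant} (equivalently, Proposition~\ref{constant bound}) and Lemma~\ref{high probability sum convex constant}, the same three-way decomposition of $-2\eta\sum_t\la g_t,x_t-x^*\ra$ with the convexity term dropped, the same use of $\eta^2T\le\tfrac14$, and the same Young's-inequality splits producing the $\tfrac18 P_c^4$ and $18\log^2\tfrac{T}{\delta}$ summands before absorbing $\tfrac12 D_c^2$. The only cosmetic difference is that you control the cross term by Cauchy--Schwarz with $\|x_t-\overline{x}_t\|\le Y_c$ rather than the paper's Young-plus-Proposition~\ref{proposition overline x_t - x_t} route, which yields the same $\eta^2T$-type bound.
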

\begin{proof}
  We assume that \eqref{inequality high probability non-convex constant} and \eqref{inequality high probability convex constant} always happen and then we deduce \eqref{theorem 2 (1)}. 
  Since \eqref{inequality high probability non-convex constant} and \eqref{inequality high probability convex constant} each hold with probability at least $1-\delta$, it follows that \eqref{theorem 2 (1)} holds with probability at least $1-2\delta$.
  It is easy to verify that $\|x_1-x^*\|^2\leq D_c^2$. Suppose that for some $l\in [T]$,
  \begin{align}
    \label{assumption constant convex}
    \|x_t-x^*\|^2\leq D_c^2, \quad \forall t\in [l].
  \end{align}
  Using the iteration step of Algorithm \ref{algorithm1}, we have
  \begin{align}
    \label{x_{t+1}-x_t}
    \|x_{t+1}-x^*\|^2=\|x_{t}-x^*\|^2-2\theta_t \la g_t, x_t-x^*\ra+\theta_t^2\|g_t\|^2. 
  \end{align}
  Summing over $t\in[l]$ with $\theta_t=\eta$, we have
  \begin{align*}
    \|x_{l+1}-x^*\|^2=&\|x_{1}-x^*\|^2-2\eta\sumlimits{t=1}{l} \la g_t, x_t-x^*\ra+\eta^2\sumlimits{t=1}{l}\|g_t\|^2.
  \end{align*}
Also, by decomposing the middle term on the right side, we have 
\begin{align*}
  -2\eta\sumlimits{t=1}{l} \la g_t, x_t-x^*\ra=&-2\eta\sumlimits{t=1}{l}\la \overline{g}_t, x_t-x^*\ra+2\eta\sumlimits{t=1}{l}\la \overline{g}_t-g_t,x_t-x^*\ra\notag\\
  =&-2\eta\sumlimits{t=1}{l}\la \overline{g}_t, \overline{x}_t-x^*\ra-2\eta\sumlimits{t=1}{l}\la \overline{g}_t, x_t-\overline{x}_t\ra +2\eta\sumlimits{t=1}{l}\la \overline{g}_t-g_t,x_t-x^*\ra\notag\\
  \leq & -2\eta\sumlimits{t=1}{l}\la \overline{g}_t, x_t-\overline{x}_t\ra +2\eta\sumlimits{t=1}{l}\la \overline{g}_t-g_t,x_t-x^*\ra,
\end{align*}
where the above inequality follows from Lemma \ref{convexity}. Therefore,
\begin{align}
  \label{4.50}
  \|x_{l+1}-x^*\|^2\leq & \|x_{1}-x^*\|^2-2\eta\sumlimits{t=1}{l}\la \overline{g}_t, x_t-\overline{x}_t\ra+ 2\eta\sumlimits{t=1}{l}\la \overline{g}_t-g_t,x_t-x^*\ra+\eta^2\sumlimits{t=1}{l}\|g_t\|^2.
\end{align}
Applying Lemma \ref{high probability sum convex constant}, Cauchy-Schwarz inequality and Young's inequality to \eqref{4.50}, 
\begin{align*}
  \|x_{l+1}-x^*\|^2 \leq & \|x_{1}-x^*\|^2+\eta^2\sumlimits{t=1}{l}\|\overline{g}_t\|^2+\sumlimits{t=1}{l}\|x_t-\overline{x}_t\|^2\notag\\
  & +\frac{\eta^2}{D_c}\sumlimits{t=1}{l}\left(A\overline{\Delta}_t+B\|\overline{g}_t\|^2+C\right)\|x_t-x^*\|^2+3D_c\log\frac{T}{\delta}+\eta^2\sumlimits{t=1}{l}\|g_t\|^2\notag\\
  \leq & \|x_{1}-x^*\|^2+\eta^2\sumlimits{t=1}{l}\|\overline{g}_t\|^2+\sumlimits{t=1}{l}\left[\left(1-\alpha_t\right)\varGamma_t\sumlimits{k=1}{t}\frac{\alpha_k}{\varGamma_k}\eta^2\|g_k\|^2\right]\notag\\
  & +\frac{\eta^2}{D_c}\sumlimits{t=1}{l}\left(A\overline{\Delta}_t+B\|\overline{g}_t\|^2+C\right)\|x_t-x^*\|^2+3D_c\log\frac{T}{\delta}+\eta^2\sumlimits{t=1}{l}\|g_t\|^2\notag\\
  \leq & \|x_{1}-x^*\|^2+\eta^2\sumlimits{t=1}{l}\|\overline{g}_t\|^2+\eta^2\sumlimits{t=1}{l}\left[\sumlimits{k=t}{l}\left(1-\alpha_k\right)\varGamma_k\right]\frac{\alpha_t}{\varGamma_t}\|g_t\|^2\notag\\
  &+\eta^2D_c\sumlimits{t=1}{l}\left(A\overline{\Delta}_t+B\|\overline{g}_t\|^2+C\right)+3D_c\log\frac{T}{\delta}+\eta^2\sumlimits{t=1}{l}\|g_t\|^2,
\end{align*}
where the second inequality follows from Proposition \ref{proposition overline x_t - x_t} and \eqref{constant step size constraint}, and the third inequality follows from \eqref{assumption constant convex}.
By \eqref{proposition 1.2}, we know that $\left[\sumlimits{k=t}{l}\left(1-\alpha_k\right)\varGamma_k\right]\frac{\alpha_t}{\varGamma_t}\leq 2$ for all $t\leq l\leq T$. Therefore,  
\begin{align*}  
  \|x_{l+1}-x^*\|^2\leq & \|x_{1}-x^*\|^2+\eta^2\sumlimits{t=1}{l}\|\overline{g}_t\|^2+3\eta^2\sumlimits{t=1}{l}\|g_t\|^2\notag\\
  &+\eta^2D_c\sumlimits{t=1}{l}\left(A\overline{\Delta}_t+B\|\overline{g}_t\|^2+C\right)+3D_c\log\frac{T}{\delta}\notag\\
  \leq & \|x_{1}-x^*\|^2+\eta^2\sumlimits{t=1}{l}\|\overline{g}_t\|^2+6\eta^2\sumlimits{t=1}{l}\left(A\overline{\Delta}_t+\left(B+1\right)\|\overline{g}_t\|^2+C\right)\notag\\
  &+\eta^2D_c\sumlimits{t=1}{l}\left(A\overline{\Delta}_t+B\|\overline{g}_t\|^2+C\right)+3D_c\log\frac{T}{\delta}\notag\\
  = & \|x_{1}-x^*\|^2+6\eta^2\sumlimits{t=1}{l}\left(A\overline{\Delta}_t+\left(B+\frac{7}{6}\right)\|\overline{g}_t\|^2+C\right)\notag\\
  &+\eta^2D_c\sumlimits{t=1}{l}\left(A\overline{\Delta}_t+B\|\overline{g}_t\|^2+C\right)+3D_c\log\frac{T}{\delta}\notag\\
  \leq & \|x_{1}-x^*\|^2+ 6\eta^2 T \left(A\functiongapboundconstant+ 4\left(B+\frac{7}{6}\right)\left(L_0\functiongapboundconstant+L_1^2\functiongapboundconstant^2\right)+C\right)\notag\\
  & + \eta^2D_c  T \left(A\functiongapboundconstant+4B\left(L_0\functiongapboundconstant+L_1^2\functiongapboundconstant^2\right)+C\right)  + 3D_c\log\frac{T}{\delta},
\end{align*}
where the second inequality follows from \eqref{triangle square} and the last inequality holds since \eqref{assumption constant convex} and Lemma \ref{lemma 6.2}.
Since $\eta^2 T\leq \frac{1}{4}$,
\begin{align*}
  &\|x_{l+1}-x^*\|^2\notag\\
  \leq & \|x_{1}-x^*\|^2+ \frac{3}{2}\left(A\functiongapboundconstant+ 4\left(B+\frac{7}{6}\right)\left(L_0\functiongapboundconstant+L_1^2\functiongapboundconstant^2\right)+C\right)\notag\\
  &+\frac{D_c}{4}\left(A\functiongapboundconstant+4B\left(L_0\functiongapboundconstant+L_1^2\functiongapboundconstant^2\right)+C\right)  + 3D_c\log\frac{T}{\delta}\notag\\
  \leq & \|x_{1}-x^*\|^2+\frac{3}{2}\left(A\functiongapboundconstant+ 4\left(B+\frac{7}{6}\right)\left(L_0\functiongapboundconstant+L_1^2\functiongapboundconstant^2\right)+C\right)\notag\\
  &+ \frac{D_c^2}{4}+\frac{1}{16}\left(A\functiongapboundconstant+4B\left(L_0\functiongapboundconstant+L_1^2\functiongapboundconstant^2\right)+C\right)^2  + \frac{D_c^2}{4}+9\log^2\frac{T}{\delta}= D_c^2,
\end{align*} 
where the second inequality follows from Young's inequality and the last equation holds since 
\begin{align*}
  D_c^2=&2\|x_1-x^*\|^2+3\left(A\functiongapboundconstant+ 4\left(B+\frac{7}{6}\right)\left(L_0\functiongapboundconstant+L_1^2\functiongapboundconstant^2\right)+C\right)\notag\\
  & +\frac{1}{8}\left(A\functiongapboundconstant+4B\left(L_0\functiongapboundconstant+L_1^2\functiongapboundconstant^2\right)+C\right)^2  +18\log^2\frac{T}{\delta}.
\end{align*}
So we show that the argument holds when $l+1$ and the induction is complete. 
\end{proof}
 Based on Proposition \ref{4.3}, we are able to obtain the convergence rate in the convex case. 
\begin{proof}[Proof of Theorem \ref{theorem 2}]
We assume that \eqref{inequality high probability non-convex constant}, \eqref{inequality high probability convex constant} and \eqref{inequality high probability convex constant convergence} always happen and then we deduce that the convergence rate always holds.
Since \eqref{inequality high probability non-convex constant}, \eqref{inequality high probability convex constant} and \eqref{inequality high probability convex constant convergence} hold with probability at least  $1-\delta$ separately, the convergence rate holds with probability at least $1-3\delta$.
Re-arranging \eqref{x_{t+1}-x_t} with $\theta_t=\eta, \forall t\in[T]$, we have
\begin{align*}
  \la g_t, x_t-x^*\ra=&\frac{1}{2\eta}\left(\|x_{t}-x^*\|^2-\|x_{t+1}-x^*\|^2\right)+\frac{1}{2}\eta\|g_t\|^2.
\end{align*}
Adding $\la g_t, \overline{x}_t-x_t\ra$ to both sides of the above equation, we have
\begin{align*}
  \la g_t, \overline{x}_t-x^*\ra =&\frac{1}{2\eta}\left(\|x_{t}-x^*\|^2-\|x_{t+1}-x^*\|^2\right)+\frac{1}{2}\eta\|g_t\|^2+\la g_t, \overline{x}_t-x_t\ra.  
\end{align*}
Summing over $t\in[T]$,
\begin{align}
  \label{constant convex sum}
  \sumlimits{t=1}{T}\la g_t, \overline{x}_t-x^*\ra \leq & \frac{1}{2\eta}\|x_1-x^*\|^2+\frac{\eta}{2}\sumlimits{t=1}{T}\|g_t\|^2+\sumlimits{t=1}{T}\la g_t, \overline{x}_t-x_t\ra\notag\\
  \leq & \frac{1}{2\eta}\|x_1-x^*\|^2+\frac{\eta}{2}\sumlimits{t=1}{T}\|g_t\|^2+\frac{1}{2}\sumlimits{t=1}{T}\left(\eta\|g_t\|^2+\frac{1}{\eta}\|\overline{x}_t-x_t\|^2\right)\notag\\
  \leq &  \frac{1}{2\eta}\|x_1-x^*\|^2+\eta\sumlimits{t=1}{T}\|g_t\|^2+\frac{1}{2\eta}\sumlimits{t=1}{T}\left[\left(1-\alpha_t\right)\varGamma_t\sumlimits{k=1}{t}\frac{\alpha_k}{\varGamma_k}\eta^2\|g_k\|^2\right]\notag\\
  = & \frac{1}{2\eta}\|x_1-x^*\|^2+\eta\sumlimits{t=1}{T}\|g_t\|^2+\frac{\eta}{2}\sumlimits{t=1}{T}\left[\sumlimits{k=t}{T}\left(1-\alpha_k\right)\varGamma_k\right]\frac{\alpha_t}{\varGamma_t}\|g_t\|^2\notag\\
  \leq & \frac{1}{2\eta}\|x_1-x^*\|^2+2\eta\sumlimits{t=1}{T}\|g_t\|^2,
\end{align}
where the second inequality follows from Cauchy-Schwarz inequality and Young's inequality. The third inequality follows from Proposition \ref{proposition overline x_t - x_t} and \eqref{constant step size constraint}, and the last inequality is due to \eqref{proposition 1.2}. 
Combining with \eqref{triangle square}, Lemma \ref{lemma 6.2} and Proposition \ref{constant bound},
\begin{align}
  \label{inequality 4.46}
  \sumlimits{t=1}{T}\la g_t, \overline{x}_t-x^*\ra \leq & \frac{1}{2\eta}\|x_1-x^*\|^2+4\eta\sumlimits{t=1}{T}\left(A\overline{\Delta}_t+\left(B+1\right)\|\overline{g}_t\|^2+C\right)\notag\\
  \leq & \frac{1}{2\eta}\|x_1-x^*\|^2+4\eta\sumlimits{t=1}{T}\left(A+4\left(B+1\right)\left(L_0+L_1^2\functiongapboundconstant\right)\right)\overline{\Delta}_t+4\eta TC\notag\\
  \leq & \frac{1}{2\eta}\|x_1-x^*\|^2+4\eta\sumlimits{t=1}{T}\left(A+4\left(B+1\right)\left(L_0+L_1^2\functiongapboundconstant\right)\right)\la \overline{g}_t,\overline{x}_t-x^*\ra+4\eta TC,
\end{align}
where the last inequality is due to Lemma \ref{convexity}. 
Applying Lemma \ref{high probability sum convex constant convergence} with
$$\lambda=1/\left(\left(D_c+Y_c\right)\sqrt{TC/\log\frac{1}{\delta}}+3\left(A+4B\left(L_0+L_1^2\functiongapboundconstant\right)\right)\left(D_c+Y_c\right)^2\right),$$ 
where $Y_c$ is defined in \eqref{Y_c}, we have
\begin{align}
  \label{inequality 4.39}
  \sumlimits{t=1}{T}-\la \xi_t,\overline{x}_t-x^*\ra \leq &  \frac{3\lambda}{4}\sumlimits{t=1}{T}\left(\left(A+4B\left(L_0+L_1^2\functiongapboundconstant\right)\right)\overline{\Delta}_t+C\right)\|\overline{x}_t-x^*\|^2+\frac{1}{\lambda}\log\frac{1}{\delta}\notag\\
  \leq & \frac{1}{4\left(D_c+Y_c\right)^2}\sumlimits{t=1}{T}\overline{\Delta}_t\|\overline{x}_t-x^*\|^2+\frac{3}{4\left(D_c+Y_c\right)\sqrt{T}}\sqrt{C\log\frac{1}{\delta}}\sumlimits{t=1}{T}\|\overline{x}_t-x^*\|^2\notag\\
  &+\left(D_c+Y_c\right)\sqrt{TC\log\frac{1}{\delta}}+3\left(A+4B\left(L_0+L_1^2\functiongapboundconstant\right)\right)\left(D_c+Y_c\right)^2\log\frac{1}{\delta}\notag\\
  \leq & \frac{1}{4}\sumlimits{t=1}{T}\overline{\Delta}_t+\frac{7}{4}\left(D_c+\frac{1}{8L_1}\right)\sqrt{TC\log\frac{1}{\delta}}\notag\\
  &+3\left(A+4B\left(L_0+L_1^2\functiongapboundconstant\right)\right)\left(D_c+\frac{1}{8L_1}\right)^2\log\frac{1}{\delta},
\end{align}
where  the first inequality is due to Lemma \ref{lemma 6.2} and Proposition \ref{constant bound}. The third inequality holds since $\|\overline{x}_t-x^*\|\leq \|x_t-x^*\|+\|\overline{x}_t-x_t\|\leq D_c+Y_c\leq D_c+\frac{1}{8L_1}$. Applying Lemma \ref{convexity} to \eqref{inequality 4.39},
\begin{align}
  \label{inequality 4.50}
  \sumlimits{t=1}{T}\la \overline{g}_t-g_t,\overline{x}_t-x^*\ra \leq & \frac{1}{4}\sumlimits{t=1}{T}\la \overline{g}_t,\overline{x}_t-x^*\ra+\frac{7}{4}\left(D_c+\frac{1}{8L_1}\right)\sqrt{TC\log\frac{1}{\delta}}\notag\\
  &+3\left(A+4B\left(L_0+L_1^2\functiongapboundconstant\right)\right)\left(D_c+\frac{1}{8L_1}\right)^2\log\frac{1}{\delta}.
\end{align}
Combining \eqref{inequality 4.46}, \eqref{inequality 4.50} with the constraints of $\eta$ in \eqref{eta constant convex}, we have 
\begin{align*}
  \sumlimits{t=1}{T}\la \overline{g}_t, \overline{x}_t-x^*\ra\leq & \frac{1}{\eta}\|x_1-x^*\|^2+8\eta T C +\frac{7}{2}\left(D_c+\frac{1}{8L_1}\right)\sqrt{TC\log\frac{1}{\delta}}\notag\\
  &+6\left(A+4B\left(L_0+L_1^2\functiongapboundconstant\right)\right)\left(D_c+\frac{1}{8L_1}\right)^2\log\frac{1}{\delta}.
\end{align*}
Using Jensen's inequality and Lemma \ref{convexity}, we have that
\begin{align*}
  f\left(\frac{1}{T}\sumlimits{t=1}{T}\overline{x}_t\right)-f^*\leq &\left(\frac{1}{\eta}\|x_1-x^*\|^2+6\left(A+4B\left(L_0+L_1^2\functiongapboundconstant\right)\right)\left(D_c+1/8L_1\right)^2\log\frac{1}{\delta}\right)\frac{1}{T}\notag\\
&+8\eta C+\frac{7\left(D_c+1/8L_1\right)\sqrt{C\log\frac{1}{\delta}}}{2\sqrt{T}}.
\end{align*}
Combining with the constraints of $\eta$ in \eqref{eta constant convex}, we obtain the final result,
\begin{align}
  \label{5.66}
  f\left(\frac{1}{T}\sumlimits{t=1}{T}\overline{x}_t\right)-f^*\leq &\left(6\left(A+4B\left(L_0+L_1^2\functiongapboundconstant\right)\right)\left(D_c+\frac{1}{8L_1}\right)^2\log\frac{1}{\delta}\right)\frac{1}{T}\notag\\
    &+\left(8\left(L_0+L_1\mathcal{M}_c\right)\left(B+1\right)+3\left(A+4BL_0\right)\functiongapboundconstant^{1/2}+12BL_1^2\functiongapboundconstant^{3/2}\right)\frac{\|x_1-x^*\|^2}{T}     \notag\\
    &+ 8L_1 \left(\sqrt{A\functiongapboundconstant}+2\left(\sqrt{B}+1\right)\sqrt{L_0 \functiongapboundconstant+L_1^2 \functiongapboundconstant^2}+\sqrt{C}\right) \frac{\|x_1-x^*\|^2}{T}     \notag\\
    &+16\left(A+4\left(B+1\right)\left(L_0+L_1^2\functiongapboundconstant\right)\right)  \frac{\|x_1-x^*\|^2}{T}     \notag\\
    &+ 2\left(2\sqrt{\left(L_0+L_1\mathcal{M}_c\right)A}+\sqrt{\left(L_0+L_1\mathcal{M}_c\right)C}+1\right)                      \frac{\|x_1-x^*\|^2}{\sqrt{T}}\notag\\
    &+\left(\frac{4}{\sqrt{\left(L_0+L_1\mathcal{M}_c\right)}}+\frac{7}{2}\left(D_c+\frac{1}{8L_1}\right)\sqrt{\log\frac{1}{\delta}}\right)\frac{\sqrt{C}}{\sqrt{T}}.
\end{align}
\end{proof}

\section{Analysis of Adaptive Algorithms}
\label{section 5}
In this section, we analyze the convergence rate of RSAG with the adaptive step size and AdaGrad-Norm  under the assumptions of generalized smoothness and relaxed affine variance noise.
See Example \ref{example 2} for the detail step size setting of the two algorithms.
\subsection{Non-convex Optimization}
\label{adaptive non-convex}
We apply a similar proof strategy as in Section \ref{constant non-convex}.
It is worth emphasizing that the main challenge here is the stochastic correlation between $\eta_t$ in \eqref{adaptive step size} and $g_t$.
To eliminate the correlated randomness, we introduce the following decorrelated step size,
  \begin{align}
    \label{tilde eta_t}
    \tilde{\eta}_t=\frac{\eta}{\sqrt{G_{t-1}^2+A\overline{\Delta}_t+\left(B+1\right)\|\overline{g}_t\|^2+C}},
  \end{align}
  where $G_t^2=G_0^2+\sumlimits{k=1}{t}\|g_k\|^2$.
  Also, Lemma \ref{sum square} and Lemma \ref{sum log} play an important role in dealing with the adaptive step size. We will bound the function value gap first.
\begin{lemma}
  \label{agd term}
  Suppose that $\{x_t\}_{t\in[T]}$ and $\{\overline{x}_t\}_{t\in[T]}$ are generated by RSAG with the adaptive step size or AdaGrad-Norm. Then, we have
  $$\|\overline{x}_t-x_t\|\leq \eta, \quad \forall t\in[T].$$
\end{lemma}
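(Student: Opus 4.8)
The plan is to use the explicit representation of $\overline{x}_t-x_t$ from Proposition \ref{proposition overline x_t - x_t} together with the structure of the adaptive step size. First I would recall that for RSAG with the adaptive step size and for AdaGrad-Norm (Example \ref{example 2}), we have $\gamma_t=(1+\alpha_t)\theta_t$ (or $\gamma_t=\theta_t$), $\theta_t=\eta_t$, and $\alpha_t=\frac{2}{t+1}$, so that in either case $\frac{|\theta_t-\gamma_t|}{\alpha_t}\leq\theta_t=\eta_t$ — this is the analogue of \eqref{constant step size constraint} in the adaptive setting, and it is the only algebraic fact about the parameters we need. For AdaGrad-Norm the bound is trivial since $\overline{x}_t=x_t$, so the content is the RSAG case.

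Next I would apply the triangle-inequality form of \eqref{overline x_t-x_t}: using Proposition \ref{proposition 1} (namely $\varGamma_{t-1}\sum_{k=1}^{t-1}\frac{\alpha_k}{\varGamma_k}\le\varGamma_{t}\sum_{k=1}^{t}\frac{\alpha_k}{\varGamma_k}=1$) and the constraint above,
\begin{align*}
\|\overline{x}_t-x_t\|\le \varGamma_{t-1}\sumlimits{k=1}{t-1}\frac{\alpha_k}{\varGamma_k}\,\frac{|\theta_k-\gamma_k|}{\alpha_k}\,\|g_k\|
\le \varGamma_{t-1}\sumlimits{k=1}{t-1}\frac{\alpha_k}{\varGamma_k}\,\eta_k\|g_k\|.
\end{align*}
Now the crucial point is that $\eta_k\|g_k\|\le\eta$ for every $k$: indeed $\eta_k=\eta/\sqrt{G_0^2+\sum_{j=1}^k\|g_j\|^2}\le\eta/\sqrt{\|g_k\|^2}=\eta/\|g_k\|$ (and the bound is vacuous when $g_k=0$). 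Substituting this in and using $\varGamma_{t-1}\sum_{k=1}^{t-1}\frac{\alpha_k}{\varGamma_k}\le1$ yields $\|\overline{x}_t-x_t\|\le\eta$, which is exactly the claim.

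I do not expect any real obstacle here; the lemma is essentially bookkeeping. The one point that needs a line of care is the edge case $g_k=0$, where the pointwise bound $\eta_k\|g_k\|\le\eta$ holds trivially (the $k$-th summand is zero), so the convexity-weight identity from Proposition \ref{proposition 1} still closes the estimate. The reason this is cleaner than the constant-step-size analysis (where one had to first bound $\|g_t\|$ uniformly via the induction in Proposition \ref{constant bound}) is precisely that the adaptive normalization makes $\eta_k\|g_k\|$ automatically bounded by $\eta$ without any a priori control on the gradients — so no induction or high-probability argument is needed for this particular lemma.
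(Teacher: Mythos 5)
Your argument is correct and is essentially identical to the paper's proof: both rely on the representation \eqref{overline x_t-x_t}, the step-size constraint $\frac{\gamma_k-\theta_k}{\alpha_k}\leq\theta_k=\eta_k$, the pointwise bound $\eta_k\|g_k\|=\eta\|g_k\|/\sqrt{G_0^2+\sum_{j\le k}\|g_j\|^2}\leq\eta$, and the weight identity \eqref{proposition 1.1}. The only cosmetic difference is that the paper applies \eqref{proposition 1.1} directly at index $t-1$ to get $\varGamma_{t-1}\sum_{k=1}^{t-1}\frac{\alpha_k}{\varGamma_k}=1$, whereas you bound it by $1$; the content is the same.
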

\begin{proof}
For all $t\in[T]$,
\begin{align}
  \label{inequality 6.4}
  \eta_t\|g_t\|=\frac{\eta}{\sqrt{G_0^2+\sumlimits{k=1}{t}\|g_k\|^2}}\|g_t\|\leq \eta.
\end{align}
For both RSAG with the adaptive step size and AdaGrad-Norm, we have 
\begin{align}
  \label{adaptive step size constraint}
  \frac{\gamma_t-\theta_t}{\alpha_t}\leq \theta_t=\eta_t, \quad \forall t\in[T].
\end{align}
Combining \eqref{proposition 1.1}, \eqref{overline x_t-x_t} and \eqref{adaptive step size constraint}, we have 
\begin{align*}
  \|\overline{x}_t-x_t\|\leq  \varGamma_{t-1}\sumlimits{k=1}{t-1}\frac{\alpha_k}{\varGamma_k} \eta_k\|g_k\|
  \leq  \eta\varGamma_{t-1}\sumlimits{k=1}{t-1}\frac{\alpha_k}{\varGamma_k} = \eta.
\end{align*}
\end{proof}

\begin{lemma}
  \label{lemma 6}
  Given $T\geq 1 $ and $\delta\in \left(0,1\right)$, if Assumptions \ref{assumption 2} and \ref{assumption 3} hold, then with probability at least $1-\delta$, $\forall l\in[T]$,
  \begin{align}
    \label{inequality high probability non-convex adaptive}
    \sumlimits{t=1}{l}-\tilde{\eta}_t \la \overline{g}_t,\xi_t\ra\leq  \frac{1}{4P_a}\sumlimits{t=1}{l}\tilde{\eta}_t\|\overline{g}_t\|^2P_t+3\eta P_a\log\frac{T}{\delta},
  \end{align}
 where $P_t$ and $P_a$ are defined in \eqref{P_t} and \eqref{P_a}, respectively. 
\end{lemma}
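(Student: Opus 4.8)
The statement is a high-probability bound on a martingale-type sum with the decorrelated step size $\tilde{\eta}_t$, so the natural tool is a Freedman-type / Bernstein-type concentration inequality for martingale difference sequences, exactly in the spirit of the constant-step-size analogue Lemma \ref{high probability sum non-convex constant}. First I would set $Z_t=-\tilde{\eta}_t\la\overline{g}_t,\xi_t\ra$ and check that $\{Z_t\}$ is a martingale difference sequence with respect to the natural filtration: both $\overline{x}_t$ and $\overline{g}_t=\nabla f(\overline{x}_t)$ are measurable with respect to $z_1,\dots,z_{t-1}$, and crucially $\tilde{\eta}_t$ in \eqref{tilde eta_t} depends only on $G_{t-1}$, $\overline{\Delta}_t$ and $\|\overline{g}_t\|$ — all of which are $z_{<t}$-measurable — so $\mathbb{E}_t[Z_t]=-\tilde{\eta}_t\la\overline{g}_t,\mathbb{E}_t[\xi_t]\ra=0$ by Assumption \ref{assumption 2}. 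This is precisely why the decorrelated step size was introduced, and it is what makes the concentration argument go through.

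**Main estimates.** The plan is to control the conditional variance and a range/subgaussian parameter of $Z_t$. By Cauchy–Schwarz, $|Z_t|\le\tilde{\eta}_t\|\overline{g}_t\|\|\xi_t\|$, and $\mathbb{E}_t[Z_t^2]\le\tilde{\eta}_t^2\|\overline{g}_t\|^2\,\mathbb{E}_t[\|\xi_t\|^2]\le\tilde{\eta}_t^2\|\overline{g}_t\|^2 P_t^2$ using Assumption \ref{assumption 3} and the definition \eqref{P_t} of $P_t$ (noting $A\overline{\Delta}_t+B\|\overline{g}_t\|^2+C\le P_t^2$; the factor $4B(L_0\overline{\Delta}_t+L_1^2\overline{\Delta}_t^2)$ in $P_t^2$ only helps). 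For the denominator of $\tilde{\eta}_t$ one uses $\sqrt{G_{t-1}^2+A\overline{\Delta}_t+(B+1)\|\overline{g}_t\|^2+C}\ge\sqrt{A\overline{\Delta}_t+B\|\overline{g}_t\|^2+C}$, hence $\tilde{\eta}_t P_t\le\eta\cdot\frac{P_t}{\sqrt{A\overline{\Delta}_t+(B+1)\|\overline{g}_t\|^2+C}}$; together with the crude bound relating $P_t$ to $P_a$ (which holds on the good event where $\overline{\Delta}_t\le\functiongapboundadaptive$, or one can keep $P_t$ and convert at the end), one gets both $\mathbb{E}_t[Z_t^2]\le\frac{\eta}{4P_a}\tilde{\eta}_t\|\overline{g}_t\|^2 P_t\cdot(\text{something})$ up to constants and a uniform-in-$t$ bound on $|Z_t|$ of order $\eta P_a$. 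I would then invoke the same martingale concentration lemma the paper uses elsewhere (a Freedman/Bernstein inequality of the form: with probability $\ge1-\delta$, for all $l\in[T]$, $\sum_{t=1}^l Z_t\le\frac{1}{4}\sum_{t=1}^l\frac{\mathbb{E}_t[Z_t^2]}{\lambda}+\lambda\log\frac{T}{\delta}$ for a suitable scale $\lambda\sim\eta P_a$), and take a union bound over $l$ via a maximal inequality so the bound holds simultaneously for every prefix.

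**Assembling the bound.** Plugging $\mathbb{E}_t[Z_t^2]\le\tilde{\eta}_t^2\|\overline{g}_t\|^2P_t^2$ into that inequality with $\lambda=\eta P_a$ (so that $\tilde{\eta}_t P_t/(\eta P_a)\le 1$ after the denominator estimate, converting one power of $\tilde{\eta}_t P_t$ into a harmless constant $\le\eta/\text{denom}\le 1$ factor — actually the clean way is $\tilde{\eta}_t^2\|\overline{g}_t\|^2P_t^2=\tilde{\eta}_t\|\overline{g}_t\|^2 P_t\cdot\tilde{\eta}_t P_t\le\tilde{\eta}_t\|\overline{g}_t\|^2 P_t\cdot\frac{\eta P_t}{\sqrt{A\overline{\Delta}_t+(B+1)\|\overline{g}_t\|^2+C}}$ and the last fraction is bounded since $P_t^2$ exceeds $A\overline{\Delta}_t+B\|\overline{g}_t\|^2+C$ up to the $B$-smoothness correction, which one absorbs into a constant or into $P_a$), yields exactly $\sum_{t=1}^l Z_t\le\frac{1}{4P_a}\sum_{t=1}^l\tilde{\eta}_t\|\overline{g}_t\|^2 P_t+3\eta P_a\log\frac{T}{\delta}$, matching \eqref{inequality high probability non-convex adaptive}.

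**Main obstacle.** The delicate point is the bookkeeping that turns $\mathbb{E}_t[Z_t^2]$ (which carries $\tilde{\eta}_t^2 P_t^2$) into the asymmetric right-hand side $\tilde{\eta}_t\|\overline{g}_t\|^2 P_t$ with only one power of $\tilde{\eta}_t$ and $P_t$: this requires exploiting that the "extra" factor $\tilde{\eta}_t P_t$ is bounded by a constant times $\eta$ divided by $P_a$ (up to the generalized-smoothness correction terms in $P_t^2$ versus $A\overline{\Delta}_t+(B+1)\|\overline{g}_t\|^2+C$), and doing this without circularity — i.e., before the induction of Proposition-analogue (the adaptive counterpart of Proposition \ref{constant bound}) has established $\overline{\Delta}_t\le\functiongapboundadaptive$. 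The resolution, as in the constant case, is that this lemma is stated unconditionally (it only needs Assumptions \ref{assumption 2} and \ref{assumption 3}) with $P_t$ appearing on the right, and the conversion to $P_a$ happens later inside the induction; so one must be careful to keep $P_t$ (not $P_a$) wherever the good event has not yet been invoked, which the statement already does.
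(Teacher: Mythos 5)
Your overall strategy coincides with the paper's: set $Z_t=-\tilde{\eta}_t\la\overline{g}_t,\xi_t\ra$, verify the martingale-difference property using the $z_{<t}$-measurability of $\tilde{\eta}_t$ and $\overline{g}_t$, apply the sub-Gaussian martingale concentration inequality (Lemma \ref{lemma high probability}) with a union bound over the prefix length $l$ (re-scaling $\delta\to\delta/T$), and choose the deterministic scale $\lambda=\tfrac{1}{3\eta P_a}$ at the end. The fact that $P_a$ enters only through this deterministic choice of $\lambda$, while $P_t$ stays on the right-hand side, is exactly how the paper avoids the circularity you worry about, and you identify that correctly.

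However, your ``assembling'' step contains a genuine error. You first pass to $\mathbb{E}_t[Z_t^2]\leq\tilde{\eta}_t^2\|\overline{g}_t\|^2P_t^2$ and then try to split $\tilde{\eta}_t^2\|\overline{g}_t\|^2P_t^2=\tilde{\eta}_t\|\overline{g}_t\|^2P_t\cdot\tilde{\eta}_t P_t$, which forces you to bound $\tilde{\eta}_t P_t$ by a constant multiple of $\eta$. That bound is false in general: since $P_t^2=A\overline{\Delta}_t+4B(L_0\overline{\Delta}_t+L_1^2\overline{\Delta}_t^2)+C$ while the denominator of $\tilde{\eta}_t$ only contains $A\overline{\Delta}_t+(B+1)\|\overline{g}_t\|^2+C$, the ratio $P_t/\sqrt{A\overline{\Delta}_t+(B+1)\|\overline{g}_t\|^2+C}$ can be arbitrarily large whenever $4B(L_0\overline{\Delta}_t+L_1^2\overline{\Delta}_t^2)\gg B\|\overline{g}_t\|^2$ (e.g.\ when $\|\overline{g}_t\|$ is small but $\overline{\Delta}_t$ is not, which nothing excludes for a non-convex $f$). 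Your justification ``the last fraction is bounded since $P_t^2$ exceeds $A\overline{\Delta}_t+B\|\overline{g}_t\|^2+C$'' runs in the wrong direction: an upper bound on the fraction would require $P_t^2$ to be \emph{dominated by}, not to exceed, the denominator. The fix is to reverse the order of the two substitutions, as the paper does: keep the variance in the form $\tilde{\eta}_t^2\|\overline{g}_t\|^2\left(A\overline{\Delta}_t+B\|\overline{g}_t\|^2+C\right)$, split off the factor $\tilde{\eta}_t\sqrt{A\overline{\Delta}_t+B\|\overline{g}_t\|^2+C}\leq\eta$ (valid because $G_{t-1}^2+A\overline{\Delta}_t+(B+1)\|\overline{g}_t\|^2+C\geq A\overline{\Delta}_t+B\|\overline{g}_t\|^2+C$), and only \emph{afterwards} upgrade the remaining factor $\sqrt{A\overline{\Delta}_t+B\|\overline{g}_t\|^2+C}$ to $P_t$ via Lemma \ref{lemma 6.2}. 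This yields $\mathbb{E}_t[Z_t^2]\leq\eta\,\tilde{\eta}_t\|\overline{g}_t\|^2P_t$, after which $\lambda=\tfrac{1}{3\eta P_a}$ gives precisely the constants $\tfrac{1}{4P_a}$ and $3\eta P_a$ in \eqref{inequality high probability non-convex adaptive}.
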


\begin{proposition}
  \label{5.2}
  Under the conditions of Theorem \ref{theorem 2}, $\overline{\Delta}_t\leq \functiongapboundadaptive, P_t\leq P_a, \forall t\in[T]$ hold with probability at least $1-\delta$, where $\functiongapboundadaptive$ is defined as
  \begin{align}
    \label{functiongapboundadaptive}
    \functiongapboundadaptive=\Gadaptive,
  \end{align}
  and $P_t$, $P_a$ are given in \eqref{P_t} and \eqref{P_a}, respectively.  
\end{proposition}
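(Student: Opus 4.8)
The plan is to mirror the induction argument of Proposition~\ref{constant bound}, but now carrying the decorrelated step size $\tilde{\eta}_t$ from \eqref{tilde eta_t} in place of the constant $\eta$, and exploiting Lemma~\ref{agd term} (which gives $\|\overline{x}_t-x_t\|\le\eta$ for free) to control the generalized-smoothness error terms. I would condition on the event in \eqref{inequality high probability non-convex adaptive} of Lemma~\ref{lemma 6}, which holds with probability at least $1-\delta$; everything below is then deterministic on that event, so the claimed probability follows. The induction hypothesis is $\overline{\Delta}_t\le\functiongapboundadaptive$ (equivalently $P_t\le P_a$) for all $t\le l$; the base case $\overline{\Delta}_1\le\functiongapboundadaptive$ is immediate from the explicit formula \eqref{functiongapboundadaptive} since $\functiongapboundadaptive=\Gadaptive\ge\overline{\Delta}_1$.

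First I would run the descent lemma (Lemma~\ref{descent lemma}) on $f(x_{t+1})-f(x_t)$ using $x_{t+1}=x_t-\eta_tg_t$: because $\|x_{t+1}-x_t\|=\eta_t\|g_t\|\le\eta\le 1/(8L_1)$ by \eqref{inequality 6.4} and the constraint $\eta\le 1/(8L_1)$ in \eqref{theorem 3 eta}, the $(L_0,L_1)$-smoothness inequality applies. Expanding $\nabla f(x_t)=\overline{g}_t+(\nabla f(x_t)-\overline{g}_t)$ and $g_t=\overline{g}_t+\xi_t$, I get the analogue of \eqref{inequality base}, with the cross term $-\eta_t\la\overline{g}_t,\xi_t\ra$ kept aside and the remaining error terms bounded via $\|\overline{x}_t-x_t\|\le\eta$ (Lemma~\ref{agd term}) and the bound $\nablaf{x_t}\le\mathcal{M}_a$ from \eqref{nabla fx_t adaptive step size}, which is itself a consequence of the induction hypothesis through $\overline{\Delta}_t\le\functiongapboundadaptive$. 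The key replacement of $\eta_t$ by $\tilde{\eta}_t$ happens here: since $G_{t-1}^2+A\overline{\Delta}_t+(B+1)\|\overline{g}_t\|^2+C\le G_{t-1}^2+\|g_t\|^2 + \text{(const)}$ up to constants (using \eqref{triangle square}), one has $\tilde{\eta}_t\sim\eta_t$ in the relevant direction, and this lets me telescope deterministically. Summing over $t\in[l]$ and using Proposition~\ref{proposition 1} (parts \eqref{proposition 1.1}, \eqref{proposition 1.2}) to handle the $\varGamma_t$-weighted double sum, together with Lemma~\ref{sum square} to bound $\sum_t\eta_t^2\|g_t\|^2\lesssim\eta^2\log(G_T^2/G_0^2)\lesssim\eta^2\mathcal{H}$ and Lemma~\ref{sum log} for the telescoped adaptive increments, I obtain an inequality of the shape $f(x_{l+1})-f(x_1)\le \text{(absolute const)}+\text{(coefficient)}\cdot\sum_{t\le l}\tilde{\eta}_t\|\overline{g}_t\|^2 + 3\eta P_a\log\frac{T}{\delta}$, where the $\sum\tilde{\eta}_t\|\overline{g}_t\|^2$ coefficient is driven negative (or at most a small positive fraction) by the step-size constraints $\eta\le 1/(8P_a\mathcal{H})$ and $\eta\le 1/(3P_a)$ in \eqref{theorem 3 eta}, and the martingale term is absorbed by the $\frac{1}{4P_a}\sum\tilde{\eta}_t\|\overline{g}_t\|^2P_t\le\frac14\sum\tilde{\eta}_t\|\overline{g}_t\|^2$ bound from Lemma~\ref{lemma 6} (using $P_t\le P_a$). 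This leaves $f(x_{l+1})\le f(x_1)+\text{const}+3\eta P_a\log\frac{T}{\delta}$, and since $\eta P_a\le 1/(3\mathcal{H})\le 1/3$ and $\eta\le 1/(8L_1)$ the constant is controlled by $\overline{\Delta}_1+\log\frac{T}{\delta}+L_0/L_1^2$ up to absolute factors.

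Finally I would transfer the bound from $f(x_{l+1})$ to $f(\overline{x}_{l+1})=f^*+\overline{\Delta}_{l+1}$ exactly as in \eqref{gap between f overline x_t+1 and f x_t+1 constant step size}--\eqref{descent lemma 2}: the descent lemma applied to the pair $(x_{l+1},\overline{x}_{l+1})$ with $\|\overline{x}_{l+1}-x_{l+1}\|\le\eta\le 1/(8L_1)$ contributes a $\frac{17}{128L_1}\|\overline{g}_{l+1}\|+\frac{L_0}{128L_1^2}$ term, and Lemma~\ref{lemma 6.2} converts $\|\overline{g}_{l+1}\|\le\sqrt{L_0\overline{\Delta}_{l+1}}+L_1\overline{\Delta}_{l+1}$; absorbing the $\overline{\Delta}_{l+1}$-terms to the left side (they have coefficient strictly below $1$, leaving a factor like $5/8$) and matching against the explicit constant in \eqref{functiongapboundadaptive} yields $\overline{\Delta}_{l+1}\le\functiongapboundadaptive$, closing the induction; then $P_{l+1}\le P_a$ follows from \eqref{P_t}--\eqref{P_a} monotonically. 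The main obstacle is the bookkeeping in the telescoping step: unlike the constant-step-size case, the $\tilde{\eta}_t$-weighted descent sum and the adaptive quantity $G_t$ must be compared through Lemmas~\ref{sum square} and~\ref{sum log}, and one must verify that the accumulated $\log$-type factors are exactly what the constraints on $\eta$ in \eqref{theorem 3 eta} (built from $\mathcal{H}$ and $P_a$) are designed to cancel — any loose constant there would break the clean $\overline{\Delta}_{l+1}\le\functiongapboundadaptive$ comparison. A secondary subtlety is that $\mathcal{M}_a$, $P_a$, $\mathcal{H}$ all depend on $\functiongapboundadaptive$, so one must check there is no circularity: $\functiongapboundadaptive$ is fixed \emph{a priori} by \eqref{functiongapboundadaptive}, and the induction only uses $\overline{\Delta}_t\le\functiongapboundadaptive$ to validate the already-chosen constants, so the argument is well-founded.
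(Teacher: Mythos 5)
Your proposal follows essentially the same route as the paper's proof: condition on the event of Lemma \ref{lemma 6}, induct on $\overline{\Delta}_t\leq\functiongapboundadaptive$, use Lemma \ref{agd term} and the descent lemma to get the telescoped bound, split $-\sum_t\eta_t\la\overline{g}_t,g_t\ra$ via the decorrelated step size $\tilde{\eta}_t$, control the adaptive sums with Lemma \ref{sum log}, and close the induction by transferring from $f(x_{l+1})$ to $f(\overline{x}_{l+1})$ and absorbing the $\overline{\Delta}_{l+1}$ terms. The only loose spot is your informal "$\tilde{\eta}_t\sim\eta_t$" remark, which the paper makes precise through Lemma \ref{tilde eta} in bounding term (b.3), but this is a matter of detail rather than a different argument.
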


\begin{proof}
  Assume that \eqref{inequality high probability non-convex adaptive} always happens. From this, we deduce $\overline{\Delta}_t\leq \functiongapboundadaptive,\forall t\in[T]$. Thus $P_t\leq P_a, \forall t\in[T]$. 
  Since \eqref{inequality high probability non-convex adaptive} holds with probability at least $1-\delta$, it follows that $\overline{\Delta}_t\leq \functiongapboundadaptive$ and $P_t\leq P_a$  hold with probability at least $1-\delta$ for all $t\in[T]$.
  It is easy to verify that $f(\overline{x}_1)-f^*\leq \functiongapboundadaptive$. Therefore, $P_1\leq P_a$. Assume that for some $l\in[T]$, 
  $$f(\overline{x}_t)-f^*\leq \functiongapboundadaptive, \quad \forall t\in[l], \quad \text{thus}, \quad P_t\leq P_a, \quad \forall t\in[l].$$
Combining Lemma \ref{agd term} and the constraints of $\eta$ in \eqref{theorem 3 eta}, we have $\|\overline{x}_t-x_t\|\leq \eta \leq 1/8L_1, \forall t\in[T]$.  
Therefore, by \eqref{nabla fx_t} and Lemma \ref{lemma 6.2}, for all $t\in[l]$, 
  \begin{align}
  \label{nabla fx_t adaptive step size}
  \nablaf{x_t} \leq & \frac{1}{8L_1}\left(L_0+2L_1\sqrt{L_0\functiongapboundadaptive+L_1^2\functiongapboundadaptive^2}\right)+2\sqrt{L_0\functiongapboundadaptive+L_1^2\functiongapboundadaptive^2}\notag\\
  = & \frac{L_0}{8L_1}+\frac{9}{4}\sqrt{L_0\functiongapboundadaptive+L_1^2\functiongapboundadaptive^2}=\mathcal{M}_a.
\end{align}
Also, for all $t\in[T]$, $$\|x_{t+1}-x_t\|=\theta_t\|g_t\|=\eta_t\|g_t\|\leq \eta \leq 1/8L_1.$$
Then, summing up \eqref{inequality base} over $t\in[l]$ with $\theta_t=\eta_t$,
\begin{align*}
  f(x_{l+1})-f(x_1)\leq & \frac{1}{2}\sumlimits{t=1}{l}\left[\left(L_0+L_1\nablaf{x_t}\right)\left(1-\alpha_t\right)\varGamma_t\sumlimits{k=1}{t}\frac{\alpha_k}{\varGamma_k}\eta_k^2\|g_k\|^2\right]\notag\\
  &+\sumlimits{t=1}{l}\left(L_0+L_1\nablaf{x_t}\right)\eta_t^2\|g_t\|^2-\sumlimits{t=1}{l}\eta_t\|\overline{g}_t\|^2-\sumlimits{t=1}{l}\eta_t\la \overline{g}_t,\xi_t\ra\notag\\
  = & \frac{1}{2}\sumlimits{t=1}{l}\left[\sumlimits{k=t}{l}\left(L_0+L_1\nablaf{x_k}\right)\left(1-\alpha_k\right)\varGamma_k\right]\frac{\alpha_t}{\varGamma_t}\eta_t^2\|g_t\|^2\notag\\
  &+\sumlimits{t=1}{l}\left(L_0+L_1\nablaf{x_t}\right)\eta_t^2\|g_t\|^2-\sumlimits{t=1}{l}\eta_t\|\overline{g}_t\|^2-\sumlimits{t=1}{l}\eta_t\la \overline{g}_t,\xi_t\ra.
\end{align*}
Combining with  \eqref{proposition 1.2} and \eqref{nabla fx_t adaptive step size}, 
\begin{align}
  \label{delta}
  f(x_{l+1})-f(x_1) \leq & 2\left(L_0+L_1\mathcal{M}_a\right)\sumlimits{t=1}{l}\eta_t^2\|g_t\|^2-\sumlimits{t=1}{l}\eta_t\|\overline{g}_t\|^2-\sumlimits{t=1}{l}\eta_t\la \overline{g}_t,\xi_t\ra\notag\\
   = & \underbrace{2\left(L_0+L_1\mathcal{M}_a\right)\sumlimits{t=1}{l}\eta_t^2\|g_t\|^2}_{\text{(a)}}\underbrace{-\sumlimits{t=1}{l}\eta_t\la \overline{g}_t,g_t\ra}_{\text{(b)}}.
\end{align}
\paragraph{Term (a)} 
\begin{align}
  \label{sum square eta_t g_t}
  \sumlimits{t=1}{l}\frac{\|g_t\|^2}{G_t^2}  \leq &  \log\left(1+\sumlimits{t=1}{l}\frac{\|g_t\|^2}{G_0^2}\right)\notag\\
  \leq & \log\left(1+\frac{2\sum_{t=1}^{l}\left(A\overline{\Delta}_t+\left(B+1\right)\|\overline{g}_t\|^2+C\right)}{G_0^2}\right)\notag\\
  \leq &  \log\left(1+\frac{2T\left(A\functiongapboundadaptive+4\left(B+1\right)\left(L_0\functiongapboundadaptive+L_1^2\functiongapboundadaptive^2\right)+C\right)}{G_0^2}\right),
\end{align}
where the  first inequality follows from Lemma \ref{sum log}, the second inequality follows from \eqref{triangle square} and the last inequality follows Lemma \ref{lemma 6.2}. 
Therefore,
\begin{align}
  \label{Term A}
  2\left(L_0+L_1\mathcal{M}_a\right)\sumlimits{t=1}{l}\eta_t^2\|g_t\|^2
  =  2\left(L_0+L_1\mathcal{M}_a\right)\eta^2\sumlimits{t=1}{l}\frac{\|g_t\|^2}{G_t^2}
  \leq  2\left(L_0+L_1\mathcal{M}_a\right)\eta^2\mathcal{H},
\end{align}
where $\mathcal{H}$ is defined in \eqref{mathcal H}.
\paragraph{Term (b)}
\begin{align}
  \label{Term B}
  \underbrace{-\sumlimits{t=1}{l}\eta_t\la \overline{g}_t,g_t\ra}_{\text{(b)}} = & \underbrace{-\sumlimits{t=1}{l}\tilde{\eta}_t\|\overline{g}_t\|^2}_{\text{(b.1)}}+\underbrace{\sumlimits{t=1}{l}\tilde{\eta}_t\la\overline{g}_t,\overline{g}_t-g_t\ra}_{\text{(b.2)}}+\underbrace{\sumlimits{t=1}{l}\left(\tilde{\eta}_t-\eta_t\right)\la \overline{g}_t, g_t\ra}_{\text{(b.3)}}.
\end{align}
\paragraph{Term (b.2)}
Combining \eqref{inequality high probability non-convex adaptive} with the assumptions that $\overline{\Delta}_t\leq \functiongapboundadaptive$ and $P_t\leq P_a$, $\forall t\in[l]$, we have
\begin{align}
  \label{Term B.2}
  \sumlimits{t=1}{l}\tilde{\eta}_t \la \overline{g}_t,\overline{g}_t-g_t\ra \leq \frac{1}{4}\sumlimits{t=1}{l}\tilde{\eta}_t\|\overline{g}_t\|^2+3\eta P_a\log\frac{T}{\delta}.
\end{align}
\paragraph{Term (b.3)}
Applying Lemma \ref{tilde eta} and Cauchy-Schwarz inequality, we have 
\begin{align}
  \label{5.21}
  |\tilde{\eta}_t-\eta_t| \la \overline{g}_t, g_t \ra \leq & 2\tilde{\eta}_t\sqrt{A\overline{\Delta}_t+B\|\overline{g}_t\|^2+C} \cdot \frac{\|\overline{g}_t\|\|g_t\|}{G_t}\notag\\
  \leq & \frac{1}{4}\tilde{\eta}_t\|\overline{g}_t\|^2+4\tilde{\eta}_t \left(A\overline{\Delta}_t+B\|\overline{g}_t\|^2+C\right)\cdot\frac{\|g_t\|^2}{G_t^2}\notag\\
  \leq & \frac{1}{4}\tilde{\eta}_t\|\overline{g}_t\|^2+4\eta \sqrt{A\overline{\Delta}_t+B\|\overline{g}_t\|^2+C}\cdot\frac{\|g_t\|^2}{G_t^2},
\end{align}
where the second inequality holds since Young's inequality and the last inequality follows from the definition of $\tilde{\eta}_t$ in \eqref{tilde eta_t}.
Summing up \eqref{5.21} over $t\in[l]$ and combining \eqref{sum square eta_t g_t}, Lemma \ref{lemma 6.2} with the assumption that $\overline{\Delta}_t\leq \functiongapboundadaptive, \forall t\in[l]$, we have
\begin{align}
  \label{Term B.3}
  \sumlimits{t=1}{l}\left(\tilde{\eta}_t-\eta_t\right) \la \overline{g}_t, g_t \ra \leq & \sumlimits{t=1}{l}|\tilde{\eta}_t-\eta_t| \la \overline{g}_t, g_t \ra\notag\\
  \leq & \frac{1}{4}\sumlimits{t=1}{l}\tilde{\eta}_t\|\overline{g}_t\|^2+4\eta \sqrt{A\functiongapboundadaptive+4B\left(L_0\functiongapboundadaptive+L_1^2\functiongapboundadaptive^2\right)+C} \sumlimits{t=1}{l}\frac{\|g_t\|^2}{G_t^2}\notag\\
  \leq & \frac{1}{4}\sumlimits{t=1}{l}\tilde{\eta}_t\|\overline{g}_t\|^2+4\eta \sqrt{A\functiongapboundadaptive+4B\left(L_0\functiongapboundadaptive+L_1^2\functiongapboundadaptive^2\right)+C}\mathcal{H}.
\end{align}
Combining \eqref{delta}, \eqref{Term A}, \eqref{Term B}, \eqref{Term B.2} and \eqref{Term B.3}, we have
\begin{align}
  \label{5.23}
  f(x_{l+1})-f(x_1) \leq & -\frac{1}{2}\sumlimits{t=1}{l}\tilde{\eta}_t\|\overline{g}_t\|^2+2\left(L_0+L_1\mathcal{M}_a\right)\eta^2\mathcal{H}\notag\\
  &+3\eta\sqrt{A\functiongapboundadaptive+4B\left(L_0\functiongapboundadaptive+L_1^2\functiongapboundadaptive^2\right)+C}\log\frac{T}{\delta}\notag\\
  &+4\eta \sqrt{A\functiongapboundadaptive+4B\left(L_0\functiongapboundadaptive+L_1^2\functiongapboundadaptive^2\right)+C}\mathcal{H}\notag\\
  \leq & -\frac{1}{2}\sumlimits{t=1}{l}\tilde{\eta}_t\|\overline{g}_t\|^2+\log\frac{T}{\delta}+1,
 \end{align}
where the second inequality holds since the constraints of $\eta$ in \eqref{theorem 3 eta}.
Now we need to bound the gap between $f(\overline{x}_{l+1})$ and $f(x_{l+1})$. 
By Lemma \ref{agd term} and the restrictions of $\eta$, we know that $\|\overline{x}_{l+1}-x_{l+1}\|\leq 1/8L_1$. Therefore, \eqref{descent lemma 2} still holds here.
Combining \eqref{5.23} with \eqref{descent lemma 2}, and subtracting $f^*$ from both sides, we have
\begin{align*}
  \overline{\Delta}_{l+1} \leq & \overline{\Delta}_1-\frac{1}{2}\sumlimits{t=1}{l}\tilde{\eta}_t\|\overline{g}_t\|^2+\log\frac{T}{\delta}+1+\frac{L_0}{128L_1^2}+\frac{17}{128L_1}\|\overline{g}_{l+1}\|.
  \end{align*}
Applying Lemma \ref{lemma 6.2} and Lemma \ref{sqrt sum}, we have
\begin{align*}
   \overline{\Delta}_{l+1}\leq  &\overline{\Delta}_1-\frac{1}{2}\sumlimits{t=1}{l}\tilde{\eta}_t\|\overline{g}_t\|^2+\log\frac{T}{\delta}+1+\frac{L_0}{128L_1^2}+\frac{17}{64L_1}\sqrt{L_0\overline{\Delta}_{l+1}}+\frac{17}{64}\overline{\Delta}_{l+1}\notag\\
  \leq &  \overline{\Delta}_1-\frac{1}{2}\sumlimits{t=1}{l}\tilde{\eta}_t\|\overline{g}_t\|^2+\log\frac{T}{\delta}+1+ \frac{L_0}{128L_1^2}+\frac{7}{64}\overline{\Delta}_{l+1}+\frac{289L_0}{1792L_1^2}+\frac{17}{64}\overline{\Delta}_{l+1},
 \end{align*}
 where the second inequality follows from Young's inequality.
 Re-arranging the above inequality, we have
 \begin{align}
  \label{function gap adaptive}
  \frac{5}{8}\overline{\Delta}_{t+1}\leq \overline{\Delta}_1-\frac{1}{2}\sumlimits{t=1}{l}\tilde{\eta}_t\|\overline{g}_t\|^2+\log\frac{T}{\delta}+1+\frac{303L_0}{1792L_1^2} \leq  \frac{5}{8}\functiongapboundadaptive,
 \end{align}
where the second inequality holds since $\functiongapboundadaptive= \Gadaptive$.
Therefore, $P_{l+1}\leq P_a$ also holds.
Now we finish the induction and obtain the desired conclusion.
\end{proof}
 Based on Proposition \ref{5.2}, we are able to analyze the convergence of RSAG with the adaptive step size and AdaGrad-Norm in the non-convex case.
\begin{proof}[Proof of Theorem \ref{theorem 3}]
Since $f(\overline{x}_t)-f^*\leq \functiongapboundadaptive, \forall t\in[T]$ holds with probability at least $1-\delta$, the convergence rate also holds with probability at least $1-\delta$. By \eqref{function gap adaptive}, 
\begin{align}
  \label{inequality 5.25}
  \frac{1}{2}\sumlimits{t=1}{T}\tilde{\eta}_t\|\overline{g}_t\|^2 \leq \frac{5}{8}\functiongapboundadaptive. 
\end{align}
Also, we have that for all $t\in[T]$,
\begin{align}
  \label{inequality 5.26}
  \frac{\eta}{\tilde{\eta}_t}=&\sqrt{G_0^2 +\sumlimits{s=1}{t-1}\|g_s\|^2+A\overline{\Delta}_t+\left(B+1\right)\|\overline{g}_t\|^2+C}\notag\\
  \leq & \sqrt{G_0^2+2\sumlimits{s=1}{t-1}\left(A\overline{\Delta}_s+\left(B+1\right)\|\overline{g}_s\|^2+C\right)+A\overline{\Delta}_t+\left(B+1\right)\|\overline{g}_t\|^2+C}\notag\\
  \leq & \sqrt{G_0^2+2\left(B+1\right)\sumlimits{s=1}{T}\|\overline{g}_s\|^2+2T\left(A\functiongapboundadaptive+C\right)},
\end{align}
where the first inequality follows from \eqref{triangle square} and the second inequality is due to Lemma \ref{lemma 6.2}.
Combining \eqref{inequality 5.25} and \eqref{inequality 5.26}, we have
\begin{align*}
  \frac{1}{2}\sumlimits{t=1}{T}\|\overline{g}_t\|^2 \leq &\frac{5\functiongapboundadaptive}{8\eta}\sqrt{G_0^2+2\left(B+1\right)\sumlimits{t=1}{T}\|\overline{g}_t\|^2+2T\left(A\functiongapboundadaptive+C\right)}\notag\\
  \leq & \frac{5\functiongapboundadaptive}{8\eta}\left(G_0+\sqrt{2\left(B+1\right)\sumlimits{t=1}{T}\|\overline{g}_t\|^2}+\sqrt{2T\left(A\functiongapboundadaptive+C\right)}\right)\notag\\
  \leq &\frac{5\functiongapboundadaptive G_0}{8\eta} + \frac{1}{4}\sumlimits{t=1}{T}\|\overline{g}_t\|^2+\frac{25\functiongapboundadaptive^2}{32\eta^2}\left(B+1\right)+\frac{5\functiongapboundadaptive}{8\eta}\sqrt{2T\left(A\functiongapboundadaptive+C\right)},
\end{align*}
where the second inequality is due to Lemma \ref{sqrt sum} and the last inequality follows from Young's inequality.
Re-arranging the above inequality and dividing by $T$ on both sides, we have 
\begin{align}
  \label{result of theorem 3}
  \frac{1}{T}\sumlimits{t=1}{T}\|\overline{g}_t\|^2\leq & \left(\frac{5\functiongapboundadaptive G_0}{2\eta}+\frac{25\functiongapboundadaptive^2}{8\eta^2}\left(B+1\right)\right)\frac{1}{T}+\frac{5\functiongapboundadaptive}{2\eta\sqrt{T}}\sqrt{2\left(A\functiongapboundadaptive+C\right)}.
\end{align}
\end{proof}

\subsection{Convex Optimization}
In this section, we obtain the convergence rate for convex optimization with the constraints of $\eta$ in Theorem \ref{theorem 3}.
In the convex case, we use a decorrelated step size defined in \cite{attia2023sgd},
\begin{align}
  \label{definition of hat eta}
  \hat{\eta}_t=\frac{\eta}{\sqrt{G_{t-1}^2+\|\overline{g}_t\|^2}}.
\end{align}
The following lemma is the affine variance noise version of Lemma 15 in \cite{attia2023sgd}. 
\begin{lemma}
  \label{Lemma 5.4}
  Let $\mathcal{W}(x)$ be a $\mathbb{R} \rightarrow \mathbb{R}$ function defined as 
  \begin{align}
    \label{mathcal W}
    \mathcal{W}(x)=\frac{\eta\sqrt{Ax+C}}{G_0}+\eta\sqrt{B}.
  \end{align}
  Suppose that $\{x_t\}_{t\in[T]}$ is generated by AdaGrad-Norm or RSAG with the adaptive step size \eqref{adaptive step size}.
  Given $T\geq 1 $ and $\delta\in \left(0,1\right)$, if Assumptions \ref{assumption 2} and \ref{assumption 3} hold, then with probability at least $1-\delta$, $\forall l \in [T]$,
  \begin{align}
    \label{inequality 5.4}
    \sumlimits{t=1}{l}\hat{\eta}_t \la \overline{g}_t-g_t, x_t-x^*\ra \leq 2\overline{D}_l \sqrt{A_{T,\delta}\sumlimits{t=1}{l}\eta_{t-1}^2\|g_t-\overline{g}_t\|^2+\mathcal{W}_{\max}^2B_{T,\delta}},
  \end{align}  
where 
\begin{align*}
  D_l=\max_{t\leq l}\left\{\|x_t-x^*\|\right\}, \qquad \overline{D}_l=\max\left\{D_l,\eta\right\}, \notag\\
  \mathcal{W}_{\max}=\mathcal{W}(\overline{\Delta}_{\max}),\qquad \overline{\Delta}_{\max}=\max_{t\leq T} \left\{\overline{\Delta}_t \right\},
\end{align*}
and  
\begin{align}
  \label{A_T B_T}
A_{T,\delta}=16\log\left(\frac{60\log_2(4T)\log(6T)}{\delta}\right), \qquad B_{T,\delta}=16\log^2\left(\frac{60\log_2(4T)\log(6T)}{\delta}\right).
\end{align}
\end{lemma}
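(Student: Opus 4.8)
The plan is to recognize the left-hand side as a sum of martingale differences and to control it by a time-uniform (peeling) version of Freedman's inequality, following the proof of Lemma 15 in \cite{attia2023sgd} but bounding the noise through Assumption \ref{assumption 3} rather than \eqref{asaffine}. Fix the filtration $\mathcal{F}_t=\sigma(z_1,\dots,z_t)$ and note that $\hat{\eta}_t$, $\overline{g}_t=\nabla f(\overline{x}_t)$ and $x_t$ are all $\mathcal{F}_{t-1}$-measurable; this is precisely why the decorrelated step size \eqref{definition of hat eta} is used in place of $\eta_t$. Writing $u_t=(x_t-x^*)/\|x_t-x^*\|$ when $x_t\neq x^*$ (and $d_t=0$ otherwise), we have
$$d_t:=\hat{\eta}_t\la \overline{g}_t-g_t,x_t-x^*\ra=\|x_t-x^*\|\,e_t,\qquad e_t:=\hat{\eta}_t\la \overline{g}_t-g_t,u_t\ra,$$
and by Assumption \ref{assumption 2} (unbiasedness), $\mathbb{E}_t[e_t]=\hat{\eta}_t\la\overline{g}_t-\nabla f(\overline{x}_t),u_t\ra=0$, so $\{e_t\}$ (hence $\{d_t\}$) is a martingale difference sequence.

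Next I would record the two predictable estimates that feed the concentration bound. Since $\hat{\eta}_t=\eta/\sqrt{G_{t-1}^2+\|\overline{g}_t\|^2}\le \eta/G_{t-1}=\eta_{t-1}$, the empirical quadratic variation obeys $\sum_{t\le l}e_t^2\le \sum_{t\le l}\eta_{t-1}^2\|g_t-\overline{g}_t\|^2$. For the increment bound, Assumption \ref{assumption 3} together with $\tfrac{A\overline{\Delta}_t+C}{G_{t-1}^2+\|\overline{g}_t\|^2}\le \tfrac{A\overline{\Delta}_t+C}{G_0^2}$, $\tfrac{B\|\overline{g}_t\|^2}{G_{t-1}^2+\|\overline{g}_t\|^2}\le B$ and $(\sqrt{a}+\sqrt{b})^2\ge a+b$ (Lemma \ref{sqrt sum}) give
$$\hat{\eta}_t^2\|g_t-\overline{g}_t\|^2\le \eta^2\,\frac{A\overline{\Delta}_t+B\|\overline{g}_t\|^2+C}{G_{t-1}^2+\|\overline{g}_t\|^2}\le \eta^2\Big(\tfrac{A\overline{\Delta}_t+C}{G_0^2}+B\Big)\le \mathcal{W}(\overline{\Delta}_t)^2,$$
so that $|e_t|\le \hat{\eta}_t\|g_t-\overline{g}_t\|\le \mathcal{W}(\overline{\Delta}_t)\le \mathcal{W}_{\max}$ almost surely; crucially $\mathcal{W}(\overline{\Delta}_t)$ and $\|x_t-x^*\|$ are $\mathcal{F}_{t-1}$-measurable.

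I would then apply a generalized Freedman inequality that is uniform over the index $l$ and over the data-dependent scales — this is the source of $A_{T,\delta}$ and $B_{T,\delta}$, whose $\log_2(4T)\log(6T)$ factors come from a double dyadic peeling: one over the empirical variation $\sum_{t\le l}e_t^2$ and one over $\max_{t\le l}\|x_t-x^*\|$, the latter using the crude deterministic range $\|x_t-x^*\|\le \|x_1-x^*\|+T\eta$ obtained from $\eta_k\|g_k\|\le\eta$ (cf. \eqref{inequality 6.4} and Lemma \ref{agd term}). This yields, with probability at least $1-\delta$ and for all $l\in[T]$,
$$\sum_{t\le l}d_t\le \big(\textstyle\max_{t\le l}\|x_t-x^*\|\big)\Big(\sqrt{A_{T,\delta}\textstyle\sum_{t\le l}\eta_{t-1}^2\|g_t-\overline{g}_t\|^2}+\sqrt{B_{T,\delta}}\,\mathcal{W}_{\max}\Big),$$
and combining the two terms via $\sqrt{u}+\sqrt{v}\le 2\sqrt{u+v}$ and replacing $\max_{t\le l}\|x_t-x^*\|$ by $\overline{D}_l$ gives \eqref{inequality 5.4}.

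The main obstacle is exactly this last stage: the iterate norms $\|x_t-x^*\|$ are not bounded a priori (indeed this lemma is used to \emph{establish} such a bound in the convex analysis), so a one-shot Freedman bound is unavailable and a stopping-time/peeling argument is required, with care taken to keep the quantities $\overline{D}_l$ and $\mathcal{W}(\overline{\Delta}_t)$ predictable so that the peeling only costs $\log\log$-type factors. The payoff of using the decorrelated $\hat{\eta}_t$ rather than $\eta_t$ is that no additional conditioning is needed to make $\{d_t\}$ a martingale difference sequence, and the algebraic manipulation of $\hat{\eta}_t^2\|g_t-\overline{g}_t\|^2$ above is what produces both the empirical-variation proxy $\sum_{t\le l}\eta_{t-1}^2\|g_t-\overline{g}_t\|^2$ and, through the running maximum of $\mathcal{W}(\overline{\Delta}_t)$, the floor term $\mathcal{W}_{\max}^2 B_{T,\delta}$.
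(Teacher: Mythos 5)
Your proposal is correct and follows essentially the same route as the paper: the paper likewise treats $\hat{\eta}_t\langle \overline{g}_t-g_t,x_t-x^*\rangle$ as a martingale difference sequence (using that $\hat\eta_t$, $\overline g_t$, $x_t$ are predictable), bounds the increments by $\mathcal{W}_{\max}$ and the quadratic variation by $\sum_t\eta_{t-1}^2\|g_t-\overline g_t\|^2$ via $\hat\eta_t\le\eta_{t-1}$, and realizes your ``double peeling'' concretely as a union bound over $O(\log_2(4T))$ dyadic scales $\Phi_k$ of the iterate norm (made admissible by the deterministic range $D_t\le D_1+\eta(t-1)$ and a projection onto the unit ball) combined with the time-uniform self-normalized inequality of Lemma 14 in \cite{attia2023sgd}, which is the source of the $\log(6T)$ factor. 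The only cosmetic difference is that the paper's factor $2$ comes from $\Phi_{k_l}\le 2\overline D_l$ rather than from merging two square roots.
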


\begin{lemma}
  \label{Lemma 5.5}
  Given $T\geq 1 $ and $\delta\in \left(0,1\right)$, if Assumptions \ref{assumption 2} and \ref{assumption 3} hold, then with probability at least $1-\delta$, $\forall l\in[T]$,
  \begin{align}
    \label{inequality 5.5}
    -\sumlimits{t=1}{l}\la \overline{g}_t,\xi_t\ra \leq & \frac{1}{2P_a^2}\sumlimits{t=1}{l}\|\overline{g}_t\|^2P_t^2+\frac{3P_a^2}{2}\log\frac{T}{\delta},
  \end{align}  
  where $P_t$ and $P_a$ are defined in \eqref{P_t} and \eqref{P_a}, respectively.
\end{lemma}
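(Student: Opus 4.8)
The plan is to recognize the left-hand side of \eqref{inequality 5.5} as a martingale sum and apply the time-uniform concentration tool that underlies the other probabilistic lemmas of the paper, with a specific choice of the free parameter. Put $X_t:=-\la\overline{g}_t,\xi_t\ra$ and let $\mathcal{F}_{t-1}=\sigma(z_1,\dots,z_{t-1})$. Since $\overline{x}_t=\alpha_tx_t+(1-\alpha_t)\tilde{x}_t$ is formed from $x_t,\tilde{x}_t$, which depend only on $z_1,\dots,z_{t-1}$, the quantities $\overline{x}_t,\overline{g}_t=\nabla f(\overline{x}_t),\overline{\Delta}_t,P_t$ are all $\mathcal{F}_{t-1}$-measurable; and $\mE_t[\xi_t]=\mE_t[g_t]-\overline{g}_t=0$ by Assumption \ref{assumption 2}, so $\{X_t\}$ is a martingale difference sequence adapted to $\{\mathcal{F}_{t-1}\}$.

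First I would control the conditional variance and the tail of $X_t$. By Cauchy--Schwarz together with Assumption \ref{assumption 3},
\[
\mE_t\bigl[X_t^2\bigr]\le\|\overline{g}_t\|^2\,\mE_t\bigl[\|\xi_t\|^2\bigr]\le\|\overline{g}_t\|^2\bigl(A\overline{\Delta}_t+B\|\overline{g}_t\|^2+C\bigr),
\]
and inserting $\|\overline{g}_t\|^2\le 4\bigl(L_0\overline{\Delta}_t+L_1^2\overline{\Delta}_t^2\bigr)$ from Lemma \ref{lemma 6.2} into the factor multiplying $B$ gives $A\overline{\Delta}_t+B\|\overline{g}_t\|^2+C\le P_t^2$, hence $\mE_t[X_t^2]\le\|\overline{g}_t\|^2P_t^2=:v_t$, which is predictable. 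The same two estimates, applied pathwise through Assumption \ref{assumption 3} itself (or, for a cleaner tail, through the sub-Gaussian form of Remark \ref{remark 4.1}), give $|X_t|\le\|\overline{g}_t\|\,\|\xi_t\|\le\|\overline{g}_t\|P_t$ a.s., so $X_t$ is conditionally sub-Gaussian/sub-exponential with variance proxy of order $v_t$. I would then feed $\{X_t\}$ and $\{v_t\}$ into the standard time-uniform Freedman/Bernstein-type inequality (the same mechanism behind Lemmas \ref{high probability sum non-convex constant}, \ref{lemma 6} and \ref{high probability sum convex constant convergence}) with the free parameter set to $\lambda=1/P_a^2$, obtaining, with probability at least $1-\delta$ and simultaneously for every $l\in[T]$,
\[
\sumlimits{t=1}{l}X_t\le\frac{\lambda}{2}\sumlimits{t=1}{l}v_t+\frac{3}{2\lambda}\log\frac{T}{\delta}=\frac{1}{2P_a^2}\sumlimits{t=1}{l}\|\overline{g}_t\|^2P_t^2+\frac{3P_a^2}{2}\log\frac{T}{\delta},
\]
which is exactly \eqref{inequality 5.5}; the value $\lambda=1/P_a^2$ is tuned so that the term quadratic in $\|\overline{g}_t\|$ stays small enough to be absorbed later in the convex analysis.

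The step I expect to be the main obstacle is the concentration inequality itself: the predictable variance factor $v_t=\|\overline{g}_t\|^2P_t^2$ is not bounded by a deterministic constant a priori (Proposition \ref{5.2} is not available at this stage and would be awkward to use here), so the exponential supermartingale $\exp(\lambda\sum_{s\le t}X_s-c\lambda^2\sum_{s\le t}v_s)$ cannot be run with a single fixed $\lambda$ in the purely almost-sure case. This is handled by a peeling/union-bound argument over a dyadic grid for the cumulative variance $\sum_{s\le t}v_s$, which is also what turns a naive $\log(1/\delta)$ into the $\log(T/\delta)$ appearing in the statement. Under the sub-Gaussian form of the noise the exponential supermartingale is valid for all $\lambda>0$, so the bound then follows directly from Ville's inequality evaluated at $\lambda=1/P_a^2$, with no peeling required.
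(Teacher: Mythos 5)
Your proposal is correct and follows essentially the same route as the paper: the paper's proof of Lemma \ref{Lemma 5.5} is literally the one-line specialization of \eqref{4.14} (established inside the proof of Lemma \ref{high probability sum non-convex constant} via the martingale-difference/Cauchy--Schwarz/Lemma \ref{lemma 6.2} chain you describe) with $\lambda=\tfrac{2}{3P_a^2}$, which yields exactly $\tfrac{1}{2P_a^2}\sum_t\|\overline{g}_t\|^2P_t^2+\tfrac{3P_a^2}{2}\log\tfrac{T}{\delta}$. The one inaccuracy is your anticipated obstacle: no peeling over the cumulative variance is needed even in the almost-sure case, because the concentration tool used (Lemma \ref{lemma high probability}, from Li--Orabona) keeps the random predictable variance $\sum_t\sigma_t^2$ on the right-hand side of the bound and is valid for a single fixed deterministic $\lambda$ (here $P_a$ is deterministic); the factor $\log\tfrac{T}{\delta}$ comes solely from the union bound over the $T$ prefix lengths $l\in[T]$, not from a dyadic grid on the variance.
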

 In the following sections, we define $\eta_0=\frac{\eta}{G_0}$ for simplicity.

 \begin{proposition}
  \label{proposition 5.2}
  Under the conditions of Theorem \ref{Theorem 4}, with probability at least $1-3\delta$, we have
  \begin{align}
    \label{5.38 1}
    \|x_t-x^*\|^2\leq D_a^2,\quad \forall t\in[T],
  \end{align}
  where 
  \begin{align}
    \label{D_a}
    D_a^2=2\|x_1-x^*\|^2+4R_a+10\eta^2\mathcal{H}+\frac{\eta^2}{2}+ 32\left(A_{T,\delta}R_a+\mathcal{W}_a^2B_{T,\delta}\right)  +\frac{8}{\eta^2}R_a^2,
  \end{align}
  \begin{align}
    \label{R}
    R_a=&2\eta^2\log\left(1+\frac{T\left(A\functiongapboundadaptive+4B\left(L_0\functiongapboundadaptive+L_1^2\functiongapboundadaptive^2\right)+C\right)}{2G_0^2}\right)\notag\\
    &+7\frac{\eta^2}{G_0^2}\left(A\functiongapboundadaptive+4B\left(L_0\functiongapboundadaptive+L_1^2\functiongapboundadaptive^2\right)+C\right)\log\frac{T}{\delta},
  \end{align}
   $\mathcal{W}_a=\mathcal{W}(\functiongapboundadaptive)$ with $\mathcal{W}(x)$ defined in \eqref{mathcal W}, and $\mathcal{H}$, $A_{T,\delta}$, $B_{T,\delta}$ are given in \eqref{mathcal H}, \eqref{A_T B_T}, respectively.
 \end{proposition}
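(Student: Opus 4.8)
The plan is to reproduce the induction of Proposition \ref{4.3} on the quantity $\|x_t-x^*\|^2$, but with two changes forced by the adaptive step size: the step size $\eta_t$ is now correlated with $g_t$, so the noise term must be rerouted through a decorrelated surrogate, and, since $\eta$ is an absolute constant rather than of order $1/\sqrt T$, every sum on the right-hand side has to be kept at $\mathrm{polylog}(T)$ scale by exploiting the decay of $\eta_t$ and the telescoping identities of Proposition \ref{proposition 1}, instead of the crude ``$\eta^2T=O(1)$'' accounting used in the constant-step case. First I would fix the three high-probability events on which the whole argument lives: the event of Proposition \ref{5.2} (which supplies $\overline{\Delta}_t\le\functiongapboundadaptive$ and $P_t\le P_a$ for all $t$, hence $\|\overline{g}_t\|^2\le 2(L_0\functiongapboundadaptive+L_1^2\functiongapboundadaptive^2)$ by Lemma \ref{lemma 6.2} and $\|\overline{x}_t-x_t\|\le\eta$ by Lemma \ref{agd term}), the event of Lemma \ref{Lemma 5.4}, and the event of Lemma \ref{Lemma 5.5}; their intersection has probability at least $1-3\delta$, exactly as claimed.

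On this event I would induct on $l$, the base case $\|x_1-x^*\|^2\le D_a^2$ being immediate from \eqref{D_a}. For the step, assume $\|x_t-x^*\|^2\le D_a^2$ for all $t\in[l]$ and expand $x_{t+1}=x_t-\eta_t g_t$:
\[
\|x_{l+1}-x^*\|^2=\|x_1-x^*\|^2-2\sumlimits{t=1}{l}\eta_t\la g_t,x_t-x^*\ra+\sumlimits{t=1}{l}\eta_t^2\|g_t\|^2 ,
\]
where the last sum is $\eta^2\sum_t\|g_t\|^2/G_t^2\le\eta^2\mathcal{H}$ by \eqref{sum square eta_t g_t}. In the cross term I would split $\la g_t,x_t-x^*\ra=\la\overline{g}_t,\overline{x}_t-x^*\ra+\la\overline{g}_t,x_t-\overline{x}_t\ra+\la\overline{g}_t-g_t,x_t-x^*\ra$, discard $-2\sum\eta_t\la\overline{g}_t,\overline{x}_t-x^*\ra\le 0$ by convexity (Lemma \ref{convexity}), bound the ``AGD offset'' $-2\sum\eta_t\la\overline{g}_t,x_t-\overline{x}_t\ra$ by Cauchy-Schwarz and a step-size-adapted Young inequality and then collapse the resulting double sum via Proposition \ref{proposition overline x_t - x_t}, \eqref{adaptive step size constraint}, the monotonicity of $\{\eta_t\}$ and Proposition \ref{proposition 1} into a multiple of $\eta_0^2\mathcal{H}$, and treat the stochastic term $2\sum\eta_t\la\overline{g}_t-g_t,x_t-x^*\ra$ by writing it as $2\sum\hat{\eta}_t\la\overline{g}_t-g_t,x_t-x^*\ra+2\sum(\eta_t-\hat{\eta}_t)\la\overline{g}_t-g_t,x_t-x^*\ra$ with $\hat{\eta}_t$ from \eqref{definition of hat eta}. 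Lemma \ref{Lemma 5.4} bounds the first piece by $2\overline{D}_l\sqrt{A_{T,\delta}\sum_t\eta_{t-1}^2\|g_t-\overline{g}_t\|^2+\mathcal{W}_{\max}^2B_{T,\delta}}$, where $\overline{D}_l\le\max\{D_a,\eta\}$ by the induction hypothesis, $\mathcal{W}_{\max}\le\mathcal{W}_a$ since $\mathcal{W}$ is increasing and $\overline{\Delta}_{\max}\le\functiongapboundadaptive$, and $\sum_t\eta_{t-1}^2\|g_t-\overline{g}_t\|^2\le R_a$; this last estimate is the deterministic consequence, on the event of Proposition \ref{5.2}, of the almost-sure noise bound together with \eqref{triangle square} and Lemmas \ref{sum square}, \ref{sum log} --- splitting the steps into those with $\|g_t\|^2\le G_{t-1}^2$ and the at most $\log_2(G_T^2/G_0^2)$ remaining ``doubling'' steps reproduces exactly the two terms of \eqref{R}. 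The correction $2\sum(\eta_t-\hat{\eta}_t)\la\overline{g}_t-g_t,x_t-x^*\ra$ is controlled via the near-equality of $\eta_t$ and $\hat{\eta}_t$ (cf.\ Lemma \ref{tilde eta}), Cauchy-Schwarz, the induction hypothesis and the $R_a$ bound, and after a Young step contributes the $\eta^{-2}R_a^2$-type term of \eqref{D_a}; a leftover inner-product sum of the form $-\sum\la\overline{g}_t,\xi_t\ra$ is absorbed by Lemma \ref{Lemma 5.5} with $P_t\le P_a$.

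Collecting these bounds gives $\|x_{l+1}-x^*\|^2\le\|x_1-x^*\|^2+\Theta+\frac12 D_a^2$, where $\Theta$ is an explicit absolute combination of $\eta^2\mathcal{H}$, $R_a$, $A_{T,\delta}R_a+\mathcal{W}_a^2B_{T,\delta}$, $\eta^{-2}R_a^2$ and $\eta^2$, and the $\frac12$ comes from the two Young steps applied to $2\overline{D}_l\sqrt{\cdots}$ and to the correction term; since \eqref{D_a} is precisely $D_a^2=2\|x_1-x^*\|^2+2\Theta$ up to the slack in the stated constants, the right-hand side is at most $D_a^2$ and the induction closes. The main obstacle is the stochastic coupling between $\eta_t$ and $g_t$: one must select the right decorrelated surrogates ($\hat{\eta}_t$, and the comparison behind Lemma \ref{tilde eta}), check that the correction terms they produce are genuinely of lower order, and --- because $\eta$ is no longer a function of $T$ --- recover the $\mathrm{polylog}(T)$-scale sums ($\eta^2\mathcal{H}$, $R_a$) purely from the decay of $\eta_t$ and the telescoping identities, all while arranging that the self-referential $D_a$ (entering through $\overline{D}_l$ and through $\|x_t-x^*\|\le D_a$) is reabsorbed into $D_a^2$ with exactly the constants hard-wired into \eqref{D_a}.
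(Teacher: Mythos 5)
Your overall architecture is exactly the paper's: the same three high-probability events (the event behind Proposition \ref{5.2}, i.e.\ \eqref{inequality high probability non-convex adaptive}, plus \eqref{inequality 5.4} and \eqref{inequality 5.5}), the same induction on $\|x_t-x^*\|^2$, the same bound $\sum_t\eta_t^2\|g_t\|^2\le\eta^2\mathcal{H}$, the same three-way split of the cross term with the decorrelated $\hat{\eta}_t$, Lemma \ref{Lemma 5.4} for the martingale piece, Lemma \ref{hat eta} (not Lemma \ref{tilde eta}, a minor mislabel) for the correction piece, and the same Young-inequality bookkeeping that reabsorbs $D_a^2/2$.

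There is, however, one concrete gap: your derivation of $\sumlimits{t=1}{l}\eta_{t-1}^2\|g_t-\overline{g}_t\|^2\le R_a$. You describe it as a \emph{deterministic} consequence of the noise bound via a split into ``doubling steps,'' but this cannot work as stated. Since $\eta_{t-1}^2=\eta^2/G_{t-1}^2$ and $G_{t-1}^2$ accumulates $\|g_i\|^2$ rather than $\|g_i-\overline{g}_i\|^2$, Lemma \ref{sum log} is not directly applicable: the denominators are not the running sums of the numerators, and deterministically one only has $\sum_{i\le t}\|g_i-\overline{g}_i\|^2\le 2(G_t^2-G_0^2)+2\sum_{i\le t}\|\overline{g}_i\|^2$, whose last term is of order $T$, destroying the polylog scale of $R_a$. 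The paper closes this by a stopping-time argument: it defines $k$ as the first time the cumulative noise exceeds $6P_a^2\log(T/\delta)$ and uses the concentration event \eqref{inequality 5.5} to control the cross term $-2\sum_{i\le t}\la\overline{g}_i,\overline{g}_i-g_i\ra$ in the expansion of $G_t^2$, yielding $G_t^2\ge G_0^2+\tfrac12\sum_{i\le t}\|\overline{g}_i-g_i\|^2$ for $t\ge k$; only then does Lemma \ref{sum log} produce the logarithmic term of \eqref{R}, while the pre-$k$ contribution and the $\eta_{t-1}^2-\eta_t^2$ correction give the second term. You do list the event of Lemma \ref{Lemma 5.5} among your assumptions, but you deploy it only for a ``leftover inner-product sum'' that does not actually arise in this proof; its real role is inside the $R_a$ bound, and without routing it there your argument for \eqref{R} does not go through.
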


\begin{proof}
We apply a method similar to that used in the proof of Proposition \ref{4.3}. Assuming that \eqref{inequality high probability non-convex adaptive}, \eqref{inequality 5.4} and  \eqref{inequality 5.5} always happen, we deduce that \eqref{5.38 1} always holds. 
Since \eqref{inequality high probability non-convex adaptive}, \eqref{inequality 5.4} and  \eqref{inequality 5.5} happen with probability at least $1-\delta$ separately, it follows that \eqref{5.38 1} holds with probability at least $1-3\delta$.
It is apparent that $\|x_1-x^*\|^2\leq D_a^2$. 
Suppose that for some $l\in[T]$, 
$$\|x_t-x^*\|^2\leq D_a^2, \quad \forall t\in[l].$$
By the iteration step of Algorithm \ref{algorithm1}, we have  
  \begin{align}
    \label{iterating step adaptive}
     \|x_{t+1}-x^*\|^2=\|x_{t}-x^*\|^2-2\theta_t \la g_t, x_t-x^*\ra+\theta_t^2\|g_t\|^2.
  \end{align}
 Summing over $t\in[l]$ with $\theta_t=\eta_t$, where $\eta_t$ are defined in \eqref{adaptive step size}, and applying \eqref{sum square eta_t g_t},
\begin{align}
  \label{5.38}
  \|x_{l+1}-x^*\|^2=&\|x_1-x^*\|^2-2\sumlimits{t=1}{l}\eta_t \la g_t,x_t-x^*\ra+\sumlimits{t=1}{l}\eta_t^2\|g_t\|^2\notag\\
  \leq & \|x_1-x^*\|^2-2\sumlimits{t=1}{l}\eta_t \la g_t,x_t-x^*\ra+\eta^2\mathcal{H}.
\end{align}
By decomposing the middle term on the right side, we have
\begin{align}
  \label{5.39}
  &-2\sumlimits{t=1}{l}\eta_t \la g_t, x_t-x^*\ra\notag\\
  =&-2\sumlimits{t=1}{l}\eta_t\la \overline{g}_t, x_t-x^*\ra+2\sumlimits{t=1}{l}\eta_t\la \overline{g}_t-g_t,x_t-x^*\ra\notag\\
  =&-2\sumlimits{t=1}{l}\eta_t\la \overline{g}_t, \overline{x}_t-x^*\ra-2\sumlimits{t=1}{l}\eta_t\la \overline{g}_t, x_t-\overline{x}_t\ra +2\sumlimits{t=1}{l}\eta_t\la \overline{g}_t-g_t,x_t-x^*\ra\notag\\
  \leq & -2\sumlimits{t=1}{l}\eta_t\la \overline{g}_t, x_t-\overline{x}_t\ra +2\sumlimits{t=1}{l}\eta_t\la \overline{g}_t-g_t,x_t-x^*\ra\notag\\
  = & \underbrace{-2\sumlimits{t=1}{l}\eta_t\la \overline{g}_t, x_t-\overline{x}_t\ra}_{\text{(i)}}+\underbrace{2\sumlimits{t=1}{l}\hat{\eta}_t\la \overline{g}_t-g_t,x_t-x^*\ra}_{\text{(ii)}}+\underbrace{2\sumlimits{t=1}{l}\left(\eta_t-\hat{\eta}_t\right)\la \overline{g}_t-g_t,x_t-x^*\ra}_{\text{(iii)}},
\end{align}
where the inequality follows from Lemma \ref{convexity}.
\paragraph{Term (i)}
Applying Cauchy-Schwarz inequality and Young's inequality,  
\begin{align}
  \label{6.56}
  -2\sumlimits{t=1}{l}\eta_t\la  \overline{g}_t, x_t-\overline{x}_t\ra\leq &\sumlimits{t=1}{l}\eta_t^2\|\overline{g}_t\|^2+\sumlimits{t=1}{l}\|\overline{x}_t-x_t\|^2\notag\\
  \leq & \sumlimits{t=1}{l}\eta_t^2\|\overline{g}_t\|^2+ \sumlimits{t=1}{l}\left[\left(1-\alpha_t\right)\varGamma_t\sumlimits{k=1}{t}\frac{\alpha_k}{\varGamma_k}\eta_k^2\|g_k\|^2\right]\notag\\
  = & \sumlimits{t=1}{l}\eta_t^2\|\overline{g}_t\|^2+\sumlimits{t=1}{l}\left[\sumlimits{k=t}{l}\left(1-\alpha_k\right)\varGamma_k\right]\frac{\alpha_t}{\varGamma_t}\eta_t^2\|g_t\|^2\notag\\
  \leq & \sumlimits{t=1}{l}\eta_t^2\|\overline{g}_t\|^2+2\sumlimits{t=1}{l}\eta_t^2\|g_t\|^2,
\end{align}
where the second inequality follows from Proposition \ref{proposition overline x_t - x_t} and \eqref{adaptive step size constraint}, and the last inequality holds since \eqref{proposition 1.2}.
By the triangle inequality, we have $\|\overline{g}_t\|^2\leq 2\|\overline{g}_t-g_t\|^2+2\|g_t\|^2$.
Therefore,
\begin{align}
  \label{Term i}
  -2\sumlimits{t=1}{l}\eta_t\la  \overline{g}_t, x_t-\overline{x}_t\ra\leq & 2\sumlimits{t=1}{l}\eta_t^2\|\overline{g}_t-g_t\|^2+4\sumlimits{t=1}{l}\eta_t^2\|g_t\|^2\notag\\
  \leq & 2\sumlimits{t=1}{l}\eta_{t-1}^2\|\overline{g}_t-g_t\|^2+4\sumlimits{t=1}{l}\eta_t^2\|g_t\|^2,
\end{align}
where the second inequality holds since $\eta_t\leq \eta_{t-1}, \forall t\in[T]$.
\paragraph{Term (ii)}
Applying Lemma \ref{Lemma 5.4}, we have 
\begin{align}
  \label{Term ii}
 2 \sumlimits{t=1}{l}\hat{\eta}_t \la \overline{g}_t-g_t,x_t-x^*\ra\leq & 4 \overline{D}_l \sqrt{A_{T,\delta}\sumlimits{t=1}{l}\eta_{t-1}^2\|g_t-\overline{g}_t\|^2+\mathcal{W}_a^2B_{T,\delta}}\notag\\
 \leq & \frac{\overline{D}_l^2}{4}+16\left(A_{T,\delta}\sumlimits{t=1}{l}\eta_{t-1}^2\|g_t-\overline{g}_t\|^2+\mathcal{W}_a^2B_{T,\delta}\right)\notag\\
 \leq & \frac{D_a^2}{4}+\frac{\eta^2}{4}+16\left(A_{T,\delta}\sumlimits{t=1}{l}\eta_{t-1}^2\|g_t-\overline{g}_t\|^2+\mathcal{W}_a^2B_{T,\delta}\right),
\end{align}
where the second inequality follows from Young's inequality and the last inequality holds since the definition of $\overline{D}_l$ and the assumption that $\|x_t-x^*\|^2\leq D_a^2, \forall t\in[l]$.
\paragraph{Term (iii)}
Applying Cauchy-Schwarz inequality and the assumption that for all $t\in[l]$, $\|x_t-x^*\|\leq D_a$, we have 
\begin{align}
  \label{Term iii}
  2\sumlimits{t=1}{l}\left(\eta_t-\hat{\eta}_t\right)\la \overline{g}_t-g_t,x_t-x^*\ra\leq & 2D_a\sumlimits{t=1}{l}|\eta_t-\hat{\eta}_t|\cdot\|\overline{g}_t-g_t\|\notag\\
  \leq & 2\eta D_a \sumlimits{t=1}{l} \frac{\|\overline{g}_t-g_t\|^2}{\sqrt{G_{t-1}^2+\|g_t\|^2}\sqrt{G_{t-1}^2+\|\overline{g}_t\|^2}}\notag\\
  \leq & 2\frac{D_a}{\eta}\sumlimits{t=1}{l}\eta_{t-1}^2\|\overline{g}_t-g_t\|^2,
\end{align}
where the second inequality holds since Lemma \ref{hat eta} and the third inequality holds since the definition of $\eta_{t}$.
Next we will bound $\sumlimits{t=1}{l}\eta_{t-1}^2\|\overline{g}_t-g_t\|^2$.
\begin{case}
\begin{align*}
  \sumlimits{i=1}{T}\|\overline{g}_i-g_i\|^2\leq6\left(A\functiongapboundadaptive+4B\left(L_0\functiongapboundadaptive+L_1^2\functiongapboundadaptive^2\right)+C\right)\log\frac{T}{\delta}.
\end{align*}
Then, we have 
\begin{align}
  \label{6}
  \sumlimits{t=1}{l}\eta_{t-1}^2\|\overline{g}_t-g_t\|^2\leq \frac{6\eta^2}{G_0^2}\left(A\functiongapboundadaptive+4B\left(L_0\functiongapboundadaptive+L_1^2\functiongapboundadaptive^2\right)+C\right)\log\frac{T}{\delta}.
\end{align}
\end{case}
\begin{case}
  \begin{align*}
    \sumlimits{i=1}{T}\|\overline{g}_i-g_i\|^2>6\left(A\functiongapboundadaptive+4B\left(L_0\functiongapboundadaptive+L_1^2\functiongapboundadaptive^2\right)+C\right)\log\frac{T}{\delta}.
  \end{align*}
  Let
  \begin{align}
    \label{k}
  k=\min\left\{t\in [T] : \sumlimits{i=1}{t}\|\overline{g}_i-g_i\|^2>6\left(A\functiongapboundadaptive+4B\left(L_0\functiongapboundadaptive+L_1^2\functiongapboundadaptive^2\right)+C\right)\log\frac{T}{\delta}\right\}.
  \end{align}
  By  \eqref{inequality 5.5}, with the assumption that for all $t\in [l]$, $\overline{\Delta}_t\leq \functiongapboundadaptive$ and $P_t\leq P_a$,
\begin{align}
  \label{lambda}
  \sumlimits{t=1}{l}\la \overline{g}_t, \overline{g}_t-g_t\ra \leq & \frac{1}{2}\sumlimits{t=1}{l}\|\overline{g}_t\|^2+\frac{3}{2}\left(A\functiongapboundadaptive+4B\left(L_0\functiongapboundadaptive+L_1^2\functiongapboundadaptive^2\right)+C\right)\log\frac{T}{\delta}.
\end{align}
If $l<k$,
\begin{align}
  \label{5.44}
  \sumlimits{t=1}{l}\eta_{t-1}^2\|\overline{g}_t-g_t\|^2\leq \frac{\eta^2}{G_0^2}\sumlimits{t=1}{l}\|\overline{g}_t-g_t\|^2\leq 6\frac{\eta^2}{G_0^2}\left(A\functiongapboundadaptive+4B\left(L_0\functiongapboundadaptive+L_1^2\functiongapboundadaptive^2\right)+C\right)\log\frac{T}{\delta}.
\end{align}
If $l\geq k$, $\forall k\leq t \leq l$,
\begin{align*}
  G_t^2=&G_0^2+\sumlimits{i=1}{t}\|\overline{g}_i\|^2+\sumlimits{i=1}{t}\|\overline{g}_i-g_i\|^2-2\sumlimits{i=1}{t}\la \overline{g}_i,\overline{g}_i-g_i\ra\notag\\
\geq & G_0^2+\sumlimits{i=1}{t} \|\overline{g}_i-g_i\|^2-3\left(A\functiongapboundadaptive+4B\left(L_0\functiongapboundadaptive+L_1^2\functiongapboundadaptive^2\right)+C\right)\log\frac{T}{\delta}\notag\\
\geq & G_0^2+\frac{1}{2}\sumlimits{i=1}{t}\|\overline{g}_i-g_i\|^2,
\end{align*}
where the first inequality follows from \eqref{lambda} and the last inequality is due to the definition of $k$ in \eqref{k}.
Hence,
\begin{align}
  \label{5.46}
  \sumlimits{t=k}{l}\eta_{t-1}^2\|\overline{g}_t-g_t\|^2=&\sumlimits{t=k}{l}\eta_t^2\|\overline{g}_t-g_t\|^2+\sumlimits{t=k}{l}\left(\eta_{t-1}^2-\eta_t^2\right)\|\overline{g}_t-g_t\|^2\notag\\
  \leq & \eta^2\sumlimits{t=k}{l}\left(\frac{\|\overline{g}_t-g_t\|^2}{G_0^2+\frac{1}{2}\sum_{i=1}^t\|\overline{g}_i-g_i\|^2}\right)+\frac{\eta^2}{G_0^2}\left(A\functiongapboundadaptive+4B\left(L_0\functiongapboundadaptive+L_1^2\functiongapboundadaptive^2\right)+C\right)\notag\\
  \leq & 2\eta^2\sumlimits{t=1}{T}\left(\frac{\|\overline{g}_t-g_t\|^2}{2G_0^2+\sum_{i=1}^t\|\overline{g}_i-g_i\|^2}\right)+\frac{\eta^2}{G_0^2}\left(A\functiongapboundadaptive+4B\left(L_0\functiongapboundadaptive+L_1^2\functiongapboundadaptive^2\right)+C\right)\notag\\
  \leq & 2\eta^2\log\left(1+\frac{T\left(A\functiongapboundadaptive+4B\left(L_0\functiongapboundadaptive+L_1^2\functiongapboundadaptive^2\right)+C\right)}{2G_0^2}\right)\notag\\
  &+\frac{\eta^2}{G_0^2}\left(A\functiongapboundadaptive+4B\left(L_0\functiongapboundadaptive+L_1^2\functiongapboundadaptive^2\right)+C\right),
\end{align}
where the first inequality holds since $\eta_t\leq \eta_{t-1}, \forall t\in[T]$, and  the last inequality follows from Lemma \ref{lemma 6.2} and \ref{sum log}.
\end{case}
Combining \eqref{6}, \eqref{5.44} and \eqref{5.46}, we have
\begin{align}
  \label{5.47}
  \sumlimits{t=1}{l}\eta_{t-1}^2\|\overline{g}_t-g_t\|^2\leq &2\eta^2\log\left(1+\frac{T\left(A\functiongapboundadaptive+4B\left(L_0\functiongapboundadaptive+L_1^2\functiongapboundadaptive^2\right)+C\right)}{2G_0^2}\right)\notag\\
  &+ 7\frac{\eta^2}{G_0^2}\left(A\functiongapboundadaptive+4B\left(L_0\functiongapboundadaptive+L_1^2\functiongapboundadaptive^2\right)+C\right)\log\frac{T}{\delta}=R_a.
\end{align}
Therefore, combining \eqref{5.38}, \eqref{5.39}, \eqref{Term i}, \eqref{Term ii} and \eqref{Term iii}, we have
\begin{align*}
  &\|x_{l+1}-x^*\|^2\notag\\
  \leq & \|x_1-x^*\|^2+2\sumlimits{t=1}{l}\eta_{t-1}^2\|\overline{g}_t-g_t\|^2+4\sumlimits{t=1}{l}\eta_t^2\|g_t\|^2+\frac{D_a^2}{4}+\frac{\eta^2}{4}\notag\\
  &+16\left(A_{T,\delta}\sumlimits{t=1}{l}\eta_{t-1}^2\|g_t-\overline{g}_t\|^2+\mathcal{W}_a^2B_{T,\delta}\right)+2\frac{D_a}{\eta}\sumlimits{t=1}{l}\eta_{t-1}^2\|\overline{g}_t-g_t\|^2+\eta^2\mathcal{H}\notag\\
  \leq & \|x_1-x^*\|^2+2R_a+4\eta^2\mathcal{H}+\frac{D_a^2}{4}+ \frac{\eta^2}{4}+16\left(A_{T,\delta}R_a+\mathcal{W}_a^2B_{T,\delta}\right)+\frac{D_a^2}{4}+\frac{4}{\eta^2}R_a^2+\eta^2 \mathcal{H}\notag\\
  = & \|x_1-x^*\|^2+2R_a+5\eta^2\mathcal{H}+\frac{1}{2}D_a^2+\frac{\eta^2}{4}+ 16\left(A_{T,\delta}R_a+\mathcal{W}_a^2B_{T,\delta}\right)  +\frac{4}{\eta^2}R_a^2= D_a^2,
\end{align*}
where the second inequality holds since Young's inequality and  \eqref{sum square eta_t g_t}, \eqref{5.47}. 
The last equation holds since 
\begin{align*}
  D_a^2=2\|x_1-x^*\|^2+4R_a+10\eta^2\mathcal{H}+\frac{\eta^2}{2}+ 32\left(A_{T,\delta}R_a+\mathcal{W}_a^2B_{T,\delta}\right)  +\frac{8}{\eta^2}R_a^2.
\end{align*}
\end{proof}
 Next we will prove Theorem \ref{Theorem 4} based on Proposition \ref{proposition 5.2}. 
\begin{proof}[Proof of Theorem \ref{Theorem 4}]
  Assuming that \eqref{inequality high probability convex constant convergence}, \eqref{inequality high probability non-convex adaptive}, \eqref{inequality 5.4} and  \eqref{inequality 5.5} always hold, we deduce the convergence rate always holds. 
  Since \eqref{inequality high probability convex constant convergence}, \eqref{inequality high probability non-convex adaptive}, \eqref{inequality 5.4} and  \eqref{inequality 5.5} happen with probability at least  $1-\delta$ separately, the convergence rate holds with probability at least $1-4\delta$.
  Dividing $2\theta_t$ on both sides of \eqref{iterating step adaptive} and  summing over $t\in[T]$ with $\theta_t=\eta_t$,
\begin{align*}
  \sumlimits{t=1}{T}\la g_t, x_t-x^*\ra =& \sumlimits{t=1}{T}\frac{\|x_t-x^*\|^2-\|x_{t+1}-x^*\|^2}{2\eta_t}+\frac{1}{2}\sumlimits{t=1}{T}\eta_t\|g_t\|^2\notag\\
  \leq & \frac{G_0\|x_1-x^*\|^2}{2\eta}+\frac{1}{2}\sumlimits{t=1}{T}\left(\frac{1}{\eta_t}-\frac{1}{\eta_{t-1}}\right)\|x_t-x^*\|^2+\frac{1}{2}\sumlimits{t=1}{T}\eta_t\|g_t\|^2.
\end{align*}
Since  $\eta_t\leq \eta_{t-1}, \forall t\in[T]$,
\begin{align*}
\frac{1}{\eta_t}-\frac{1}{\eta_{t-1}}\leq \eta_t \left(\frac{1}{\eta_t^2}-\frac{1}{\eta_{t-1}^2}\right)=\frac{\eta_t}{\eta^2}\|g_t\|^2.
\end{align*}
Therefore,
\begin{align*}
\sumlimits{t=1}{T}\la g_t, x_t-x^*\ra \leq &\frac{G_0\|x_1-x^*\|^2}{2\eta}+\frac{1}{2}\sumlimits{t=1}{T}\eta_t\|g_t\|^2\left(1+\frac{\|x_t-x^*\|^2}{\eta^2}\right).
\end{align*}
Adding $\sum_{t=1}^T\la g_t,\overline{x}_t-x_t\ra$ to both sides, we have
\begin{align*}
  \sumlimits{t=1}{T}\la g_t,\overline{x}_t-x^*\ra\leq &\frac{G_0\|x_1-x^*\|^2}{2\eta}+\frac{1}{2}\sumlimits{t=1}{T}\eta_t\|g_t\|^2\left(1+\frac{\|x_t-x^*\|^2}{\eta^2}\right)+\sumlimits{t=1}{T}\la g_t,\overline{x}_t-x_t\ra\notag\\
  \leq & \frac{G_0\|x_1-x^*\|^2}{2\eta}+\frac{1}{2}\sumlimits{t=1}{T}\eta_t\|g_t\|^2\left(1+\frac{\|x_t-x^*\|^2}{\eta^2}\right)\notag\\
  &+\frac{1}{2}\sumlimits{t=1}{T}\eta_t\|g_t\|^2+\frac{1}{2}\sumlimits{t=1}{T}\frac{1}{\eta_t}\|\overline{x}_t-x_t\|^2\notag\\
  = & \frac{G_0\|x_1-x^*\|^2}{2\eta}+\frac{1}{2}\sumlimits{t=1}{T}\eta_t\|g_t\|^2\left(2+\frac{\|x_t-x^*\|^2}{\eta^2}\right)+\frac{1}{2}\sumlimits{t=1}{T}\frac{1}{\eta_t}\|\overline{x}_t-x_t\|^2,
\end{align*}
where the second inequality follows from Cauchy-Schwarz inequality and Young's inequality. 
Combining with \eqref{proposition 1.2}, \eqref{adaptive step size constraint} and Proposition \ref{proposition overline x_t - x_t}, we have
\begin{align}
  \label{convex sum adaptive}
  \sumlimits{t=1}{T}\la g_t,\overline{x}_t-x^*\ra \leq & \frac{G_0\|x_1-x^*\|^2}{2\eta}+\frac{1}{2}\sumlimits{t=1}{T}\eta_t\|g_t\|^2\left(2+\frac{\|x_t-x^*\|^2}{\eta^2}\right)\notag\\
  & + \frac{1}{2}\sumlimits{t=1}{T}\left[\frac{1}{\eta_t}\left(1-\alpha_t\right)\varGamma_t\sumlimits{k=1}{t}\frac{\alpha_k}{\varGamma_k}\eta_k^2\|g_k\|^2\right]\notag\\
  = & \frac{G_0\|x_1-x^*\|^2}{2\eta}+\frac{1}{2}\sumlimits{t=1}{T}\eta_t\|g_t\|^2\left(2+\frac{\|x_t-x^*\|^2}{\eta^2}\right)\notag\\
  &+\frac{1}{2}\sumlimits{t=1}{T}\left[\sumlimits{k=t}{T}\left(1-\alpha_k\right)\frac{\varGamma_k}{\eta_k}\right]\frac{\alpha_t}{\varGamma_t}\eta_t^2\|g_t\|^2\notag\\
  \leq & \frac{G_0\|x_1-x^*\|^2}{2\eta}+\frac{1}{2}\sumlimits{t=1}{T}\eta_t\|g_t\|^2\left(2+\frac{\|x_t-x^*\|^2}{\eta^2}\right)+\frac{1}{\eta_T}\sumlimits{t=1}{T}\eta_t^2\|g_t\|^2,
\end{align}
where the last inequality holds since $\frac{1}{\eta_t}\leq\frac{1}{\eta_T}, \forall t\in[T]$.
Applying \eqref{sum square eta_t g_t} and Lemma \ref{sum square} to \eqref{convex sum adaptive}, together with the conclusions that $\overline{\Delta}_t\leq \functiongapboundadaptive$ and $\|x_t-x^*\|^2 \leq D_a^2, \forall t\in[T]$, we have
\begin{align*}
  &\sumlimits{t=1}{T}\la g_t,\overline{x}_t-x^*\ra \notag\\
  \leq &  \frac{G_0\|x_1-x^*\|^2}{2\eta}+\left(2\eta+\frac{D_a^2}{\eta}\right)\sqrt{\sumlimits{t=1}{T}\|g_t\|^2} +\eta \mathcal{H} \sqrt{G_0^2+\sum_{t=1}^T\|g_t\|^2}   \notag\\
  \leq &  \frac{G_0\|x_1-x^*\|^2}{2\eta}+\left(2\eta+\frac{D_a^2}{\eta}\right)\sqrt{\sumlimits{t=1}{T}\|g_t\|^2} +\eta \mathcal{H} G_0+\eta \mathcal{H} \sqrt{\sumlimits{t=1}{T}\|g_t\|^2}\notag\\
  \leq &  \frac{G_0\|x_1-x^*\|^2}{2\eta}+\left(2\eta+\frac{D_a^2}{\eta}+\eta \mathcal{H}\right)\sqrt{2\sumlimits{t=1}{T}\left(A\overline{\Delta}_t+\left(B+1\right)\|\overline{g}_t\|^2+C\right)}+\eta \mathcal{H} G_0\notag\\
  \leq & \frac{G_0\|x_1-x^*\|^2}{2\eta}+\left(2\eta+\frac{D_a^2}{\eta}+\eta \mathcal{H}\right)\sqrt{2\sumlimits{t=1}{T}\left(A\overline{\Delta}_t+4\left(B+1\right)\left(L_0\overline{\Delta}_t+L_1^2\overline{\Delta}_t^2\right)+C\right)}\notag\\
  & + \eta \mathcal{H} G_0,
\end{align*} 
where the second inequality holds since Lemma \ref{sqrt sum}, the third inequality follows from \eqref{triangle square} and the last inequality follows from Lemma \ref{lemma 6.2}.
Applying Lemma \ref{sqrt sum} again, we have
\begin{align}
  \label{5.54}
  \sumlimits{t=1}{T}\la g_t,\overline{x}_t-x^*\ra \leq & \frac{G_0\|x_1-x^*\|^2}{2\eta}+\eta \mathcal{H} G_0\notag\\
  &+ \left(2\eta+\frac{D_a^2}{\eta}+\eta \mathcal{H}\right)\left(\sqrt{2\left(A+4\left(B+1\right)\left(L_0+L_1^2\functiongapboundadaptive\right)\right)\sumlimits{t=1}{T}\overline{\Delta}_t}+\sqrt{2TC}\right)\notag\\
  \leq & \frac{G_0\|x_1-x^*\|^2}{2\eta}+\left(2\eta+\frac{D_a^2}{\eta}+\eta \mathcal{H}\right)\sqrt{2TC} +\eta \mathcal{H}G_0\notag\\
  &+\left(2\eta+\frac{D_a^2}{\eta}+\eta \mathcal{H}\right)^2\left(A+4\left(B+1\right)\left(L_0+L_1^2\functiongapboundadaptive\right)\right)+\frac{1}{2}\sumlimits{t=1}{T}\overline{\Delta}_t\notag\\
  \leq & \frac{G_0\|x_1-x^*\|^2}{2\eta}+\left(2\eta+\frac{D_a^2}{\eta}+\eta \mathcal{H}\right)\sqrt{2TC} +\eta \mathcal{H}G_0\notag\\
  &+\left(2\eta+\frac{D_a^2}{\eta}+\eta \mathcal{H}\right)^2\left(A+4\left(B+1\right)\left(L_0+L_1^2\functiongapboundadaptive\right)\right)+\frac{1}{2}\sumlimits{t=1}{T}\la \overline{g}_t,\overline{x}_t-x^*\ra,
\end{align}
where the second inequality holds since Young's inequality and the last inequality follows from Lemma \ref{convexity}.
Applying Lemma \ref{high probability sum convex constant convergence} and letting 
$$\lambda=1/\left(\left(D_a+1/8L_1\right)\sqrt{TC/\log\frac{1}{\delta}}+3\left(A+4B\left(L_0+L_1^2\functiongapboundadaptive\right)\right)\left(D_a+1/8L_1\right)^2\right),$$ we have
\begin{align}
  \label{inequality 5.57}
  & \sumlimits{t=1}{T}\la \overline{g}_t-g_t,\overline{x}_t-x^*\ra \notag\\
  \leq &  \frac{3\lambda}{4}\sumlimits{t=1}{T}\left(\left(A+4B\left(L_0+L_1^2\functiongapboundadaptive\right)\right)\overline{\Delta}_t+C\right)\|\overline{x}_t-x^*\|^2+\frac{1}{\lambda}\log\frac{1}{\delta}\notag\\
  \leq & \frac{1}{4\left(D_a+1/8L_1\right)^2}\sumlimits{t=1}{T}\overline{\Delta}_t\|\overline{x}_t-x^*\|^2+\frac{3}{4\left(D_a+1/8L_1\right)\sqrt{T}}\sqrt{C\log\frac{1}{\delta}}\sumlimits{t=1}{T}\|\overline{x}_t-x^*\|^2\notag\\
  &+\left(D_a+1/8L_1\right)\sqrt{TC\log\frac{1}{\delta}}+3\left(A+4B\left(L_0+L_1^2\functiongapboundadaptive\right)\right)\left(D_a+1/8L_1\right)^2\log\frac{1}{\delta}\notag\\
  \leq & \frac{1}{4}\sumlimits{t=1}{T}\overline{\Delta}_t+\frac{7}{4}\left(D_a+\frac{1}{8L_1}\right)\sqrt{TC\log\frac{1}{\delta}}+3\left(A+4B\left(L_0+L_1^2\functiongapboundadaptive\right)\right)\left(D_a+\frac{1}{8L_1}\right)^2\log\frac{1}{\delta},
 \end{align}
 where the first inequality is due to Lemma \ref{lemma 6.2} and the assumption that $\overline{\Delta}_t\leq \functiongapboundadaptive, \forall t\in[T]$. The third inequality holds since $\|\overline{x}_t-x^*\|\leq \|x_t-x^*\|+\|\overline{x}_t-x_t\|\leq  D_a+\frac{1}{8L_1}$.
 Applying Lemma \ref{convexity} to \eqref{inequality 5.57},
 \begin{align}
  \label{5.56}
  \sumlimits{t=1}{T}\la \overline{g}_t-g_t,\overline{x}_t-x^*\ra \leq & \frac{1}{4}\sumlimits{t=1}{T}\la \overline{g}_t,\overline{x}_t-x^*\ra +\frac{7}{4}\left(D_a+\frac{1}{8L_1}\right)\sqrt{TC\log\frac{1}{\delta}}\notag\\
  &+3\left(A+4B\left(L_0+L_1^2\functiongapboundadaptive\right)\right)\left(D_a+\frac{1}{8L_1}\right)^2\log\frac{1}{\delta}.
 \end{align}
Combining \eqref{5.54} and \eqref{5.56}, we have
\begin{align*}
  \sumlimits{t=1}{T}\la \overline{g}_t,\overline{x}_t-x^*\ra\leq &\frac{1}{4}\sumlimits{t=1}{T}\la \overline{g}_t,\overline{x}_t-x^*\ra +\frac{7}{4}\left(D_a+\frac{1}{8L_1}\right)\sqrt{TC\log\frac{1}{\delta}}\notag\\
  &+3\left(A+4B\left(L_0+L_1^2\functiongapboundadaptive\right)\right)\left(D_a+\frac{1}{8L_1}\right)^2\log\frac{1}{\delta}\notag\\
  &+ \frac{G_0\|x_1-x^*\|^2}{2\eta}+\left(2\eta+\frac{D_a^2}{\eta}+\eta \mathcal{H}\right)\sqrt{2TC} +\eta \mathcal{H}G_0\notag\\
  &+\left(2\eta+\frac{D_a^2}{\eta}+\eta \mathcal{H}\right)^2\left(A+4\left(B+1\right)\left(L_0+L_1^2\functiongapboundadaptive\right)\right)+\frac{1}{2}\sumlimits{t=1}{T}\la \overline{g}_t,\overline{x}_t-x^*\ra.
\end{align*}
Re-arranging the above inequality,
\begin{align*}
  \sumlimits{t=1}{T}\la \overline{g}_t,\overline{x}_t-x^*\ra\leq & 7\left(D_a+\frac{1}{8L_1}\right)\sqrt{TC\log\frac{1}{\delta}}+12\left(A+4B\left(L_0+L_1^2\functiongapboundadaptive\right)\right)\left(D_a+\frac{1}{8L_1}\right)^2\log\frac{1}{\delta}\notag\\
  &+ \frac{2G_0\|x_1-x^*\|^2}{\eta}+4\left(2\eta+\frac{D_a^2}{\eta}+\eta \mathcal{H}\right)\sqrt{2TC} +4\eta \mathcal{H}G_0\notag\\
  &+  4\left(2\eta+\frac{D_a^2}{\eta}+\eta \mathcal{H}\right)^2\left(A+4\left(B+1\right)\left(L_0+L_1^2\functiongapboundadaptive\right)\right).
\end{align*} 
Using Jensen's inequality and Lemma \ref{convexity}, we have
\begin{align}
  \label{6.77}
  f\left(\frac{1}{T}\sumlimits{t=1}{T}\overline{x}_t\right)-f^*\leq & \left(12\left(A+4B\left(L_0+L_1^2\functiongapboundadaptive\right)\right)\left(D_a+\frac{1}{8L_1}\right)^2\log\frac{1}{\delta}+\frac{2G_0\|x_1-x^*\|^2}{\eta}\right)\frac{1}{T}\notag\\
  &+\left(4\left(2\eta+\frac{D_a^2}{\eta}+\eta \mathcal{H}\right)^2\left(A+4\left(B+1\right)\left(L_0+L_1^2\functiongapboundadaptive\right)\right)+4\eta \mathcal{H}G_0\right)\frac{1}{T}\notag\\
  & + \frac{7\left(D_a+\frac{1}{8L_1}\right)\sqrt{C\log\frac{1}{\delta}}+4\left(2\eta+\frac{D_a^2}{\eta}+\eta \mathcal{H}\right)\sqrt{2C}}{\sqrt{T}}.
\end{align}

\end{proof}
\section{Analysis of Adaptive Algorithms under the Smoothness Condition}
\label{section 6}
In this section, we present the convergence rate of RSAG with the adaptive step size and AdaGrad-Norm under the smoothness condition, and demonstrate that prior knowledge of the problem parameters is not required to tune the step size in this case.
\subsection{Non-convex Optimization}
The proof strategy in this section closely follows that of Section \ref{adaptive non-convex}, with a subtle distinction highlighted in Lemma \ref{sum function value gap} and Lemma \ref{sum log smooth}.
\begin{lemma}
  \label{lemma 6.1}
  Given $T\geq 1 $ and $\delta\in \left(0,1\right)$, if $f(x)$ is an $L$-smooth function and Assumptions \ref{assumption 2}, \ref{assumption 3} hold, then with probability at least $1-\delta$, $\forall l\in[T]$,
  \begin{align}
    \label{inequality high probability non-convex adaptive smooth}
    \sumlimits{t=1}{l}-\tilde{\eta}_t \la \overline{g}_t,\xi_t\ra\leq  \frac{1}{4Q}\sumlimits{t=1}{l}\tilde{\eta}_t\|\overline{g}_t\|^2Q_t+3\eta Q\log\frac{T}{\delta},
  \end{align}
 where 
 \begin{align}
  \label{6.3}
  Q_t=\sqrt{A\overline{\Delta}_t+2BL\overline{\Delta}_t+C},
 \end{align}
and 
\begin{align}
  \label{definition 6.4}
Q=\sqrt{A\functiongapboundsmooth+2BL\functiongapboundsmooth+C}.
\end{align}
\end{lemma}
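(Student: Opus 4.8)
The plan is to run the same self-normalized martingale argument that underlies Lemma~\ref{lemma 6}, the only structural change being that the generalized-smoothness gradient estimate (Lemma~\ref{lemma 6.2}) is replaced by the classical consequence of $L$-smoothness, $\|\nabla f(x)\|^2\le 2L\big(f(x)-f^*\big)$ for all $x$, which is valid here since $f$ is $L$-smooth and bounded below (Assumption~\ref{assumption 1}). Write $\mathcal{F}_{t-1}=\sigma(z_1,\dots,z_{t-1})$. Since $\overline{x}_t$ --- and hence $\overline{g}_t=\nabla f(\overline{x}_t)$ and the decorrelated step size $\tilde{\eta}_t$ of \eqref{tilde eta_t}, which depends only on $G_{t-1}$, $\overline{\Delta}_t$ and $\|\overline{g}_t\|$ --- is $\mathcal{F}_{t-1}$-measurable while $\mathbb{E}_t[\xi_t]=0$ by Assumption~\ref{assumption 2}, the increments $-\tilde{\eta}_t\langle\overline{g}_t,\xi_t\rangle$ form a martingale difference sequence. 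Fixing a deterministic $\lambda>0$, I would first note that, under Assumption~\ref{assumption 3}, conditionally on $\mathcal{F}_{t-1}$ the scalar $\langle\overline{g}_t,\xi_t\rangle$ is mean-zero with $|\langle\overline{g}_t,\xi_t\rangle|\le\|\overline{g}_t\|\sqrt{A\overline{\Delta}_t+B\|\overline{g}_t\|^2+C}$ almost surely, so Hoeffding's lemma gives a universal constant $c>0$ with
\begin{align*}
\mathbb{E}_t\!\left[\exp\!\big(-\lambda\tilde{\eta}_t\langle\overline{g}_t,\xi_t\rangle\big)\right]\le\exp\!\Big(c\,\lambda^2\,\tilde{\eta}_t^{\,2}\|\overline{g}_t\|^2\big(A\overline{\Delta}_t+B\|\overline{g}_t\|^2+C\big)\Big);
\end{align*}
under the sub-Gaussian variant of Remark~\ref{remark 4.1} the same estimate holds, possibly with a larger $c$, after passing from the norm sub-Gaussian condition to the moment generating function of the one-dimensional projection $\langle\overline{g}_t,\xi_t\rangle$.

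Next I would form the exponential process $V_l=\exp\!\big(\sum_{t=1}^{l}\big[-\lambda\tilde{\eta}_t\langle\overline{g}_t,\xi_t\rangle-c\lambda^2\tilde{\eta}_t^{\,2}\|\overline{g}_t\|^2(A\overline{\Delta}_t+B\|\overline{g}_t\|^2+C)\big]\big)$; the bound above gives $\mathbb{E}_t[V_l]\le V_{l-1}$, hence $\mathbb{E}[V_l]\le1$ for all $l$. Applying Markov's inequality to $V_l$ at level $T/\delta$ together with a union bound over $l\in[T]$ yields, with probability at least $1-\delta$, for all $l\in[T]$,
\begin{align*}
\sum_{t=1}^{l}-\tilde{\eta}_t\langle\overline{g}_t,\xi_t\rangle\le c\,\lambda\sum_{t=1}^{l}\tilde{\eta}_t^{\,2}\|\overline{g}_t\|^2\big(A\overline{\Delta}_t+B\|\overline{g}_t\|^2+C\big)+\frac{1}{\lambda}\log\frac{T}{\delta}.
\end{align*}
To turn the quadratic-in-$\tilde{\eta}_t$ sum into the linear one appearing in the statement, I would use $\tilde{\eta}_t\sqrt{A\overline{\Delta}_t+B\|\overline{g}_t\|^2+C}\le\tilde{\eta}_t\sqrt{G_{t-1}^2+A\overline{\Delta}_t+(B+1)\|\overline{g}_t\|^2+C}=\eta$, immediate from \eqref{tilde eta_t}, so that $\tilde{\eta}_t^{\,2}(A\overline{\Delta}_t+B\|\overline{g}_t\|^2+C)\le\eta\,\tilde{\eta}_t\sqrt{A\overline{\Delta}_t+B\|\overline{g}_t\|^2+C}$; then $\|\overline{g}_t\|^2\le2L\overline{\Delta}_t$ gives $A\overline{\Delta}_t+B\|\overline{g}_t\|^2+C\le A\overline{\Delta}_t+2BL\overline{\Delta}_t+C=Q_t^2$, so the bracketed sum is at most $\eta\sum_{t=1}^{l}\tilde{\eta}_t\|\overline{g}_t\|^2Q_t$. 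Since $Q$ in \eqref{definition 6.4} is a fixed constant with $Q\ge\sqrt{C}>0$, choosing $\lambda$ of order $1/(\eta Q)$ so that the first term becomes $\tfrac{1}{4Q}\sum_{t=1}^{l}\tilde{\eta}_t\|\overline{g}_t\|^2Q_t$ leaves an additive term bounded by $3\eta Q\log\tfrac{T}{\delta}$ (the sharp Hoeffding constant gives $2\eta Q\log\tfrac{T}{\delta}$; the extra slack covers the sub-Gaussian case), which is exactly \eqref{inequality high probability non-convex adaptive smooth}.

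The step I expect to be the main obstacle is the concentration in the first paragraph. The ``variance proxy'' multiplying $\lambda^2$ is itself random (though $\mathcal{F}_{t-1}$-measurable), so one genuinely needs a self-normalized exponential supermartingale rather than a fixed-variance Azuma estimate, and the argument closes only because the \emph{decorrelated} step size $\tilde{\eta}_t$ of \eqref{tilde eta_t} --- rather than the true $\eta_t$, which is correlated with $g_t$ --- makes $\tilde{\eta}_t\|\overline{g}_t\|\sqrt{A\overline{\Delta}_t+B\|\overline{g}_t\|^2+C}\le\eta$ a pathwise inequality; verifying the conditional moment generating function estimate in the unbounded (sub-Gaussian) case of Remark~\ref{remark 4.1} needs slightly more care but is routine. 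All remaining bookkeeping is identical to the proof of Lemma~\ref{lemma 6}, under the substitutions $P_t\mapsto Q_t$, $P_a\mapsto Q$ obtained by replacing $4B\big(L_0\overline{\Delta}_t+L_1^2\overline{\Delta}_t^2\big)$ with $2BL\overline{\Delta}_t$.
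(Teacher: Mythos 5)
Your proposal is correct and follows essentially the same route as the paper: the self-normalized exponential supermartingale you construct by hand is exactly the content of the paper's Lemma \ref{lemma high probability} (quoted from \cite{li2020high}), and the remaining steps --- linearizing via $\tilde{\eta}_t\sqrt{A\overline{\Delta}_t+B\|\overline{g}_t\|^2+C}\leq\eta$, replacing the generalized-smooth bound by $\|\overline{g}_t\|^2\leq 2L\overline{\Delta}_t$ so that $P_t\mapsto Q_t$, union-bounding over $l$, and choosing $\lambda$ of order $1/(\eta Q)$ --- are precisely the paper's argument (it takes $\lambda=\tfrac{1}{3\eta Q}$). No gaps.
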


\begin{lemma}
  \label{sum function value gap}
  Suppose that  $f(x)$ is an $L$-smooth function. Then, for RSAG with the adaptive step size and AdaGrad-Norm, we have
  \begin{align*}
    \sumlimits{t=1}{T}\overline{\Delta}_t\leq \overline{\Delta}_1 T+\left(3\eta\|\overline{g}_1\|+\frac{9L\eta^2}{2}\right)T^2+\frac{9L\eta^2}{2}T^3.
  \end{align*}
\end{lemma}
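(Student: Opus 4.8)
The plan is to control $\overline{\Delta}_t = f(\overline{x}_t) - f^*$ for each $t$ by tracking how far $\overline{x}_t$ has drifted from the initial point $\overline{x}_1 = x_1$, then summing the resulting bounds over $t \in [T]$. First I would use Lemma \ref{agd term}, which gives $\|\overline{x}_t - x_t\| \leq \eta$, together with the update rule $x_{t+1} = x_t - \eta_t g_t$ and $\eta_t \|g_t\| \leq \eta$ (from \eqref{inequality 6.4}), to bound the displacement $\|\overline{x}_t - x_1\|$. Concretely, $\|\overline{x}_t - x_1\| \leq \|\overline{x}_t - x_t\| + \|x_t - x_1\| \leq \eta + \sum_{k=1}^{t-1}\eta_k\|g_k\| \leq \eta + (t-1)\eta \leq t\eta$. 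So $\overline{x}_t$ stays within a ball of radius $O(t\eta)$ around $x_1$.

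Next I would invoke $L$-smoothness in its descent-lemma form: for an $L$-smooth function, $f(\overline{x}_t) \leq f(x_1) + \langle \nabla f(x_1), \overline{x}_t - x_1\rangle + \frac{L}{2}\|\overline{x}_t - x_1\|^2$. Using Cauchy--Schwarz on the inner product and the displacement bound $\|\overline{x}_t - x_1\| \leq t\eta$, this yields
\begin{align*}
\overline{\Delta}_t \leq \overline{\Delta}_1 + \|\nabla f(x_1)\|\cdot t\eta + \frac{L}{2}t^2\eta^2.
\end{align*}
Here I must be careful: $\nabla f(x_1) = \nabla f(\overline{x}_1) = \overline{g}_1$ since $\overline{x}_1 = x_1$, so $\|\nabla f(x_1)\| = \|\overline{g}_1\|$. (A slightly more refined displacement estimate — splitting $\|x_t - x_1\|^2 \leq (t-1)\sum_{k}\eta_k^2\|g_k\|^2$ via Cauchy--Schwarz and then using $\eta_k\|g_k\|\le \eta$ — together with $\|\overline{x}_t - x_t\|\le \eta$ and $(a+b)^2 \le 3a^2 + \tfrac32 b^2$-type bookkeeping should reproduce the specific constants $3$ and $9L/2$ in the claimed inequality; this is routine and I would not grind through it.)

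Finally I would sum over $t \in [T]$, using $\sum_{t=1}^T 1 = T$, $\sum_{t=1}^T t \leq T^2$, and $\sum_{t=1}^T t^2 \leq T^3$, to get
\begin{align*}
\sumlimits{t=1}{T}\overline{\Delta}_t \leq \overline{\Delta}_1 T + \left(3\eta\|\overline{g}_1\| + \frac{9L\eta^2}{2}\right)T^2 + \frac{9L\eta^2}{2}T^3,
\end{align*}
matching the statement after collecting the $t$- and $t^2$-coefficients appropriately. I expect the only real subtlety to be the constant-tracking in the displacement estimate — i.e., choosing the right Young/Cauchy--Schwarz splits so that the drift of $\overline{x}_t$ from $x_1$ produces exactly the coefficients $3$ on the $\eta\|\overline{g}_1\|T^2$ term and $9L\eta^2/2$ on both the $T^2$ and $T^3$ terms, rather than merely $O(\cdot)$ versions; the structural argument (displacement $\lesssim t\eta$, then descent lemma, then sum) is straightforward.
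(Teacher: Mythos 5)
Your proposal is correct, and it takes a genuinely different (and in fact more direct) route than the paper. The paper's proof works with \emph{consecutive} iterates: it first shows $\|\overline{x}_t-\overline{x}_{t-1}\|\leq 3\eta$, deduces the gradient-growth estimate $\|\overline{g}_t\|\leq\|\overline{g}_1\|+3L\eta(t-1)$, applies the descent lemma to each step $f(\overline{x}_{s+1})-f(\overline{x}_s)$, and telescopes; the constants $3$ and $9L/2$ in the statement are artifacts of that $3\eta$ per-step displacement. You instead anchor at $x_1=\overline{x}_1$, bound the cumulative displacement $\|\overline{x}_t-x_1\|\leq\|\overline{x}_t-x_t\|+\sum_{k=1}^{t-1}\eta_k\|g_k\|\leq t\eta$ via Lemma \ref{agd term} and \eqref{inequality 6.4}, and apply the descent lemma once from $x_1$ to $\overline{x}_t$ (valid without any distance restriction since $L_1=0$ here). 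This yields $\overline{\Delta}_t\leq\overline{\Delta}_1+\eta\|\overline{g}_1\|t+\tfrac{L\eta^2}{2}t^2$ and hence, after summing, $\sumlimits{t=1}{T}\overline{\Delta}_t\leq\overline{\Delta}_1T+\eta\|\overline{g}_1\|T^2+\tfrac{L\eta^2}{2}T^3$, which is strictly stronger than the claimed bound and so implies it immediately. The one inaccuracy in your write-up is the suggestion that a ``more refined displacement estimate'' is needed to reproduce the coefficients $3$ and $9L/2$: no such bookkeeping is required or even possible along your route, because your argument simply produces smaller constants; since the lemma is an upper bound, you should just note that your bound dominates the stated one rather than trying to match its constants.
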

\begin{proof}
  Applying the triangle inequality and the definition of $L$-smoothness in \eqref{definition L smoothness}, we have 
  \begin{align}
    \label{6.6}
    \|\overline{g}_t\|\leq \|\overline{g}_{t-1}\|+\|\overline{g}_t-\overline{g}_{t-1}\|\leq \|\overline{g}_{t-1}\|+L\|\overline{x}_t-\overline{x}_{t-1}\|.
  \end{align}
  Using the triangle inequality again, we have that for all $2\leq t\leq T, t\in \mathbb{N}$,
  \begin{align}
    \label{6.7}
    \|\overline{x}_t-\overline{x}_{t-1}\| = & \|\left(\overline{x}_t-x_t\right)-\left(\overline{x}_{t-1}-x_{t-1}\right)+\left(x_t-x_{t-1}\right)\|\notag\\
    \leq & \|\overline{x}_t-x_t\|+\|\overline{x}_{t-1}-x_{t-1}\|+\|x_t-x_{t-1}\|\leq 3\eta,
  \end{align}
  where the last inequality follows from Lemma \ref{agd term} and the iteration step.
Combining \eqref{6.6} and \eqref{6.7}, we have that 
\begin{align}
  \label{6.8}
  \|\overline{g}_t\|\leq \|\overline{g}_{t-1}\|+3L\eta \leq \|\overline{g}_1\|+3L\eta\left(t-1\right).
\end{align}
By Lemma \ref{descent lemma} with $L_0=L$ and $L_1=0$, we have that for all $s\in[T]$,
\begin{align}
  \label{7.8}
  f(\overline{x}_{s+1})\leq & f(\overline{x}_s)+\la \overline{g}_s, \overline{x}_{s+1}-\overline{x}_s\ra+\frac{L}{2}\|\overline{x}_{s+1}-\overline{x}_s\|^2\notag\\
  \leq & f(\overline{x}_s)+\|\overline{g}_s\|\|\overline{x}_{s+1}-\overline{x}_s\|+\frac{L}{2}\|\overline{x}_{s+1}-\overline{x}_s\|^2\notag\\
  \leq & f(\overline{x}_s)+3\eta\left(\|\overline{g}_1\|+3L\eta\left(s-1\right)\right)+\frac{9L\eta^2}{2},
\end{align}
where the second inequality follows from Cauchy-Schwarz inequality and the last inequality holds since \eqref{6.7} and \eqref{6.8}.
Subtracting $f^*$ from both sides of \eqref{7.8} and summing over $s\in[t]$, we have
\begin{align*}
  \overline{\Delta}_{t+1}\leq \overline{\Delta}_1+\left(3\eta\|\overline{g}_1\|+\frac{9L\eta^2}{2}\right)t+9L\eta^2\frac{t\left(t-1\right)}{2}.
\end{align*} 
Summing over $t\in[T]$, we have
\begin{align*}
  \sumlimits{t=1}{T}\overline{\Delta}_t\leq & \sumlimits{t=1}{T}\overline{\Delta}_1  + \left(3\eta\|\overline{g}_1\|+\frac{9L\eta^2}{2}\right)\sumlimits{t=1}{T}\left(t-1\right)+\frac{9L\eta^2}{2}\sumlimits{t=1}{T}\left(t-1\right)\left(t-2\right)\notag\\
  \leq & \overline{\Delta}_1 T+\left(3\eta\|\overline{g}_1\|+\frac{9L\eta^2}{2}\right)T^2+\frac{9L\eta^2}{2}T^3.
\end{align*}
\end{proof}

\begin{lemma}
  \label{sum log smooth}
  Under the same conditions of Lemma \ref{sum function value gap}, we have 
  \begin{align*}
    \sumlimits{t=1}{T}\frac{\|g_t\|^2}{G_t^2}\leq \mathcal{F},
  \end{align*}
  where 
  \begin{align}
    \label{f}
    \mathcal{F}=\log\left(1+\left(\left(X\overline{\Delta}_1+2C\right)T+X\left(3\eta\|\overline{g}_1\|+\frac{9L\eta^2}{2}\right)T^2+\frac{9L\eta^2}{2}XT^3\right)/G_0^2\right),
  \end{align}
and 
\begin{align}
  \label{x}
  X=2A+4LB+4L.
\end{align}
\end{lemma}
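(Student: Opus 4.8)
The plan is to combine the standard AdaGrad-type logarithmic telescoping inequality (Lemma \ref{sum log}) with the already-established deterministic bound on $\sum_{t=1}^{T}\overline{\Delta}_t$ from Lemma \ref{sum function value gap}. First I would invoke Lemma \ref{sum log} with $l=T$ to get
\begin{align*}
  \sumlimits{t=1}{T}\frac{\|g_t\|^2}{G_t^2}\leq \log\left(1+\frac{1}{G_0^2}\sumlimits{t=1}{T}\|g_t\|^2\right),
\end{align*}
so the whole task reduces to an upper bound on $\sum_{t=1}^{T}\|g_t\|^2$ that depends only on $T$, $\overline{\Delta}_1$, $\|\overline{g}_1\|$ and the problem constants.

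Next I would bound a single term $\|g_t\|^2$. By the triangle inequality $\|g_t\|^2\leq 2\|g_t-\overline{g}_t\|^2+2\|\overline{g}_t\|^2$, and then Assumption \ref{assumption 3} (the almost-sure relaxed affine variance noise, which is exactly what makes this step deterministic, so no concentration argument is needed) gives $\|g_t-\overline{g}_t\|^2\leq A\overline{\Delta}_t+B\|\overline{g}_t\|^2+C$. Hence $\|g_t\|^2\leq 2A\overline{\Delta}_t+2(B+1)\|\overline{g}_t\|^2+2C$. Since $f$ is $L$-smooth and bounded below, Lemma \ref{lemma 6.2} with $L_0=L$, $L_1=0$ yields $\|\overline{g}_t\|^2=\|\nabla f(\overline{x}_t)\|^2\leq 2L\overline{\Delta}_t$, so
\begin{align*}
  \|g_t\|^2\leq \big(2A+4L(B+1)\big)\overline{\Delta}_t+2C=X\overline{\Delta}_t+2C,
\end{align*}
with $X$ as in \eqref{x}. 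Summing over $t\in[T]$ gives $\sum_{t=1}^{T}\|g_t\|^2\leq X\sum_{t=1}^{T}\overline{\Delta}_t+2CT$.

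Finally I would plug in Lemma \ref{sum function value gap}, namely $\sum_{t=1}^{T}\overline{\Delta}_t\leq \overline{\Delta}_1 T+\big(3\eta\|\overline{g}_1\|+\tfrac{9L\eta^2}{2}\big)T^2+\tfrac{9L\eta^2}{2}T^3$, to obtain
\begin{align*}
  \sumlimits{t=1}{T}\|g_t\|^2\leq \big(X\overline{\Delta}_1+2C\big)T+X\Big(3\eta\|\overline{g}_1\|+\tfrac{9L\eta^2}{2}\Big)T^2+\tfrac{9L\eta^2}{2}XT^3,
\end{align*}
and substituting this into the logarithmic bound above gives exactly $\mathcal{F}$ as defined in \eqref{f}. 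I do not expect a serious obstacle here: the only place requiring care is the chain $\|g_t\|^2\rightsquigarrow \overline{\Delta}_t$, where one must use the almost-sure form of the noise assumption together with the $L$-smooth gradient–gap inequality $\|\nabla f\|^2\le 2L(f-f^*)$; everything else is bookkeeping of constants and direct application of the cited lemmas.
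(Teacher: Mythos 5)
Your proposal is correct and follows essentially the same route as the paper: bound $\|g_t\|^2$ via the triangle inequality, the almost-sure noise assumption, and $\|\nabla f\|^2\le 2L(f-f^*)$ to get $\|g_t\|^2\le X\overline{\Delta}_t+2C$, then apply Lemma \ref{sum function value gap} and Lemma \ref{sum log}. The only difference is the order in which you invoke Lemma \ref{sum log} versus the summation bound, which is immaterial.
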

\begin{proof}
  Summing up \eqref{triangle square} over $t\in[T]$ and applying Lemma \ref{smooth function value gap}, we have
\begin{align}
  \label{sum stochastic gradient 1}
  \sumlimits{t=1}{T} \|g_t\|^2 \leq & 2\sumlimits{t=1}{T}\left(A\overline{\Delta}_t+\left(B+1\right)\|\overline{g}_t\|^2+C\right)\notag
  \leq  2\sumlimits{t=1}{T}\left(\left(A+2LB+2L\right)\overline{\Delta}_t+C\right)\notag\\
  = & X\sumlimits{t=1}{T}\overline{\Delta}_t+2CT.
\end{align}
Combining \eqref{sum stochastic gradient 1} and Lemma \ref{sum function value gap}, 
\begin{align}
  \label{sum g_t}
  \sumlimits{t=1}{T} \|g_t\|^2 \leq \left(X\overline{\Delta}_1+2C\right)T+X\left(3\eta\|\overline{g}_1\|+\frac{9L\eta^2}{2}\right)T^2+\frac{9L\eta^2}{2}XT^3.
\end{align}
Then, substituting \eqref{sum g_t}   into Lemma \ref{sum log}, we have
\begin{align*}
  \sumlimits{t=1}{T}\frac{\|g_t\|^2}{G_t^2} & \leq \log\left(1+\frac{ \sum_{t=1}^{T}\|g_t\|^2  }{G_0^2}\right) \leq  \mathcal{F}.
\end{align*}
\end{proof}

\begin{proposition}
  \label{proposition 6.1}
  Under the same conditions of Theorem \ref{theorem 5}, we have that with probability at least $1-\delta$,
  \begin{align}
    \label{7.20}
    \overline{\Delta}_t\leq \functiongapboundsmooth, \quad\forall t\in[T],
  \end{align}
where 
\begin{align}
  \label{functiongapboundsmooth}
  \functiongapboundsmooth=&4\overline{\Delta}_1+8L\eta^2\mathcal{F}+36\eta^2\left(A+2BL\right)\log^2\frac{T}{\delta}+12\eta\sqrt{C}\log\frac{T}{\delta}\notag\\
  &+64\eta^2\left(A+2BL\right)\mathcal{F}^2+16\eta\sqrt{C}\mathcal{F}+10L\eta^2.
\end{align}
\end{proposition}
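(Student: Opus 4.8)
The proof will follow the induction scheme of Proposition \ref{5.2}, with the generalized‑smoothness tools replaced by their $L$‑smooth counterparts: we run the descent lemma (Lemma \ref{descent lemma}) with $L_0=L$, $L_1=0$ along $\{x_t\}$ as in the derivation of \eqref{inequality base}, use Lemma \ref{lemma 6.1} in place of Lemma \ref{lemma 6}, and use Lemma \ref{sum log smooth} (which gives $\sumlimits{t=1}{T}\|g_t\|^2/G_t^2\le\mathcal{F}$) in place of \eqref{sum square eta_t g_t}. Assuming the event \eqref{inequality high probability non-convex adaptive smooth}, which holds with probability at least $1-\delta$, we argue by induction on $l$ that $\overline{\Delta}_t\le\functiongapboundsmooth$ for all $t\in[l]$; the base case $\overline{\Delta}_1\le\functiongapboundsmooth$ is immediate from \eqref{functiongapboundsmooth}. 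Throughout, Lemma \ref{agd term} supplies $\|\overline{x}_t-x_t\|\le\eta$, while under the inductive hypothesis the standard bound $\|\overline{g}_t\|^2\le 2L\overline{\Delta}_t$ for $L$‑smooth $f$ (Lemma \ref{smooth function value gap}) gives $\|\overline{g}_t\|^2\le 2L\functiongapboundsmooth$ and hence $Q_t\le Q$ for $t\in[l]$, with $Q_t$, $Q$ as in \eqref{6.3}, \eqref{definition 6.4}.

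For the inductive step, summing the $L_1=0$ descent estimate over $t\in[l]$ with $\theta_t=\eta_t$ and collapsing the resulting double sum via \eqref{proposition 1.2} yields
\[
f(x_{l+1})-f(x_1)\le -\sumlimits{t=1}{l}\eta_t\|\overline{g}_t\|^2-\sumlimits{t=1}{l}\eta_t\la\overline{g}_t,\xi_t\ra+2L\eta^2\sumlimits{t=1}{l}\frac{\|g_t\|^2}{G_t^2},
\]
whose last term is at most $2L\eta^2\mathcal{F}$ by Lemma \ref{sum log smooth}. I would then decompose $-\sumlimits{t=1}{l}\eta_t\la\overline{g}_t,g_t\ra$ exactly as in \eqref{Term B} into the three pieces $-\sumlimits{t=1}{l}\tilde{\eta}_t\|\overline{g}_t\|^2$, $\sumlimits{t=1}{l}\tilde{\eta}_t\la\overline{g}_t,\overline{g}_t-g_t\ra$ (bounded through Lemma \ref{lemma 6.1} together with $Q_t\le Q$), and the step‑size‑discrepancy term $\sumlimits{t=1}{l}(\tilde{\eta}_t-\eta_t)\la\overline{g}_t,g_t\ra$ (bounded using Lemma \ref{tilde eta}, Young's inequality and a further call to Lemma \ref{sum log smooth}). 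Collecting terms gives
\[
f(x_{l+1})-f(x_1)\le -\tfrac{1}{2}\sumlimits{t=1}{l}\tilde{\eta}_t\|\overline{g}_t\|^2+c_1 L\eta^2\mathcal{F}+c_2\,\eta Q\mathcal{F}+c_3\,\eta Q\log\tfrac{T}{\delta}
\]
for absolute constants $c_1,c_2,c_3$.

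To close the induction I would transfer this to the averaged iterate. Applying Lemma \ref{descent lemma} with $L_0=L$, $L_1=0$ and $\|\overline{x}_{l+1}-x_{l+1}\|\le\eta$ gives $f(\overline{x}_{l+1})\le f(x_{l+1})+\eta\|\overline{g}_{l+1}\|+\tfrac{L}{2}\eta^2$; using $f(x_1)=f^*+\overline{\Delta}_1$ (since $\overline{x}_1=x_1$), subtracting $f^*$, discarding the nonpositive term $-\tfrac{1}{2}\sumlimits{t=1}{l}\tilde{\eta}_t\|\overline{g}_t\|^2$, bounding $\eta\|\overline{g}_{l+1}\|\le\eta\sqrt{2L\overline{\Delta}_{l+1}}$ and absorbing it into $\tfrac{1}{4}\overline{\Delta}_{l+1}$ by Young's inequality yields $\tfrac{3}{4}\overline{\Delta}_{l+1}\le\overline{\Delta}_1+c_1 L\eta^2\mathcal{F}+c_2\,\eta Q\mathcal{F}+c_3\,\eta Q\log\tfrac{T}{\delta}+c_4 L\eta^2$. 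Since $Q\le\sqrt{(A+2BL)\functiongapboundsmooth}+\sqrt{C}$, splitting each product $\eta\sqrt{(A+2BL)\functiongapboundsmooth}\,(\cdot)$ by Young's inequality into $\tfrac{1}{8}\functiongapboundsmooth$ plus a remainder of shape $\eta^2(A+2BL)(\cdot)^2$ makes the $\functiongapboundsmooth$‑dependent part of the right‑hand side total at most $\tfrac{1}{4}\functiongapboundsmooth$, while the residual terms — multiples of $\overline{\Delta}_1$, $L\eta^2\mathcal{F}$, $\eta\sqrt{C}\mathcal{F}$, $\eta\sqrt{C}\log\tfrac{T}{\delta}$, $\eta^2(A+2BL)\mathcal{F}^2$, $\eta^2(A+2BL)\log^2\tfrac{T}{\delta}$ and $L\eta^2$ — are each dominated by the corresponding summand of \eqref{functiongapboundsmooth}, so their sum is at most $\tfrac{1}{2}\functiongapboundsmooth$. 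Hence $\tfrac{3}{4}\overline{\Delta}_{l+1}\le\tfrac{3}{4}\functiongapboundsmooth$, i.e. $\overline{\Delta}_{l+1}\le\functiongapboundsmooth$, completing the induction.

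The main obstacle I anticipate is the constant bookkeeping in the last step: because $Q$, and therefore several of the error terms, depends on the target $\functiongapboundsmooth$ itself, the Young's‑inequality parameters must be chosen so that the total self‑referential coefficient of $\functiongapboundsmooth$ stays strictly below one, and one must verify that the explicit constant \eqref{functiongapboundsmooth} — with its precise $\mathcal{F}$, $\mathcal{F}^2$, $\log^2(T/\delta)$, $\eta\sqrt{C}$ and $L\eta^2$ contributions — is exactly large enough to absorb every residual term. The other delicate point is the discrepancy estimate for $\sumlimits{t=1}{l}(\tilde{\eta}_t-\eta_t)\la\overline{g}_t,g_t\ra$, the only place where the gap between the true adaptive step size $\eta_t$ and the decorrelated surrogate $\tilde{\eta}_t$ must be controlled, which relies on Lemma \ref{tilde eta} together with the summability bound of Lemma \ref{sum log smooth}.
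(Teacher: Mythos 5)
Your proposal is correct and follows essentially the same route as the paper's proof: the same induction on $l$ conditioned on the event \eqref{inequality high probability non-convex adaptive smooth}, the same $L_0=L$, $L_1=0$ specialization of the descent estimate collapsed via \eqref{proposition 1.2}, the same three-way decomposition of $-\sum_{t}\eta_t\langle\overline{g}_t,g_t\rangle$ handled by Lemma \ref{lemma 6.1}, Lemma \ref{tilde eta} and Lemma \ref{sum log smooth}, and the same transfer to $\overline{x}_{l+1}$ with the Young-absorption of the self-referential $Q$ terms. The only difference is cosmetic: you budget roughly $\tfrac{1}{4}\functiongapboundsmooth$ for the self-referential pieces and $\tfrac{1}{2}\functiongapboundsmooth$ for the residuals, whereas the paper does the reverse, but both splits sum to the same $\tfrac{3}{4}\functiongapboundsmooth$ and are compatible with the definition \eqref{functiongapboundsmooth}.
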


\begin{proof}
  We will apply an induction argument in the proof. 
  Suppose that \eqref{inequality high probability non-convex adaptive smooth} always happens and then we deduce $\overline{\Delta}_t\leq \functiongapboundsmooth, \forall t\in[T]$, always holds. 
  Since \eqref{inequality high probability non-convex adaptive smooth} happens with probability at least $1-\delta$, it follows that $\overline{\Delta}_t\leq \functiongapboundsmooth, \forall t\in[T]$, holds with probability at least $1-\delta$.
  It is apparent that $f(\overline{x}_1)-f^*\leq \functiongapboundsmooth$. 
  Assume that for some $l\in[T]$, 
  $$f(\overline{x}_t)-f^*\leq \functiongapboundsmooth, \quad \forall t\in[l].$$
  We view $L$-smoothness as a special case of  $(L_0,L_1)$-generalized smoothness, i.e., $L_0=L$ and $L_1=0$. Therefore, by \eqref{delta},
  \begin{align}
    \label{6.4}
    f(x_{l+1})-f(x_1)\leq \underbrace{2L\sumlimits{t=1}{l}\eta_t^2\|g_t\|^2}_{\text{(1)}} \underbrace{-\sumlimits{t=1}{l}\eta_t\la \overline{g}_t,g_t\ra}_{\text{(2)}}.
  \end{align}
 \paragraph{Term (1)}
Applying Lemma \ref{sum log smooth}, we have that 
\begin{align}
  \label{Term A smooth}
  2L\sumlimits{t=1}{l}\eta_t^2\|g_t\|^2 \leq 2L\eta^2 \mathcal{F},
\end{align}
where $\mathcal{F}$ is defined in \eqref{f}.
\paragraph{Term (2)}
Recall the definition of $\tilde{\eta}_t$ in \eqref{tilde eta_t}.
\begin{align}
  \label{Term B smooth}
  \underbrace{-\sumlimits{t=1}{l}\eta_t\la \overline{g}_t,g_t\ra}_{(2)} = & \underbrace{-\sumlimits{t=1}{l}\tilde{\eta}_t\|\overline{g}_t\|^2}_{(2.1)}+\underbrace{\sumlimits{t=1}{l}\tilde{\eta}_t\la\overline{g}_t,\overline{g}_t-g_t\ra}_{(2.2)}+\underbrace{\sumlimits{t=1}{l}\left(\tilde{\eta}_t-\eta_t\right)\la \overline{g}_t, g_t\ra}_{(2.3)}.
\end{align}
\paragraph{Term (2.2)}
By \eqref{inequality high probability non-convex adaptive smooth}, we have 
\begin{align*}
  \sumlimits{t=1}{l}\tilde{\eta}_t \la \overline{g}_t,\overline{g}_t-g_t\ra \leq \frac{1}{4Q}\sumlimits{t=1}{l}\tilde{\eta}_t\|\overline{g}_t\|^2Q_t+3\eta Q\log\frac{T}{\delta}.
\end{align*}
Since the assumption that $\overline{\Delta}_t\leq \functiongapboundsmooth, \forall t\in[l]$, we have $Q_t\leq Q, \forall t\in[l]$, and 
\begin{align*}
  \sumlimits{t=1}{l}\tilde{\eta}_t \la \overline{g}_t,\overline{g}_t-g_t\ra \leq & \frac{1}{4}\sumlimits{t=1}{l}\tilde{\eta}_t\|\overline{g}_t\|^2+3\eta Q\log\frac{T}{\delta}.
\end{align*}
Note that $Q=\sqrt{\left(A+2BL\right)\functiongapboundsmooth+C}\leq \sqrt{\left(A+2BL\right)\functiongapboundsmooth}+\sqrt{C}$. Using Young's inequality,
\begin{align}
  \label{Term B.2 smooth}
  \sumlimits{t=1}{l}\tilde{\eta}_t \la \overline{g}_t,\overline{g}_t-g_t\ra \leq & \frac{1}{4}\sumlimits{t=1}{l}\tilde{\eta}_t\|\overline{g}_t\|^2+3\eta\sqrt{\left(A+2BL\right)\functiongapboundsmooth}\log\frac{T}{\delta}+3\eta\sqrt{C}\log\frac{T}{\delta}\notag\\
  \leq & \frac{1}{4}\sumlimits{t=1}{l}\tilde{\eta}_t\|\overline{g}_t\|^2+\frac{\functiongapboundsmooth}{4}+9\eta^2\left(A+2BL\right)\log^2\frac{T}{\delta}+3\eta\sqrt{C}\log\frac{T}{\delta}.
\end{align}
\paragraph{Term (2.3)}
The estimation here is similar to  \eqref{Term B.3} except for the bound of $\|\overline{g}_t\|^2$. 
Concretely,
\begin{align*}
  \sumlimits{t=1}{l}\left(\tilde{\eta}_t-\eta_t\right)\la \overline{g}_t, g_t\ra\leq \sumlimits{t=1}{l}|\tilde{\eta}_t-\eta_t| \la \overline{g}_t, g_t \ra \leq & \frac{1}{4}\sumlimits{t=1}{l}\tilde{\eta}_t\|\overline{g}_t\|^2+4\eta \sqrt{\left(A+2BL\right)\functiongapboundsmooth+C} \sumlimits{t=1}{l}\frac{\|g_t\|^2}{G_t^2}.
\end{align*}
Applying Lemma \ref{sum log smooth},
\begin{align}
  \label{Term B.3 smooth}
  \sumlimits{t=1}{l}\left(\tilde{\eta}_t-\eta_t\right) \la \overline{g}_t, g_t \ra \leq & \frac{1}{4}\sumlimits{t=1}{l}\tilde{\eta}_t\|\overline{g}_t\|^2+4\eta \sqrt{\left(A+2BL\right)\functiongapboundsmooth+C}\mathcal{F}\notag\\
  \leq & \frac{1}{4}\sumlimits{t=1}{l}\tilde{\eta}_t\|\overline{g}_t\|^2+4\eta\left(\sqrt{\left(A+2BL\right)\functiongapboundsmooth}+\sqrt{C}\right)\mathcal{F}\notag\\
  \leq & \frac{1}{4}\sumlimits{t=1}{l}\tilde{\eta}_t\|\overline{g}_t\|^2+\frac{\functiongapboundsmooth}{4}+16\eta^2\left(A+2BL\right)\mathcal{F}^2+4\eta\sqrt{C}\mathcal{F},
\end{align}
where the second inequality follows from Lemma \ref{sqrt sum} and the last inequality holds since Young's inequality.
Combining \eqref{6.4}, \eqref{Term A smooth}, \eqref{Term B smooth}, \eqref{Term B.2 smooth} and \eqref{Term B.3 smooth}, we have that 
\begin{align}
  \label{6.11}
  f(x_{l+1})-f(x_1)\leq &  -\frac{1}{2}\sumlimits{t=1}{l}\tilde{\eta}_t\|\overline{g}_t\|^2+2L\eta^2\mathcal{F}+ \frac{\functiongapboundsmooth}{2}+ 9\eta^2\left(A+2BL\right)\log^2\frac{T}{\delta}+3\eta\sqrt{C}\log\frac{T}{\delta}\notag\\
  & +16\eta^2\left(A+2BL\right)\mathcal{F}^2+4\eta\sqrt{C}\mathcal{F}.
\end{align}
Next we will bound the gap between $f(x_{l+1})$ and $f(\overline{x}_{l+1})$.
Applying Lemma \ref{descent lemma} with $L_0=L$ and $L_1=0$ again,
\begin{align}
  \label{l smooth descent lemma}
  f(\overline{x}_{l+1}) \leq & f(x_{l+1})+\la \overline{g}_{l+1}, \overline{x}_{l+1}-x_{l+1}\ra+\frac{L}{2}\|\overline{x}_{l+1}-x_{l+1}\|^2\notag\\
  \leq & f(x_{l+1})+\|\overline{g}_{l+1}\|\|\overline{x}_{l+1}-x_{l+1}\|+\frac{L}{2}\|\overline{x}_{l+1}-x_{l+1}\|^2,
\end{align}
where the second inequality follows from Cauchy-Schwarz inequality.
Combining with Lemma \ref{agd term} and Lemma \ref{smooth function value gap}, we have
\begin{align}
  \label{6.27}
  f(\overline{x}_{l+1}) \leq & f(x_{l+1})+\eta\sqrt{2L\overline{\Delta}_{l+1}}+\frac{L\eta^2}{2}
  \leq  f(x_{l+1})+\frac{\overline{\Delta}_{l+1}}{4}+\frac{5L\eta^2}{2}.
\end{align}
Then, substituting \eqref{6.11} into \eqref{6.27} and subtracting $f^*$ from both sides, we have
\begin{align}
  \label{6.28}
  \frac{3}{4}\overline{\Delta}_{l+1}\leq & \overline{\Delta}_1  -\frac{1}{2}\sumlimits{t=1}{l}\tilde{\eta}_t\|\overline{g}_t\|^2+2L\eta^2\mathcal{F}+ \frac{\functiongapboundsmooth}{2}+ 9\eta^2\left(A+2BL\right)\log^2\frac{T}{\delta}+3\eta\sqrt{C}\log\frac{T}{\delta}\notag\\
  & +16\eta^2\left(A+2BL\right)\mathcal{F}^2+4\eta\sqrt{C}\mathcal{F}+\frac{5L\eta^2}{2}\leq \frac{3}{4}\functiongapboundsmooth,
\end{align}
where the last inequality holds since 
\begin{align*}
  \functiongapboundsmooth=&4\overline{\Delta}_1+8L\eta^2\mathcal{F}+36\eta^2\left(A+2BL\right)\log^2\frac{T}{\delta}+12\eta\sqrt{C}\log\frac{T}{\delta}\notag\\
  &+64\eta^2\left(A+2BL\right)\mathcal{F}^2+16\eta\sqrt{C}\mathcal{F}+10L\eta^2.
\end{align*}
  Therefore, $\overline{\Delta}_{l+1}$ satisfies the assumption and the induction is complete. 
\end{proof}
 Based on Proposition \ref{proposition 6.1}, we are able to obtain the convergence rate.
\begin{proof}[Proof of Theorem \ref{theorem 5}]
Since \eqref{7.20} holds with probability at least $1-\delta$, the convergence rate also holds with probability at least $1-\delta$. By \eqref{6.28}, 
\begin{align}
  \label{6.29}
  \frac{1}{2}\sumlimits{t=1}{T}\tilde{\eta}_t\|\overline{g}_t\|^2\leq \frac{3}{4}\functiongapboundsmooth.
\end{align}
Recalling the definition of $\tilde{\eta}_t$ in \eqref{tilde eta_t}, we have that for all $t\in[T]$,
\begin{align}
  \label{6.30}
  \frac{\eta}{\tilde{\eta}_t}=&\sqrt{G_0^2+\sumlimits{s=1}{t-1}\|g_s\|^2+A\overline{\Delta}_t+\left(B+1\right)\|\overline{g}_t\|^2+C}\notag\\
  \leq & \sqrt{G_0^2+2\sumlimits{s=1}{t-1}\left(A\overline{\Delta}_s+\left(B+1\right)\|\overline{g}_s\|^2+C\right)+A\overline{\Delta}_t+\left(B+1\right)\|\overline{g}_t\|^2+C}\notag\\
  \leq & \sqrt{G_0^2+2\left(B+1\right)\sumlimits{s=1}{T}\|\overline{g}_s\|^2+2T\left(A\functiongapboundsmooth+C\right)},
\end{align}
where the first inequality follows from \eqref{triangle square} and the second inequality is due to Lemma \ref{smooth function value gap}.
Combining \eqref{6.29} and \eqref{6.30},
\begin{align*}
  \frac{1}{2}\sumlimits{t=1}{T}\|\overline{g}_t\|^2\leq &\frac{3}{4\eta}\functiongapboundsmooth\sqrt{G_0^2+2\left(B+1\right)\sumlimits{t=1}{T}\|\overline{g}_t\|^2+2T\left(A\functiongapboundsmooth+C\right)}\notag\\
 \leq & \frac{3\functiongapboundsmooth}{4\eta}\left(G_0+\sqrt{2\left(B+1\right)\sumlimits{t=1}{T}\|\overline{g}_t\|^2}+\sqrt{2T\left(A\functiongapboundsmooth+C\right)}\right)\notag\\
 \leq & \frac{3\functiongapboundsmooth}{4\eta}G_0+\frac{1}{4}\sumlimits{t=1}{T}\|\overline{g}_t\|^2+\frac{9\functiongapboundsmooth^2}{8\eta^2}\left(B+1\right)+\frac{3\functiongapboundsmooth}{4\eta}\sqrt{2T\left(A\functiongapboundsmooth+C\right)},
\end{align*}
where the second inequality is due to Lemma \ref{sqrt sum} and the last inequality follows from Young's inequality.
Re-arranging the above inequality, we have
\begin{align}
  \label{7.37}
  \frac{1}{T}\sumlimits{t=1}{T}\|\overline{g}_t\|^2\leq & \left(\frac{3\functiongapboundsmooth G_0}{\eta}+\frac{9\functiongapboundsmooth^2}{2\eta^2}\left(B+1\right)\right)\frac{1}{T}+ \frac{3\functiongapboundsmooth}{\eta\sqrt{T}}\sqrt{2\left(A\functiongapboundsmooth+C\right)}.
\end{align}
\end{proof}
\subsection{Convex Optimization}
Before proving the convergence rate in the convex case, we will firstly bound $\|x_t-x^*\|^2, \forall t\in[T]$, using the following lemma related to the probability inequality.
\begin{lemma}
  \label{high probability sum non-convex adaptive smooth}
  Suppose that $f(x)$ is an $L$-smooth function. Given $T\geq 1 $ and $\delta\in \left(0,1\right)$, if Assumptions \ref{assumption 2} and \ref{assumption 3} hold, then with probability at least $1-\delta$, $\forall l\in[T]$,
  \begin{align}
    \label{inequality high probability non-convex adaptive smooth 6.36}
    \sumlimits{t=1}{l}-\la\overline{g}_t,\xi_t\ra\leq \frac{1}{2}\sumlimits{t=1}{l}\frac{Q_t^2}{Q^2}\|\overline{g}_t\|^2+\frac{3Q^2}{2}\log\frac{T}{\delta},
  \end{align}
  where $Q_t$ and $Q$ are defined in \eqref{6.3} and \eqref{definition 6.4}, respectively.
\end{lemma}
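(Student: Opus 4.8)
The plan is to recognize \eqref{inequality high probability non-convex adaptive smooth 6.36} as an instance of the time‑uniform martingale concentration argument already used for Lemma~\ref{high probability sum non-convex constant}, Lemma~\ref{lemma 6.1} (its $\tilde\eta_t$‑weighted companion) and Lemma~\ref{high probability sum convex constant convergence}, the only genuinely new ingredient being that under $L$‑smoothness the affine‑variance bound collapses to the \emph{linear}‑in‑$\overline{\Delta}_t$ envelope $Q_t$ of \eqref{6.3} rather than the quadratic $P_t$. Concretely, set $d_t = -\la \overline{g}_t, \xi_t\ra$. Since $\overline{x}_t=\alpha_t x_t+(1-\alpha_t)\tilde x_t$ in Algorithm~\ref{algorithm1} depends only on $z_1,\dots,z_{t-1}$, the vector $\overline{g}_t=\nabla f(\overline{x}_t)$ is $\mathcal{F}_{t-1}$‑measurable, so Assumption~\ref{assumption 2} gives $\mE_t[d_t]=-\la\overline{g}_t,\mE_t[\xi_t]\ra=0$, i.e. $\{d_t\}$ is a martingale difference sequence.

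Next I would control the conditional envelope and variance of $d_t$. By Cauchy--Schwarz, $|d_t|\le\|\overline{g}_t\|\,\|\xi_t\|$ and $\mE_t[d_t^2]\le\|\overline{g}_t\|^2\,\mE_t[\|\xi_t\|^2]$. Assumption~\ref{assumption 3} bounds $\|\xi_t\|^2\le A\overline{\Delta}_t+B\|\overline{g}_t\|^2+C$ almost surely; here, instead of invoking Lemma~\ref{lemma 6.2} as in the generalized‑smooth case, I would use that $f$ is $L$‑smooth and bounded below, so Lemma~\ref{smooth function value gap} yields $\|\overline{g}_t\|^2\le 2L\overline{\Delta}_t$, whence $\|\xi_t\|^2\le A\overline{\Delta}_t+2BL\overline{\Delta}_t+C=Q_t^2$ a.s. Therefore $|d_t|\le\|\overline{g}_t\|\,Q_t$ a.s. and $\mE_t[d_t^2]\le\|\overline{g}_t\|^2 Q_t^2$, where both $Q_t$ and $\|\overline{g}_t\|$ are $\mathcal{F}_{t-1}$‑measurable since $\overline{\Delta}_t$ is.

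Then I would feed $\{d_t\}$, with predictable variance proxy $\nu_t:=\|\overline{g}_t\|^2 Q_t^2$, into the Freedman/Bernstein‑type martingale lemma used throughout the appendix (the same device behind Lemma~\ref{high probability sum non-convex constant} and Lemma~\ref{Lemma 5.5}), which provides, for a suitable $\lambda>0$, with probability at least $1-\delta$ and simultaneously for all $l\in[T]$,
$$\sumlimits{t=1}{l} d_t \le \frac{3\lambda}{4}\sumlimits{t=1}{l}\nu_t+\frac{1}{\lambda}\log\frac{T}{\delta}.$$
Choosing $\lambda=\tfrac{2}{3Q^2}$ turns the first term into $\tfrac{1}{2Q^2}\sum_{t\le l}\|\overline{g}_t\|^2 Q_t^2=\tfrac12\sum_{t\le l}\tfrac{Q_t^2}{Q^2}\|\overline{g}_t\|^2$ and the second into $\tfrac{3Q^2}{2}\log\tfrac{T}{\delta}$, which is exactly \eqref{inequality high probability non-convex adaptive smooth 6.36}; the $\log\frac{T}{\delta}$ (rather than $\log\frac1\delta$) is the price of the union bound over $l\in[T]$ implicit in the "$\forall l$" statement.

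I expect the only point needing care — and the reason the lemma is stated separately from its generalized‑smooth analogue — to be the bookkeeping around $\|\overline{g}_t\|^2\le 2L\overline{\Delta}_t$: it is what replaces the quadratic $P_t$ by the linear $Q_t$ and makes $Q$ in \eqref{definition 6.4} the right normalizer, and one must check that $\lambda=\tfrac{2}{3Q^2}$ is admissible for the master lemma even though the envelope $\|\overline{g}_t\| Q_t$ is random and a priori unbounded (no bound on $\overline{\Delta}_t$ is available at this stage). As elsewhere in the paper this is absorbed by the self‑normalized/predictable‑weight form of the concentration lemma, so that \eqref{inequality high probability non-convex adaptive smooth 6.36} holds \emph{unconditionally} — which is precisely what lets it seed the induction on $\|x_t-x^*\|^2$ in the convex analysis, mirroring the role of Lemma~\ref{Lemma 5.5} in Proposition~\ref{proposition 5.2}.
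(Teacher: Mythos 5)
Your proposal is correct and follows essentially the same route as the paper: the paper likewise treats $-\la\overline{g}_t,\xi_t\ra$ as a martingale difference sequence, applies the self-normalized concentration of Lemma \ref{lemma high probability} with predictable weight $\|\overline{g}_t\|^2\left(A\overline{\Delta}_t+B\|\overline{g}_t\|^2+C\right)\leq\|\overline{g}_t\|^2Q_t^2$ (the reduction to $Q_t^2$ coming from Lemma \ref{smooth function value gap}), union-bounds over $l\in[T]$ to obtain $\log\frac{T}{\delta}$, and chooses $\lambda=\frac{2}{3Q^2}$. The only cosmetic difference is that you substitute the $Q_t^2$ bound before invoking the concentration lemma while the paper does it after; this is immaterial.
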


\begin{proposition}
  \label{proposition 6.2}
  Under the conditions of Theorem \ref{theorem 6}, with probability at least $1-3\delta$, we have
  \begin{align}
    \label{inequality 6.34}
    \|x_t-x^*\|^2\leq D_L^2, \quad  \forall t\in[T],
  \end{align}
  where 
  \begin{align}
    \label{D_L}
    D_L^2=2\|x_1-x^*\|^2+4R_L+10\eta^2\mathcal{F}+\frac{\eta^2}{2}+ 32\left(A_{T,\delta}R_L+\mathcal{W}_L^2B_{T,\delta}\right)+\frac{8}{\eta^2}R_L^2,
  \end{align}
  \begin{align}
    \label{6.42}
  R_L=2\eta^2\log\left(1+\frac{T\left(A\functiongapboundsmooth+2BL\functiongapboundsmooth+C\right)}{2G_0^2}\right)+ 7\frac{\eta^2}{G_0^2}\left(A\functiongapboundsmooth+2BL\functiongapboundsmooth+C\right)\log\frac{T}{\delta},
  \end{align}
$\mathcal{W}_L=\mathcal{W}(\functiongapboundsmooth)$ with $\mathcal{W}(x)$  defined in \eqref{mathcal W}, and $\mathcal{F}$, $A_{T,\delta}$, $B_{T,\delta}$ are given in \eqref{f}, \eqref{A_T B_T}, respectively. 
\end{proposition}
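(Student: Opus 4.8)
The plan is to reproduce, almost line for line, the induction argument from the proof of Proposition \ref{proposition 5.2}, with every generalized-smoothness quantity replaced by its $L$-smooth counterpart: $\functiongapboundadaptive$ by $\functiongapboundsmooth$, $P_a$ (and $P_t$) by $Q$ (and $Q_t$), $\mathcal{H}$ by $\mathcal{F}$, $R_a$ by $R_L$, $\mathcal{W}_a$ by $\mathcal{W}_L = \mathcal{W}(\functiongapboundsmooth)$, and $D_a$ by $D_L$. First I would condition on the simultaneous occurrence of the three high-probability events \eqref{inequality high probability non-convex adaptive smooth}, \eqref{inequality 5.4}, and \eqref{inequality high probability non-convex adaptive smooth 6.36}; since each fails with probability at most $\delta$, a union bound yields the stated $1-3\delta$. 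On this event Proposition \ref{proposition 6.1} gives $\overline{\Delta}_t \leq \functiongapboundsmooth$ for all $t\in[T]$, and together with Lemma \ref{smooth function value gap} this furnishes the bound $\|\overline{g}_t\|^2 \leq 2L\overline{\Delta}_t \leq 2L\functiongapboundsmooth$ that plays the role occupied by Lemma \ref{lemma 6.2} in the generalized-smooth proof.

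Next I would run the induction on the claim $\|x_t-x^*\|^2 \leq D_L^2$. The base case $t=1$ is immediate from \eqref{D_L}. For the inductive step, assume the bound holds for all $t\in[l]$. Starting from \eqref{iterating step adaptive} with $\theta_t=\eta_t$ and summing over $t\in[l]$, I would invoke Lemma \ref{sum log smooth} (in place of \eqref{sum square eta_t g_t}) to replace $\sum_{t=1}^l \eta_t^2\|g_t\|^2$ by $\eta^2\mathcal{F}$. The cross term $-2\sum_{t=1}^l \eta_t\langle g_t, x_t-x^*\rangle$ is then split, exactly as in \eqref{5.39}, into three pieces: term (i), the $\langle\overline{g}_t, x_t-\overline{x}_t\rangle$ contribution, handled by Cauchy--Schwarz, Young's inequality, Proposition \ref{proposition overline x_t - x_t}, \eqref{adaptive step size constraint}, \eqref{proposition 1.2} and the triangle inequality; term (ii), $2\sum_{t=1}^l \hat\eta_t\langle\overline{g}_t-g_t, x_t-x^*\rangle$, bounded by Lemma \ref{Lemma 5.4} with $\mathcal{W}_{\max}$ taken to be $\mathcal{W}_L$ and $D_l\le D_L$, then split by Young's inequality into a $\tfrac14 D_L^2$ term plus $16(A_{T,\delta}\sum \eta_{t-1}^2\|\overline{g}_t-g_t\|^2 + \mathcal{W}_L^2 B_{T,\delta})$; and term (iii), $2\sum_{t=1}^l(\eta_t-\hat\eta_t)\langle\overline{g}_t-g_t, x_t-x^*\rangle$, controlled via Cauchy--Schwarz and Lemma \ref{hat eta} into a multiple of $\tfrac{D_L}{\eta}\sum_{t=1}^l \eta_{t-1}^2\|\overline{g}_t-g_t\|^2$.

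The principal technical step is the bound $\sum_{t=1}^l \eta_{t-1}^2\|\overline{g}_t-g_t\|^2 \leq R_L$, and this is the step I expect to be the main obstacle. I would argue by cases, exactly as in Proposition \ref{proposition 5.2}: if the total noise energy $\sum_{i=1}^T\|\overline{g}_i-g_i\|^2$ never exceeds the threshold $6(A\functiongapboundsmooth+2BL\functiongapboundsmooth+C)\log\frac{T}{\delta}$, then the crude estimate $\eta_{t-1}^2\le \eta^2/G_0^2$ already gives a bound of the right form; otherwise, letting $k$ be the first index at which the threshold is crossed, I use Lemma \ref{high probability sum non-convex adaptive smooth} (with $\overline{\Delta}_t\le\functiongapboundsmooth$, $Q_t\le Q$) to obtain $\sum_{i=1}^t\langle\overline{g}_i,\overline{g}_i-g_i\rangle \le \tfrac12\sum_{i=1}^t\|\overline{g}_i\|^2 + \tfrac32(A\functiongapboundsmooth+2BL\functiongapboundsmooth+C)\log\frac{T}{\delta}$, hence $G_t^2 \ge G_0^2 + \tfrac12\sum_{i=1}^t\|\overline{g}_i-g_i\|^2$ for $t\ge k$, and then a log-telescoping estimate (Lemma \ref{sum log}) produces the logarithmic term in $R_L$. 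The only point where the smooth setting genuinely differs from the generalized-smooth one is that $\|\overline{g}_t\|^2$ is now controlled directly by $L$-smoothness through $2L\overline{\Delta}_t$, so the terms quadratic in $\overline{\Delta}_t$ that appear in the $(L_0,L_1)$-case vanish; this is precisely why $Q_t$, $Q$ and $R_L$ have their simpler forms.

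Finally I would substitute the bounds on terms (i), (ii), (iii) and the estimate $\sum_{t=1}^l \eta_{t-1}^2\|\overline{g}_t-g_t\|^2\le R_L$ back into the summed version of \eqref{iterating step adaptive}, using Lemma \ref{sqrt sum} and Young's inequality to separate out the $D_L^2$ contributions, and verify that the accumulated constant is exactly $D_L^2$ as defined in \eqref{D_L}. The bookkeeping is routine given the template of Proposition \ref{proposition 5.2}; the essential check is that the two occurrences of a $\tfrac14 D_L^2$ term (one from Lemma \ref{Lemma 5.4} after Young, one from term (iii) after Young) combine to $\tfrac12 D_L^2$ on the right-hand side, which is what forces the factor $2$ in front of $\|x_1-x^*\|^2$ in \eqref{D_L} and closes the induction.
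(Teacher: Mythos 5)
Your proposal is correct and follows essentially the same route as the paper's proof, which explicitly "imitates" Proposition \ref{proposition 5.2}: condition on the three events \eqref{inequality high probability non-convex adaptive smooth}, \eqref{inequality 5.4}, \eqref{inequality high probability non-convex adaptive smooth 6.36}, run the induction with the decomposition into terms (i)--(iii), bound $\sum_{t=1}^{l}\eta_{t-1}^2\|\overline{g}_t-g_t\|^2\le R_L$ by the same two-case argument, and close with Young's inequality so that the two $\tfrac14 D_L^2$ contributions sum to $\tfrac12 D_L^2$. All the substitutions you identify ($\functiongapboundadaptive\to\functiongapboundsmooth$, $P\to Q$, $\mathcal{H}\to\mathcal{F}$, $R_a\to R_L$, $\mathcal{W}_a\to\mathcal{W}_L$) match the paper exactly.
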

\begin{proof}
  We will imitate the proof of Proposition \ref{proposition 5.2}.
  Suppose that \eqref{inequality 5.4}, \eqref{inequality high probability non-convex adaptive smooth} and \eqref{inequality high probability non-convex adaptive smooth 6.36} always hold and then we deduce that \eqref{inequality 6.34} always holds. 
  Since \eqref{inequality 5.4}, \eqref{inequality high probability non-convex adaptive smooth} and \eqref{inequality high probability non-convex adaptive smooth 6.36} hold with probability at least $1-\delta$ separately, \eqref{inequality 6.34} holds with probability at least $1-3\delta$.
  It is apparent that $\|x_1-x^*\|^2\leq D_L^2$. Suppose that for some $l\in[T]$, 
  $$\|x_t-x^*\|^2\leq D_L^2, \quad \forall t\in[l].$$
  Summing up \eqref{iterating step adaptive} over $t\in[l]$ with $\theta_t=\eta_t$, we have that 
  \begin{align}
    \label{6.33}
    \|x_{l+1}-x^*\|^2=&\|x_1-x^*\|^2-2\sumlimits{t=1}{l}\eta_t \la g_t,x_t-x^*\ra+\sumlimits{t=1}{l}\eta_t^2\|g_t\|^2\notag\\
    \leq & \|x_1-x^*\|^2-2\sumlimits{t=1}{l}\eta_t \la g_t,x_t-x^*\ra+\eta^2\mathcal{F},
  \end{align}
where the inequality follows from Lemma \ref{sum log smooth}.
Recalling \eqref{5.39}, we have 
\begin{align}
  \label{6.34}
  &-2\sumlimits{t=1}{l}\eta_t \la g_t, x_t-x^*\ra \notag\\
  \leq & \underbrace{-2\sumlimits{t=1}{l}\eta_t\la \overline{g}_t, x_t-\overline{x}_t\ra}_{\text{(i)}}+\underbrace{2\sumlimits{t=1}{l}\hat{\eta}_t\la \overline{g}_t-g_t,x_t-x^*\ra}_{\text{(ii)}}+\underbrace{2\sumlimits{t=1}{l}\left(\eta_t-\hat{\eta}_t\right)\la \overline{g}_t-g_t,x_t-x^*\ra}_{\text{(iii)}}.
\end{align}
\paragraph{Term (i)}
By \eqref{6.56} and \eqref{Term i},
\begin{align}
  \label{Term i smooth}
  -2\sumlimits{t=1}{l}\eta_t\la  \overline{g}_t, x_t-\overline{x}_t\ra \leq & 2\sumlimits{t=1}{l}\eta_{t-1}^2\|\overline{g}_t-g_t\|^2+4\sumlimits{t=1}{l}\eta_t^2\|g_t\|^2.
\end{align}
\paragraph{Term (ii)}
Applying Lemma \ref{Lemma 5.4}, we have
\begin{align}
  \label{Term ii smooth}
 2 \sumlimits{t=1}{l}\hat{\eta}_t \la \overline{g}_t-g_t,x_t-x^*\ra\leq & 4 \overline{D}_l \sqrt{A_{T,\delta}\sumlimits{t=1}{l}\eta_{t-1}^2\|g_t-\overline{g}_t\|^2+\mathcal{W}_L^2B_{T,\delta}}\notag\\
 \leq & \frac{\overline{D}_l^2}{4}+16\left(A_{T,\delta}\sumlimits{t=1}{l}\eta_{t-1}^2\|g_t-\overline{g}_t\|^2+\mathcal{W}_L^2B_{T,\delta}\right)\notag\\
 \leq & \frac{D_L^2}{4}+\frac{\eta^2}{4}+16\left(A_{T,\delta}\sumlimits{t=1}{l}\eta_{t-1}^2\|g_t-\overline{g}_t\|^2+\mathcal{W}_L^2B_{T,\delta}\right),
\end{align}
where the second inequality follows from Young's inequality and the last inequality holds since the assumption that $\|x_t-x^*\|^2\leq D_L^2, \forall t\in[l]$.
\paragraph{Term (iii)}
Applying Cauchy-Schwarz inequality, we have
\begin{align}
  \label{Term iii smooth}
  2\sumlimits{t=1}{l}\left(\eta_t-\hat{\eta}_t\right)\la \overline{g}_t-g_t,x_t-x^*\ra \leq & 2D_L\sumlimits{t=1}{l}|\eta_t-\hat{\eta}_t| \cdot \|\overline{g}_t-g_t\|\notag\\
  \leq & 2\eta D_L \sumlimits{t=1}{l} \frac{\|\overline{g}_t-g_t\|^2}{\sqrt{G_{t-1}^2+\|g_t\|^2}\sqrt{G_{t-1}^2+\|\overline{g}_t\|^2}}\notag\\
  \leq & 2\frac{D_L}{\eta}\sumlimits{t=1}{l}\eta_{t-1}^2\|\overline{g}_t-g_t\|^2,
\end{align}
where the second inequality holds since Lemma \ref{hat eta} and the third inequality holds since the definition of $\eta_{t}$.
Next we will bound $\sumlimits{t=1}{l}\eta_{t-1}^2\|\overline{g}_t-g_t\|^2$.
\thecase
\begin{case1}
  \begin{align*}
    \sumlimits{i=1}{T}\|\overline{g}_i-g_i\|^2\leq 6\left(A\functiongapboundsmooth+2BL\functiongapboundsmooth+C\right)\log\frac{T}{\delta}.
  \end{align*}
  Then, we have
  \begin{align}
    \label{7.35}
    \sumlimits{t=1}{l}\eta_{t-1}^2\|\overline{g}_t-g_t\|^2\leq \frac{6\eta^2}{G_0^2}\left(A\functiongapboundsmooth+2BL\functiongapboundsmooth+C\right)\log\frac{T}{\delta}
  \end{align}
\end{case1}
\begin{case1}
\begin{align*}
  \sumlimits{i=1}{t}\|\overline{g}_i-g_i\|^2>6\left(A\functiongapboundsmooth+2BL\functiongapboundsmooth+C\right)\log\frac{T}{\delta}.
\end{align*}
  Let
\begin{align}
  \label{j}
  j=\min\left\{t\in [T] : \sumlimits{i=1}{t}\|\overline{g}_i-g_i\|^2>6\left(A\functiongapboundsmooth+2BL\functiongapboundsmooth+C\right)\log\frac{T}{\delta}\right\}.
\end{align}
By  \eqref{inequality high probability non-convex adaptive smooth 6.36}, together with the assumption that for all $t\in [l]$, $\overline{\Delta}_t\leq \functiongapboundsmooth$,
\begin{align}
  \label{lambda smooth}
  \sumlimits{t=1}{l}\la \overline{g}_t, \overline{g}_t-g_t\ra \leq & \frac{1}{2}\sumlimits{t=1}{l}\|\overline{g}_t\|^2+\frac{3}{2}\left(A\functiongapboundsmooth+2BL\functiongapboundsmooth+C\right)\log\frac{T}{\delta}.
\end{align}
If $l<j$,
\begin{align}
  \label{6.39}
  \sumlimits{t=1}{l}\eta_{t-1}^2\|\overline{g}_t-g_t\|^2\leq \frac{\eta^2}{G_0^2}\sumlimits{t=1}{l}\|\overline{g}_t-g_t\|^2\leq 6\frac{\eta^2}{G_0^2}\left(A\functiongapboundsmooth+2BL\functiongapboundsmooth+C\right)\log\frac{T}{\delta}.
\end{align}
If $l\geq j$, $\forall j\leq t \leq l$,
\begin{align*}
  G_t^2=&G_0^2+\sumlimits{i=1}{t}\|\overline{g}_i\|^2+\sumlimits{i=1}{t}\|\overline{g}_i-g_i\|^2-2\sumlimits{i=1}{t}\la \overline{g}_i,\overline{g}_i-g_i\ra\notag\\
\geq & G_0^2+\sumlimits{i=1}{t} \|\overline{g}_i-g_i\|^2-3\left(A\functiongapboundsmooth+2BL\functiongapboundsmooth+C\right)\log\frac{T}{\delta}\notag\\
\geq & G_0^2+\frac{1}{2}\sumlimits{i=1}{t}\|\overline{g}_i-g_i\|^2,
\end{align*}
where the first inequality follows from \eqref{lambda smooth} and the last inequality follows from the definition of $j$ in \eqref{j}.
Hence,
\begin{align}
  \label{6.41}
  \sumlimits{t=j}{l}\eta_{t-1}^2\|\overline{g}_t-g_t\|^2=&\sumlimits{t=j}{l}\eta_t^2\|\overline{g}_t-g_t\|^2+\sumlimits{t=j}{l}\left(\eta_{t-1}^2-\eta_t^2\right)\|\overline{g}_t-g_t\|^2\notag\\
  \leq & \eta^2\sumlimits{t=1}{T}\frac{\|\overline{g}_t-g_t\|^2}{G_0^2+\frac{1}{2}\sum_{i=1}^t\|\overline{g}_i-g_i\|^2}+\frac{\eta^2}{G_0^2}\left(A\functiongapboundsmooth+2BL\functiongapboundsmooth+C\right)\notag\\
  \leq & 2\eta^2\log\left(1+\frac{T\left(A\functiongapboundsmooth+2BL\functiongapboundsmooth+C\right)}{2G_0^2}\right)+\frac{\eta^2}{G_0^2}\left(A\functiongapboundsmooth+2BL\functiongapboundsmooth+C\right),
\end{align}
where the last inequality follows from Assumption \ref{assumption 3}, Lemma \ref{smooth function value gap} and Lemma \ref{sum log}.
\end{case1}
Combining \eqref{7.35}, \eqref{6.39} and \eqref{6.41}, we have
\begin{align}
  \label{7.49}
  &\sumlimits{t=1}{l}\eta_{t-1}^2\|\overline{g}_t-g_t\|^2\notag\\
  \leq &2\eta^2\log\left(1+\frac{T\left(A\functiongapboundsmooth+2BL\functiongapboundsmooth+C\right)}{2G_0^2}\right)+ 7\frac{\eta^2}{G_0^2}\left(A\functiongapboundsmooth+2BL\functiongapboundsmooth+C\right)\log\frac{T}{\delta}=R_L.
\end{align}
Then, combining \eqref{6.33}, \eqref{6.34}, \eqref{Term i smooth}, \eqref{Term ii smooth} and \eqref{Term iii smooth}, we have
\begin{align*}
  \|x_{l+1}-x^*\|^2\leq & \|x_1-x^*\|^2+2\sumlimits{t=1}{l}\eta_{t-1}^2\|\overline{g}_t-g_t\|^2+4\sumlimits{t=1}{l}\eta_t^2\|g_t\|^2 +\frac{D_L^2}{4}+\frac{\eta^2}{4} \notag\\
  &+16\left(A_{T,\delta}\sumlimits{t=1}{l}\eta_{t-1}^2\|g_t-\overline{g}_t\|^2+\mathcal{W}_L^2B_{T,\delta}\right)+2\frac{D_L}{\eta}\sumlimits{t=1}{l}\eta_{t-1}^2\|\overline{g}_t-g_t\|^2+\eta^2\mathcal{F} \notag\\
  \leq & \|x_1-x^*\|^2+2R_L+4\eta^2\mathcal{F}+\frac{D_L^2}{4}+\frac{\eta^2}{4}+16\left(A_{T,\delta}R_L+\mathcal{W}_L^2B_{T,\delta}\right)  \notag\\
  &+\frac{1}{4}D_L^2+\frac{4}{\eta^2}R_L^2+\eta^2 \mathcal{F}\notag\\
  = & \|x_1-x^*\|^2+2R_L+5\eta^2\mathcal{F}+\frac{1}{2}D_L^2+\frac{\eta^2}{4}+ 16\left(A_{T,\delta}R_L+\mathcal{W}_L^2B_{T,\delta}\right)+\frac{4}{\eta^2}R_L^2= D_L^2,
\end{align*}
where the second inequality holds since Young's inequality, \eqref{7.49} and Lemma \ref{sum log smooth}. The last equation holds since 
\begin{align*}
  D_L^2=2\|x_1-x^*\|^2+4R_L+10\eta^2\mathcal{F}+\frac{\eta^2}{2}+ 32\left(A_{T,\delta}R_L+\mathcal{W}_L^2B_{T,\delta}\right)+\frac{8}{\eta^2}R_L^2.
\end{align*}
\end{proof}
 Based on Proposition \ref{proposition 6.2}, we are able to prove Theorem \ref{theorem 6}.
\begin{proof}[Proof of Theorem \ref{theorem 6}]
Suppose that \eqref{inequality high probability convex constant convergence}, \eqref{inequality 5.4}, \eqref{inequality high probability non-convex adaptive smooth} and \eqref{inequality high probability non-convex adaptive smooth 6.36} always hold and then we deduce \eqref{inequality theorem 6} always holds.
Since \eqref{inequality high probability convex constant convergence}, \eqref{inequality 5.4}, \eqref{inequality high probability non-convex adaptive smooth} and \eqref{inequality high probability non-convex adaptive smooth 6.36} hold with probability at least $1-\delta$ separately, it follows that \eqref{inequality theorem 6} holds with probability at least $1-4\delta$.
Applying Lemma \ref{sum log smooth} and Lemma \ref{sum square} to \eqref{convex sum adaptive}, together with the conclusion that  $\|x_t-x^*\|^2 \leq D_L^2, \forall t\in[T]$ in Proposition \ref{proposition 6.2}, we have
\begin{align*}
  \sumlimits{t=1}{T}\la g_t,\overline{x}_t-x^*\ra \leq &  \frac{G_0\|x_1-x^*\|^2}{2\eta}+\left(2\eta+\frac{D_L^2}{\eta}\right)\sqrt{\sumlimits{t=1}{T}\|g_t\|^2} +\eta \mathcal{F} \sqrt{G_0^2+\sum_{t=1}^T\|g_t\|^2}   \notag\\
  \leq &  \frac{G_0\|x_1-x^*\|^2}{2\eta}+\left(2\eta+\frac{D_L^2}{\eta}\right)\sqrt{\sumlimits{t=1}{T}\|g_t\|^2} +\eta \mathcal{F} G_0+\eta \mathcal{F} \sqrt{\sumlimits{t=1}{T}\|g_t\|^2}\notag\\
  \leq &  \frac{G_0\|x_1-x^*\|^2}{2\eta}+\left(2\eta+\frac{D_L^2}{\eta}+\eta \mathcal{F}\right)\sqrt{2\sumlimits{t=1}{T}\left(A\overline{\Delta}_t+\left(B+1\right)\|\overline{g}_t\|^2+C\right)}+\eta \mathcal{F} G_0\notag\\
  \leq & \frac{G_0\|x_1-x^*\|^2}{2\eta}+\left(2\eta+\frac{D_L^2}{\eta}+\eta \mathcal{F}\right)\sqrt{2\sumlimits{t=1}{T}\left(A\overline{\Delta}_t+2\left(B+1\right)L\overline{\Delta}_t+C\right)}+ \eta \mathcal{F} G_0,
\end{align*} 
where the second inequality holds since Lemma \ref{sqrt sum}, the third inequality follows from \eqref{triangle square} and the last inequality follows from Lemma \ref{smooth function value gap}.
Applying Lemma \ref{sqrt sum} again, we have
\begin{align}
  \label{6.43}
  \sumlimits{t=1}{T}\la g_t,\overline{x}_t-x^*\ra \leq & \frac{G_0\|x_1-x^*\|^2}{2\eta}+\left(2\eta+\frac{D_L^2}{\eta}+\eta \mathcal{F}\right)\left(\sqrt{2\left(A+2\left(B+1\right)L\right)\sumlimits{t=1}{T}\overline{\Delta}_t}+\sqrt{2TC}\right)\notag\\
  &+\eta \mathcal{F} G_0\notag\\ \leq & \frac{G_0\|x_1-x^*\|^2}{2\eta}+\left(2\eta+\frac{D_L^2}{\eta}+\eta \mathcal{F}\right)^2\left(A+2\left(B+1\right)L\right)+\frac{1}{2}\sumlimits{t=1}{T}\overline{\Delta}_t\notag\\
  &+\left(2\eta+\frac{D_L^2}{\eta}+\eta \mathcal{F}\right)\sqrt{2TC} +\eta \mathcal{F}G_0\notag\\
  \leq & \frac{G_0\|x_1-x^*\|^2}{2\eta}+\left(2\eta+\frac{D_L^2}{\eta}+\eta \mathcal{F}\right)^2\left(A+2\left(B+1\right)L\right)+\frac{1}{2}\sumlimits{t=1}{T}\la \overline{g}_t,\overline{x}_t-x^*\ra\notag\\
  &+\left(2\eta+\frac{D_L^2}{\eta}+\eta \mathcal{F}\right)\sqrt{2TC} +\eta \mathcal{F}G_0,
\end{align}
where the second inequality holds since Young's inequality and the last inequality follows from Lemma \ref{convexity}.
Applying Lemma \ref{high probability sum convex constant convergence} and letting $$\lambda=1/\left(\left(D_L+\eta\right)\sqrt{TC/\log\frac{1}{\delta}}+3\left(A+2BL\right)\left(D_L+\eta\right)^2\right),$$ we have
\begin{align}
  \label{inequality 6.44}
  \sumlimits{t=1}{T}\la \overline{g}_t-g_t,\overline{x}_t-x^*\ra \leq &  \frac{3\lambda}{4}\sumlimits{t=1}{T}\left(\left(A+2BL\right)\overline{\Delta}_t+C\right)\|\overline{x}_t-x^*\|^2+\frac{1}{\lambda}\log\frac{1}{\delta}\notag\\
  \leq & \frac{1}{4\left(D_L+\eta\right)^2}\sumlimits{t=1}{T}\overline{\Delta}_t\|\overline{x}_t-x^*\|^2+\frac{3}{4\left(D_L+\eta\right)\sqrt{T}}\sqrt{C\log\frac{1}{\delta}}\sumlimits{t=1}{T}\|\overline{x}_t-x^*\|^2\notag\\
  &+\left(D_L+\eta\right)\sqrt{TC\log\frac{1}{\delta}}+3\left(A+2BL\right)\left(D_L+\eta\right)^2\log\frac{1}{\delta}\notag\\
  \leq & \frac{1}{4}\sumlimits{t=1}{T}\overline{\Delta}_t+\frac{7}{4}\left(D_L+\eta\right)\sqrt{TC\log\frac{1}{\delta}}+3\left(A+2BL\right)\left(D_L+\eta\right)^2\log\frac{1}{\delta},
 \end{align}
 where the first inequality is due to Lemma \ref{smooth function value gap} and the third inequality holds since $\|\overline{x}_t-x^*\|\leq \|x_t-x^*\|+\|\overline{x}_t-x_t\|\leq D_L+\eta$.
 Applying Lemma \ref{convexity} to \eqref{inequality 6.44},
 \begin{align}
  \label{6.45}
  \sumlimits{t=1}{T}\la \overline{g}_t-g_t,\overline{x}_t-x^*\ra \leq & \frac{1}{4}\sumlimits{t=1}{T}\la \overline{g}_t,\overline{x}_t-x^*\ra +\frac{7}{4}\left(D_L+\eta\right)\sqrt{TC\log\frac{1}{\delta}}\notag\\
  &+3\left(A+2BL\right)\left(D_L+\eta\right)^2\log\frac{1}{\delta}.
 \end{align}
Combining \eqref{6.43} and \eqref{6.45}, we have
\begin{align*}
  \sumlimits{t=1}{T}\la \overline{g}_t,\overline{x}_t-x^*\ra\leq &\frac{1}{4}\sumlimits{t=1}{T}\la \overline{g}_t,\overline{x}_t-x^*\ra +\frac{7}{4}\left(D_L+\eta\right)\sqrt{TC\log\frac{1}{\delta}}\notag\\
  &+3\left(A+2BL\right)\left(D_L+\eta\right)^2\log\frac{1}{\delta}\notag\\
  &+\frac{G_0\|x_1-x^*\|^2}{2\eta}+\left(2\eta+\frac{D_L^2}{\eta}+\eta \mathcal{F}\right)^2\left(A+2\left(B+1\right)L\right)+\frac{1}{2}\sumlimits{t=1}{T}\la \overline{g}_t,\overline{x}_t-x^*\ra\notag\\
  &+\left(2\eta+\frac{D_L^2}{\eta}+\eta \mathcal{F}\right)\sqrt{2TC} +\eta \mathcal{F}G_0.
\end{align*}
Re-arranging the above inequality, we have
\begin{align*}
  \sumlimits{t=1}{T}\la \overline{g}_t,\overline{x}_t-x^*\ra\leq & 7\left(D_L+\eta\right)\sqrt{TC\log\frac{1}{\delta}}+12\left(A+2BL\right)\left(D_L+\eta\right)^2\log\frac{1}{\delta}\notag\\
  &+ \frac{2G_0\|x_1-x^*\|^2}{\eta}+4\left(2\eta+\frac{D_L^2}{\eta}+\eta \mathcal{F}\right)^2\left(A+2\left(B+1\right)L\right)\notag\\
  &+4\left(2\eta+\frac{D_L^2}{\eta}+\eta \mathcal{F}\right)\sqrt{2TC} +4\eta \mathcal{F}G_0.
\end{align*} 
Using Jensen's inequality and Lemma \ref{convexity}, we have
\begin{align}
  \label{7.62}
  f\left(\frac{1}{T}\sumlimits{t=1}{T}\overline{x}_t\right)-f^*\leq & \left(12\left(A+2BL\right)\left(D_L+\eta\right)^2\log\frac{1}{\delta}+\frac{2G_0\|x_1-x^*\|^2}{\eta}\right)\frac{1}{T}\notag\\
  &+\left(4\left(2\eta+\frac{D_L^2}{\eta}+\eta \mathcal{F}\right)^2\left(A+2\left(B+1\right)L\right)+4\eta \mathcal{F} G_0\right)\frac{1}{T}\notag\\
  & + \frac{7\left(D_L+\eta\right)\sqrt{C\log\frac{1}{\delta}}+4\left(2\eta+\frac{D_L^2}{\eta}+\eta \mathcal{F}\right)\sqrt{2C}}{\sqrt{T}}.
\end{align}
\end{proof}
\section*{Acknowledgement}
This work was supported in part by the NSFC under grant number 12471096, and the National Key Research and Development Program of China under grant number 2021YFA1003500. 
The corresponding author is Junhong Lin.

\bibliographystyle{abbrv}
\bibliography{ref}

\appendix
\label{Appendix}
\section{Complementary Lemmas}
\begin{lemma}[Remark 2.3 in \cite{zhang2020improved}]
  \label{descent lemma}
  Suppose that $f:\mathbb{R}^d \rightarrow \mathbb{R}$ is an $(L_0,L_1)$-generalized smooth function. When $\|x-y\|\leq 1/L_1$, we have 
  $$\left\lvert f(y)- f(x)-\la \nabla f(x),y-x\ra\right\rvert \leq \frac{L_0+L_1\nablaf{x}}{2}\|x-y\|^2.$$
\end{lemma}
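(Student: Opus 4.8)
The plan is to prove the generalized descent lemma by the classical integral-remainder argument, carried out along a line segment short enough that Definition~\ref{definition 1} applies uniformly along it.

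First I would fix $x,y\in\mathbb{R}^d$ with $\|x-y\|\le 1/L_1$ and introduce the one-dimensional auxiliary function $\phi(t)=f\bigl(x+t(y-x)\bigr)$ for $t\in[0,1]$. Since $f$ is differentiable, $\phi$ is differentiable with $\phi'(t)=\la\nabla f(x+t(y-x)),\,y-x\ra$; moreover Definition~\ref{definition 1} forces $\nabla f$ to be locally Lipschitz (one first bounds $\nablaf{u}$ locally in terms of $\nablaf{v}$ and then feeds this back into the inequality), hence continuous, so $\phi'$ is continuous on $[0,1]$ and the fundamental theorem of calculus gives
\[
f(y)-f(x)=\phi(1)-\phi(0)=\int_0^1\la\nabla f(x+t(y-x)),\,y-x\ra\,dt.
\]
Subtracting $\la\nabla f(x),y-x\ra=\int_0^1\la\nabla f(x),y-x\ra\,dt$ yields the remainder identity
\[
f(y)-f(x)-\la\nabla f(x),y-x\ra=\int_0^1\la\nabla f(x+t(y-x))-\nabla f(x),\,y-x\ra\,dt.
\]

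Next I would estimate the integrand pointwise. For each $t\in[0,1]$ the intermediate point $z_t:=x+t(y-x)$ satisfies $\|z_t-x\|=t\|y-x\|\le\|y-x\|\le 1/L_1$, so Definition~\ref{definition 1} applies to the pair $(x,z_t)$ and gives $\|\nabla f(z_t)-\nabla f(x)\|\le\left(L_0+L_1\nablaf{x}\right)t\|y-x\|$. Combining this with the triangle inequality for integrals and Cauchy--Schwarz inside the integral,
\[
\bigl|f(y)-f(x)-\la\nabla f(x),y-x\ra\bigr|\le\int_0^1\|\nabla f(z_t)-\nabla f(x)\|\,\|y-x\|\,dt\le\left(L_0+L_1\nablaf{x}\right)\|y-x\|^2\int_0^1 t\,dt,
\]
and $\int_0^1 t\,dt=\frac12$ produces exactly the claimed two-sided bound.

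The only point requiring any care is the uniform validity of Definition~\ref{definition 1} along the whole segment, and this is guaranteed precisely by the hypothesis $\|x-y\|\le 1/L_1$, since then $\|z_t-x\|\le\|x-y\|\le 1/L_1$ for every $t\in[0,1]$; note in particular that the bound uses the gradient at the fixed base point $x$, so there is no circularity. A secondary, purely technical, point is the applicability of the fundamental theorem of calculus, which I would justify via differentiability of $f$ together with the continuity of $\nabla f$ implied by the smoothness assumption. I do not anticipate a substantive obstacle: the argument is a one-variable reduction followed by a direct estimate.
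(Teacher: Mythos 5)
Your argument is correct. Note that the paper does not prove this lemma at all --- it is quoted verbatim as Remark 2.3 of \cite{zhang2020improved} --- so there is no in-paper proof to compare against; your integral-remainder derivation is the standard one (and essentially the one in that reference): the hypothesis $\|x-y\|\le 1/L_1$ guarantees $\|z_t-x\|\le 1/L_1$ for every intermediate point, Definition~\ref{definition 1} applied to the ordered pair $(x,z_t)$ gives the bound in terms of $\nablaf{x}$ with no circularity, and integrating $t$ over $[0,1]$ produces the factor $1/2$. The only technical point, justifying the fundamental theorem of calculus, is handled adequately since Definition~\ref{definition 1} applied at a fixed base point already yields continuity of $\nabla f$ there.
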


 The following lemma plays an important role in high-probability analysis. We refer interested readers to see \cite{li2020high} for the proof.
\begin{lemma}[Lemma 1 in \cite{li2020high}]
  \label{lemma high probability}
  Assume that $\{Z_t\}_{t\in[T]}$ is a martingale difference sequence with respect to $\gamma_1, \gamma_2, \cdots, \gamma_T$ and $\mathbb{E}_t[\exp\left(Z_t^2/\sigma_t^2\right)]\leq \exp(1)$ for all $1 \leq t \leq T$, where $\sigma_t$ is a sequence of measurable random variables with respect to $\gamma_1, \gamma_2, \cdots, \gamma_{t-1}$. Then, for any fixed $\lambda >0$ and $\delta\in(0,1)$, with probability at least $1-\delta$, we have
  $$\sumlimits{t=1}{T}Z_t\leq \frac{3\lambda}{4}\sumlimits{t=1}{T}\sigma_t^2+\frac{1}{\lambda}\log\frac{1}{\delta}.$$
\end{lemma}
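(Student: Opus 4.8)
The plan is to run the Chernoff/supermartingale argument that is standard for martingale differences with sub-Gaussian-type Orlicz increments. Writing $S_l=\sum_{t=1}^{l}Z_t$ and $V_l=\sum_{t=1}^{l}\sigma_t^2$, the event in the conclusion is, after multiplying through by $\lambda>0$, exactly $\{\lambda S_T-\tfrac34\lambda^2V_T\le\log\tfrac1\delta\}$, so by Markov's inequality applied to $\exp(\lambda S_T-\tfrac34\lambda^2V_T)$ it suffices to show $\mE[\exp(\lambda S_T-\tfrac34\lambda^2V_T)]\le1$. Since each $\sigma_t$ is measurable with respect to $\gamma_1,\dots,\gamma_{t-1}$, the adapted sequence $M_l:=\exp(\lambda S_l-\tfrac34\lambda^2V_l)$ satisfies $M_0=1$ and, conditioning on $\gamma_1,\dots,\gamma_{l-1}$, $\mE[M_l\mid\gamma_1,\dots,\gamma_{l-1}]=M_{l-1}\exp(-\tfrac34\lambda^2\sigma_l^2)\,\mE_l[\exp(\lambda Z_l)]$. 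Hence $\{M_l\}$ is a supermartingale, and $\mE[M_T]\le1$, provided the per-step conditional bound
$$\mE_t[\exp(\lambda Z_t)]\le\exp\!\big(\tfrac34\lambda^2\sigma_t^2\big),\qquad t\in[T],$$
holds. This per-step estimate is the heart of the proof.

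To prove it I would condition on $\gamma_1,\dots,\gamma_{t-1}$, so that $W:=Z_t$ has $\mE[W]=0$ and $\mE[\exp(W^2/\sigma_t^2)]\le e$ with $\sigma_t$ a constant; after rescaling $(W,\lambda)\mapsto(W/\sigma_t,\lambda\sigma_t)$ it reduces to: if $\mE[W]=0$ and $\mE[e^{W^2}]\le e$, then $\mE[e^{\lambda W}]\le e^{3\lambda^2/4}$ for every $\lambda>0$ (the case $\lambda<0$ follows by replacing $W$ with $-W$). I would split on the size of $\lambda$. For $\lambda\le\tfrac43$, use the elementary pointwise inequality $e^y\le y+e^{(9/16)y^2}$, which holds for all real $y$ (the constant $9/16$ is admissible since $\sup_{y\ne0}y^{-2}\log(e^y-y)\approx0.558<9/16$); then $\mE[e^{\lambda W}]\le\lambda\mE[W]+\mE[e^{(9/16)\lambda^2W^2}]=\mE[(e^{W^2})^{(9/16)\lambda^2}]\le(\mE[e^{W^2}])^{(9/16)\lambda^2}\le e^{(9/16)\lambda^2}\le e^{3\lambda^2/4}$, where the middle inequality is Jensen's for the concave map $u\mapsto u^{(9/16)\lambda^2}$ (valid because $(9/16)\lambda^2\le1$ on this range). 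For $\lambda\ge\tfrac43$, use Young's inequality $\lambda W\le\epsilon\lambda^2+\tfrac{W^2}{4\epsilon}$ together with $\mE[e^{W^2/(4\epsilon)}]\le(\mE[e^{W^2}])^{1/(4\epsilon)}\le e^{1/(4\epsilon)}$ (valid once $\epsilon\ge\tfrac14$), giving $\mE[e^{\lambda W}]\le e^{\epsilon\lambda^2+1/(4\epsilon)}$; choosing $\epsilon=\max\{\tfrac1{2\lambda},\tfrac14\}$ makes the exponent $\lambda$ when $\lambda\le2$ and $\tfrac{\lambda^2}{4}+1$ when $\lambda\ge2$, both of which are $\le\tfrac34\lambda^2$ as soon as $\lambda\ge\tfrac43$. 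Since the two ranges overlap, the per-step bound holds for all $\lambda>0$, which closes the argument.

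The only genuinely delicate point is getting the two regimes to meet at the prescribed constant $\tfrac34$: the small-$\lambda$ branch needs $c\lambda^2\le1$ for the Jensen step, which over the range $\lambda\le4/3$ forces $c\le9/16$, while the pointwise inequality $e^y\le y+e^{cy^2}$ forces $c\gtrsim0.558$; one has to check these are compatible (they are, with a margin, since $0.558<9/16<3/4$) and that the Young-based branch indeed beats $\tfrac34\lambda^2$ all the way down to $\lambda=4/3$. A minor but necessary bookkeeping point is to track, in the supermartingale step, that $\sigma_l$, $S_{l-1}$ and $V_{l-1}$ are all $\sigma(\gamma_1,\dots,\gamma_{l-1})$-measurable, so that the conditional moment generating function can be factored out cleanly. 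Finally, exactly the same argument applies under the sub-Gaussian noise form of Remark~\ref{remark 4.1}, since only $\mE_t[\exp(Z_t^2/\sigma_t^2)]\le e$ enters.
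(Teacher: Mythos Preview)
The paper does not give its own proof of this lemma; it simply cites \cite{li2020high} and refers the reader there. Your argument is a correct, self-contained version of the standard supermartingale/Chernoff proof that underlies that reference: form $M_l=\exp(\lambda S_l-\tfrac34\lambda^2 V_l)$, reduce to the per-step MGF bound $\mE_t[e^{\lambda Z_t}]\le e^{(3/4)\lambda^2\sigma_t^2}$, and verify the latter by splitting on the size of $\lambda$. The numerics you use are consistent (the supremum $\sup_{y\neq0}y^{-2}\log(e^y-y)\approx0.558<9/16$, and the Young branch indeed gives exponent $\le(3/4)\lambda^2$ for all $\lambda\ge4/3$), so both regimes glue at the claimed constant. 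Since the paper only points to the literature, there is nothing to compare beyond noting that your route is the expected one.
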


\begin{lemma}[Lemma 14 in \cite{attia2023sgd}]
  \label{attia}
  Let $X_t$ be a random variable with respect to $\gamma_1,\cdots, \gamma_t$ such that $|X_t| \leq 1$ with probability 1. Then, for every $\delta \in (0, 1)$ and any random variable $\hat{X}_t$ with respect to $\gamma_1,\cdots \gamma_{t-1}$ such that $|\hat{X}_t| \leq 1$ with probability 1,
  \begin{align*}
  \mathrm{Pr} \left(\exists t < \infty: \left| \sumlimits{s=1}{t} \left( X_s-\mathbb{E}_s[X_s]\right)  \right| \geq \sqrt{A_t(\delta) \sumlimits{s=1}{t}\left(X_s-\hat{X}_s\right)^2+B_t(\delta)   } \right) \leq \delta,
  \end{align*}
  where 
  \begin{align}
    \label{A B}
    A_t(\delta)=16 \log\left( \frac{60\log(6t)}{\delta}\right), \qquad B_t(\delta)=16 \log^2\left( \frac{60\log(6t)}{\delta}\right).
  \end{align}
  \end{lemma}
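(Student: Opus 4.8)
The statement is a time-uniform, self-normalized Bernstein/Freedman inequality for bounded martingale differences, so I would prove it by the standard route: an exponential supermartingale plus Ville's maximal inequality, made adaptive by a geometric ``stitching'' over the unknown variance scale, followed by a step that converts the predictable quadratic variation into the observed quantity $\sum_{s\le t}(X_s-\hat X_s)^2$. Write $d_s:=X_s-\mathbb{E}_s[X_s]$ and $M_t:=\sum_{s\le t}d_s$; this is a martingale in the filtration generated by $\gamma_1,\gamma_2,\dots$ with $|d_s|\le 2$ a.s. Since $\hat X_s$ is measurable with respect to $\gamma_1,\dots,\gamma_{s-1}$, the conditional expectation minimizes the conditional mean square error, hence $\mathbb{E}_s[d_s^2]=\mathrm{Var}_s(X_s)\le \hat w_s:=\mathbb{E}_s[(X_s-\hat X_s)^2]$. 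Set $\hat W_t:=\sum_{s\le t}\hat w_s$ and $\widehat V_t:=\sum_{s\le t}(X_s-\hat X_s)^2$.

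\textbf{Exponential supermartingale and Ville.} For $\lambda\in(0,1/2]$ we have $|\lambda d_s|\le 1$, so $e^{\lambda d_s}\le 1+\lambda d_s+\lambda^2 d_s^2$ and therefore $\mathbb{E}_s[e^{\lambda d_s}]\le 1+\lambda^2\mathbb{E}_s[d_s^2]\le e^{\lambda^2\hat w_s}$; thus $Z_t^{\lambda}:=\exp(\lambda M_t-\lambda^2\hat W_t)$ is a nonnegative supermartingale with $Z_0^{\lambda}=1$. Ville's maximal inequality gives, for each fixed $\lambda$ and $\delta'\in(0,1)$, $\mathrm{Pr}(\exists t<\infty:\,M_t>\lambda\hat W_t+\tfrac1\lambda\log\tfrac1{\delta'})\le\delta'$, and symmetrically for $-M_t$ — already uniform in $t$ for that $\lambda$.

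\textbf{Stitching and self-normalization.} The near-optimal $\lambda\asymp\sqrt{\log(1/\delta')/\hat W_t}$ depends on the growing, unknown $\hat W_t$ (bounded only by $\hat W_t\le 4t$). I would union-bound over the grid $\lambda_j=2^{-j-1}$, $j\ge 0$, with budgets $\delta_j=\delta/(2c_0(j+1)^2)$, $c_0=\sum_{j\ge0}(j+1)^{-2}$, so $\sum_j\delta_j\le\delta/2$. On the intersection event, for each $t$ pick the smallest $j$ with $\lambda_j\le\sqrt{\log(1/\delta_j)/\hat W_t}$ (or $j=0$ if $\hat W_t$ is tiny): then $j+1=O(\log_2(4t))$, and substituting yields $|M_t|\le\sqrt{A_t(\delta)\hat W_t+B_t(\delta)}$ for all $t$, where $A_t,B_t$ take the form in \eqref{A B} because $\log(1/\delta_j)=\log(1/\delta)+2\log(j+1)+O(1)$ together with $j+1=O(\log t)$ produces the $\log(6t)$ inside the logarithm, and the grid ratio and $c_0$ fix the numeric constants. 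It remains to replace $\hat W_t$ (predictable) by $\widehat V_t$ (observed): apply the same machinery to the bounded martingale differences $Y_s:=\hat w_s-(X_s-\hat X_s)^2$ (note $0\le(X_s-\hat X_s)^2\le 4$, so $|Y_s|\le 4$ and $\mathbb{E}_s[Y_s^2]\le 4\hat w_s$) to obtain, uniformly in $t$ with probability $\ge 1-\delta/2$, $\hat W_t-\widehat V_t=\sum_{s\le t}Y_s\le\tfrac12\hat W_t+O(\log(1/\delta)+\log\log t)$, hence $\hat W_t\le 2\widehat V_t+O(\log(1/\delta)+\log\log t)$; substituting this into the previous bound, re-absorbing constants into $A_t,B_t$, and intersecting the two $\delta/2$-events gives the lemma.

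\textbf{Main obstacle.} None of the steps is conceptually deep; the real work is the constant bookkeeping in the stitching and self-normalization — choosing the grid ratio, the budget sequence $\delta_j$, and the rescalings so that everything collapses to exactly $A_t(\delta)=16\log(60\log(6t)/\delta)$ and $B_t(\delta)=16\log^2(60\log(6t)/\delta)$. Since the statement is quoted verbatim as Lemma~14 of \cite{attia2023sgd}, one may alternatively simply invoke that reference; the outline above records why it holds.
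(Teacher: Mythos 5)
The paper itself offers no proof of this statement: it is imported verbatim as Lemma~14 of \cite{attia2023sgd}, so the "paper's approach" is simply to cite, which your closing remark already covers and which is entirely adequate here. Your from-scratch sketch is nonetheless a correct outline of the standard route to such time-uniform empirical-Bernstein inequalities (and essentially the route underlying the cited result): the reduction $\mathbb{E}_s[d_s^2]\le \mathbb{E}_s[(X_s-\hat X_s)^2]$ via the conditional-MSE optimality of $\mathbb{E}_s[X_s]$ is the right key observation, the exponential supermartingale plus Ville's inequality gives the time-uniform bound for each fixed $\lambda$, and geometric stitching over $\lambda$ with a summable budget produces the $\log\log t$ (i.e.\ $\log(6t)$ inside the logarithm) overhead. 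The self-normalization step is also sound: since only a crude factor-of-two comparison $\hat W_t\le 2\widehat V_t+O(\log(1/\delta))$ is needed, a single fixed $\lambda$ in Ville's inequality suffices there, with no further stitching. The only genuine gap is the one you name yourself: the constants. In particular, substituting $\hat W_t\le 2\widehat V_t+O(\log(1/\delta))$ under the square root inflates both $A_t$ and $B_t$, and landing on exactly $16$ and $60$ requires carrying the grid ratio, the budget sequence, and this substitution through explicitly; as written you have established a bound of the stated \emph{form} but not with the stated constants. Since the exact constants matter nowhere in the paper beyond unimportant numerical factors, and since the lemma is in any case a quoted external result, this is acceptable — but if you intended the sketch as a self-contained proof rather than a justification for the citation, that bookkeeping would have to be done.
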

  
  \begin{lemma}
  \label{lemma 6.2}  
  Let $f(x) : \mathbb{R}^d\rightarrow \mathbb{R}$ be an $(L_0,L_1)$-smooth function with minimum $f^*$. 
  Then, $$\|\nabla f(x_t)\|^2\leq 2\left(L_0+L_1\|\nabla f(x_t)\|\right)\Delta_t,$$ and $$\nablaf{x_t}^2\leq 4L_0\Delta_t+4L_1^2\Delta_t^2,$$
  where $\Delta_t=f(x_t)-f^*$.
  \end{lemma}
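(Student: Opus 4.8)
\textbf{Proof proposal for Lemma \ref{lemma 6.2}.}

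The plan is to derive both inequalities from the standard consequence of $(L_0,L_1)$-smoothness (the descent-type bound in Lemma \ref{descent lemma}) applied at the point $x_t$, exploiting the fact that moving along $-\nabla f(x_t)$ decreases the function below $f^*$ unless the gradient is controlled. First I would fix $x=x_t$ and consider the point $y = x_t - \frac{1}{L_0+L_1\nablaf{x_t}}\nabla f(x_t)$. I need to check that $\|x-y\| = \frac{\nablaf{x_t}}{L_0+L_1\nablaf{x_t}} \leq \frac{1}{L_1}$, which holds because $\frac{\nablaf{x_t}}{L_0+L_1\nablaf{x_t}} < \frac{\nablaf{x_t}}{L_1\nablaf{x_t}} = \frac{1}{L_1}$. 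Then Lemma \ref{descent lemma} gives
\[
f(y) \leq f(x_t) + \la \nabla f(x_t), y-x_t\ra + \frac{L_0+L_1\nablaf{x_t}}{2}\|y-x_t\|^2 = f(x_t) - \frac{\nablaf{x_t}^2}{2\left(L_0+L_1\nablaf{x_t}\right)}.
\]
Since $f(y)\geq f^*$, rearranging yields $\nablaf{x_t}^2 \leq 2\left(L_0+L_1\nablaf{x_t}\right)\Delta_t$, which is the first claim.

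For the second inequality I would start from the first one and split the right-hand side: $\nablaf{x_t}^2 \leq 2L_0\Delta_t + 2L_1\Delta_t\nablaf{x_t}$. This is a quadratic inequality in $a := \nablaf{x_t}$ of the form $a^2 - 2L_1\Delta_t\, a - 2L_0\Delta_t \leq 0$, so $a$ is at most the larger root, $a \leq L_1\Delta_t + \sqrt{L_1^2\Delta_t^2 + 2L_0\Delta_t}$. Squaring and using $(u+v)^2 \leq 2u^2+2v^2$ gives $a^2 \leq 2L_1^2\Delta_t^2 + 2\left(L_1^2\Delta_t^2 + 2L_0\Delta_t\right) = 4L_1^2\Delta_t^2 + 4L_0\Delta_t$, which is exactly $\nablaf{x_t}^2 \leq 4L_0\Delta_t + 4L_1^2\Delta_t^2$.

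I do not anticipate a genuine obstacle here; the only point requiring a moment of care is verifying that the trial point $y$ stays within the $1/L_1$-ball so that Lemma \ref{descent lemma} is applicable, and handling the degenerate case $\nablaf{x_t}=0$ (where both inequalities are trivial). An alternative route for the first inequality, avoiding the explicit choice of step, is to invoke a known Polyak-type inequality for generalized smooth functions, but the direct computation above is self-contained and cleaner.
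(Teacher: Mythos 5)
Your proof is correct and follows essentially the same route as the paper: both take the trial point $y=x_t-\frac{1}{L_0+L_1\nablaf{x_t}}\nabla f(x_t)$, apply Lemma \ref{descent lemma}, and use $f(y)\geq f^*$ to get the first inequality. For the second inequality the paper absorbs the cross term $2L_1\Delta_t\nablaf{x_t}$ via Young's inequality rather than solving the quadratic explicitly, but this is a cosmetic difference yielding the same constants.
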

  \begin{proof}
    Let $x=x_t-\frac{1}{L_0+L_1\|\nabla f(x_t)\|}\nabla f(x_t)$.
    It is easy to verify that $\|x-x_t\|\leq 1/L_1$.
    From Lemma \ref{descent lemma}, we have that
    $$f(x) \leq f(x_t)  + \la \nabla f(x_t), x - x_t \ra + {L_0+L_1 \|\nabla f(x)\|  \over 2} \|x - x_t\|^2.$$
    Hence, 
    \begin{align*}
      &\frac{\|\nabla f(x_t)\|^2}{2\left(L_0+L_1\|\nabla f(x_t)\|\right)}\leq f(x_t)-f(x)\leq \Delta_t.
    \end{align*}
  Re-arranging the above inequality and applying Cauchy-Schwarz inequality and Young's inequality, we have
  \begin{align*}
    \nablaf{x_{t}}^2 \leq & 2L_0\Delta_{t}+\frac{1}{2}\|\nabla f(x_{t})\|^2+2L_1^2\Delta_{t}^2.
  \end{align*}
  We obtain the desired result.
  \end{proof}
  As a direct corollary, we obtain the following lemma for smooth functions.
  \begin{lemma}
    \label{smooth function value gap}
    Let $f(x) : \mathbb{R}^d\rightarrow \mathbb{R}$ be an $L$-smooth function with minimum $f^*$.
    Then, 
    \begin{align}
      \label{lemma 9.5 inequality}
      \|\nabla f(x_t)\|^2\leq 2L\Delta_t,
    \end{align}
    where $\Delta_t=f(x_t)-f^*$.
  \end{lemma}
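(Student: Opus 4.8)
\textbf{Proof proposal for Lemma \ref{smooth function value gap}.}

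The plan is to obtain this as an immediate corollary of Lemma \ref{lemma 6.2} by specializing the generalized smoothness parameters. First I would observe that an $L$-smooth function is precisely an $(L_0, L_1)$-smooth function with $L_0 = L$ and $L_1 = 0$: indeed, when $L_1 = 0$ the condition $\|x - y\| \leq 1/L_1$ in Definition \ref{definition 1} becomes vacuous (holds for all $x, y$), and \eqref{def generalized smooth} reduces to $\|\nabla f(x) - \nabla f(y)\| \leq L_0 \|x - y\|$, which is exactly \eqref{definition L smoothness} with $L_0 = L$.

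Next I would apply the first inequality of Lemma \ref{lemma 6.2}, namely $\|\nabla f(x_t)\|^2 \leq 2(L_0 + L_1\|\nabla f(x_t)\|)\Delta_t$, with $L_0 = L$ and $L_1 = 0$. The term $L_1 \|\nabla f(x_t)\|$ vanishes, leaving $\|\nabla f(x_t)\|^2 \leq 2L\Delta_t$, which is exactly \eqref{lemma 9.5 inequality}. Alternatively one could invoke the second inequality of Lemma \ref{lemma 6.2}, $\|\nabla f(x_t)\|^2 \leq 4L_0\Delta_t + 4L_1^2\Delta_t^2$, which with $L_1 = 0$ gives $\|\nabla f(x_t)\|^2 \leq 4L\Delta_t$ — a weaker constant, so the first route is preferable.

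There is essentially no obstacle here: the only point requiring a sentence of justification is the reduction of $L$-smoothness to the $L_1 = 0$ case of $(L_0, L_1)$-smoothness, and in particular that the domain restriction $\|x-y\| \le 1/L_1$ disappears in the limit. Everything else is direct substitution into an already-established lemma, so the proof is two or three lines.
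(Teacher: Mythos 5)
Your proposal is correct and matches the paper exactly: the paper states this lemma as a direct corollary of Lemma \ref{lemma 6.2}, obtained by specializing $L_0 = L$ and $L_1 = 0$ in its first inequality, just as you do. Your additional remark that the restriction $\|x-y\|\leq 1/L_1$ becomes vacuous when $L_1=0$ is a reasonable (and slightly more careful) justification of that specialization.
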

  
  \begin{lemma}
    \label{sqrt sum}
    Let $\{a_t\}_{t\in[n]}$ be a sequence of non-negative real numbers. We have
    \begin{align*}
      \sqrt{\sum_{i=1}^n a_i}\leq \sumlimits{i=1}{n}\sqrt{a_i}.
    \end{align*}
  \end{lemma}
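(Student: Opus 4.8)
The statement to prove is Lemma \ref{sqrt sum}: for non-negative reals $\{a_i\}_{i=1}^n$, we have $\sqrt{\sum_{i=1}^n a_i} \leq \sum_{i=1}^n \sqrt{a_i}$.

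\textbf{Proof proposal.} The plan is to square both sides and reduce the claim to the obvious fact that cross terms are non-negative. Since both sides are non-negative (a sum of square roots of non-negative numbers is non-negative, and the left side is a square root), the inequality $\sqrt{\sum_i a_i} \le \sum_i \sqrt{a_i}$ is equivalent to the squared inequality $\sum_i a_i \le \bigl(\sum_i \sqrt{a_i}\bigr)^2$. Expanding the right-hand side gives $\bigl(\sum_i \sqrt{a_i}\bigr)^2 = \sum_i a_i + \sum_{i \ne j} \sqrt{a_i}\sqrt{a_j}$, and the double sum over $i \ne j$ is a sum of non-negative terms, hence $\ge 0$. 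This immediately yields the result. Alternatively, one can give a clean induction on $n$: the base case $n=1$ is an equality, and for the inductive step one uses $\sqrt{u+v} \le \sqrt{u} + \sqrt{v}$ for non-negative $u,v$ (itself proved by squaring) with $u = \sum_{i=1}^{n-1} a_i$ and $v = a_n$, then applies the induction hypothesis to $\sqrt{u}$.

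I would carry out the squaring argument first since it is the shortest: (1) note non-negativity of both sides; (2) state the equivalence with the squared inequality; (3) expand the square of the sum of square roots; (4) observe the cross terms are non-negative and conclude. There is essentially no obstacle here — this is a completely routine elementary inequality, and the only thing to be careful about is making the non-negativity of both sides explicit so that squaring is a valid equivalence (one cannot square an inequality between reals of unknown sign). No results from earlier in the paper are needed; the proof is self-contained.
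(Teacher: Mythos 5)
Your argument is correct: since both sides are non-negative, the inequality is equivalent to $\sum_i a_i \le \left(\sum_i \sqrt{a_i}\right)^2 = \sum_i a_i + \sum_{i\ne j}\sqrt{a_i a_j}$, and the cross terms are non-negative. The paper states this lemma without proof (treating it as elementary), so there is no authorial argument to compare against; your squaring argument is the standard justification and nothing is missing.
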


  \begin{lemma}
    \label{convexity}
    Let $f(x) : \mathbb{R}^d\rightarrow \mathbb{R}$ be a convex function with minimum $f^*$ and let $x^*$ be the minimizer, i.e., $f(x^*)=f^*$. 
    Then, we have 
    \begin{align*}
      \la \nabla f(x), x-x^*\ra \geq f(x)-f^*\geq 0, \quad \forall x\in \mathbb{R}^d.
    \end{align*}
  \end{lemma}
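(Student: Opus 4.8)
The plan is to invoke the first-order (subgradient) characterization of convexity for the differentiable function $f$ and then specialize it at the minimizer. The only implicit hypothesis to note is that $f$ is differentiable, which is forced by the appearance of $\nabla f$ in the statement; with that, convexity is equivalent to the gradient inequality
\[
f(y) \geq f(x) + \la \nabla f(x), y - x\ra, \qquad \forall x, y \in \mathbb{R}^d.
\]
I would begin by recording this inequality, or equivalently deriving it from the definition of convexity in the usual way (by rearranging $f(x + t(y-x)) \le (1-t)f(x) + t f(y)$, dividing by $t$, and letting $t \to 0^+$).

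Next I would substitute $y = x^*$ into the gradient inequality. Since $f(x^*) = f^*$ by assumption, this gives $f^* \geq f(x) + \la \nabla f(x), x^* - x\ra$, and after rearranging and using $\la \nabla f(x), x^* - x\ra = -\la \nabla f(x), x - x^*\ra$, we obtain $\la \nabla f(x), x - x^*\ra \geq f(x) - f^*$, which is the first asserted inequality. For the second inequality $f(x) - f^* \geq 0$, I would simply note that $f^* = \inf_{z \in \mathbb{R}^d} f(z)$ is by definition a lower bound for $f$, so $f(x) \geq f^*$ for every $x$.

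There is no real obstacle here: the statement is the standard optimality/convexity estimate used throughout the paper, and the proof is a one-line specialization of the gradient inequality plus the definition of the infimum. The only point requiring a word of care is making explicit which formulation of convexity is being used (the first-order gradient inequality, licensed by differentiability) rather than the zeroth-order midpoint inequality.
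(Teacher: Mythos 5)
Your proposal is correct and follows essentially the same route as the paper: the paper's proof is exactly the first-order convexity inequality specialized at $y=x^*$, i.e.\ $f^*\geq f(x)+\la \nabla f(x),x^*-x\ra$, followed by rearrangement and the observation that $f^*$ is the minimum. No gaps.
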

  \begin{proof}
    By the convexity of $f$, we have
    \begin{align*}
       f^*\geq f(x)+ \la \nabla f(x), x^*-x\ra.
    \end{align*}
    Thus, 
    \begin{align*}
      \la \nabla f(x), x-x^*\ra \geq  f(x)-f^*\geq 0.
    \end{align*}
  \end{proof}
   Lemma \ref{sum square} and Lemma \ref{sum log} have broad applications in the analysis of adaptive algorithms. See \citep{duchi2011adaptive,streeter2010less,levy2018online} e.g..
  \begin{lemma}[Lemma A.2 in \cite{levy2018online}]
    \label{sum square}
    Let $\{a_t\}_{t\in[n]}$ be a sequence of non-negative real numbers. Then, it holds that
    $$\sqrt{\sumlimits{i=1}{n}a_i}\leq \sumlimits{i=1}{n}\frac{a_i}{\sqrt{\sum_{j=1}^{i}a_j}}\leq 2\sqrt{\sumlimits{i=1}{n}a_i}.$$
  \end{lemma}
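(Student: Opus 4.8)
The plan is to abbreviate the partial sums as $S_k := \sumlimits{j=1}{k} a_j$ with the convention $S_0 := 0$, so that the claim reads $\sqrt{S_n} \le \sumlimits{i=1}{n} a_i/\sqrt{S_i} \le 2\sqrt{S_n}$, and to prove the two inequalities independently. First I would dispose of degeneracies: if $S_n = 0$ then every term vanishes and the statement is trivial, and after discarding any leading indices with $a_i = 0$ (equivalently, reading $0/0$ as $0$, which changes neither side since those terms contribute nothing and $S_n$ is unchanged) we may assume $S_i > 0$ for all $i \in [n]$, so that each summand $a_i/\sqrt{S_i}$ is well defined.

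For the lower bound I would use only monotonicity of the partial sums: since $S_i \le S_n$ we have $a_i/\sqrt{S_i} \ge a_i/\sqrt{S_n}$ for every $i$, and summing gives
\[
\sumlimits{i=1}{n} \frac{a_i}{\sqrt{S_i}} \ \ge\ \frac{1}{\sqrt{S_n}}\sumlimits{i=1}{n} a_i \ =\ \frac{S_n}{\sqrt{S_n}} \ =\ \sqrt{S_n}.
\]

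For the upper bound the single idea needed is to compare each summand to a telescoping increment. Writing $a_i = S_i - S_{i-1}$ and using $\sqrt{S_{i-1}} \le \sqrt{S_i}$,
\[
\frac{a_i}{\sqrt{S_i}} \ =\ \frac{S_i - S_{i-1}}{\sqrt{S_i}} \ \le\ \frac{2\,(S_i - S_{i-1})}{\sqrt{S_i} + \sqrt{S_{i-1}}} \ =\ 2\bigl(\sqrt{S_i} - \sqrt{S_{i-1}}\bigr),
\]
after which summing over $i \in [n]$ telescopes to $2(\sqrt{S_n} - \sqrt{S_0}) = 2\sqrt{S_n}$, the claimed bound.

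There is no genuine obstacle in this lemma: the argument is two lines once the substitution $a_i = S_i - S_{i-1}$ is in place, and the only point that demands a sentence of care is the degenerate case in which the first several $a_i$ vanish, handled above by the $0/0 = 0$ convention. An induction on $n$ would also work but carries more bookkeeping than the telescoping route, so I would prefer the latter.
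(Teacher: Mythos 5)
Your proof is correct. The lower bound is identical to the paper's (replace each denominator $\sqrt{S_i}$ by the larger $\sqrt{S_n}$ and sum). For the upper bound you diverge: the paper proves it by induction on $n$, where the inductive step requires completing a square under the radical ($\bigl(\sum_{i\le t}a_i\bigr)^2+a_{t+1}\sum_{i\le t}a_i\le\bigl(\sum_{i\le t}a_i+\tfrac12 a_{t+1}\bigr)^2$), whereas you use the telescoping bound $a_i/\sqrt{S_i}\le 2(\sqrt{S_i}-\sqrt{S_{i-1}})$, which follows from rationalizing $\sqrt{S_i}-\sqrt{S_{i-1}}$ and the monotonicity $\sqrt{S_{i-1}}\le\sqrt{S_i}$. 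The two arguments are equally rigorous; yours is shorter, makes the origin of the constant $2$ transparent (it is the integral/derivative bound for $x\mapsto 2\sqrt{x}$), and avoids the algebraic manipulation inside the radical. You are also slightly more careful than the paper about the degenerate case where leading partial sums vanish, which you dispose of with the $0/0=0$ convention; the paper's proof implicitly assumes $a_1>0$. Either route is acceptable.
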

  \begin{proof}
    The first inequality is apparent since
    \begin{align*}
      \sumlimits{i=1}{n}\frac{a_i}{\sqrt{\sum_{j=1}^{i}a_j}} \geq \frac{\sumlimits{i=1}{n}a_i}{\sqrt{\sum_{i=1}^{n}a_i}}\geq \sqrt{\sumlimits{i=1}{n}a_i}.
    \end{align*}
    Next we will prove the second inequality by induction.
    It is apparent that $\frac{a_1}{\sqrt{a_1}}\leq 2\sqrt{a_1}$. 
    Suppose that for some $t\in\left[n-1\right]$, $\sumlimits{i=1}{t}\frac{a_i}{\sqrt{\sum_{j=1}^{i}a_j}}\leq 2\sqrt{\sumlimits{i=1}{t}a_i}$. 
    Now we will prove that the conclusion holds when $t+1$.
    \begin{align*}
      \sumlimits{i=1}{t+1}\frac{a_i}{\sqrt{\sum_{j=1}^{i}a_j}} \leq & \frac{a_{t+1}}{\sqrt{\sum_{i=1}^{t+1}a_i}}+2\sqrt{\sumlimits{i=1}{t}a_i}\notag\\
      = & \frac{a_{t+1}+2\sqrt{\left(\sumlimits{i=1}{t}a_i\right)^2+a_{t+1}\sumlimits{i=1}{t}a_i}}{\sqrt{\sum_{i=1}^{t+1}a_i}}\notag\\
       \leq & \frac{a_{t+1}+2\sqrt{\left(\sumlimits{i=1}{t}a_i\right)^2+a_{t+1}\sumlimits{i=1}{t}a_i+\frac{1}{4}a_{t+1}^2}}{\sqrt{\sum_{i=1}^{t+1}a_i}}\notag\\
       = & \frac{a_{t+1}+2\sumlimits{i=1}{t}a_i+a_{t+1}}{\sqrt{\sum_{i=1}^{t+1}a_i}}\notag
       = 2\sqrt{\sumlimits{i=1}{t+1}a_i}.
    \end{align*}
    Therefore, we finish the induction and get the desired result.
  \end{proof}
  \begin{lemma}[Lemma A.3 in \cite{levy2018online}]
    \label{sum log}
    Let $\{a_t\}_{t\in[n]}$ be a sequence of non-negative real numbers. Then, it holds that
   \begin{align*}
    \sumlimits{i=1}{n}\frac{a_i}{1+\sum_{j=1}^{i}a_j}\leq \log\left(1+\sumlimits{i=1}{n}a_i\right).
   \end{align*}
  \end{lemma}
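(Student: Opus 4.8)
The plan is to reduce the sum to a telescoping one by working with the running totals. Set $S_0 = 1$ and $S_i = 1 + \sum_{j=1}^{i} a_j$ for $i \in [n]$, so that $a_i = S_i - S_{i-1}$ and the left-hand side of the claim is exactly $\sum_{i=1}^{n} \frac{S_i - S_{i-1}}{S_i}$. Since each $a_i \geq 0$, the sequence $\{S_i\}$ is non-decreasing with $S_i \geq 1 > 0$ for all $i \geq 0$; this is the only structural fact about the $a_i$ that will be used. The statement is the discrete analogue of $\int \frac{ds}{s} = \log s$, and the proof makes this precise term by term.

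The key elementary inequality I would invoke is $1 - u \leq -\log u$ for every $u > 0$, which follows from the convexity of $-\log$ (equivalently $\log u \leq u - 1$). Applying it with $u = S_{i-1}/S_i \in (0,1]$ gives, for each $i$,
\[
\frac{a_i}{S_i} = 1 - \frac{S_{i-1}}{S_i} \leq -\log\frac{S_{i-1}}{S_i} = \log S_i - \log S_{i-1}.
\]
Summing over $i = 1, \dots, n$, the right-hand side telescopes to $\log S_n - \log S_0 = \log S_n - \log 1 = \log\!\left(1 + \sum_{i=1}^{n} a_i\right)$, which is precisely the asserted bound. The degenerate case $a_i = 0$ is harmless, since then $S_i = S_{i-1}$ and the $i$-th summand vanishes on both sides.

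There is no real obstacle here: the argument is two lines once the substitution is made. The only point meriting a moment's care is to check that the logarithm inequality is applied to an argument in the admissible range — here $S_{i-1}/S_i \in (0,1]$, so both sides are nonnegative and the estimate is non-vacuous — and to note that the normalization $S_0 = 1$ is exactly what turns $\log S_0$ into $0$ and produces the "$1+$" inside the logarithm on the right-hand side.
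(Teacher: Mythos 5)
Your proof is correct and is essentially the same argument as the paper's: the paper's key inequality $\frac{x}{1+x}\leq\log(1+x)$ applied with $x=\frac{a_{t+1}}{1+\sum_{i=1}^{t}a_i}$ is exactly your bound $1-u\leq-\log u$ with $u=S_t/S_{t+1}$, yielding the same term-by-term estimate $\frac{a_i}{S_i}\leq\log S_i-\log S_{i-1}$. The only difference is presentational — you sum and telescope directly, while the paper packages the same telescoping as an induction.
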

  \begin{proof}
    We will prove the conclusion by induction.
   Since 
   \begin{align}
    \label{A.3}
    \frac{x}{1+x}\leq \log\left(x+1\right), \forall x\geq0,
   \end{align}
   we have 
    $$\frac{a_1}{1+a_1}\leq \log\left(1+a_1\right).$$
    Suppose that for some $t\in[n-1]$, 
    $$\sumlimits{i=1}{t}\frac{a_i}{1+\sum_{j=1}^{i}a_j}\leq \log\left(1+\sumlimits{i=1}{t}a_i\right).$$
    Therefore,
    \begin{align*}
      \sumlimits{i=1}{t+1}\frac{a_i}{1+\sum_{j=1}^{i}a_j}\leq & \log\left(1+\sumlimits{i=1}{t}a_i\right)+\frac{a_{t+1}}{1+\sum_{i=1}^{t+1}a_i}\\
      \leq & \log\left(1+\sumlimits{i=1}{t}a_i\right)+\log\left(1+\frac{a_{t+1}}{1+\sum_{i=1}^{t}a_i}\right)\\
      = & \log\left(1+\sumlimits{i=1}{t+1}a_i\right),
    \end{align*}
    where we apply \eqref{A.3} with $x=\frac{a_{t+1}}{1+\sum_{i=1}^{t}a_i}$ in the second inequality.
    Thus, the induction is complete, and the conclusion holds.
  \end{proof}
  The following two lemmas follow from \cite{attia2023sgd}, where the affine variance noise is assumed. Here, we give the corresponding two lemmas under the relaxed affine variance noise assumption.
  \begin{lemma}
    \label{tilde eta}
    For all $1\leq t\leq T$, $$|\tilde{\eta}_t-\eta_t|\leq \frac{2\tilde{\eta}_t\sqrt{A\overline{\Delta}_t+B\|\overline{g}_t\|^2+C}}{\sqrt{G_0^2+\sumlimits{i=1}{t}\|g_i\|^2}},$$
    where $\eta_t$ and $\tilde{\eta}_t$ are defined in \eqref{adaptive step size} and \eqref{tilde eta_t}. 
  \end{lemma}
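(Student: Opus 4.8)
The plan is to turn the statement into a purely scalar inequality about the two squared denominators and then close the gap using Assumption \ref{assumption 3} together with one short case distinction; note that the claim is a deterministic inequality holding for each fixed $t$, so no martingale machinery is needed. Writing $\eta_t=\eta/\sqrt{a_t}$ and $\tilde{\eta}_t=\eta/\sqrt{b_t}$ with
\[
a_t:=G_0^2+\sumlimits{i=1}{t}\|g_i\|^2=G_{t-1}^2+\|g_t\|^2,\qquad b_t:=G_{t-1}^2+A\overline{\Delta}_t+\left(B+1\right)\|\overline{g}_t\|^2+C,
\]
the elementary identity $|\sqrt{a_t}-\sqrt{b_t}|=|a_t-b_t|/(\sqrt{a_t}+\sqrt{b_t})$ gives
\[
|\tilde{\eta}_t-\eta_t|=\frac{\eta\,|a_t-b_t|}{\sqrt{a_tb_t}\,(\sqrt{a_t}+\sqrt{b_t})}=\frac{\tilde{\eta}_t}{\sqrt{a_t}}\cdot\frac{|a_t-b_t|}{\sqrt{a_t}+\sqrt{b_t}}.
\]
Since $\sqrt{G_0^2+\sumlimits{i=1}{t}\|g_i\|^2}=\sqrt{a_t}$, it therefore suffices to show $|a_t-b_t|\leq 2\sqrt{S_t}\,(\sqrt{a_t}+\sqrt{b_t})$, where $S_t:=A\overline{\Delta}_t+B\|\overline{g}_t\|^2+C$ is exactly the right-hand side of Assumption \ref{assumption 3}.

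To estimate $a_t-b_t=\|g_t\|^2-A\overline{\Delta}_t-(B+1)\|\overline{g}_t\|^2-C$, I would substitute $g_t=\overline{g}_t+\xi_t$; the $\|\overline{g}_t\|^2$ coming out of $\|g_t\|^2$ partly cancels against $(B+1)\|\overline{g}_t\|^2$, leaving $a_t-b_t=2\la\overline{g}_t,\xi_t\ra+\|\xi_t\|^2-S_t$. Assumption \ref{assumption 3} gives $\|\xi_t\|^2\leq S_t$, and Cauchy--Schwarz gives $|\la\overline{g}_t,\xi_t\ra|\leq\|\overline{g}_t\|\sqrt{S_t}$. For the upper side this is immediate: $a_t-b_t\leq 2\la\overline{g}_t,\xi_t\ra\leq 2\|\overline{g}_t\|\sqrt{S_t}$. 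The lower side is the delicate point: $b_t-a_t\leq 2\|\overline{g}_t\|\,\|\xi_t\|-\|\xi_t\|^2+S_t$, and bounding $\|\xi_t\|\leq\sqrt{S_t}$ bluntly in the cross term leaves a spurious extra $S_t$ and the worse constant $3$. Instead I would split on the size of $u:=\|\overline{g}_t\|$ (with $v:=\|\xi_t\|\leq\sqrt{S_t}$). If $u\geq\sqrt{S_t}$, then $v\mapsto 2uv-v^2$ is nondecreasing on $[0,u]\supseteq[0,\sqrt{S_t}]$, so $2uv-v^2\leq 2u\sqrt{S_t}-S_t$ and the $\pm S_t$ cancel, yielding $b_t-a_t\leq 2u\sqrt{S_t}$. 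If $u<\sqrt{S_t}$, complete the square: $2uv-v^2=u^2-(v-u)^2\leq u^2\leq S_t$, so $b_t-a_t\leq 2S_t$. In both cases $|a_t-b_t|\leq 2\sqrt{S_t}\,\max\{\|\overline{g}_t\|,\sqrt{S_t}\}$.

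Finally, $\|\overline{g}_t\|\leq\sqrt{b_t}$ because $b_t\geq(B+1)\|\overline{g}_t\|^2\geq\|\overline{g}_t\|^2$, and $\sqrt{S_t}\leq\sqrt{b_t}$ because $b_t\geq S_t$ (using $B+1\geq B$ and discarding the nonnegative $G_{t-1}^2$). Hence $\max\{\|\overline{g}_t\|,\sqrt{S_t}\}\leq\sqrt{b_t}\leq\sqrt{a_t}+\sqrt{b_t}$, so $|a_t-b_t|\leq 2\sqrt{S_t}(\sqrt{a_t}+\sqrt{b_t})$, i.e. $|a_t-b_t|/(\sqrt{a_t}+\sqrt{b_t})\leq 2\sqrt{S_t}$; substituting this into the rationalized identity above finishes the proof. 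The main obstacle, as indicated, is the lower bound on $b_t-a_t$: naive Cauchy--Schwarz plus Assumption \ref{assumption 3} loses an additive $S_t$, and recovering the sharp constant $2$ hinges on the cancellation in the case $\|\overline{g}_t\|\geq\sqrt{S_t}$ (equivalently, on the bound $2uv-v^2+S_t\leq 2\sqrt{S_t}\max\{u,\sqrt{S_t}\}$ valid for all $0\leq v\leq\sqrt{S_t}$). Everything else is Cauchy--Schwarz, nonnegativity of the discarded terms, and the identity $|\sqrt{a}-\sqrt{b}|=|a-b|/(\sqrt{a}+\sqrt{b})$.
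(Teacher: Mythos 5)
Your proof is correct, and it follows the same skeleton as the paper's: rationalize via $|1/\sqrt{a}-1/\sqrt{b}|=|a-b|/(\sqrt{ab}(\sqrt{a}+\sqrt{b}))$ and then show $|a_t-b_t|\leq 2\sqrt{S_t}\,(\sqrt{a_t}+\sqrt{b_t})$. Where you diverge is in how that numerator bound is obtained. The paper writes $|a_t-b_t|\leq S_t+|\,\|g_t\|^2-\|\overline{g}_t\|^2|$ and factors the second term as $|\la g_t-\overline{g}_t,g_t+\overline{g}_t\ra|\leq\|\xi_t\|\,\|g_t+\overline{g}_t\|\leq\|\xi_t\|(\sqrt{a_t}+\sqrt{b_t})$, using $\|g_t\|\leq\sqrt{a_t}$ and $\|\overline{g}_t\|\leq\sqrt{b_t}$; dividing by $\sqrt{a_t}+\sqrt{b_t}$ then gives $S_t/(\sqrt{a_t}+\sqrt{b_t})+\|\xi_t\|\leq\sqrt{S_t}+\sqrt{S_t}$, since $b_t\geq S_t$ and $\|\xi_t\|\leq\sqrt{S_t}$. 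So the ``spurious extra $S_t$'' you worry about is harmless on the paper's route: it is not dropped but divided by the denominator, where $\sqrt{b_t}\geq\sqrt{S_t}$ absorbs it exactly. Your alternative -- expanding $\|g_t\|^2=\|\overline{g}_t\|^2+2\la\overline{g}_t,\xi_t\ra+\|\xi_t\|^2$ and optimizing $2uv-v^2$ over $v\in[0,\sqrt{S_t}]$ in two cases -- is also valid and reaches the same constant $2$, but the case analysis is extra work that the difference-of-squares factorization sidesteps. Both arguments rely on the same three facts ($\|\xi_t\|^2\leq S_t$, $b_t\geq S_t$, $b_t\geq\|\overline{g}_t\|^2$), so neither is more general; the paper's is just shorter.
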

  \begin{proof}
    Using $\frac{1}{\sqrt{a}}-\frac{1}{\sqrt{b}}=\frac{b-a}{\sqrt{ab}\left(\sqrt{a}+\sqrt{b}\right)}$,
  \begin{align*}
    &|\tilde{\eta}_t-\eta_t| \notag\\
    = & \frac{\eta\cdot\left|\|g_t\|^2-A\overline{\Delta}_t-\left(B+1\right)\|\overline{g}_t\|^2-C\right|}{\etalow\tiletalow  \left(\etalow+\tiletalow\right)}\notag\\
    = & \frac{\tilde{\eta}_t}{\etalow} \cdot \frac{\left|\|g_t\|^2-A\overline{\Delta}_t-\left(B+1\right)\|\overline{g}_t\|^2-C\right|}{ \etalow+\tiletalow}\notag\\
    \leq & \frac{\tilde{\eta}_t}{\etalow} \cdot \frac{A\overline{\Delta}_t+B\|\overline{g}_t\|^2+C+\|g_t-\overline{g}_t\|\|g_t+\overline{g}_t\|}{\etalow+\tiletalow}\notag\\
    \leq & \frac{\tilde{\eta}_t}{\etalow} \cdot \left(\sqrt{A\overline{\Delta}_t+B\|\overline{g}_t\|^2+C}+\|g_t-\overline{g}_t\|\right)\notag\\
    \leq & \frac{2\tilde{\eta}_t\sqrt{A\overline{\Delta}_t+B\|\overline{g}_t\|^2+C}}{\sqrt{G_0^2+\sumlimits{i=1}{t}\|g_i\|^2}},
  \end{align*}
  where we use 
  \begin{align}
    \label{8.30}
    \left|\|g_t\|^2-\|\overline{g}_t\|^2\right|=\left|\la g_t- \overline{g}_t,g_t+\overline{g}_t\ra\right|\leq \|g_t- \overline{g}_t\|\|g_t+\overline{g}_t\|
  \end{align}
  in the first inequality and 
  \begin{align*}
    \|g_t+\overline{g}_t\|\leq \|g_t\|+\|\overline{g}_t\|\leq G_t+\tiletalow
  \end{align*} 
  in the second inequality.
  \end{proof}
  
  \begin{lemma}
    \label{hat eta}
    For all $t\in[T]$, 
    \begin{align*}
      |\eta_t-\hat{\eta}_t|\leq & \eta\frac{\|\overline{g}_t-g_t\|}{\sqrt{G_{t-1}^2+\|g_t\|^2}\sqrt{G_{t-1}^2+\|\overline{g}_t\|^2}},
    \end{align*}
    where $\hat{\eta}_t$ is defined in \eqref{definition of hat eta}.
  \end{lemma}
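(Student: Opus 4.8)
The plan is to mirror almost verbatim the argument already used for Lemma \ref{tilde eta}, since $\hat{\eta}_t$ differs from $\eta_t$ only in replacing $\|g_t\|^2$ by $\|\overline{g}_t\|^2$ inside the square root. First I would rewrite both step sizes on a common footing: since $G_t^2=G_0^2+\sum_{k=1}^t\|g_k\|^2$, we have $G_t^2=G_{t-1}^2+\|g_t\|^2$, so by \eqref{adaptive step size} and \eqref{definition of hat eta},
\[
\eta_t=\frac{\eta}{\sqrt{G_{t-1}^2+\|g_t\|^2}},\qquad \hat{\eta}_t=\frac{\eta}{\sqrt{G_{t-1}^2+\|\overline{g}_t\|^2}}.
\]

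Next I would apply the elementary identity $\frac{1}{\sqrt{a}}-\frac{1}{\sqrt{b}}=\frac{b-a}{\sqrt{ab}\,(\sqrt{a}+\sqrt{b})}$ with the choices $a=G_{t-1}^2+\|g_t\|^2$ and $b=G_{t-1}^2+\|\overline{g}_t\|^2$, which gives
\[
|\eta_t-\hat{\eta}_t|=\eta\,\frac{\bigl|\,\|\overline{g}_t\|^2-\|g_t\|^2\,\bigr|}{\sqrt{ab}\,(\sqrt{a}+\sqrt{b})}.
\]
Then, exactly as in \eqref{8.30}, I would write $\bigl|\|\overline{g}_t\|^2-\|g_t\|^2\bigr|=\bigl|\la \overline{g}_t-g_t,\overline{g}_t+g_t\ra\bigr|\le \|\overline{g}_t-g_t\|\,\|\overline{g}_t+g_t\|$ by Cauchy--Schwarz, and bound $\|\overline{g}_t+g_t\|\le\|g_t\|+\|\overline{g}_t\|\le\sqrt{a}+\sqrt{b}$, using that $\|g_t\|\le\sqrt{G_{t-1}^2+\|g_t\|^2}=\sqrt{a}$ and $\|\overline{g}_t\|\le\sqrt{G_{t-1}^2+\|\overline{g}_t\|^2}=\sqrt{b}$.

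Finally, substituting this bound into the previous display cancels the factor $\sqrt{a}+\sqrt{b}$ and yields
\[
|\eta_t-\hat{\eta}_t|\le \eta\,\frac{\|\overline{g}_t-g_t\|}{\sqrt{ab}}=\eta\,\frac{\|\overline{g}_t-g_t\|}{\sqrt{G_{t-1}^2+\|g_t\|^2}\,\sqrt{G_{t-1}^2+\|\overline{g}_t\|^2}},
\]
which is the claim. I do not anticipate any real obstacle here: the estimate is a one-line algebraic manipulation plus Cauchy--Schwarz, structurally identical to the proof of Lemma \ref{tilde eta}; the only minor point to be careful about is correctly tracking which term plays the role of $b-a$ in the identity so that the difference that appears is $\|\overline{g}_t\|^2-\|g_t\|^2$ rather than its negative, but since everything is inside an absolute value this is immaterial.
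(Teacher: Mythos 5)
Your proposal is correct and matches the paper's proof essentially step for step: both rationalize the difference of reciprocal square roots to produce $\bigl|\|\overline{g}_t\|^2-\|g_t\|^2\bigr|$ in the numerator, bound it via Cauchy--Schwarz as in \eqref{8.30}, and cancel the factor $\sqrt{G_{t-1}^2+\|\overline{g}_t\|^2}+\sqrt{G_{t-1}^2+\|g_t\|^2}$ against $\|\overline{g}_t\|+\|g_t\|$. No issues.
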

  \begin{proof}
    \begin{align*}
      |\eta_t-\hat{\eta}_t|= & \eta \left|\frac{1}{\sqrt{G_{t-1}^2+\|g_t\|^2}}-\frac{1}{\sqrt{G_{t-1}^2+\|\overline{g}_t\|^2}}\right|\notag\\
    = & \eta  \frac{\left|\sqrt{G_{t-1}^2+\|g_t\|^2}-\sqrt{G_{t-1}^2+\|\overline{g}_t\|^2}\right| }{\sqrt{G_{t-1}^2+\|g_t\|^2} \cdot \sqrt{G_{t-1}^2+\|\overline{g}_t\|^2}}   \notag\\
    = & \eta  \frac{\left|\|\overline{g}_t\|^2-\|g_t\|^2\right|}{\sqrt{G_{t-1}^2+\|g_t\|^2} \cdot \sqrt{G_{t-1}^2+\|\overline{g}_t\|^2}\left(\sqrt{G_{t-1}^2+\|\overline{g}_t\|^2}+\sqrt{G_{t-1}^2+\|g_t\|^2}\right)} \notag\\
    \leq & \eta \frac{\|\overline{g}_t-g_t\|\cdot \|\overline{g}_t+g_t\|}{\sqrt{G_{t-1}^2+\|g_t\|^2} \cdot \sqrt{G_{t-1}^2+\|\overline{g}_t\|^2}\left(\sqrt{G_{t-1}^2+\|\overline{g}_t\|^2}+\sqrt{G_{t-1}^2+\|g_t\|^2}\right)}\notag\\
    \leq & \eta \frac{\|\overline{g}_t-g_t\|\left(\|\overline{g}_t\|+\|g_t\|\right)}{\sqrt{G_{t-1}^2+\|g_t\|^2} \cdot \sqrt{G_{t-1}^2+\|\overline{g}_t\|^2}\left(\sqrt{G_{t-1}^2+\|\overline{g}_t\|^2}+\sqrt{G_{t-1}^2+\|g_t\|^2}\right)}\notag\\
    \leq & \eta \frac{\|\overline{g}_t-g_t\|}{\sqrt{G_{t-1}^2+\|g_t\|^2} \cdot \sqrt{G_{t-1}^2+\|\overline{g}_t\|^2}},
  \end{align*}
  where the first inequality follows from \eqref{8.30} and the second inequality is due to the triangle inequality. 
  \end{proof}
  \begin{proposition}
    \label{expectation}
    Under the conditions of Theorem \ref{theorem 7}, we have $$\mE[f(x_t)-f^*]\leq \functiongapexpectation, \quad \forall t\in[T],$$
    where 
    \begin{align}
      \label{functiongapexpectation}
      \functiongapexpectation=4\overline{\Delta}_1+8.
    \end{align}
  \end{proposition}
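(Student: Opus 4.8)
The plan is to prove Proposition~\ref{expectation} by an induction argument on $t$, paralleling the high-probability inductions in Propositions~\ref{constant bound} and~\ref{5.2} but working in expectation and exploiting the simpler $L$-smoothness structure together with the expected-version noise Assumption~\ref{assumption 4}. The base case is immediate: $\mE[\overline{\Delta}_1]=\overline{\Delta}_1\le 4\overline{\Delta}_1+8=\functiongapexpectation$. For the inductive step, I would assume $\mE[\overline{\Delta}_t]\le\functiongapexpectation$ for all $t\le l$ and show the same bound for $t=l+1$. First I would record that under the constant step size in \eqref{expectation eta constraint} we have $\theta_t=\eta\le \min\{1/\sqrt{3LAT},\sqrt{2/(3LCT)},\ldots\}$, so that $\eta$ is small enough to absorb the noise and curvature terms; in particular $\eta^2 T\le \mathcal{O}(1/L)$ against the $A$ and $C$ terms, and $\eta\le 1/(4L(B+1))$ controls the $\|\overline g_t\|^2$ coefficient.

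Next I would run the standard descent estimate specialized to $L$-smoothness (Lemma~\ref{descent lemma} with $L_0=L$, $L_1=0$), exactly as in \eqref{inequality base}–\eqref{telescope}, but now take conditional expectations. The key simplifications relative to the generalized-smooth case are: (i) the curvature coefficient $L_0+L_1\|\nabla f(x_t)\|$ collapses to the constant $L$, removing the need for the auxiliary bound \eqref{nabla fx_t constant step size} on $\|\nabla f(x_t)\|$; (ii) $\|\overline x_t-x_t\|\le Y_c$ no longer needs to be forced below $1/(8L_1)$; and (iii) the cross term $-\eta\sum_t\la\overline g_t,\xi_t\ra$ now simply vanishes in conditional expectation by Assumption~\ref{assumption 2} — no martingale concentration is needed. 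Using \eqref{triangle square} with the expected noise bound, $\mE_t\|g_t\|^2\le 2(A\overline\Delta_t+(B+1)\|\overline g_t\|^2+C)$, and then $\|\overline g_t\|^2\le 2L\overline\Delta_t$ from Lemma~\ref{smooth function value gap}, I would obtain after telescoping and taking full expectations an inequality of the form
\begin{align*}
  \mE[f(x_{l+1})-f(x_1)]\le \bigl(c_1 L\eta^2(A+2L(B+1))-1+c_2\bigr)\eta\sumlimits{t=1}{l}\mE\|\overline g_t\|^2 + c_3 L\eta^2\bigl(A\functiongapexpectation+C\bigr)l,
\end{align*}
where the step-size constraints in \eqref{expectation eta constraint} (specifically $\eta\le 1/(4L(B+1))$ and the constraint involving $A\functiongapexpectation+2L(B+1)\functiongapexpectation+C$) make the $\sum\mE\|\overline g_t\|^2$ coefficient nonpositive and bound the residual by a constant multiple of $\functiongapexpectation$.

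I would then transfer from $x_{l+1}$ to $\overline x_{l+1}$ via the $L$-smooth descent inequality \eqref{l smooth descent lemma}–\eqref{6.27}, i.e. $f(\overline x_{l+1})\le f(x_{l+1})+\|\overline g_{l+1}\|\|\overline x_{l+1}-x_{l+1}\|+\tfrac L2\|\overline x_{l+1}-x_{l+1}\|^2$, bound $\|\overline x_{l+1}-x_{l+1}\|\le \eta\cdot(\text{something})$ using Proposition~\ref{proposition overline x_t - x_t} and the constant-step-size relation \eqref{constant step size constraint}, apply Young's inequality to split $\|\overline g_{l+1}\|\|\overline x_{l+1}-x_{l+1}\|\le \tfrac14\overline\Delta_{l+1}\cdot\mathcal{O}(L)+\ldots$ together with $\|\overline g_{l+1}\|^2\le 2L\overline\Delta_{l+1}$, and subtract $f^*$. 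Rearranging yields $c\,\mE[\overline\Delta_{l+1}]\le \overline\Delta_1 + (\text{constants bounded by }\tfrac c2\functiongapexpectation)$, and the explicit choice $\functiongapexpectation=4\overline\Delta_1+8$ closes the induction. The main obstacle I anticipate is purely bookkeeping: verifying that every one of the four terms in the step-size minimum in \eqref{expectation eta constraint} is exactly what is needed to make the corresponding coefficient nonpositive or to bound the corresponding residual by the right fraction of $\functiongapexpectation$ — in particular, ensuring that the $\overline x_{l+1}\to x_{l+1}$ transfer step and the $\eta^2 T$ terms jointly fit inside the slack $4\overline\Delta_1+8-\overline\Delta_1$ — rather than any conceptual difficulty, since the vanishing of the noise cross-term in expectation removes the delicate part of the high-probability arguments.
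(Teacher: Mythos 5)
Your proposal is correct and follows essentially the same route as the paper: an induction on $t$ using the $L$-smooth descent lemma, with the noise cross-terms vanishing under conditional expectation, the relaxed affine variance bound $\mE_t\|g_t\|^2\leq A\overline{\Delta}_t+(B+1)\|\overline{g}_t\|^2+C$ combined with $\|\overline{g}_t\|^2\leq 2L\overline{\Delta}_t$, each of the four step-size constraints in \eqref{expectation eta constraint} absorbing exactly one residual term (including the $\overline{x}_{l+1}\to x_{l+1}$ transfer via Young's inequality and the bound $\mE\|\overline{x}_{l+1}-x_{l+1}\|^2\leq \eta^2(A\functiongapexpectation+2L(B+1)\functiongapexpectation+C)$), and the choice $\functiongapexpectation=4\overline{\Delta}_1+8$ closing the induction. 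The only cosmetic deviation is your use of the factor-$2$ bound \eqref{triangle square} where the paper uses the exact expectation decomposition \eqref{triangle expectation}, which affects constants but not the argument.
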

  \begin{proof}
    We obtain the result by induction. It is apparent that $\mE\left[f(x_1)-f^*\right]=f(x_1)-f^*\leq \functiongapexpectation.$ 
    Suppose that for some $l\in [T]$, 
    $$\mE[f(x_t)-f^*]\leq \functiongapexpectation, \quad \forall t\in[l].$$
    Using Lemma \ref{descent lemma}, we have
    \begin{align}
      \label{a}
      & f(x_{t+1})-f(x_t) \notag\\ 
      \leq & \la \nabla f(x_t), x_{t+1}-x_t \ra+\frac{L}{2}\|x_{t+1}-x_t\|^2\notag\\
      = & -\theta_t\la \nabla f(x_t), g_t \ra+\frac{L}{2}\theta_t^2\|g_t\|^2\notag\\
      = & -\theta_t\la \overline{g}_t+\nabla f(x_t)-\overline{g}_t,\overline{g}_t+\xi_t\ra + \frac{L}{2}\theta_t^2\|\overline{g}_t+\xi_t\|^2\notag\\
      = & -\theta_t \|\overline{g}_t\|^2-\theta_t\la \nabla f(x_t),\xi_t\ra -\theta_t \la \nabla f(x_t)-\overline{g}_t,\overline{g}_t\ra+\frac{L}{2}\theta_t^2\left(\|\overline{g}_t\|^2+2\la \overline{g}_t, \xi_t \ra+\|\xi_t\|^2\right).
    \end{align}
    Applying Cauchy-Schwarz inequality to \eqref{a} with $\theta_t=\eta$, we have
    \begin{align*}
      & f(x_{t+1})-f(x_t) \notag\\ 
      \leq & -\eta \|\overline{g}_t\|^2-\eta\la \nabla f(x_t),\xi_t\ra + \eta\|\nabla f(x_t)-\overline{g}_t\|\|\overline{g}_t\|+\frac{L}{2}\eta^2\left(\|\overline{g}_t\|^2+2\la \overline{g}_t, \xi_t \ra+\|\xi_t\|^2\right)\notag\\
      \leq & -\eta\left(1-\frac{L}{2}\eta\right) \|\overline{g}_t\|^2-\eta\la \nabla f(x_t),\xi_t\ra + L\eta\|x_t-\overline{x}_t\|\|\overline{g}_t\|+L\eta^2\la \overline{g}_t, \xi_t \ra+\frac{L}{2}\eta^2\|\xi_t\|^2\notag\\
      \leq & -\eta\left(1-L\eta\right) \|\overline{g}_t\|^2-\eta\la \nabla f(x_t),\xi_t\ra + \frac{L}{2}\|x_t-\overline{x}_t\|^2+L\eta^2\la \overline{g}_t, \xi_t \ra+\frac{L}{2}\eta^2\|\xi_t\|^2,
    \end{align*}
    where the second inequality follows from the definition of $L$-smoothness in \eqref{definition L smoothness} and the last inequality holds since Young's inequality.
    Combining with Proposition \ref{proposition overline x_t - x_t}, we have that
    \begin{align*}
      & f(x_{t+1})-f(x_t) \notag\\ 
      \leq & -\eta\left(1-L\eta\right) \|\overline{g}_t\|^2-\eta\la \nabla f(x_t),\xi_t\ra + \frac{L}{2}    \left(1-\alpha_t\right)\varGamma_t\sumlimits{k=1}{t}\frac{\alpha_k}{\varGamma_k}\eta^2\|g_k\|^2       +L\eta^2\la \overline{g}_t, \xi_t \ra+\frac{L}{2}\eta^2\|\xi_t\|^2.
    \end{align*}
    Summing over $t\in[l]$, we have that
    \begin{align}
      \label{sum expectation}
      &f(x_{l+1})-f(x_1)\notag\\
      \leq &  -\eta\left(1-L\eta\right)\sumlimits{t=1}{l}\|\overline{g}_t\|^2-\eta\sumlimits{t=1}{l}\la \nabla f(x_t),\xi_t\ra\notag\\
      & +\frac{L}{2}  \sumlimits{t=1}{l}  \left(1-\alpha_t\right)\varGamma_t\sumlimits{k=1}{t}\frac{\alpha_k}{\varGamma_k}\eta^2\|g_k\|^2+L\eta^2 \sumlimits{t=1}{l}\la \overline{g}_t, \xi_t \ra+\frac{L}{2}\eta^2\sumlimits{t=1}{l}\|\xi_t\|^2\notag\\
      \leq &    -\eta\left(1-L\eta\right)\sumlimits{t=1}{l}\|\overline{g}_t\|^2-\eta\sumlimits{t=1}{l}\la \nabla f(x_t),\xi_t\ra\notag\\
      & +\frac{L\eta^2}{2}\sumlimits{t=1}{l}\left[\sumlimits{k=t}{l}\left(1-\alpha_k\right)\varGamma_k\right]\frac{\alpha_t}{\varGamma_t}\|g_t\|^2+L\eta^2 \sumlimits{t=1}{l}\la \overline{g}_t, \xi_t \ra+\frac{L}{2}\eta^2\sumlimits{t=1}{l}\|\xi_t\|^2.
    \end{align}
   Note that $$\mE \left[\la \overline{g}_t, \xi_t\ra\right]=\mE\left[\mE_t\la \overline{g}_t,\xi_t\ra\right]= \mE \left[\la \overline{g}_t,\mE_t\left[\xi_t\right] \ra\right]=0,$$ and $$\mE\left[\la \nabla f(x_t),\xi_t\ra\right]=\mE\left[\mE_t\la \nabla f(x_t),\xi_t\ra\right]=\mE\left[\nabla f(x_t), \mE_t\left[\xi_t\right]\ra\right]=0.$$
   Therefore, taking expectation on both sides of \eqref{sum expectation}, we have
   \begin{align}
    \label{b}
     &\mathbb{E}\left[f(x_{l+1})-f(x_1)\right]\notag\\
     \leq & -\eta\left(1-L\eta\right)\mathbb{E}\left[\sumlimits{t=1}{l}\|\overline{g}_t\|^2\right] + \frac{L\eta^2}{2}\sumlimits{t=1}{l}\left[\sumlimits{k=t}{l}\left(1-\alpha_k\right)\varGamma_k\right]\frac{\alpha_t}{\varGamma_t}\mathbb{E}\left[\|g_t\|^2\right]+\frac{L}{2}\eta^2\mathbb{E}\left[\sumlimits{t=1}{l}\|\xi_t\|^2\right]\notag\\
     \leq & -\eta\left(1-L\eta\right)\mathbb{E}\left[\sumlimits{t=1}{l}\|\overline{g}_t\|^2\right] + L\eta^2 \mathbb{E}\left[\sumlimits{t=1}{l}\|g_t\|^2\right]+\frac{L}{2}\eta^2\mathbb{E}\left[\sumlimits{t=1}{l}\|\xi_t\|^2\right],
   \end{align}
   where the second inequality holds since \eqref{proposition 1.2}.
   Also, we have that 
   \begin{align}
    \label{triangle expectation}
    \mathbb{E}_t\left[\|g_t\|^2\right]=\mathbb{E}_t\left[\|\overline{g}_t\|^2\right]+2\mathbb{E}_t\left[\la \overline{g}_t,\xi_t\ra\right]+\mathbb{E}_t\left[\|\xi_t\|^2\right]=\|\overline{g}_t\|^2+\mathbb{E}_t\|\xi_t\|^2\leq A\overline{\Delta}_t+\left(B+1\right)\|\overline{g}_t\|^2+C,
   \end{align}
  where the inequality follows from the relaxed affine variance noise assumption.
  Combining \eqref{b} and \eqref{triangle expectation}, we have
  \begin{align*}
    &\mathbb{E}\left[f(x_{l+1})-f(x_1)\right]\notag\\
    \leq & -\eta\left(1-L\eta\right)\mathbb{E}\left[\sumlimits{t=1}{l}\|\overline{g}_t\|^2\right]\notag\\
    &+L\eta^2\mathbb{E} \left[\sumlimits{t=1}{l}\left(A\overline{\Delta}_t+\left(B+1\right)\|\overline{g}_t\|^2+C\right)\right]+\frac{L}{2}\eta^2\mathbb{E}\left[\sumlimits{t=1}{l}\left(A\overline{\Delta}_t+B\|\overline{g}_t\|^2+C\right)\right]\notag\\
    \leq & -\eta\left(1-L\eta-\left(B+1\right)L\eta-\frac{1}{2}BL\eta\right)\mathbb{E}\left[\sumlimits{t=1}{l}\|\overline{g}_t\|^2\right]+\frac{3}{2}L\eta^2A\mathbb{E}\left[\sumlimits{t=1}{l}\overline{\Delta}_t\right]+\frac{3}{2}L\eta^2Cl\notag\\
    = & -\eta\left(1-L\eta\left(2+\frac{3}{2}B\right)\right)\mathbb{E}\left[\sumlimits{t=1}{l}\|\overline{g}_t\|^2\right]+\frac{3}{2}L\eta^2A\mathbb{E}\left[\sumlimits{t=1}{l}\overline{\Delta}_t\right]+\frac{3}{2}L\eta^2Cl.
  \end{align*}
  With the assumption that $\mathbb{E}[\overline{\Delta}_t]\leq \functiongapexpectation , \forall t\in[l]$, and the constraint that $\eta\leq \frac{1}{4L\left(B+1\right)}$, we have
  \begin{align}
    \label{8.6}
    \mathbb{E}\left[f(x_{l+1})-f(x_1)\right]\leq & \frac{3}{2}L\eta^2 Al \functiongapexpectation+\frac{3}{2}L\eta^2Cl.
  \end{align}
  Taking expectation on \eqref{lemma 9.5 inequality}, together with the assumption that $\overline{\Delta}_t\leq \functiongapexpectation, \forall t\in[l]$, we have 
  \begin{align*}
    \mE\left[\|\overline{g}_t\|^2\right]\leq 2L\mE\left[\overline{\Delta}_t\right]\leq 2L \functiongapexpectation, \quad \forall t\in[l].
  \end{align*}
  By Proposition \ref{proposition overline x_t - x_t} and \eqref{triangle expectation}, we have that for all $t\in[l+1]$,
  \begin{align}
    \label{8.10}
    \mE\left[\|\overline{x}_t-x_t\|^2\right] \leq & \eta^2 \varGamma_t\sumlimits{k=1}{t-1}\frac{\alpha_k}{\varGamma_k}\mE\left[\|g_k\|^2\right]\notag\\
    \leq & \eta^2 \varGamma_t\sumlimits{k=1}{t-1}\frac{\alpha_k}{\varGamma_k} \left(A\functiongapexpectation+2L\left(B+1\right)\functiongapexpectation+C\right)\notag\\
    \leq & \eta^2 \left(A\functiongapexpectation+2L\left(B+1\right)\functiongapexpectation+C\right),
  \end{align}
 where the last inequality holds since \eqref{proposition 1.1}.
  For simplicity, let 
  \begin{align}
    \label{Y3}
    Y_e=\eta\sqrt{\left(A\functiongapexpectation+2L\left(B+1\right)\functiongapexpectation+C\right)}.
  \end{align}
  By Jensen's inequality, 
  \begin{align*}
    \mE\left[\|\overline{x}_t-x_t\|\right]\leq \sqrt{\mE\left[\|\overline{x}_t-x_t\|^2\right]}\leq Y_e. 
  \end{align*}
   Applying Young's inequality to \eqref{l smooth descent lemma}, we have
  \begin{align*}
    f(\overline{x}_{l+1})\leq & f(x_{l+1})+\frac{1}{8L}\|\overline{g}_{l+1}\|^2+\frac{5L}{2}\|\overline{x}_{l+1}-x_{l+1}\|^2.
  \end{align*}
  Taking expectation on both sides, we have
  \begin{align*}
    \mE\left[f(\overline{x}_{l+1})\right]\leq &  \mE[f(x_{l+1})]+\frac{1}{8L}\mE\left[\|\overline{g}_{l+1}\|^2\right]+\frac{5L}{2}\mE\left[\|\overline{x}_{l+1}-x_{l+1}\|^2\right]\notag\\
    \leq & \mE\left[f(x_{l+1})\right]+ \frac{1}{4} \mE[\overline{\Delta}_{l+1}]+\frac{5L}{2}Y_e^2,
  \end{align*}
  where the last inequality holds since Lemma \ref{lemma 9.5 inequality} and \eqref{8.10}.
  Combining with \eqref{8.6}, we have
  \begin{align*}
    \mE[f(\overline{x}_{l+1})]\leq & f(x_1)+\frac{3}{2}L\eta^2 Al \functiongapexpectation+\frac{3}{2}L\eta^2Cl+ \frac{1}{4}\mE[\overline{\Delta}_{l+1}]+\frac{5L}{2}Y_e^2.
  \end{align*}
  Subtracting $f^*$ from both sides and applying the constraints of $\eta$ in \eqref{expectation eta constraint}, we have
  \begin{align*}
    \mE\left[\overline{\Delta}_{l+1}\right]\leq \overline{\Delta}_1+\frac{1}{2}\functiongapexpectation+1+\frac{1}{4}\mE[\overline{\Delta}_{l+1}]+1.
  \end{align*}
  Therefore, 
  \begin{align*}
    \frac{3}{4}\mE[\overline{\Delta}_{l+1}]\leq \overline{\Delta}_1+\frac{1}{2}\functiongapexpectation+2=\frac{3}{4}\functiongapexpectation.
  \end{align*}
  Now we finish the induction and obtain the desired result.

  \end{proof}

\section{Omitted Proofs}
\begin{proof}[Proof of Remark \ref{remark 4.1}]
  Let $\zeta_t=\frac{\|g_t-\overline{g}_t\|^2}{A\overline{\Delta}_t+B\|\overline{g}_t\|^2+C}$, $\forall t\in[T]$, where $T$ is fixed.
  By the definition of sub-Gaussian, $$\mathbb{E}_{t}\left[\exp\left(\zeta_t\right)\right]\leq \mathrm{e}, \quad\text{thus},\quad \mathbb{E}\left[\exp\left(\zeta_t\right)\right]\leq \mathrm{e}.$$
  By Markov's inequality, for any $\beta\in\mathbb{R}$,
  \begin{align*}
  \mathbb{P}\left(\max\limits_{t\in [T]}\zeta_t\geq \beta\right)=&\mathbb{P}\left(\exp\left(\max\limits_{t\in [T]}\zeta_t\right)\geq \mathrm{e}^\beta\right)\notag\\
  \leq & \text{e}^{-\beta}\mathbb{E}\left[\exp\left(\max\limits_{t\in [T]}\zeta_t\right)\right]\leq \text{e}^{-\beta}\mathbb{E}\left[\sumlimits{t=1}{T}\exp\left(\zeta_t\right)\right]\leq \text{e}^{-\beta}T \text{e}.
  \end{align*}
  Therefore, with probability at least $1-\delta$, we have 
  \begin{align*}
  \|g_t-\overline{g}_t\|^2\leq \log\left(\frac{T \mathrm{e}}{\delta}\right)\left(A\overline{\Delta_t}+B\|\overline{g}_t\|^2+C\right), \quad \forall t\in[T].
  \end{align*}
  Note that an additional factor $\log\left(\frac{T \mathrm{e}}{\delta}\right)$ emerges compared to Assumption \ref{assumption 3} but the convergence rate still holds up to a logarithm factor. 
  We can obtain the convergence rate using the induction argument similar to those in Section \ref{section 4}, \ref{section 5} and \ref{section 6}.
\end{proof}

\begin{proof}[Proof of Proposition \ref{proposition 1}]
  From \cite{ghadimi2016accelerated}, we have
  \begin{align*}
    \sumlimits{k=1}{t}\frac{\alpha_k}{\varGamma_k}=\frac{\alpha_1}{\varGamma_1}+\sumlimits{k=2}{t}\frac{\alpha_k}{\varGamma_k}=\frac{1}{\varGamma_1}+\sumlimits{k=2}{t}\frac{1}{\varGamma_k}\left(1-\frac{\varGamma_{k}}{\varGamma_{k-1}}\right)=\frac{1}{\varGamma_1}+\sumlimits{k=2}{t}\left(\frac{1}{\varGamma_k}-\frac{1}{\varGamma_{k-1}}\right)=\frac{1}{\varGamma_t}.
  \end{align*}
  Therefore, 
  \begin{align*}
    \varGamma_{t}\sumlimits{k=1}{t}\frac{\alpha_k}{\varGamma_k}=1.
  \end{align*}
  For any $t\geq 2$, we have
 \begin{align*}
  \varGamma_t=\left(1-\alpha_t\right)\varGamma_{t-1}=\frac{t-1}{t+1}\varGamma_{t-1}=\cdots=\frac{t-1}{t+1}\cdot\frac{t-2}{t}\cdots \frac{2-1}{2+1}=\frac{2}{t\left(t+1\right)}.
\end{align*}
Hence,
  \begin{align*}
    \left[\sumlimits{k=t}{T}\left(1-\alpha_k\right)\varGamma_k\right]\frac{\alpha_t}{\varGamma_t} & =\left[\sumlimits{k=t}{T}\frac{k-1}{k+1}\frac{2}{k\left(k+1\right)}\right]\cdot t\notag\\
    \leq & 2t \sumlimits{k=t}{T}\frac{1}{k\left(k+1\right)}\notag\\
    = & 2t\left(\frac{1}{t}-\frac{1}{T+1}\right) \leq 2.
  \end{align*}
\end{proof}

\begin{proof}[Proof of Proposition \ref{proposition overline x_t - x_t}]
  Following from the iteration steps in Algorithm \ref{algorithm1} and referring to the proof in \cite{ghadimi2016accelerated}, we have 
  \begin{align*}
    \overline{x}_t-x_t = & \left(1-\alpha_t\right)[\tilde{x}_t-x_t]\notag\\
    = & \left(1-\alpha_t\right)[\overline{x}_{t-1}-x_{t-1}+\left(\theta_{t-1}-\gamma_{t-1}\right)g_{t-1}]\notag\\
    = & \left(1-\alpha_t\right)[\left(1-\alpha_{t-1}\right)\left(\tilde{x}_{t-1}-x_{t-1}\right)+\left(\theta_{t-1}-\gamma_{t-1}\right)g_{t-1}]\notag\\
    = & \left(1-\alpha_t\right)\sumlimits{k=1}{t-1}\left(\prod\limits_{j=k+1}^{t-1}\left(1-\alpha_j\right)\right)\left(\theta_k-\gamma_k\right)g_k\notag\\
    = & \left(1-\alpha_t\right)\sumlimits{k=1}{t-1}\frac{\varGamma_{t-1}}{\varGamma_{k}}\left(\theta_k-\gamma_k\right)g_k\notag\\
    = & \left(1-\alpha_t\right)\varGamma_{t-1}\sumlimits{k=1}{t-1}\frac{\alpha_k}{\varGamma_k}\frac{\left(\theta_k-\gamma_k\right)}{\alpha_k}g_k.
  \end{align*}
  By the convexity of square of the norm and \eqref{proposition 1.1}, we have 
  \begin{align}
    \label{overline x_t-x_t square norm}
    \|\overline{x}_t-x_t\|^2 = & \left\Vert\left(1-\alpha_t\right)\varGamma_{t-1}\sumlimits{k=1}{t-1}\frac{\alpha_k}{\varGamma_k} \frac{\left(\theta_k-\gamma_k\right)}{\alpha_k}  g_k\right\Vert^2\notag\\
    \leq & \left(1-\alpha_t\right)^2\varGamma_{t-1}\sumlimits{k=1}{t-1}\frac{\alpha_k}{\varGamma_k}\frac{\left(\theta_k-\gamma_k\right)^2}{\alpha_k^2}\|g_k\|^2\notag\\
    = & \left(1-\alpha_t\right)\varGamma_t\sumlimits{k=1}{t-1}\frac{\alpha_k}{\varGamma_k}\frac{\left(\theta_k-\gamma_k\right)^2}{\alpha_k^2}\|g_k\|^2.
  \end{align}
\end{proof}

\begin{proof}[Proof of Lemma \ref{high probability sum non-convex constant}]
  Let $Z_t=-\la \overline{g}_t,\xi_t\ra$. 
  Note that $\overline{g}_t$ is a random variable dependent on $z_1,\cdots,z_{t-1}$ and $\xi_t$ is dependent on $z_1,\cdots,z_t$. Therefore, it is apparent that $Z_t$ is a martingale difference sequence since
  $$\mathbb{E}\left[- \la \overline{g}_t,\xi_t\ra|z_1,\cdots,z_{t-1}\right]=-\la \overline{g}_t,\mathbb{E}_t[\xi_t]\ra=0.$$
  Using Cauchy-Schwarz inequality,
  \begin{align*}
    \mathbb{E}_t\left[\exp\left(\frac{Z_t^2}{\|\overline{g}_t\|^2\left(A\overline{\Delta}_t+B\|\overline{g}_t\|^2+C\right)}\right)\right]\leq\mathbb{E}_t\left[\exp\left(\frac{\|\xi_t\|^2}{A\overline{\Delta}_t+B\|\overline{g}_t\|^2+C}\right)\right] \leq \text{e}.
  \end{align*}
  Therefore, given any $l\in [T]$, applying Lemma \ref{lemma high probability}, we have that for any $\lambda>0$, with probability at least $1-\delta$,
  \begin{align*}
    \sumlimits{t=1}{l}Z_t\leq &\frac{3\lambda}{4}\sumlimits{t=1}{l}\|\overline{g}_t\|^2\left(A\overline{\Delta}_t+B\|\overline{g}_t\|^2+C\right)+\frac{1}{\lambda}\log\frac{1}{\delta}\notag\\
    \leq &\frac{3\lambda}{4}\sumlimits{t=1}{l}\|\overline{g}_t\|^2 P_t^2+\frac{1}{\lambda}\log\frac{1}{\delta},
  \end{align*}
  where the last inequality follow from Lemma \ref{lemma 6.2} and $P_t$ defined in \eqref{P_t}.
 For any fixed $\lambda$, we can  re-scale over $\delta$ and have that  with probability at least $1-\delta$, for all $l\in[T]$,
 \begin{align}
  \label{4.14}
  \sumlimits{t=1}{l}-\la\overline{g}_t,\xi_t\ra\leq \frac{3\lambda}{4}\sumlimits{t=1}{l}\|\overline{g}_t\|^2 P_t^2+\frac{1}{\lambda}\log\frac{T}{\delta}.
 \end{align}
 Let $\lambda=\frac{1}{3 P_c^2}$, and we obtain the desired result.
\end{proof}

\begin{proof}[Proof of Lemma \ref{high probability sum convex constant}]
  Let $Z_t=-\eta\la \xi_t,x_t-x^*\ra$. Note that $x_t$ is a random variable dependent on $z_1,\cdots,z_{t-1}$ and $\xi_t$ is dependent on $z_1,\cdots,z_t$. 
  Therefore, it is apparent that $Z_t$ is a martingale difference sequence since
  $$\mathbb{E}\left[-\eta\la \xi_t,x_t-x^*\ra|z_1,\cdots,z_{t-1}\right]=-\eta\la \mathbb{E}_t[\xi_t],x_t-x^*\ra=0.$$
  Using Cauchy-Schwarz inequality,
  \begin{align*}
    \mathbb{E}_t\left[\exp\left(\frac{Z_t^2}{\eta^2\left(A\overline{\Delta}_t+B\|\overline{g}_t\|^2+C\right)\|x_t-x^*\|^2}\right)\right] \leq \text{e}.
  \end{align*}
  Therefore, given any $l\in [T]$, applying Lemma \ref{lemma high probability}, we have that for any $\lambda>0$, with probability at least $1-\delta$,
  \begin{align*}
    \sumlimits{t=1}{l}Z_t\leq &\frac{3\lambda}{4}\eta^2\sumlimits{t=1}{l}\left(A\overline{\Delta}_t+B\|\overline{g}_t\|^2+C\right)\|x_t-x^*\|^2+\frac{1}{\lambda}\log\frac{1}{\delta}.
  \end{align*}
 For any fixed $\lambda$, we can  re-scale over $\delta$ and have that with probability at least $1-\delta$, for all $l\in[T]$, 
 \begin{align*}
  \sumlimits{t=1}{l}-\eta \la \xi_t,x_t-x^*\ra\leq \frac{3\lambda}{4}\eta^2\sumlimits{t=1}{l}\left(A\overline{\Delta}_t+B\|\overline{g}_t\|^2+C\right)\|x_t-x^*\|^2+\frac{1}{\lambda}\log\frac{T}{\delta}.
 \end{align*}
 Let $\lambda=\frac{2}{3 D_c}$, and we obtain the desired result.
\end{proof}

\begin{proof}[Proof of Lemma \ref{high probability sum convex constant convergence}]
  Let $Z_t=-\la \xi_t,\overline{x}_t-x^*\ra$. Note that $\overline{x}_t$ is a random variable dependent on $z_1,\cdots,z_{t-1}$ and $\xi_t$ is dependent on $z_1,\cdots,z_{t}$. Therefore, $Z_t$ is a martingale difference sequence since 
$$\mathbb{E}\left[-\la \xi_t,\overline{x}_t-x^*\ra|z_1, \cdots z_{t-1}\right]=-\la \mathbb{E}_t\left[\xi_t\right],\overline{x}_t-x^*\ra=0.$$
Also, applying Cauchy-Schwarz inequality,
$$\mathbb{E}_t\left[\exp\left(\frac{Z_t^2}{\left(A\overline{\Delta}_t+B\|\overline{g}_t\|^2+C\right)\|\overline{x}_t-x^*\|^2}\right)\right]\leq \mathrm{e}.$$
By Lemma \ref{lemma high probability}, for any $\lambda>0$ and $\delta\in(0,1)$, with probability at least $1-\delta$, 
\begin{align*}
  -\sumlimits{t=1}{T}\la \xi_t,\overline{x}_t-x^*\ra \leq & \frac{3\lambda}{4}\sumlimits{t=1}{T}\left(A\overline{\Delta}_t+B\|\overline{g}_t\|^2+C\right)\|\overline{x}_t-x^*\|^2+\frac{1}{\lambda}\log\frac{1}{\delta}.
\end{align*}
\end{proof}

\begin{proof}[Proof of Lemma \ref{lemma 6}]
  Let $Z_t=-\tilde{\eta}_t \la \overline{g}_t,\xi_t\ra$. Note that $\overline{g}_t$ and $\tilde{\eta}_t$ are random variables dependent on $z_1,\cdots,z_{t-1}$ and $\xi_t$ is dependent on $z_1,\cdots,z_t$. Therefore, it is apparent that $Z_t$ is a martingale difference sequence since 
  $$\mathbb{E}\left[-\tilde{\eta}_t\la \overline{g}_t,\xi_t\ra|z_1,\cdots,z_{t-1}\right]=-\tilde{\eta}_t\la \overline{g}_t,\mathbb{E}_t\left[\xi_t\right]\ra=0.$$
  Using Cauchy-Schwarz inequality, 
  $$\mathbb{E}_t\left[\exp\left(\frac{Z_t^2}{\tilde{\eta}_t^2\|\overline{g}_t\|^2\left(A\overline{\Delta}_t+B\|\overline{g}_t\|^2+C\right)}\right)\right]\leq \mathbb{E}_t\left[\exp\left(\frac{\|\xi_t\|^2}{A\overline{\Delta}_t+B\|\overline{g}_t\|^2+C}\right)\right]\leq \text{e}.$$
  Therefore, given any $l\in [T]$, applying Lemma \ref{lemma high probability}, we have that for any $\lambda>0$, with probability at least $1-\delta$,
  \begin{align}
    \label{5.10}
    \sumlimits{t=1}{l}-\tilde{\eta}_t \la \overline{g}_t,\xi_t\ra \leq & \frac{3\lambda}{4}\sumlimits{t=1}{l}\tilde{\eta}_t^2\|\overline{g}_t\|^2\left(A\overline{\Delta}_t+B\|\overline{g}_t\|^2+C\right)+\frac{1}{\lambda}\log\frac{1}{\delta}\notag\\
    \leq & \frac{3\lambda \eta}{4}\sumlimits{t=1}{l}\tilde{\eta}_t\|\overline{g}_t\|^2\sqrt{A\overline{\Delta}_t+B\|\overline{g}_t\|^2+C}+\frac{1}{\lambda}\log\frac{1}{\delta},
  \end{align}
where the second inequality follows from the definition of $\tilde{\eta}_t$.
Combining with Lemma \ref{lemma 6.2} and $P_t$ defined in \eqref{P_t}, we have that 
\begin{align*}
  \sumlimits{t=1}{l}-\tilde{\eta}_t \la \overline{g}_t,\xi_t\ra
  \leq \frac{3\lambda \eta}{4}\sumlimits{t=1}{l}\tilde{\eta}_t\|\overline{g}_t\|^2P_t+\frac{1}{\lambda}\log\frac{1}{\delta}.
\end{align*}
For any fixed $\lambda$, we can re-scale over $\delta$ and have that with probability at least $1-\delta$, for all $l\in[T]$,
\begin{align*}
  \sumlimits{t=1}{l}-\tilde{\eta}_t \la \overline{g}_t,\xi_t\ra \leq \frac{3\lambda \eta}{4}\sumlimits{t=1}{l}\tilde{\eta}_t\|\overline{g}_t\|^2P_t+\frac{1}{\lambda}\log\frac{T}{\delta}.
\end{align*} 
Let $\lambda=\frac{1}{3\eta P_a}$, and we finish the proof.
\end{proof}
\begin{proof}[Proof of Lemma \ref{Lemma 5.4}]
  Denote $\Phi_k=2^{k-1}\overline{D}_1$ and $k_t=\lceil \log_2 \left(\overline{D}_t/\overline{D}_1\right)\rceil +1$.
  Applying the triangle inequality, we have
  \begin{align*}
    \|x_{t+1}-x^*\|=\|x_{t+1}-x_t+x_t-x^*\|\leq \|x_{t+1}-x_t\|+\|x_t-x^*\|\leq \eta+\|x_t-x^*\|. 
   \end{align*}
   Hence, 
   \begin{align*}
    D_t\leq D_1+\eta\left(t-1\right).
   \end{align*}
   Also we have that for all $2\leq t\leq T$,
   \begin{align*}
    \frac{\overline{D}_t}{\overline{D}_1}=\frac{\max \left\{ D_t,\eta \right\}}{\max\left\{ D_1,\eta \right\}}\leq \frac{\max \left\{ D_1+\eta(t-1),\eta \right\}}{\max\left\{ D_1,\eta \right\}}=\frac{D_1+\eta(t-1)}{\max\left\{D_1,\eta\right\}}\leq t.
   \end{align*}
   Therefore, 
   \begin{align*}
    1\leq k_t\leq \lceil \log_2 t\rceil+1\leq \log_2 t+2 = \log_2\left(4T\right), \quad \forall t\in[T].
   \end{align*}
  By Assumption \ref{assumption 3} and the definition of $\hat{\eta}_t$ in \eqref{hat eta}, we have that 
  \begin{align*}
    \|\hat{\eta}_t\left(g_t-\overline{g}_t\right)\|\leq \frac{\eta\sqrt{A\overline{\Delta}_{\max}+B\|\overline{g}_t\|^2+C}}{\sqrt{G_0^2+\|\overline{g}_t\|^2}}\leq \frac{\eta\sqrt{A\overline{\Delta}_{\max}+C}}{G_0}+\eta\sqrt{B}=\mathcal{W}_{\max}, \quad \forall t\in[T].
  \end{align*}
  Define the projection to the unit ball, $\Pi(x)=x/\max\left\{1,\|x\|\right\}$.
  Thus,
  \begin{align}
    \label{6.36}
    \sumlimits{t=1}{l}\frac{\hat{\eta}_t \la g_t-\overline{g}_t, x_t-x^*\ra}{\mathcal{W}_{\max}\Phi_{k_l}} = & \sumlimits{t=1}{l}\frac{\hat{\eta}_t}{\mathcal{W}_{\max}}\left\la g_t-\overline{g}_t,\Pi\left(\frac{x_t-x^*}{\Phi_{k_l}}\right)\right\ra\notag\\
    \leq & \left|\sumlimits{t=1}{l}\frac{\hat{\eta}_t}{\mathcal{W}_{\max}}\left\la g_t-\overline{g}_t,\Pi\left(\frac{x_t-x^*}{\Phi_{k_l}}\right)\right\ra\right| \notag\\
    \leq & \max_{1\leq k \leq \lfloor \log_2(4T) \rfloor} \left|\sumlimits{t=1}{l}\frac{\hat{\eta}_t}{\mathcal{W}_{\max}}\left\la g_t-\overline{g}_t,\Pi\left(\frac{x_t-x^*}{\Phi_k}\right)\right\ra\right|,
  \end{align}
  where the equation holds since $\|x_t-x^*\|\leq D_t\leq \Phi_{k_t}\leq \Phi_{k_l}, \forall t\in[l]$.
  Let 
  \begin{align}\
    \label{X_t^k}
    X_t^k=\frac{\hat{\eta}_t}{\mathcal{W}_{\max}}\left\la g_t-\overline{g}_t,\Pi\left(\frac{x_t-x^*}{\Phi_k}\right)\right\ra.
  \end{align}
  Following Assumption \ref{assumption 2}, we have $\mathbb{E}_t\left[X_t^k\right]=0$.
  Also, using Cauchy-Schwarz inequality, 
  \begin{align*}
    X_t^k\leq \frac{\hat{\eta}_t}{\mathcal{W}_{\max}}\|g_t-\overline{g}_t\|\cdot\left\|\Pi\left(\frac{x_t-x^*}{\Phi_k}\right)\right\|\leq 1.
  \end{align*}
  Applying Lemma \ref{attia} with $X_t^k$ defined in \eqref{X_t^k} and $\hat{X}_t=0$, for any $k\in\left[\lfloor \log_2(4T) \rfloor\right]$ and $\delta'\in(0,1)$, with probability at least $1-\delta'$, we have that for all $l\in[T]$,
  \begin{align}
    \label{6.38}
   \left| \sumlimits{t=1}{l}X_t^k \right|\leq \sqrt{A_l(\delta')\sumlimits{t=1}{l}\left(X_t^k\right)^2+B_l(\delta')}.
  \end{align} 
  Next we will bound $\left(X_t^k\right)^2$.
  \begin{align*}
    \left(X_t^k\right)^2\leq & \frac{\hat{\eta}_t^2\|g_t-\overline{g}_t\|^2}{\mathcal{W}_{\max}^2}\left\|\Pi\left(\frac{x_t-x^*}{\Phi_k}\right)\right\|^2
    \leq  \frac{\eta_{t-1}^2\|g_t-\overline{g}_t\|^2}{\mathcal{W}_{\max}^2},
  \end{align*}
  where the last inequality holds since the definitions of $\hat{\eta}_t$ and $\Pi(x)$.
  Combining with \eqref{6.36} and \eqref{6.38}, we have that with probability at least $1-\log_2(4T)\delta'$,
  \begin{align*}
    \sumlimits{t=1}{l}\hat{\eta}_t \la \overline{g}_t-g_t, x_t-x^*\ra \leq \mathcal{W}_{\max}\Phi_{k_l} \sqrt{A_l(\delta')\sumlimits{t=1}{l}\frac{\eta_{t-1}^2\|g_t-\overline{g}_t\|^2}{\mathcal{W}_{\max}^2}+B_l(\delta')}.
  \end{align*}
  Using the fact that $\Phi_{k_l} =2^{\lceil \log_2\left(\overline{D}_l/\overline{D}_1\right) \rceil}\overline{D}_1 \leq 2\overline{D}_l $ and letting $\delta'=\delta/\log_2(4T)$, we obtain the final result.
  \end{proof}
\begin{proof}[Proof of Lemma \ref{Lemma 5.5}]
  Let $\lambda=\frac{2}{3 P_a^2}$ in \eqref{4.14}, and we get the result immediately.
 \end{proof}

\begin{proof}[Proof of Lemma \ref{lemma 6.1}]
  The proof here is similar to the proof of Lemma \ref{lemma 6}, which is under the generalized smoothness condition.
  It is easy to verify that $-\tilde{\eta}_t \la \overline{g}_t,\xi_t\ra$ is a martingale difference sequence.
  Combining \eqref{5.10} and Lemma \ref{smooth function value gap},
\begin{align*}
  \sumlimits{t=1}{l}-\tilde{\eta}_t \la \overline{g}_t,\xi_t\ra
  \leq \frac{3\lambda \eta}{4}\sumlimits{t=1}{l}\tilde{\eta}_t\|\overline{g}_t\|^2Q_t+\frac{1}{\lambda}\log\frac{1}{\delta}.
\end{align*}
For any fixed $\lambda$, we can re-scale over $\delta$ and have that with probability at least $1-\delta$, for all $l\in[T]$,
\begin{align*}
  \sumlimits{t=1}{l}-\tilde{\eta}_t \la \overline{g}_t,\xi_t\ra \leq \frac{3\lambda \eta}{4}\sumlimits{t=1}{l}\tilde{\eta}_t\|\overline{g}_t\|^2Q_t+\frac{1}{\lambda}\log\frac{T}{\delta}.
\end{align*} 
Let $\lambda=\frac{1}{3\eta Q}$, and we finish the proof.
\end{proof}

\begin{proof}[Proof of Lemma \ref{high probability sum non-convex adaptive smooth}]
  From the proof of Lemma \ref{high probability sum non-convex constant}, we know that $-\la \overline{g}_t,\xi_t\ra$ is a martingale difference sequence.
  Therefore, given any $l\in [T]$, applying Lemma \ref{lemma high probability}, we have that for any $\lambda>0$, with probability at least $1-\delta$,
  \begin{align*}
    \sumlimits{t=1}{l}-\la \overline{g}_t,\xi_t\ra \leq &\frac{3\lambda}{4}\sumlimits{t=1}{l}\|\overline{g}_t\|^2\left(A\overline{\Delta}_t+B\|\overline{g}_t\|^2+C\right)+\frac{1}{\lambda}\log\frac{1}{\delta}\notag\\
    \leq &\frac{3\lambda}{4}\sumlimits{t=1}{l}\|\overline{g}_t\|^2 Q_t^2+\frac{1}{\lambda}\log\frac{1}{\delta},
  \end{align*}
  where the last inequality follows from Lemma \ref{smooth function value gap}.
 For any fixed $\lambda$, we can  re-scale over $\delta$ and have that  with probability at least $1-\delta$, for all $l\in[T]$,
 \begin{align*}
  \sumlimits{t=1}{l}-\la\overline{g}_t,\xi_t\ra\leq \frac{3\lambda}{4}\sumlimits{t=1}{l}\|\overline{g}_t\|^2 Q_t^2+\frac{1}{\lambda}\log\frac{T}{\delta}.
 \end{align*}
 Let $\lambda=\frac{2}{3 Q^2}$, and we obtain the desired result.
\end{proof}
\begin{proof}[Proof of Theorem \ref{theorem 7}]
  By \eqref{constant gt} and letting $L_0=L, L_1=0$, we have
  \begin{align*}
    \sumlimits{t=1}{T}\|\overline{g}_t\|^2\leq &\frac{\overline{\Delta}_1}{\eta}-\sumlimits{t=1}{T}\la \overline{g}_t,\xi_t\ra+\frac{L\eta}{2}\sumlimits{t=1}{T}\left[\sumlimits{k=t}{T}\left(1-\alpha_k\right)\varGamma_k\right]\frac{\alpha_k}{\varGamma_k}\|g_t\|^2+L\eta\sumlimits{t=1}{T}\|g_t\|^2\notag\\
    \leq &\frac{\overline{\Delta}_1}{\eta}-\sumlimits{t=1}{T}\la \overline{g}_t,\xi_t\ra+2L\eta\sumlimits{t=1}{T}\|g_t\|^2,
  \end{align*}
  where the second inequality holds since \eqref{proposition 1.2}.
  Taking expectation on both sides, we have
  \begin{align*}
   \mE \left[\sumlimits{t=1}{T}\|\overline{g}_t\|^2\right]\leq &\frac{\overline{\Delta}_1}{\eta}+2L\eta\mE\left[\sumlimits{t=1}{T}\|g_t\|^2\right]\notag\\
   \leq & \frac{\overline{\Delta}_1}{\eta}+2L\eta \sumlimits{t=1}{T}\mE\left[A\overline{\Delta}_t+\left(B+1\right)\|\overline{g}_t\|^2+C\right]\notag\\
   \leq & \frac{\overline{\Delta}_1}{\eta}+2L\eta T\left(A\functiongapexpectation+C\right)+2L\eta\left(B+1\right)\mE\left[\sumlimits{t=1}{T}\|\overline{g}_t\|^2\right],
  \end{align*}
  where the second inequality follows from \eqref{triangle expectation} and the last inequality holds since Proposition \ref{expectation}.
  Since $2L\eta\left(B+1\right)\leq 1/2$, we have
  \begin{align}
    \label{expectation non convex}
    \mE\left[\frac{1}{T}\sumlimits{t=1}{T}\|\overline{g}_t\|^2\right]\leq &\frac{2\overline{\Delta}_1}{\eta T}+4L\eta\left(A\functiongapexpectation+C\right)\notag\\
  \leq &\frac{2\overline{\Delta}_1}{\sqrt{T}}\left(\sqrt{3LA}+\sqrt{\frac{3LC}{2}}\right)+\frac{4}{\sqrt{3T}}\left(\functiongapexpectation\sqrt{LA}+\sqrt{2LC}\right) \notag\\
  &+\frac{2\overline{\Delta}_1}{T}\left(\sqrt{\frac{5L\left(A\functiongapexpectation+2L\left(B+1\right)\functiongapexpectation+C\right)}{2}}+4L\left(B+1\right)\right).
  \end{align}
  \end{proof}

  \begin{proof}[Proof of Theorem \ref{theorem 8}]
  By \eqref{constant convex sum}, we have
  \begin{align}
    \label{8.22}
    \sumlimits{t=1}{T}\la g_t, \overline{x}_t-x^*\ra\leq \frac{1}{2\eta}\|x_1-x^*\|^2+2\eta\sumlimits{t=1}{T}\|g_t\|^2.
  \end{align}
  Therefore, 
  \begin{align*}
    \mE\sumlimits{t=1}{T}\la \overline{g}_t,\overline{x}_t-x^*\ra= & \sumlimits{t=1}{T}\mE\left[\mE_t\la g_t, \overline{x}_t-x^*\ra\right]\notag\\
    \leq & \frac{1}{2\eta}\|x_1-x^*\|^2+2\eta\sumlimits{t=1}{T}\mE\left[\mE_t\left[\|g_t\|\right]^2\right]\notag\\
    \leq & \frac{1}{2\eta}\|x_1-x^*\|^2+2\eta\mE\sumlimits{t=1}{T}\left(A\overline{\Delta}_t+\left(B+1\right)\|\overline{g}_t\|^2+C\right)\notag\\
    \leq & \frac{1}{2\eta}\|x_1-x^*\|^2+2\eta\left(A+2L\left(B+1\right)\right)\mE\sumlimits{t=1}{T}\overline{\Delta}_t+2\eta TC,
  \end{align*}
  where the first inequality follows from \eqref{8.22}, the second inequality holds since \eqref{triangle expectation} and the last inequality follows from Lemma \ref{smooth function value gap}.
  Combining with Lemma \ref{convexity} and the constraint that $$2\eta\left(A+2L\left(B+1\right)\right)\leq 1/2,$$
  we have
  \begin{align*}
    \mE\sumlimits{t=1}{T}\left(f(\overline{x}_t)-f^*\right)\leq \mE\sumlimits{t=1}{T}\la \overline{g}_t,\overline{x}_t-x^*\ra\leq \frac{1}{2\eta}\|x_1-x^*\|^2+\frac{1}{2}\mE\sumlimits{t=1}{T}\overline{\Delta}_t+2\eta TC.
  \end{align*}
  Applying Jensen's inequality, we have
  \begin{align}
    \label{expectation convex}
    \mE\left[f\left(\frac{1}{T}\sumlimits{t=1}{T}\overline{x}_t\right)-f^*\right]\leq & \frac{1}{\eta T}\|x_1-x^*\|^2+4\eta C\notag\\
    \leq & \frac{4\left(A+2L\left(B+1\right)\right)\|x_1-x^*\|^2}{T}+\frac{\|x_1-x^*\|^2+4C}{\sqrt{T}}.
  \end{align}
  \end{proof}

\end{document}